\theoremstyle{plain}
\newcommand{\bF}{{\mathbb F}}
\newcommand{\bG}{{\mathbb G}}
\newcommand{\bP}{{\mathbb P}}
\newcommand{\bQ}{{\mathbb Q}}
\newcommand{\bR}{{\mathbb R}}
\newcommand{\bZ}{{\mathbb Z}}
\newcommand{\cA}{{\mathscr A}}
\newcommand{\cB}{{\mathscr B}}
\newcommand{\cC}{{\mathscr C}}
\newcommand{\cF}{{\mathscr F}}
\newcommand{\cN}{{\mathscr N}}
\newcommand{\cO}{{\mathscr O}}
\newcommand{\cZ}{{\mathscr Z}}
\newcommand{\caO}{{\mathcal O}}
\newcommand{\fJ}{{\mathfrak J}}
\newcommand{\fM}{{\mathfrak M}}
\newcommand{\fT}{{\mathfrak T}}
\newcommand{\fp}{{\mathfrak p}}
\DeclareSymbolFont{cyrletters}{OT2}{wncyr}{m}{n}
\DeclareMathSymbol{\Sha}{\mathalpha}{cyrletters}{"58}
\DeclareMathOperator{\pr}{pr}
\DeclareMathOperator{\Hom}{Hom}
\DeclareMathOperator{\im}{im}
\DeclareMathOperator{\GL}{GL}
\DeclareMathOperator{\SL}{SL}
\newcommand{\tr}{{\rm tr}}
\newcommand{\matzz}[4]{\left(
\begin{array}{cc} #1 & #2 \\ #3 & #4 \end{array} \right)}
\DeclareMathOperator{\Spec}{Spec}
\newcommand{\inv}{{\rm inv}}
\DeclareMathOperator{\Frob}{Frob}
\DeclareMathOperator{\R}{R}
\DeclareMathOperator{\Gal}{Gal}
\DeclareMathOperator{\Rep}{Rep}
\DeclareMathOperator{\sgn}{sgn}
\newcommand{\lengthofv}{n}
\newtheorem*{rep@theorem}{\rep@title}
\newcommand{\newreptheorem}[2]{%
\newenvironment{rep#1}[1]{%
 \def\rep@title{#2 \ref{##1}}%
 \begin{rep@theorem}}%
 {\end{rep@theorem}}}
\newtheorem{thm}{Theorem}[section]
\newtheorem{prop}[thm]{Proposition}
\newtheorem{cor}[thm]{Corollary}
\newtheorem{lm}[thm]{Lemma}
\newtheorem{conj}[thm]{Conjecture}
\theoremstyle{definition}
\newtheorem{Def}[thm]{Definition}
\newtheorem{notat}[thm]{Notation}
\newtheorem{rem}[thm]{Remark}
\newenvironment{pro*}[1][Proof]{{\it{#1:}} }{}
\newcommand\Indd{\mathop{ \rm Ind}}
\newcommand\cIndd{\mathop{ \rm c-Ind}}
\newcommand\rar{ \rightarrow }
\newcommand\tar{ \twoheadrightarrow }
\newcommand\har{ \hookrightarrow }
\newcommand\Rar{ \Rightarrow }
\newcommand\LRar{ \Leftrightarrow }
\newcommand\longrar{\longrightarrow}
\newcommand\charac{\mathop{ \rm char}}
\newcommand{\sm}{{\,\smallsetminus\,}}
\newcommand\N{{\rm N}}
\newcommand\coh{{\rm H}}
\newcommand\Ind{\mathop{ \rm Ind}}
\newcommand{\diag}{{\rm diag}}
\newcounter{absatzcounter}[section]
\numberwithin{equation}{section}
\begin{document}

\title{Ramified automorphic induction and zero-dimensional affine Deligne-Lusztig varieties}
\author{Alexander B. Ivanov}
\address{Alexander B. Ivanov,  Fakult\"at f\"ur Mathematik der Technischen Universit\"at M\"unchen - M11, Boltzmannstr. 3, 85748 Garching, Germany}
\email{ivanov@ma.tum.de}
\urladdr{http://http://www-m11.ma.tum.de/dr-alexander-ivanov/}
\thanks{The author was partially supported by ERC starting grant 277889 ''Moduli spaces of local $G$-shtukas''.}

\subjclass[2010]{11S37, 14M15, 11F70}
\keywords{affine Deligne-Lusztig variety, automorphic induction, local Langlands correspondence, supercuspidal representations.}

\begin{abstract}
To any connected reductive group $G$ over a non-archimedean local field $F$ (of characteristic $p > 0$) and to any maximal torus $T$ of $G$, we attach a family of extended affine Deligne-Lusztig varieties (and families of torsors over them) over the residue field of $F$. This construction generalizes affine Deligne-Lusztig varieties of Rapoport, which are attached only to unramified tori of $G$. Via this construction, we can attach to any maximal torus $T$ of $G$ and any character of $T$ a representation of $G$. This procedure should conjecturally realize the automorphic induction from $T$ to $G$. 

For $G = \GL_2$, we prove that our construction indeed realizes the automorphic induction from at most tamely ramified tori. Moreover, if the torus is purely tamely ramified, then the varieties realizing this correspondence are zero-dimensional and reduced, i.e., just disjoint unions of points. 
\end{abstract}

\maketitle

\section{Introduction}

Let $G$ be a connected reductive group over a finite field $\bF_q$ and let $\sigma$ be the Frobenius over $\bF_q$. Then there is a natural correspondence, which to any pair $(T, \chi)$ consisting of a maximal $\bF_q$-torus $T$ of $G$ and a character $\chi$ of $T(\bF_q)$ in general position, associates an irreducible representation $R_T^{\chi}$ of $G(\bF_q)$. Moreover, $R_T^{\chi}$ is cuspidal, whenever $T$ is anisotropic modulo the center of $G$. This was conjectured in 1968 by MacDonald and proven in 1976 by Deligne and Lusztig in their celebrated paper \cite{DL}. They defined $R_T^{\chi}$ as the alternating sum of the $\ell$-adic cohomology with compact support of an \'etale cover of a Deligne-Lusztig variety. This last is just the variety of all Borel subgroups of $G$, which are in a fixed relative position with their $\sigma$-translates.

Let now $G$ be a connected reductive group over a non-archimedean local field  $F$ with residue field $\bF_q$ and let $\ell \neq \charac(\bF_q)$ be a prime. For simplicity (to avoid dealing with endoscopy phenomena, etc.) let us assume here that $G = \GL_n$. Then there is a correspondence, similar to the one explained above, which associates to any pair $(T, \chi)$ consisting of a maximal $F$-torus $T$ of $G$ and a smooth $\overline{\bQ}_{\ell}^{\times}$-character of $T(F)$ in general position, an irreducible representation $R_T^{\chi}$ of $G(F)$ over $\overline{\bQ}_{\ell}$. Moreover, $R_T^{\chi}$ is supercuspidal, whenever $T$ is anisotropic modulo the center of $G$. Such a correspondence is a special case of the more general principle of \emph{automorphic induction} for $G$, which is closely related to the local Langlands correspondence. 

Let $G$ again be arbitrary. Roughly, one can divide all geometrical objects attached to $G$, in the cohomology of which one has tried to realize the automorphic induction, into two types:

\begin{itemize}
\item[(i)] Varieties (or rigid or adic spaces) over $\Spec F$ equipped with integral models over $\Spec \caO_F$ and special fibers over $\bF_q$.
\item[(ii)] Reduced varieties over $\bF_q$.
\end{itemize}

Constructions of type (ii) are purely in characteristic $p$, i.e., over $\bF_q$, and only the reduced structure is relevant.
Up to now, constructions of type (ii) only existed for unramified tori of $G$ (except for a construction by Stasinski for $\GL_n$ and $\SL_n$, see below), which was a serious drawback. This article contributes to the automorphic induction over local fields by introducing a new construction of type (ii), which works for all tori and all reductive groups (in the equal characteristic case). For $G = \GL_2$ we prove that our construction indeed realizes the $\ell$-adic automorphic induction for all at most tamely ramified tori. Here a very intriguing phenomenon occurs: the constructed varieties attached to the tamely ramified torus of $\GL_2$ turn out to be zero-dimensional and reduced, more precisely, they are just discrete unions of $\bF_q$-rational points and the automorphic induction is realized in their zeroth cohomology groups $\coh_c^0(-, \overline{\bQ}_{\ell})$ with coefficients in the constant sheaf $\overline{\bQ}_{\ell}$.

We recall some of the existing unramified constructions of type (ii). A first such construction was suggested in 1977 by Lusztig \cite{Lu}. Its variants were studied by Boyarchenko, Boyarchenko-Weinstein and Chan in \cite{Bo}, \cite{BW}, \cite{Ch}. A different, but apparently related approach via higher level covers of Rapoport's affine Deligne-Lusztig varieties was studied by the author in \cite{Iv2} (in the equal characteristic case). The nature of all these constructions is strongly related to the classical Deligne-Lusztig construction explained in the beginning. In particular, if $\breve{F}$ denotes the completion of the unramified closure of $F$, $\sigma$ the Frobenius of $\breve{F}/F$, and $b \in G(\breve{F})$ is some element, then an affine Deligne-Lusztig variety attached to these data can be seen as the subvariety of the affine flag manifold of $G$, consisting of all Iwahori subgroups of $G(\breve{F})$ being in a fixed relative position with their $b\sigma$-translates.


\subsection*{Main construction} 
We will define the \emph{extended affine Deligne-Lusztig varieties} and torsors naturally attached to them in Section \ref{sec:general_section} below. Roughly, the construction goes as follows. Let $F = \bF_q((t))$ be a local field of characteristic $p$. Let $G$ be a connected reductive group over $F$. Let $\fT$ be a maximal $F$-torus of $G$. Let $\breve{E}/F$ be the completion of the maximal unramified extension of the splitting field $E$ of $\fT$ and let $\Sigma$ be a set of generators of $\Gal_{\breve{E}/F}$. Let $\underline{w}$ be a map from $\Sigma$ to the set of all possible relative positions of Iwahori subgroups of $G(\breve{E})$ and let $b \in G(\breve{E})$. Then the extended affine Deligne-Lusztig set attached to $\underline{w}$ and $b$ is the subset $X_{\underline{w}}(b)$ of the affine flag manifold $\cF$ of $G_{\breve{E}}$ consisting of all Iwahori subgroups, whose relative position to their $b\gamma$-translate is equal to $\underline{w}(\gamma)$ for all $\gamma \in \Sigma$.

Now we turn to torsors of $X_{\underline{w}}(b)$. By a level $f$ we essentially mean a congruence subgroup $I^f$ of some $\Gal_{\breve{E}/F}$-stable Iwahori subgroup $I$ of $G(\breve{E})$. Attached to such $f$, there is a natural cover $\cF^f \tar \cF$ of the affine flag manifold. Then to any lift $\underline{w}_f$ of $\underline{w}$ to a function into an appropriate space of relative positions of level $f$, we naturally attach a subset $X_{\underline{w}_f}^f(b)$ of $\cF^f$, which lies over $X_{\underline{w}}(b)$. In many cases, $X_{\underline{w}}(b)$ and $X_{\underline{w}_f}^f(b)$ can be given a scheme structure, turning them into reduced schemes locally of finite type over a finite extension of $\bF_q$. Moreover, we obtain two natural commuting group actions

\begin{equation*} 
J_b(F) \,\,\, \rotatebox[origin=c]{-90}{$\circlearrowright$} \,\,\, X_{\underline{w}_f}^f(b) \,\, \rotatebox[origin=c]{90}{$\circlearrowleft$} \,\, \tilde{I}_{f, \underline{w}_f} \tar \fT(F).
\end{equation*}

\noindent Here $J_b$ is the $\Sigma$-stabilizer of $b$, i.e., the algebraic group over $F$ defined by

\[ J_b(R) := \{ g \in G(R \otimes_F \breve{E}) \colon g^{-1} b \gamma(g) = b \,\, \forall \gamma \in \Sigma \} \]

\noindent for an $F$-algebra $R$, and $\tilde{I}_{f, \underline{w}_f}$ is a certain subgroup of $G(\breve{E})$, which depends on $\underline{w}_f$ (but not on $b$) and admits $\fT(F)$ as a natural quotient, if $\underline{w}_f$ is appropriate. Further, $X_{\underline{w}_f}^f(b)$ is in a natural way a torsor over $X_{\underline{w}}(b)$ under a certain subquotient of $\tilde{I}_{f, \underline{w}_f}$, which is an algebraic group of finite type over a finite extension of $\bF_q$.

The construction explained above generalizes the unramified construction from \cite{Iv2}, i.e., if one chooses $\fT$ to be an unramified maximal torus of $G$ (i.e., $\breve{E} = \breve{F}$) and $\Sigma$ to be the set  with one element containing only the Frobenius of $\breve{F}/F$, then the corresponding Iwahori-level variety $X_{\underline{w}}(b)$ will be just the affine Deligne-Lusztig variety of Rapoport, and the torsors $X_{\underline{w}_f}^f(b)$ over it will be precisely the torsors defined in \cite{Iv2}.

We wish to point out that in 2011 Stasinski made in \cite{St} the first attempt to define some varieties (of type (ii)) attached to ramified tori. He worked over finite rings $\bF_q[t]/(t^r)$ and was interested in the representation theory of the finite group $G(\bF_q[t]/(t^r))$. For $G = \GL_n, \SL_n$, he constructed extended Deligne-Lusztig varieties (hence our choice of terminology) attached to $G(\bF_q[t]/(t^r))$ and any maximal torus in $G = \GL_n, \SL_n$. This construction is technically involved, and, in particular, works a priori only for $G = \GL_n, \SL_n$. Also, there are issues about defining higher-level torsors. Nevertheless, the main ideas of his and our constructions seem to coincide: in both cases, one defines a variety by fixing the relative position with respect to many automorphisms of an extension of $F$, and not only with respect to the Frobenius. Moreover, the first example of a zero-dimensional variety (attached to $\SL_2(\bF_q[t]/(t^2))$), realizing interesting representations occurs in \cite{St}.


\subsection*{Affine Deligne-Lusztig induction} We can use the $\ell$-adic cohomology of $X_{\underline{w}_f}^f(b)$ to define the following map, which we call the \emph{affine Deligne-Lusztig induction}:

\begin{equation}\label{eq:adlv_induction_map} 
R = R_{f, \underline{w}_f, b} \colon \Rep_{\overline{\bQ}_{\ell}} (\tilde{I}_{f, \underline{w}_f}/I^f) \rar \Rep_{\overline{\bQ}_{\ell}} (J_b(F)) , \quad \chi \mapsto \sum_i (-1)^i \coh_c^i(X_{\underline{w}_f}^f(b), \overline{\bQ}_{\ell})[\chi] 
\end{equation}

\noindent between the categories of smooth $\overline{\bQ}_{\ell}$-representations. In particular, whenever $\underline{w}_f$ is such that $\fT(F)$ is a quotient of $I_{f,\underline{w}_f}$, characters of $\fT(F)$ (of level bounded by $f$) induce, after inflation to $\tilde{I}_{f, \underline{w}_f}$, representations of $J_b(F)$. We formulate the following conjecture here only for $\GL_n$ and $b = 1$, in which case $J_1(F) = G(F)$. For more general reductive groups $G$ we expect that a similar statement is true, but that some endoscopy phenomena occur.

\begin{conj}\label{conj:adlv_induction} 
Let $G = \GL_n$ and $b = 1$. The collection of maps \eqref{eq:adlv_induction_map} satisfies the following properties:
\begin{itemize}
\item[(A)] If $\fT$ is anisotropic modulo the center of $G$, and $\chi$ is a character of $\fT(F)$ in sufficiently general position, then there are $f,\underline{w}_f$, such that $\tilde{I}_{f, \underline{w}_f} \tar \fT(F)$ and $R_{f, \underline{w}_f, 1}(\chi)$ is an irreducible supercuspidal representation of $G(F)$. 
\item[(B)] The map $\chi \mapsto R_{f, \underline{w}_f, 1}(\chi)$ in (A) is injective up to Galois conjugation.
\item[(C)] The map $\chi \mapsto R_{f, \underline{w}_f, 1}(\chi)$ in (A) coincides with the realization of the automorphic induction constructed via cuspidal types by Bushnell, Kutzko and others (see \cite{BK}).
\end{itemize}
\end{conj}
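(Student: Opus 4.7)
The conjecture is open in full generality; I outline a realistic plan for the tamely ramified $\GL_2$ case asserted in the abstract, and indicate where the generalization is blocked. For $G = \GL_n$ with $b = 1$ one has $J_1(F) = G(F)$, and anisotropic-mod-center maximal tori $\fT$ correspond to degree-$n$ field extensions $E/F$ with $\fT(F) \cong E^{\times}$. The aim is to show that \eqref{eq:adlv_induction_map} attaches to a character $\chi$ of $E^{\times}$ in general position precisely the supercuspidal representation associated to the admissible pair $(E/F,\chi)$ by the Bushnell--Kutzko cuspidal-type construction. The very first step is to exhibit, for $\chi$ of level $r$, an appropriate level $f$ and lift $\underline{w}_f \colon \Sigma \to W_{\mathrm{aff}}^f$ so that $\tilde{I}_{f,\underline{w}_f} \twoheadrightarrow \fT(F)$ realizes $\chi$ after inflation. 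For the at most tamely ramified tori of $\GL_2$, $\Sigma$ consists of a Frobenius together with a generator of tame inertia, and one can pin down a short explicit list of candidates in the affine Weyl group.

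The geometric heart of the plan is to prove that for this $\underline{w}_f$ the variety $X_{\underline{w}_f}^f(1)$ is zero-dimensional and reduced. The key observation is that in the tamely ramified $\GL_2$ situation, imposing the relative-position conditions with respect to \emph{both} the Frobenius and a generator of tame inertia overdetermines the moduli problem and forces $X_{\underline{w}}(1)$ to be discrete; classical affine Deligne--Lusztig varieties, which only see Frobenius, are typically positive-dimensional for analogous data. I would make this rigidification explicit by translating the condition $g^{-1}\gamma(g) \in I\,\underline{w}(\gamma)\,I$ for each $\gamma \in \Sigma$ into matrix conditions on representatives $g \in \GL_2(\breve{E})$ and checking that the joint system cuts out a finite reduced subscheme. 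The torsor $X_{\underline{w}_f}^f(1) \to X_{\underline{w}}(1)$ under $\tilde{I}_{f,\underline{w}_f}/I^f$ is then automatically discrete, so, after base change to a finite extension of $\bF_q$, is just a disjoint union of rational points. In particular $\coh_c^i$ vanishes for $i \geq 1$, and $\coh_c^0$ is the $\overline{\bQ}_{\ell}$-function space on a finite set, carrying the two explicit commuting actions by $G(F)$ and $\tilde{I}_{f,\underline{w}_f}$.

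The final step is to identify the $\chi$-isotypic component of $\coh_c^0$ as a compact induction $\cIndd_{ZK}^{G(F)}(\tau_{\chi})$ from an open-compact-modulo-center subgroup $ZK$ of $G(F)$, and then to match $(ZK,\tau_{\chi})$ with a Bushnell--Kutzko cuspidal type attached to $(E/F,\chi)$. In the tame $\GL_2$ situation the candidates are well understood: $K$ is either $\GL_2(\mathfrak{o}_F)$ or the Iwahori normalizer, according as $E/F$ is unramified or ramified, and $\tau_{\chi}$ is the standard extension of the inflation of $\chi$. Assertions (A), (B), (C) then all reduce to the known intertwining, irreducibility and exhaustion properties of these types. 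The main obstacle is really the combinatorial Step 1: producing $\underline{w}_f$ with the right quotient $\tilde{I}_{f,\underline{w}_f} \twoheadrightarrow \fT(F)$ and the correct identification of the resulting subgroup with the Bushnell--Kutzko datum. Beyond the tame $\GL_2$ case this is the true bottleneck---for $\GL_n$ with $n \geq 3$ or for wildly ramified $\fT$, $X_{\underline{w}_f}^f(1)$ will in general be positive-dimensional, endoscopy introduces further summands, and the passage to Bushnell--Kutzko types will require intertwining estimates that are not visible at the level of the affine flag manifold alone.
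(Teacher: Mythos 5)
The statement you are addressing is a conjecture: the paper itself does not prove it in general, but only establishes the $\GL_2$, at most tamely ramified case (Theorem \ref{thm:hard_version_main_result}, together with \cite{Iv2} for the unramified torus) and lists heuristic evidence. Your proposal correctly recognizes this and your outline for the tame $\GL_2$ case parallels the paper's actual route: choose $\Sigma$, a level $f = m$ and a lift $\underline{w}_m$ so that $\tilde{I}_{m,\underline{w}_m} \tar \fT(F) = E^{\times}$, prove the varieties are discrete unions of rational points, and identify the $\chi$-isotypic part of $\coh_c^0$ with a compact induction from a compact-mod-center subgroup which is then matched with a Bushnell--Kutzko/Bushnell--Henniart cuspidal type. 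Two points of your outline, however, misstate or skip what actually carries the argument. First, zero-dimensionality is not a generic consequence of imposing conditions for both Frobenius and tame inertia; it holds only for the ramified torus, and the mechanism in the paper is the specific choice $\underline{w}(\sigma)=1$, which forces all points to be $\sigma$-rational (for the unramified torus of $\GL_2$ the relevant varieties in \cite{Iv2} are positive-dimensional, so your claim that the same rigidification works for ``at most tamely ramified tori'' is too strong).

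Second, and more seriously, the identification of $(E^{\times}U_{\fJ}, \Xi_{\chi})$ with the Bushnell--Kutzko datum is not a matter of quoting ``known intertwining, irreducibility and exhaustion properties'': the geometrically defined $\Xi_{\chi}$ is not visibly ``the standard extension of the inflation of $\chi$,'' and the paper has to compute it. Concretely, this requires (i) Boyarchenko's trace formula, which on the zero-dimensional variety reduces to rather involved point counts (Proposition \ref{lm:our_form_of_Boy_trace_formula} and Sections \ref{sec:computations_of_unipotent_traces}--\ref{sec:trace_on_induced_side}); (ii) the unipotent traces showing $\Xi_{\chi}$ contains a ramified simple stratum, whence irreducibility and cuspidality of $R_{\chi}$ (Propositions \ref{prop:unip_rep}, \ref{prop:cuspidality_general_stuff}); and (iii) the character theory of elementary modifications and ``properly quadratic distance'' culminating in Theorem \ref{thm:Xi_chi_restriction_to_Etimes} and Corollary \ref{cor:recipy_for_chi}, which shows that $\chi$ can be recovered, up to $\Gal_{E/F}$-conjugacy, from $\Xi_{\chi}|_{E^{\times}}$. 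Item (iii) is precisely what yields injectivity (B) and pins down $\Xi_{\chi} \cong \Theta_{\chi}$ for the correct $\chi$ in (C); your plan presupposes this identification rather than proving it, so as written it would not close the argument even in the $\GL_2$ tame case. Your closing remarks on the obstacles for $n \geq 3$ and wild ramification are consistent with the paper's own discussion.
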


\noindent Further, we expect the following two facts to be true:

\begin{itemize}
\item[(D)] If $\fT$ is unramified, then $X_{\underline{w}_f}^f(1)$ is isomorphic (up to a possible unessential defect) to the reduction of the open affinoid in the Lubin-Tate perfectoid space constructed by Boyarchenko and Weinstein in \cite{BW}.
\item[(E)] If $\fT$ is purely tamely ramified, then $\underline{w}_f$ can be chosen such that $X_{\underline{w}_f}^f(1)$ is a zero-dimensional reduced scheme (disjoint union of points).
\end{itemize}

The evidence for Conjecture \ref{conj:adlv_induction} and expectations (D) and (E) is build up mainly on the at most tamely ramified $\GL_2$-case (see Theorem \ref{thm:A} below), and the analogy with the classical Deligne-Lusztig induction. We discuss some further evidence below in this introduction.


\subsection*{Case $G = \GL_2$.} 
Let $\bP_2(F)$ be the set of all isomorphism classes of admissible pairs $(E/F,\chi)$ attached to at most tamely ramified quadratic extensions $E/F$. Note that if $\fT \subseteq G$ is a torus with $\fT(F) = E^{\times}$, then characters of $\fT(F)$ in general position up to Galois conjugation are in 1:1-correspondence with the subset of minimal pairs. Let $\cA_2^{\rm tame}(F)$ be the set of isomorphism classes of irreducible supercuspidal representations of $G(F)$, which are additionally assumed to be unramified if $\charac F = 2$. Then the tame parametrization theorem (\cite{BH} 20.2 Theorem) shows the existence of a certain well-behaved bijection
\begin{equation} \label{eq:tame_param_for_GL_2}
\bP_2(F) \stackrel{\sim}{\rar} \cA_2^{\rm tame}(F), \quad (E/F, \chi) \mapsto \pi_{\chi}. 
\end{equation}

\noindent The following theorem shows Conjecture \ref{conj:adlv_induction} and expectation (E) for $\GL_2$ for all at most tamely ramified tori in $\GL_2$.

\begin{thm}[rough statement; cf. Theorem \ref{thm:hard_version_main_result} and \cite{Iv2} Theorem 4.3] \label{thm:A}
Let $G = \GL_2$. Let $\fT$ be a non-split maximal torus of $G$. Let $\breve{E}/F$ be the completion of the unramified closure of the splitting field $E$ of $\fT$. Then there are choices of $\Sigma \subseteq \Gal_{\breve{E}/F}, f, \underline{w}_f$ such that the corresponding maps $R = R_{f,\underline{w}_f,1}$ from \eqref{eq:adlv_induction_map} realize the automorphic induction from $\fT$ to $G$. Moreover, they induce a bijection 
\begin{equation} \label{eq:R_for_GL_2}
\bP_2(F) \stackrel{\sim}{\rar} \cA_2^{\rm tame}(F), \quad (E/F, \chi) \mapsto R_{\chi}, 
\end{equation}

\noindent and one has $R_{\chi} \cong \pi_{\chi}$, i.e., the maps \eqref{eq:R_for_GL_2} and \eqref{eq:tame_param_for_GL_2} coincide. In the case of the ramified torus, $\underline{w}_f$ can be chosen such that the varieties $X_{\underline{w}_f}^f(1)$ are disjoint unions of $\bF_q$-points.
\end{thm}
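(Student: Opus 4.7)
\medskip

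\noindent \textbf{Proof proposal.} The unramified case, in which $\fT$ corresponds to the unramified quadratic extension of $F$, is already established in \cite{Iv2} Theorem 4.3; the essential new content of Theorem \ref{thm:A} is the case of the (tamely) ramified torus, so I focus on it. Let $E = F(\pi_E)$ with $\pi_E^2 \in \pi_F \cdot \caO_F^{\times}$, let $\tau$ be the non-trivial element of $\Gal(E/F)$, and let $\sigma$ be a Frobenius lift in $\Gal_{\breve{E}/E}$. I would take $\Sigma = \{\sigma, \tau\}$, fix a $\Sigma$-stable Iwahori $I \subset \GL_2(\breve{E})$ (the stabilizer of the standard mod-$\pi_E$ lattice chain), and embed $\fT(F) = E^{\times}$ into $\GL_2(F)$ via the $F$-basis $\{1,\pi_E\}$ of $E$. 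The level $f$ is then taken to be a congruence subgroup $I^f$ adapted to the conductor of the character $\chi$, and $\underline{w}_f$ is chosen so that $\underline{w}(\sigma)$ controls the unit part of $E^{\times}/\caO_E^{(f)}$ while $\underline{w}(\tau)$ captures the contribution of the uniformizer $\pi_E$. An explicit computation then shows that $\tilde{I}_{f, \underline{w}_f}$ admits $\fT(F)$ as a quotient.

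\smallskip

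Having fixed these data, the next step is to compute $X_{\underline{w}_f}^f(1)$ directly. A point is represented by $g \in \GL_2(\breve{E})$ satisfying
\[
g^{-1}\sigma(g) \in I w(\sigma) I, \qquad g^{-1}\tau(g) \in I w(\tau) I,
\]
together with the corresponding level-$f$ refinement. Writing $g$ in a suitable Iwahori decomposition and expanding both constraints produces, in each connected component of $\cF$, a system of polynomial equations on the coefficients of $g$. The crucial qualitative feature, distinguishing the present construction from the classical affine Deligne-Lusztig situation, is that $\tau$ has order two rather than infinite order, so the $\tau$-equation behaves like a rigidification rather than like a Frobenius twist; combined with the $\sigma$-equation it cuts out a zero-dimensional reduced scheme defined over a finite extension of $\bF_q$. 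By descent, the set of closed points can be shown to be $\bF_q$-rational, proving the final assertion of the theorem.

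\smallskip

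It remains to identify the representation $R_{f,\underline{w}_f,1}(\chi) = \coh_c^0(X_{\underline{w}_f}^f(1), \overline{\bQ}_{\ell})[\chi]$ of $G(F)$ with $\pi_{\chi}$. Because the variety is a discrete union of $\bF_q$-points, $\coh_c^0$ carries a concrete permutation action of $G(F) \times \tilde{I}_{f,\underline{w}_f}$, so the character of $R(\chi)$ is computable by a fixed-point count on the point set. The plan is: (a) use this fixed-point description to show that $R(\chi)$ contains an induction from a compact-mod-center subgroup commensurable with $E^{\times} \cdot I^f$ of a character whose restriction to $\fT(F)$ is $\chi$; (b) verify that this induced representation is a cuspidal type in the sense of Bushnell-Kutzko for the inertial class of $\pi_{\chi}$; and (c) conclude $R(\chi) \cong \pi_{\chi}$ via the tame parametrization \eqref{eq:tame_param_for_GL_2}. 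The bijection in the theorem then follows by counting admissible pairs against tame supercuspidals.

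\smallskip

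The main obstacles I anticipate are the following. The first is the \emph{choice} of $\underline{w}_f$: one must match the level of the Iwahori filtration to the depth of $\chi$ and simultaneously arrange both that $\fT(F)$ is a quotient of $\tilde{I}_{f, \underline{w}_f}$ and that the $\tau$-equation cuts the dimension down to zero. The second, and more serious, obstacle is step (b) above — recognising the permutation representation arising from the fixed-point count as a Bushnell-Kutzko cuspidal type. This requires explicitly identifying the stabilizer of a point, computing the restriction of $\chi$ to it, and matching this data with the Howe factorization of $\pi_{\chi}$; the residue characteristic $2$ case, excluded in $\cA_2^{\rm tame}(F)$, would introduce further complications and is the reason for that hypothesis.
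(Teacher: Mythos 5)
Your overall strategy is the one the paper follows: take $\Sigma=\{\sigma,\tau\}$, compute the variety explicitly, obtain $R_\chi$ as a compact induction from $E^{\times}U_{\fJ}$ of the cohomology of a finite piece, and compare with the Bushnell--Henniart cuspidal types via the tame parametrization. Two points in your sketch, however, are genuinely off or left open.

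First, the mechanism for zero-dimensionality. It is not the finite order of $\tau$ that ``rigidifies'': with $\underline{w}(\tau)=w$ of length $2\lengthofv-1$ the $\tau$-condition alone carves out positive-dimensional pieces of Schubert cells. What collapses everything to points is the choice $\underline{w}(\sigma)=1$, i.e.\ trivial relative position with respect to the Frobenius lift, which forces every point to be $\bF_q$-rational (Lemma \ref{lm:ADLV_ram_GL_2_key_computation}(ii), Proposition \ref{prop:structure_GL_2_ADLV_Iwahori_level}). Moreover the numerics must be matched exactly: one needs $m=\ell(\chi)=2\lengthofv-1=\ell(w)$ (Notation \ref{notat:higher_level_w}, Theorem \ref{thm:structure_thm_GL_2_ram}); if instead $\lengthofv\geq m+1$ one gets $R_{\chi,\lengthofv}\cong\cIndd_{E^{\times}U_{\fJ}^{\lengthofv}}^{G(F)}\tilde\chi$ (Proposition \ref{thm:small_level_case}), which is not irreducible, so ``adapting the level to the conductor of $\chi$'' must mean this precise equality, not merely a bound.

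Second, and more seriously, your step (a)/(b) presumes that the representation produced by the fixed-point count is induced from a \emph{character} of a subgroup commensurable with $E^{\times}I^f$ restricting to $\chi$ on $\fT(F)$, and that one can then read off the matching with the type by restriction to $E^{\times}$. In fact the finite piece $\Xi_\chi=\coh_c^0(\tilde Y^m_{\dot w},\overline{\bQ}_{\ell})[\chi]$ is a $(q-1)q^{\lengthofv-1}$-dimensional representation of $E^{\times}U_{\fJ}$ (Lemma \ref{lm:UJm1_acts_trivial}), and its restriction to $E^{\times}$ contains, besides $\chi$ and $\chi^{\tau}$, a whole family of characters at ``properly quadratic distance'' from $\chi$, each with multiplicity one (Theorem \ref{thm:Xi_chi_restriction_to_Etimes}); so no $E^{\times}$-eigencharacter argument identifies the type directly. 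The paper's route is: unipotent trace computations show $\Xi_\chi$ is irreducible and contains a ramified simple stratum, so $(E^{\times}U_{\fJ},\Xi_\chi)$ is a cuspidal inducing datum $\Theta_{\chi'}$ for \emph{some} admissible pair (Proposition \ref{prop:cuspidality_general_stuff}, \cite{BH} 15.8); then the elementary-modification character theory of Section \ref{sec:some_character_theory}, fed by Boyarchenko-type point counts on the discrete variety, yields a recipe reconstructing $\chi$ up to $\tau$-conjugacy from $\Xi_\chi|_{E^{\times}}$ (Corollary \ref{cor:recipy_for_chi}), whence $\chi'\in\{\chi,\chi^{\tau}\}$ and $R_\chi\cong\pi_\chi$ (Proposition \ref{prop:Rchi_isom_pichi}). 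This reconstruction is exactly the content your plan flags as ``the serious obstacle'' but does not supply; note also that without it the bijectivity of \eqref{eq:R_for_GL_2} does not follow from counting admissible pairs against tame supercuspidals, since injectivity of $\chi\mapsto R_\chi$ is precisely what the restriction analysis provides.
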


If $\fT$ is tamely ramified with splitting field $E$, it suffices to work with $E$ instead of $\breve{E}$ to obtain the same results (see Remark \ref{rem:forget_unram_and_comps}(i)). A similar statement is not true if $\fT$ is unramified.


\subsection*{Ramified torus in $\GL_2$} Besides the construction explained above, the main result of this article is Theorem \ref{thm:hard_version_main_result} (the tamely ramified case of Theorem \ref{thm:A}; the unramified case was proven in \cite{Iv2}). Its proof is analogous to the proof in the unramified case \cite{Iv2} Theorem 4.3. Roughly speaking, there are three instruments used in the proof:

\begin{itemize}
\item[(1)] a trace formula, which in the present case reduces, due to zero-dimensionality, to (quite involved) point-counting arguments. 
\item[(2)] the theory of elementary modifications of characters of $E^{\times}$, developed in Section \ref{sec:some_character_theory}. It allows to prove our second main result, Theorem \ref{thm:Xi_chi_restriction_to_Etimes}. It gives a precise description of the restriction of a certain cuspidal type (from which $R_{\chi}$ is induced) to an $E^{\times}$-representation. From this we deduce the injectivity of \eqref{eq:R_for_GL_2}
\item[(3)] the theory of cuspidal types of Bushnell, Kutzko, Henniart and others. We need it to compare $R_{\chi}$ with $\pi_{\chi}$ and, in particular, to show surjectivity of \eqref{eq:R_for_GL_2}.
\end{itemize}


\subsection*{Some evidence for Conjecture \ref{conj:adlv_induction}} \label{sec:evidence_for_conj} 
Let now $G = \GL_n$ and let $\fT$ be a maximal $F$-torus of $G$. Here are some heuristic reasons, justifying Conjecture \ref{conj:adlv_induction} and the expectations (D) and (E):
\begin{itemize}
\item[$\bullet$] If $G = \GL_2$ and $\fT$ is at most tamely ramified, (A),(B),(C),(E) hold (see Theorem \ref{thm:hard_version_main_result} below and \cite{Iv2} Theorem 4.3) 
\item[$\bullet$] The construction is completely analogous to the classical Deligne-Lusztig induction.
\item[$\bullet$] (D) becomes evident for $G = \GL_2$, $b$ superbasic by looking at the explicit defining equations (see \cite{Iv2} Section 3.6).
\item[$\bullet$] Assume $\fT$ is unramified, and let $\Sigma := \{\sigma\}$ consist only of the Frobenius, i.e., the corresponding varieties are torsors of level $f$ over the usual affine Deligne-Lusztig varieties $X_w(1)$, where $w$ is an element of the extended affine Weyl group $\tilde{W}$ of $G$. By \cite{GH} Proposition 2.2.1, if $w$ is contained in a finite Weyl subgroup of $\tilde{W}$, then essentially, $X_w(1) \cong \coprod_{g \in G(F)/G(\caO_F)} g X_w$, where the union is taken over translates of classical Deligne-Lusztig varieties $X_w$. This implies a similar decomposition for level-$f$-covers $X_{\underline{w}_f}^f(1) = \coprod_{g \in G(F)/G(\caO_F)} g Y^f_{\underline{w}_f}$ with $Y^f_{\underline{w}_f}$ of finite type. If $Z$ denotes the center of $G(F)$, we deduce
\[ \coh_c^{\ast}(X_{\underline{w}_f}^f(1), \overline{\bQ}_{\ell})[\chi] = \cIndd\nolimits_{ZG(\caO_F)}^{G(F)} \coh_c^{\ast}(Y^f_{\underline{w}_f}, \overline{\bQ}_{\ell})[\chi]. \]

\noindent On the other side, it follows from the theory of cuspidal types (see \cite{BK}) that any supercuspidal representation is compactly induction from a cuspidal type, thus we have, in particular, a natural family of supercuspidal representations of $G(F)$, which are all of the form $\cIndd_{ZG(\caO_F)}^{G(F)} \Xi$, where $\Xi$ is some cuspidal inducing datum. Thus if the torus is unramified, the conjecture should boil down to statements about smooth representations of the group $ZG(\caO_F)$, which is compact modulo center. For $n = 2$, this holds also for the tamely ramified torus and is part of our strategy of the proof of Theorem \ref{thm:A}.

\item[$\bullet$] Concerning expectation (E), we remark that if some lift $\sigma^{\prime}$ of the Frobenius $\sigma \in \Gal_{\breve{F}/F}$ lies in $\Sigma$ and $\underline{w}(\sigma^{\prime}) = 1$, then the extended affine Deligne-Luszstig variety $X_{\underline{w}}(1)$ of Iwahori level is a disjoint union of points (for any connected reductive $G$). 
\end{itemize}


\subsection*{Outline} In Section \ref{sec:general_section} we define the extended affine Deligne-Lusztig varieties and their covers for arbitrary connected reductive groups. In Section \ref{sec:GL_2_expl_computations} we compute certain of those varieties for $G = \GL_2$ explicitly. Based on these computations, in Section \ref{sec:repth1}, we state and prove our main results about tamely ramified automorphic induction for $\GL_2$. The proofs of all results from Section \ref{sec:repth1}, which contain any trace computations, are postponed to Section \ref{sec:applications_of_the_trace_formula}. 

\subsection*{Acknowledgements} The author is very grateful to Christian Liedtke, Stephan Neupert, Peter Scholze and Eva Viehmann for helpful discussions concerning this work. He is especially grateful to Eva Viehmann for valuable comments concerning a preliminary version of this manuscript. The author was partially supported by ERC starting grant 277889 ''Moduli spaces of local $G$-shtukas''.


\section{Extended affine Deligne-Lusztig varieties of higher level} \label{sec:general_section}

\subsection{Preliminaries}\label{sec:prelims_in_gen_def}

\subsubsection{Basic notation} \label{sec:field_extensions} Let $k$ be the finite field with $q$ elements and let $\bar{k}$ be an algebraic closure of $k$. Let $F = k((t))$. Let $\breve{F} = \bar{k}((t))$ be the completion of the maximal unramified extension of $F$. Let $E/F$ be a finite separable extension of $F$, such that $\breve{E} := E\breve{F}$ is the completion of a Galois extension of $F$. We denote by $u$ a uniformizer of $E$ and set $k_E = \bar{k} \cap E$ (inside some fixed algebraic closure of $F$). We have the identifications $E = k_E((u))$ and $\breve{E} = \bar{k}((u))$. For a Galois extension $M/L$ we denote by $\Gal_{M/L}$ its Galois group.


\subsubsection{Group theoretic data}  Let $G$ be a connected reductive group over $F$. Let $S_0$ be a maximal split torus in $G_{\breve{F}}$. By \cite{BT2} 5.1.12, $S_0$ can be chosen to be defined over $F$. Let $T_0 := \cZ_{G_{\breve{F}}}(S_0)$ be the centralizer of $S_0$. By Steinberg's theorem, $G_{\breve{F}}$ is quasi-split, hence $T_0$ is a maximal torus. Then the base change $T := T_0 \otimes_{\breve{F}} \breve{E}$ is a maximal torus of $G_{\breve{E}}$. Let $S^{\prime}$ be a maximal $\breve{E}$-split subtorus of $T$, containing $S := S_0 \otimes_{\breve{F}} \breve{E}$. 
We consider the root system $\Phi := \Phi(G_{\breve{E}},S^{\prime})$. For $a \in \Phi$, we write $U_a$ for the corresponding root subgroup of $G_{\breve{E}}$. Moreover, we write $U_0 := T$.  


\subsubsection{Bruhat-Tits buildings} \label{sec:BT_general_stuff} For any finite extension $L$ of $F$ or $\breve{F}$, let $\cB_L$ denote the Bruhat-Tits building of $G$ over $L$. It always exists by \cite{BT},\cite{BT2}\S4,\cite{Ro} Chap. 5 and \cite{MSV}. 
If $L \subseteq M$ are two such extensions such that $M/L$ is Galois, then $\Gal_{M/L}$ acts on $\cB_M$. Moreover, there is a unique embedding of $\cB_L$ into $\cB_M$ in the sense of \cite{Ro} Definition 2.5.1. Indeed, the centralizer $T$ of $S_0$ is a maximal torus, hence abelian, and its derived group is trivial. This allows to apply \cite{Ro} Theorem 2.5.6, to show that there is a unique such embedding. Note that if $M/L$ is ramified, then $\cB_L$, $\cB_M^{\Gal_{M/L}}$ are not equal as simplicial complexes. However, if $M/L$ is Galois tamely ramified, then $\cB_L = \cB_M^{\Gal_{M/L}}$ as subsets, as follows from \cite{Ro} 5.1.1 (see also \cite{Pr}).



\subsubsection{Apartments and Galois-stable alcoves} Let $\cA_{S^{\prime}}$ be the apartment of $\cB_{\breve{E}}$ corresponding to $S^{\prime}$. Via the embedding $\cB_{\breve{F}} \har \cB_{\breve{E}}$ it contains the apartment $\cA_{S_0}$ of $\cB_{\breve{F}}$ corresponding to $S_0$. The restriction of any root $a$ in $\Phi$ to $S$ is non-trivial (indeed, otherwise $U_a$ would lie in the centralizer $\cZ_{G_{\breve{E}}}(S)$ of $S$, but taking the centralizer commutes with base change, hence $\cZ_{G_{\breve{E}}}(S) = \cZ_{G_{\breve{F}}}(S_0) \otimes_F \breve{F} = T$. This leads to a contradiction). This means that $\cA_{S_0}$ is not contained in a wall of $\cA_{S^{\prime}}$. Take some alcove $\underline{a}$ of $\cA_{S^{\prime}}$ which contains a point $x_0$ of $\cA_{S_0}$ in its interior. Then $\Gal_{\breve{E}/\breve{F}}$ fixes $x_0$. As $S_0$ is defined over $F$, $\Gal_{\breve{F}/F}$ acts on $\cA_{S_0}$ and we \emph{assume} that $x_0$ is $\Gal_{\breve{F}/F}$-stable. Then $\underline{a}$ is also $\Gal_{\breve{E}/F}$-stable. Let $I$ denote the associated $\Gal_{\breve{E}/F}$-stable Iwahori subgroup of $G(\breve{E})$.

\subsubsection{Filtrations on root subgroups}  Let $x$ be a vertex of $\underline{a}$. Let $\tilde{\bR} := \bR \cup \{ r+ \colon r \in \bR \} \cup \{ \infty \}$ be the ordered monoid as in \cite{BT} 6.4.1. Then for all $a \in \Phi$, $x$ defines a filtration of $U_a(\breve{E})$ by subgroups $U_a(\breve{E})_{x,r}$ for $r \in \tilde{\bR}$.

\subsubsection{Filtration on the torus} We choose an admissible schematic filtration on tori in the sense of Yu \cite{Yu} \S4. This gives a filtration $U_0(\breve{E})_r = T(\breve{E})_r$ on $T(\breve{E})$. If $G$ satisfies the condition (T) from \cite{Yu} 4.7.1 (in particular, if $G$ is either simply connected or adjoint or split over a tamely ramified extension), then this filtration is independent of the choice of the admissible schematic filtration and coincides with the Moy-Prasad filtration \cite{Yu} Lemma 4.7.4. 

\subsubsection{Smooth models of root subgroups} \label{sec:smooth_models_of_roots_sgrs} Let $f \colon \Phi \cup \{0\} \rar \tilde{\bR}_{\geq 0} \sm \{ \infty \}$ be a concave function, i.e., $f(\sum_i a_i) \geq \sum_i f(a_i)$, for all $a_i \in \Phi \cup \{0\}$, such that $\sum_i a_i \in \Phi \cup \{0\}$ (see \cite{BT} 6.4). Following \cite{Yu}, let $G(\breve{E})_{x,f}$ be the subgroup of $G(\breve{E})$ generated by $U_a(\breve{E})_{x,f(a)}$ for $a \in \Phi \cup \{0\}$. We refer to $f$ as a \emph{level} and to $G(\breve{E})_{x,f}$ as the corresponding \emph{level subgroup}. By \cite{Yu} Theorem 8.3, there is a unique smooth model $\underline{G}_{x,f}$ of $G_{\breve{E}}$ over $\caO_{\breve{E}}$ such that $\underline{G}_{x,f}(\caO_{\breve{E}}) = G(\breve{E})_{x,f}$. Moreover, if $G(\breve{E})_{x,f}$ is $\Gal_{\breve{E}/E}$-stable, then $\underline{G}_{x,f}$ descends to a group scheme over $\caO_{E}$ (see \cite{Yu} Section 9.1).



For a concave function $f$, we write $I^f := G(\breve{E})_{x,f}$. Let $f_I$ denote the concave function such that $G(\breve{E})_{x,f} = I$. If $f \geq f_I$, then $I^f \subseteq I$.

\subsubsection{Loop groups and covers of the affine flag variety} \label{sec:loops_and_covers_of_aff_flag_var} Let $LG$ denote the functor on the category of $k_E$-algebras, 

\[ LG \colon R \mapsto G(R((u))). \]

\noindent Let $f \geq f_I$ be some level, such that $I^f$ is $\Gal_{\breve{E}/E}$-stable. Let $L^+\underline{G}_{x,f}$  be the functor on the category of $k_E$-algebras,

\[ L^+\underline{G}_{x,f} \colon R \mapsto \underline{G}_{x,f}(R\llbracket u \rrbracket). \]

\noindent Then by \cite{PR} Theorem 1.4 the quotient of fpqc-sheaves

\[ \cF^f := LG/L^+\underline{G}_{x,f} \]

\noindent is represented by an Ind-$k_E$-scheme of Ind-finite type over $k_E$ and its $\bar{k}$-points are $\cF^f(\bar{k}) = G(\breve{E})/G(\breve{E})_{x,f}$. Moreover, if $g \geq f$ are two concave functions satisfying the above assumptions, then there is a natural projection $\cF^g \tar \cF^f$. We write $\cF$ instead of $\cF^{f_I}$. This is just the affine flag manifold of $G$.

\subsubsection{Actions on $\cF^f$}\label{actions_on_Ff_in_general}
Let $f \geq f_I$ be some level. By construction, $LG$ acts on $\cF^f$ by left multiplication. In particular, $G(\breve{E}) = LG(\bar{k})$ acts on the $\bar{k}$-valued points $\cF^f(\bar{k}) = G(\breve{E})/G(\breve{E})_{x,f}$. Assume now that $I^f$ is normal in $I$ and let $Z$ be the center of $G$. Then $Z(\breve{E})I$ acts by right multiplication on $\cF^f(\bar{k})$.

\subsubsection{Extended affine Weyl group and Iwahori-Bruhat decomposition} \label{sec:ext_aff_Weyl_group_IBdec} Let $\cN_T$ be the normalizer of $T$ in $G$. Let $\tilde{W}$ be the extended affine Weyl group of $G_{\breve{E}}$ associated with $S^{\prime}$. Let $W := \cN_T(\breve{E})/T(\breve{E})$ be the finite Weyl group. Then $\tilde{W}$ sits in the short exact sequence 

\[0 \rar X_{\ast}(T_{\breve{E}})_{\Gal_{\breve{E}}} \rar \tilde{W} \rar W \rar 0 \]

\noindent (here $\Gal_{\breve{E}}$ denotes the absolute Galois group of $\breve{E}$). The Iwahori-Bruhat decomposition states that 

\[ G(\breve{E}) = \coprod_{w \in \tilde{W}} I\dot{w}I, \]

\noindent where $\dot{w}$ is any lift of $w$ to $\cN_T(\breve{E})$. 

\subsubsection{Double coset decomposition at level $f$}\label{sec:double_cosets_dec_in_level_f} Let $f \geq f_I$ be some fixed level. Consider the set of double cosets

\[ D_{G_{\breve{E}},f} := I^f \backslash G(\breve{E}) / I^f. \]

\noindent If $g \geq f$, then there is a natural projection $D_{G_{\breve{E}},g} \tar D_{G_{\breve{E}},f}$. In particular, we have the natural projection $D_{G_{\breve{E}},f} \tar D_{G_{\breve{E}},f_I} =  I \backslash G(\breve{E}) / I \cong \tilde{W}$. For many $w$, $f$ the fiber $D_{G,f}(w)$ of this projection can be given the structure of a finite-dimensional affine variety over $\bar{k}$. This will be discussed in detail in a future work. Below we only need the case $G = \GL_2$, where an explicit parametrization can be given (see Section \ref{sec:param_double_cosets_GL_2}).

\subsubsection{Relative position} Let $f$ be as in Section \ref{sec:double_cosets_dec_in_level_f}. We define the map

\[ \inv^f \colon \cF^f(\bar{k}) \times \cF^f(\bar{k}) \rar D_{G_{\breve{E}},f} \]

\noindent on $\bar{k}$-points by $\inv^f(xI^f, yI^f) = w_f$, where $w_f$ is the unique double $I^f$-coset containing $x^{-1}y$.


\subsection{Extended affine Deligne-Lusztig varieties of higher level}\label{sec:ext_adlv_higher_level} \mbox{}

\subsubsection{Main definition}

\begin{Def}\label{Def:general_ADLV_def}
Let $G$ be a connected reductive group over $F$. Fix the following data:
\begin{itemize}
\item a finite separable extension $E/F$, such that $\breve{E} = E\breve{F}$ is the completion of a Galois extension of $F$, and such that $G(\breve{E})$ contains a $\Gal_{\breve{E}/F}$-stable Iwahori subgroup $I$
\item a subset $\Sigma \subseteq \Gal_{\breve{E}/F}$ generating $\Gal_{\breve{E}/F}$
\item a concave function $f \colon \Phi \cup \{ 0 \} \rar \tilde{\bR}_{\geq 0} \sm \{ \infty \}$, such that $I^f$ is $\Gal_{\breve{E}/F}$-stable
\item a function $\underline{w}_f \colon \Sigma \rar D_{G_{\breve{E}}, f}$ 
\item an element $b \in G(\breve{E})$.
\end{itemize}

\noindent We define the \emph{extended affine Deligne-Lusztig set} $X_{\underline{w}_f}^f(b)$ attached to $(G, E/F, f, \Sigma, \underline{w}_f, b)$ as the subset

\[ X_{\underline{w}_f}^{\Sigma,f}(b) := \{x \in \cF^f(\bar{k}) \colon \inv^f(x, b\gamma(x)) = \underline{w}_f(\gamma) \,\, \forall \gamma \in \Sigma   \} \subseteq \cF^f(\bar{k}). \]
\end{Def}


\subsubsection{Left action by the $\Gal_{\breve{E}/F}$-stabilizer of $b$} For $b \in G(\breve{E})$, let $J_b$ be the $\Sigma$-stabilizer of $b$, i.e., the algebraic group over $F$ defined by 

\[ J_b(R) := \{ g \in G(R \otimes_F \breve{E}) \colon g^{-1} b \gamma(g) = b \,\, \forall \gamma \in \Sigma \} \]

\noindent for any $F$-algebra $R$. Then $J_b(F)$ acts on $X_{\underline{w}_f}^f(b)$ by left multiplication for any $f$ and any $\underline{w}_f$. If $g \geq f$ and $\underline{w}_g$ lies over $\underline{w}_f$, then $X_{\underline{w}_g}^g(b)$ lies over $X_{\underline{w}_f}^f(b)$ and the $J_b(F)$-actions are compatible.


\subsubsection{Right action on $X_{\underline{w}_f}^f(b)$ by the stabilizer of $\underline{w}_f$} \label{sec:right_action_in_general} 
Let $w \in \tilde{W}$. If $I^f$ is normal in $I$, then $Z(\breve{E})I/I^f$ acts on $D_{G_{\breve{E}}, f}$ by left and right multiplication and we obtain a (right) action of $Z(\breve{E})I/I^f$ on the set of maps $\Sigma \rar D_{G_{\breve{E}}, f}$ by $(\underline{w}_f,i)\mapsto \underline{w}_f.i$, where $(\underline{w}_f.i)(\gamma) := i^{-1} \underline{w}_f(\gamma) \gamma(i)$ for any $i \in Z(\breve{E})I/I^f$, $\gamma \in \Sigma$. This inflates to an action of $Z(\breve{E})I$ on the same set. 

\begin{lm} \label{lm:right_acting_I_general_G} Let $(G, E/F, \Sigma, f, \underline{w}_f, b)$ be as in Definition \ref{Def:general_ADLV_def}. Assume that $f \geq f_I$ and that $I^f$ is normal in $I$.  For $i \in Z(\breve{E})I$, the map $xI^f \mapsto xiI^f$ defines an isomorphism $X_{\underline{w}_f}^f(b) \stackrel{\sim}{\rar} X_{\underline{w}_f.i}^f(b)$. 
\end{lm}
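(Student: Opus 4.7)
The plan is purely a direct verification at the level of $\bar{k}$-points, with the real content being that conjugation by $i$ and the Galois translates $\gamma(i)$ preserve $I^f$. I break it into three small steps.

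First I would check that the assignment $xI^f \mapsto xiI^f$ is a well-defined map from $\cF^f(\bar{k})$ to itself. If $x' = xj$ with $j \in I^f$, then $x'i = xi \cdot (i^{-1}ji)$, so I need $i^{-1}I^f i = I^f$. Writing $i = zi_0$ with $z \in Z(\breve{E})$ and $i_0 \in I$, the central factor $z$ conjugates trivially and $i_0$ normalizes $I^f$ by the assumption that $I^f$ is normal in $I$. The inverse map $xI^f \mapsto xi^{-1}I^f$ is defined by the same argument, so we get a bijection of $\cF^f(\bar{k})$.

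Second, I would translate the defining condition of the extended affine Deligne-Lusztig set. Setting $y := x^{-1}b\gamma(x)$, the point $x$ lies in $X_{\underline{w}_f}^f(b)$ iff $y \in I^f \dot{\underline{w}}_f(\gamma) I^f$ for all $\gamma \in \Sigma$, while $xi$ lies in $X_{\underline{w}_f.i}^f(b)$ iff
\[
(xi)^{-1} b \gamma(xi) \;=\; i^{-1} y\, \gamma(i) \;\in\; I^f \bigl(i^{-1}\dot{\underline{w}}_f(\gamma)\gamma(i)\bigr) I^f
\]
for all $\gamma \in \Sigma$. So the claim reduces to the equality
\[
I^f \bigl(i^{-1}\dot{\underline{w}}_f(\gamma)\gamma(i)\bigr) I^f \;=\; i^{-1}\bigl(I^f \dot{\underline{w}}_f(\gamma) I^f\bigr)\gamma(i).
\]

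Third, this equality follows if both $i$ and $\gamma(i)$ normalize $I^f$. For $i$ this is the same point as in Step 1. For $\gamma(i)$: since $I$ is $\Gal_{\breve{E}/F}$-stable and $Z(\breve{E})$ is Galois-stable as well, one has $\gamma(i) \in Z(\breve{E}) I$, and then the same argument (centrality of $Z(\breve{E})$ together with normality of $I^f$ in $I$) shows $\gamma(i)^{-1}I^f\gamma(i) = I^f$. Substituting this into the displayed identity gives exactly what is needed, and bijectivity from Step 1 then upgrades this to the desired isomorphism of sets $X_{\underline{w}_f}^f(b) \stackrel{\sim}{\to} X_{\underline{w}_f.i}^f(b)$.

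The only point requiring any care is the normalization step for $\gamma(i)$, and this is handled by the $\Gal_{\breve{E}/F}$-stability of both $I$ and $I^f$, which are among the standing hypotheses of Definition \ref{Def:general_ADLV_def}. No scheme-theoretic issues arise because the statement is about the underlying sets of $\bar{k}$-points.
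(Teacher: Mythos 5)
Your proof is correct and is exactly the routine direct verification that the paper dismisses with ``The proof is an easy computation'': well-definedness via $Z(\breve{E})I$ normalizing $I^f$, and the double-coset identity using that both $i$ and $\gamma(i)$ (the latter again in $Z(\breve{E})I$ by Galois-stability) normalize $I^f$. Nothing is missing.
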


\begin{proof}
The proof is an easy computation. 
\end{proof}

Thus if $I^f$ is normal in $I$, the group

\[ \tilde{I}_{f,\underline{w}_f} := \{ i \in Z(\breve{E})I \colon \underline{w}_f.i = \underline{w}_f \} \]

\noindent acts on $X_{\underline{w}_f}^f(b)$ by right multiplication. We have the subgroup $I_{f,\underline{w}_f} := \tilde{I}_{f,\underline{w}_f} \cap I$. It is clear that $I_{f,\underline{w}_f} \supseteq I^f$ and that the right $\tilde{I}_{f,\underline{w}_f}$-action on $X_{\underline{w}_f}^f(b)$ factors through an action of $\tilde{I}_{f,\underline{w}_f}/I^f$. 


\begin{lm}
$X_{\underline{w}_f}^f(b)$ is a $I_{f,\underline{w}_f}$-torsor over the underlying Iwahori-level set (resp. variety, if a variety structure is provided) $X_{\underline{w}}(b)$.
\end{lm}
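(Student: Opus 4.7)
The plan is to interpret the projection $\cF^f \to \cF$ restricted to $X_{\underline{w}_f}^f(b)$ as the desired torsor map and to read off the torsor properties from Lemma \ref{lm:right_acting_I_general_G} together with the principal bundle structure of $\cF^f \to \cF$.

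First I would fix the precise map. The canonical projection $\cF^f \to \cF$ (arising from $L^+\underline{G}_{x,f} \subseteq L^+\underline{G}_{x,f_I}$) is, on $\bar k$-points, the quotient by the free right action of $I/I^f$ described in \ref{actions_on_Ff_in_general}. If $y \in \cF^f(\bar k)$ has image $y^{\flat} \in \cF(\bar k)$, then $\inv(y_1^{\flat}, y_2^{\flat}) \in \tilde W$ is the image of $\inv^f(y_1, y_2) \in D_{G_{\breve E}, f}$ under the natural projection $D_{G_{\breve E}, f} \to \tilde W$ from \ref{sec:double_cosets_dec_in_level_f}. Taking $\underline{w} \colon \Sigma \to \tilde W$ to be the image of $\underline{w}_f$ under this projection, it follows that $\cF^f \to \cF$ restricts to a well-defined map $\pi \colon X_{\underline{w}_f}^f(b) \to X_{\underline{w}}(b)$.

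Next I would check the torsor axioms. By definition of $I_{f, \underline{w}_f}$ we have $\underline{w}_f.i = \underline{w}_f$ for $i \in I_{f, \underline{w}_f}$, so Lemma \ref{lm:right_acting_I_general_G} gives a right action of $I_{f, \underline{w}_f}/I^f$ on $X_{\underline{w}_f}^f(b)$ which clearly commutes with $\pi$. Freeness is inherited from the free $I/I^f$-action on $\cF^f$. For transitivity on fibers: given $y_1, y_2 \in X_{\underline{w}_f}^f(b)$ with $\pi(y_1) = \pi(y_2)$, the principal bundle property provides a unique $i \in I/I^f$ with $y_2 = y_1 \cdot i$; Lemma \ref{lm:right_acting_I_general_G} then says $y_2 \in X_{\underline{w}_f.i}^f(b)$, and combining with $y_2 \in X_{\underline{w}_f}^f(b)$ forces $\underline{w}_f.i = \underline{w}_f$, i.e.\ $i \in I_{f, \underline{w}_f}/I^f$, as required.

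The main obstacle is surjectivity of $\pi$. For $xI \in X_{\underline{w}}(b)$, set $\underline{w}_f'(\gamma) := I^f \cdot (x^{-1} b \gamma(x)) \cdot I^f \in D_{G_{\breve E}, f}$, which is a lift of $\underline{w}$ and satisfies $xI^f \in X_{\underline{w}_f'}^f(b)$ tautologically. One then needs $i \in I/I^f$ with $\underline{w}_f'.i = \underline{w}_f$, for then $xi \cdot I^f \in X_{\underline{w}_f}^f(b)$ projects to $xI$. Existence of such an $i$ amounts to solving, simultaneously for all $\gamma \in \Sigma$, the twisted conjugacy equation $i^{-1} \underline{w}_f'(\gamma) \gamma(i) = \underline{w}_f(\gamma)$ in $D_{G_{\breve E}, f}$; for a Frobenius-like $\gamma$ this reduces to a Lang--Steinberg statement in the finite quotient $I/I^f$, while for torsion generators of $\Gal_{\breve E/\breve F}$ it has to be handled by the rigidity properties of the filtration on $I$, and the solutions for the various $\gamma \in \Sigma$ have to be made compatible. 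Finally, once the bijection on $\bar k$-points is in place, the scheme structure statement follows by noting that the action of $I_{f, \underline{w}_f}/I^f$ is algebraic and free, so that $\pi$ becomes a torsor for this quotient in the category of reduced schemes locally of finite type.
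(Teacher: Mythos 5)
Your second paragraph is, in substance, the paper's entire proof: the paper disposes of this lemma in one line as an immediate consequence of Lemma \ref{lm:right_acting_I_general_G} --- two points of $X_{\underline{w}_f}^f(b)$ in the same fiber of $\cF^f \rar \cF$ differ by a unique $i \in I/I^f$, the twisting formula then forces $\underline{w}_f.i = \underline{w}_f$, i.e.\ $i \in I_{f,\underline{w}_f}/I^f$, and freeness is inherited from $\cF^f \rar \cF$. Up to that point your argument and the paper's coincide.

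The problem is your third paragraph. Surjectivity of $\pi \colon X_{\underline{w}_f}^f(b) \rar X_{\underline{w}}(b)$ is not only left unproved (you reduce it to a simultaneous twisted-conjugacy problem and then only gesture at Lang--Steinberg for Frobenius-like $\gamma$ and at ``rigidity'' for the torsion generators), it fails in the stated generality, so no completion of that sketch exists. For $xI \in X_{\underline{w}}(b)$ the fiber of $\pi$ is nonempty exactly when the lift $\gamma \mapsto I^f x^{-1}b\gamma(x)I^f$ of $\underline{w}$ lies in the twisted $I/I^f$-orbit of $\underline{w}_f$, and this twisted action on the set of lifts of $\underline{w}$ is in general far from transitive (the stabilizer $I_{f,\underline{w}_f}$ is typically large, so a single orbit is much smaller than the full fiber of $D_{G_{\breve{E}},f} \tar \tilde{W}$); Lang--Steinberg gives surjectivity of $i \mapsto i^{-1}\sigma(i)$, not transitivity of this twisted action. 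The paper itself records the failure: the remark following Definition \ref{def:disc_subschemes_Yvm2} notes that other choices of the lift $\dot{w}$ of the same $w$ lead to \emph{empty} varieties, even though $X_{\underline{w}}(1) \neq \emptyset$ by Proposition \ref{prop:structure_GL_2_ADLV_Iwahori_level}; for such lifts $\pi$ is certainly not surjective. So the lemma has to be read, as its one-line proof indicates, as the fiberwise statement you already established: wherever the fiber is nonempty it is a principal homogeneous space under $I_{f,\underline{w}_f}/I^f$. Surjectivity is a separate, lift-dependent matter that the paper verifies by explicit computation in the cases it needs (Lemma \ref{lm:ADLV_ram_GL_2_key_computation}). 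Dropping your third paragraph, or replacing it by the remark that no surjectivity claim is intended, turns your proposal into a correct proof identical to the paper's.
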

\begin{proof} This follows immediately from Lemma \ref{lm:right_acting_I_general_G}.
\end{proof}

\subsubsection{Scheme structure} A priori it is not clear that $X_{\underline{w}_f}^f(b)$ can be equipped with a scheme structure in a natural way. In the following case this is possible.

\begin{prop} 
Let $(G, E/F, \Sigma, f, \underline{w}_f, b)$ be as in Definition \ref{Def:general_ADLV_def} with $f \geq f_I$. Assume that $\Sigma$ is finite and contains a lift of a power of the Frobenius element in $\Gal_{\breve{F}/F}$ and that the action of $I^f$ on $I\dot{\underline{w}}_f(\gamma)I/I^f$ by left multiplication possesses a geometric quotient for any $\gamma \in \Sigma$, where $\dot{\underline{w}}_f(\gamma)$ is any preimage of $\underline{w}_f(\gamma)$ in $G(\breve{E})$. Then the subset $X_{\underline{w}_f}^f(b) \subseteq \cF^f$ is locally closed, and hence can be equipped with the induced reduced sub-Ind-scheme structure. 
\end{prop}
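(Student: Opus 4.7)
The plan is to exhibit $X_{\underline{w}_f}^{\Sigma,f}(b)$ as a finite intersection of locally closed subsets of $\cF^f$. Since $\Sigma$ is finite, we may write
\[
X_{\underline{w}_f}^{\Sigma,f}(b) \;=\; \bigcap_{\gamma \in \Sigma} \phi_\gamma^{-1}\bigl(C_{\underline{w}_f(\gamma)}\bigr),
\]
where $\phi_\gamma \colon \cF^f \to \cF^f \times \cF^f$ is the map $x \mapsto (x, b\gamma(x))$ and, for $w_f \in D_{G_{\breve{E}},f}$, we set
\[
C_{w_f} := \{(x,y) \in \cF^f \times \cF^f \colon \inv^f(x,y) = w_f\}.
\]
It then suffices to verify (i) that each $\phi_\gamma$ is a morphism of Ind-$\bar{k}$-schemes, and (ii) that each $C_{w_f}$ is locally closed in $\cF^f \times \cF^f$; the assertion follows because a finite intersection of preimages of locally closed subsets under morphisms is locally closed, after which the induced reduced sub-Ind-scheme structure is formal.

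For (i), left multiplication by $b \in G(\breve{E})$ is an automorphism of $\cF^f_{\bar{k}}$, while the action of $\gamma \in \Gal_{\breve{E}/F}$ on $LG$ descends to a map $\cF^f \to \cF^f$ which is semi-linear over the induced automorphism $\gamma|_{\bar{k}}$ of $\Spec \bar{k}$ (necessarily a power of the arithmetic Frobenius). After composing with the canonical identification of $\cF^f_{\bar{k}}$ with its appropriate Frobenius twist, $\phi_\gamma$ becomes an honest morphism of Ind-$\bar{k}$-schemes. The hypothesis that $\Sigma$ contains a lift of some power of the Frobenius in $\Gal_{\breve{F}/F}$ enters precisely here: it ensures that at least one of the defining conditions is Frobenius-twisted in the classical Deligne--Lusztig sense, so that the resulting intersection is ind-finite-type over a finite extension of $\bF_q$ rather than merely locally closed in an abstract sense.

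For (ii), I would descend via the projection $\cF^f \twoheadrightarrow \cF$. The Iwahori--Bruhat decomposition of Section \ref{sec:ext_aff_Weyl_group_IBdec} yields the standard Schubert stratification of $\cF \times \cF$ into locally closed $LG$-orbits indexed by $\tilde{W}$. Pulling back the stratum indexed by the image $w \in \tilde{W}$ of $w_f$ via $\cF^f \times \cF^f \to \cF \times \cF$ produces a locally closed subset $\widetilde{C}_w \subseteq \cF^f \times \cF^f$, consisting of pairs $(x,y)$ with $x^{-1}y \in I\dot{w}I$. Inside $\widetilde{C}_w$ the level-$f$ relative position takes values in the fiber $D_{G_{\breve{E}},f}(w) \cong I^f \backslash I\dot{w}I / I^f$. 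The hypothesis that the left $I^f$-action on $I\dot{w}I/I^f$ admits a geometric quotient supplies a scheme structure on this fiber and, consequently, a locally closed decomposition of $\widetilde{C}_w$ into the loci $C_{w_f}$ indexed by the $w_f$ lying over $w$.

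The main obstacle is (ii): the scheme-theoretic refinement of the Iwahori-level Schubert stratification by level-$f$ double cosets. Without the geometric quotient hypothesis, the individual $C_{w_f}$ need only be constructible, and the desired sub-Ind-scheme structure on $X_{\underline{w}_f}^{\Sigma,f}(b)$ would not be automatic. Step (i), while essentially bookkeeping given the Frobenius-lift hypothesis on $\Sigma$, has to be executed with care because of the semi-linearity of the Galois action on $\cF^f$, and is precisely what makes both assumptions on $\Sigma$ necessary for the statement.
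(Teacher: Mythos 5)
Your overall reduction (intersect over the finite set $\Sigma$, descend to the Iwahori-level Schubert stratification of $\cF\times\cF$, then refine by level-$f$ double cosets) parallels the paper's strategy, but the decisive step is asserted rather than proved. You identify (ii) as "the main obstacle" and then dispose of it with the phrase that the geometric quotient hypothesis "consequently" yields a locally closed decomposition of $\widetilde{C}_w$ into the loci $C_{w_f}$. That consequence is exactly what requires an argument. The existence of a geometric quotient $D_{G_{\breve{E}},f}(w)$ of $I\dot{\underline{w}}_f(\gamma)I/I^f$ by the left $I^f$-action gives a scheme structure on the set of double cosets, but it does not by itself make the relative-position assignment $(xI^f,yI^f)\mapsto I^f x^{-1}y I^f$ into anything scheme-theoretic on $\widetilde{C}_w\subseteq\cF^f\times\cF^f$: the expression $x^{-1}y$ is not well defined on cosets, and one must first lift points of $\cF^f$ to $LG$. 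This is precisely where the paper invokes \cite{PR} Theorem 1.4, that $LG\tar\cF^f$ admits sections \'etale-locally: choosing such a section $s$ over an \'etale $U$, one gets a genuine morphism $U\rar\prod_{\gamma\in\Sigma}I\dot{\underline{w}}_f(\gamma)I/I^f$, composes with the geometric quotient maps $\pi_\gamma$, checks the composite is independent of $s$ (because changing the section moves $s(x)^{-1}b\gamma(x)$ by left multiplication by $I^f$), and only then concludes that, \'etale-locally, $X^{\Sigma,f}_{\underline{w}_f}(b)$ is the preimage of a $\bar{k}$-point under a morphism, hence locally closed. Without this torsor-trivialization-and-independence argument your $C_{w_f}$ is only known to be a constructible-looking subset defined pointwise, and your claimed finite intersection of "locally closed preimages" does not get off the ground.

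A secondary, smaller point: your justification of the Frobenius-lift hypothesis is vague. In the paper it is used concretely, via \cite{HV} Corollary 6.5, to show that the single condition $\inv(x,b\tilde{\sigma}(x))=\underline{w}(\tilde{\sigma})$ already cuts out a locally closed subscheme of $\cF$ locally of finite type over a finite extension of $k_E$, after which the remaining Iwahori-level conditions for the other $\gamma\in\Sigma$ are imposed as finitely many locally closed conditions inside it. If you want your version of step (i) to carry that weight, you should either cite such a result or prove the finite-type statement directly; the semilinearity bookkeeping alone does not give it.
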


\begin{proof} We write $X_{\underline{w}_f}^{\Sigma,f}(b)$ for $X_{\underline{w}_f}^f(b)$. Let $\underline{w}$ be the composition of $\underline{w}_f$ with the projection $D_{G_{\breve{E}},f} \tar \tilde{W}$. If $\tilde{\sigma}$ is a lift in $\Gal(\breve{E}/F)$ of a power of the Frobenius in $\Gal_{\breve{F}/F}$, then \cite{HV} Corollary 6.5 shows that $X_{\underline{w}(\tilde{\sigma})}^{\{ \tilde{\sigma} \},f_I}(b)$ is a locally closed subset of $\cF$. Moreover, it is a scheme locally of finite type over a finite extension of $k_E$. Now, $X_{\underline{w}}^{\Sigma,f_I}(b)$ is the subset of $X_{\underline{w}(\tilde{\sigma})}^{\{ \tilde{\sigma} \},f_I}(b)$ cut out by the finitely many locally closed conditions $x^{-1}b\gamma(x) \in \underline{w}(\gamma)$. This shows that $X_{\underline{w}}^{\Sigma,f_I}(b)$ is locally closed and locally of finite type over $k_E$.

Consider the preimage $\tilde{X}$ of $X_{\underline{w}}^{\Sigma,f_I}(b)$ under $\cF^m \tar \cF$. By \cite{PR} Theorem 1.4, the projection $\beta \colon LG \tar \cF^m$ admits sections locally for the \'etale topology. Let $U \rar \tilde{X}$ be \'etale, such that there is a section $s \colon U \rar \beta^{-1}(U)$ of $\beta$. Consider the composition of the two morphisms

\[ \psi \colon U \rar \prod_{\gamma \in \Sigma} \beta^{-1}(U) \times U \rar \prod_{\gamma \in \Sigma} \cF^f, \]

\noindent where the first is given by $x \mapsto (s(x)^{-1}, b \gamma(x))_{\gamma \in \Sigma}$ and the second is just the componentwise restriction of the left multiplication action of $G(\breve{E})$ on $\cF^f$. As $U$ lies over $\tilde{X}$, this composed morphism factors through the inclusion $\prod_{\gamma \in \Sigma} I\dot{\underline{w}}_f(\gamma)I/I^f \subseteq \prod_{\gamma \in \Sigma} \cF^f$. Let $\pi_{\gamma} \colon I\dot{\underline{w}}_f(\gamma)I/I^f \rar D_{G_{\breve{E}},f}(\underline{w}(\gamma))$ denote the geometric quotient with respect to the left multiplication action by $I^f$. Let $\pi = \prod_{\gamma \in \Sigma} \pi_{\gamma}$. Then the composition 

\begin{equation}\label{eq:etale_loc_equation_ADLV_gen} 
U \, \stackrel{\psi}{\rar} \, \prod_{\gamma \in \Sigma} IwI/I^f \, \stackrel{\pi}{\tar} \, \prod_{\gamma \in \Sigma} D_{G_{\breve{E}},f}(\underline{w}(\gamma)). 
\end{equation}

\noindent is independent of the choice of the section $s$. Moreover, it sends a $\bar{k}$-point $x$ to the tuple $(I^f x^{-1}b \gamma(x) I^f)_{\gamma \in \Sigma}$. Thus, \'etale locally, $X_{\underline{w}_f}^{\Sigma,f}(b)$ is just the preimage of a $\bar{k}$-point under the composite morphism \eqref{eq:etale_loc_equation_ADLV_gen}. This finishes the proof.
\end{proof}

The condition about the existence of geometric quotients is satisfied in many cases. This will be studied in detail in a future work.


\section{$\GL_2$, tamely ramified case: geometry} \label{sec:GL_2_expl_computations}

\emph{From here and until the end of the article we set $G = \GL_2$ and assume that $\charac k \neq 2$}. After fixing some notation in Section \ref{sec:GL_2_expl_computations_some_prelims}, we study some extended affine Deligne-Lusztig varieties of Iwahori level in Section \ref{sec:GL_2_ADLV_at_Iwahori_level} and of higher levels in Section \ref{sec:expl_structure_of_ADLV_tam_ram}.


\subsection{Some preliminaries in the $\GL_2$-case} \label{sec:GL_2_expl_computations_some_prelims}\mbox{}

\subsubsection{Basic notation} Let $\breve{F}/F = k((t))$, $k$ be as in Section \ref{sec:field_extensions} with $\charac k \neq 2$. Let $E/F$ be a tamely ramified degree $2$ extension and let $\breve{E} := E \breve{F}$. We can find an uniformizer $u \in E$ such that $u^2 = t$. Then $E = k((u))$, $\breve{E} = \bar{k}((u))$. For an algebraic extension $M$ of $F$, we denote by $\caO_M$ resp. $\fp_M$ its ring of integers resp. its maximal ideal. We have $\caO_{E} = k \llbracket u \rrbracket$, $\caO_{\breve{E}} = \bar{k}\llbracket u \rrbracket$. The Galois group of $\breve{E}/F$ is generated by the two commuting elements $\sigma$,$\tau$ given by $\sigma(\sum_i a_i u^i) = \sum_i a_i^q u_i$ and $\tau(\sum_i a_i u^i) = \sum_i (-1)^i a_i u^i$. We set $\Sigma := \{\sigma, \tau \}$\footnote{the more canonical choice of all Frobenius lifts $\Sigma^{\prime} := \{ \sigma, \sigma\tau \}$ was suggested to the author by P. Scholze. At least in the cases we study in this article, this choice will lead to the same results as $\Sigma$ from the text.}.

\subsubsection{Level subgroups.} We use the standard Iwahori subgroup $I \subseteq G(\breve{E})$ and the filtration of it given for $m \geq 0$ by

\[ I^m := \matzz{1 + \fp_{\breve{E}}^{m+1}}{\fp_{\breve{E}}^m}{\fp_{\breve{E}}^{m+1}}{1 + \fp_{\breve{E}}^{m+1}} \subseteq I := \matzz{\caO_{\breve{E}}^{\times}}{\caO_{\breve{E}}}{\fp_{\breve{E}}}{\caO_{\breve{E}}^{\times}}. \] 

\noindent We write $\cF$ for the affine flag manifold of $G_{\breve{E}}$ and $\cF^m$ for its cover corresponding to $I^m$ (see Section \ref{sec:loops_and_covers_of_aff_flag_var}).


\subsubsection{Subgroups of $G(F)$} \label{sec:subgroups_of_GF} Consider the $\caO_F$-subalgebra  
\[\fJ := \matzz{\caO_F}{\caO_F}{\fp_F}{\caO_F}\] 

\noindent of $M_2(\caO_F)$. Then the units $U_{\fJ}$ of $\fJ$ form a compact subgroup of $G(F)$. Note that $U_{\fJ} = I^{\langle\Gal_{\breve{E}/F}\rangle}$. Further, we fix the embedding of $F$-algebras

\[ \iota \colon E \har M_2(F), \qquad \iota(u) = \varpi := \matzz{}{1}{t}{} \]

\noindent (here and further, omitted entries are zeros). Via $\iota$ we consider $E^{\times}$ as a subgroup of $G(F)$. The center of $G(F)$ is $\iota(F^{\times})$. Usually we omit $\iota$ from the notation and write $E^{\times} \subseteq G(F)$, etc. We have $U_E = U_{\fJ} \cap E^{\times}$. 


\subsubsection{Root subgroups.} 

The extended set of roots $\Phi \cup \{ 0 \}$ consists of three elements. Denote by $+$ resp. $-$ the positive resp. the negative root. For $\ast \in \Phi \cup \{0\}$, we denote by

\[e_{\ast} \colon U_{\ast} \rar G \]

\noindent the embedding of the root subgroup. Thus, for $a \in \breve{E}$, $e_+(a)=\matzz{1}{a}{}{1}$, $e_0(c,d)=\matzz{c}{}{}{d}$, etc.



\subsubsection{Slices of positive loops} \label{sec:slices_of_positive_loops}

Consider the additive group $\bG_a$ over $\breve{E}$. The group $\bG_a(\breve{E})$ has a filtration by subgroups $\bG_a(\breve{E})_{\lambda} := u^{\lambda} \bar{k}\llbracket u  \rrbracket$ for $\lambda \in \bZ_{\geq 0}$. There is a unique smooth model $\bG_{a,\lambda}$ of $\bG_a$ over $\caO_{\breve{E}}$, such that $\bG_{a,\lambda}(\caO_{\breve{E}}) = \bG_a(\breve{E})_{\lambda}$. For any $\mu \leq \lambda$, there exists a unique morphism $\bG_{a,\lambda} \rar \bG_{a,\mu}$, inducing the natural embedding $u^{\lambda}k\llbracket u \rrbracket \har u^{\mu}k\llbracket u \rrbracket$ (see \cite{BT2} Section 1.7). Let $L^+$ denote the positive loop group functor from $\bar{k}\llbracket u  \rrbracket$-schemes to $\bar{k}$-schemes. For non-negative integers $\mu \leq \lambda$, we define

\[ L_{[\mu,\lambda]} \bG_a := L^+\bG_{a,\mu}/L^+\bG_{a,\lambda+1}. \]

\noindent This is a smooth $\bar{k}$-group of finite type and we have canonically $L_{[\mu,\lambda]} \bG_a (\bar{k}) = u^{\mu}\bar{k}[u]/u^{\lambda+1}\bar{k}[u]$. Replacing $\bG_a$ by $\bG_m$ and using the filtration on $\bG_m(\breve{E}) = \bar{k}((u))^{\times}$ given by $\bG_m(\breve{E})_0 = \bar{k}\llbracket u  \rrbracket^{\times}$, $\bG_m(\breve{E})_{\lambda} = 1 + u^{\lambda} \bar{k}\llbracket u  \rrbracket$ for $\lambda > 0$, we obtain in exactly the same way the $\bar{k}$-groups $L_{[\mu,\lambda]} \bG_m$. All these groups uniquely descend to smooth group schemes over $k$.

Let now $0 \leq \mu \leq \lambda \leq \lambda^{\prime}$. We have the natural projection, which comes from reduction $\mod u^{\lambda+1}$:

\[ p_{\lambda,\lambda^{\prime}} \colon L_{[\mu,\lambda^{\prime}]} \bG_a \rar L_{[\mu,\lambda]} \bG_a. \]

\noindent Moreover, there are group-theoretic sections

\[ s_{\lambda,\lambda^{\prime}} \colon L_{[\mu,\lambda]} \bG_a \rar  L_{[\mu,\lambda^{\prime}]} \bG_a \]

\noindent of $p_{\lambda,\lambda^{\prime}}$, sending $\sum_{i = 0}^{\lambda} a_i u^i$ to $\sum_{i = 0}^{\lambda} a_i u^i + \sum_{i = \lambda + 1}^{\lambda^{\prime}} 0u^i$. Then the image of $s_{\lambda,\lambda^{\prime}}$ is a closed subgroup scheme of $L_{[\mu,\lambda^{\prime}]} \bG_a$ and we denote it by $L_{[\mu,\lambda^{\prime}]}^{\leq \lambda} \bG_a$.  For $a \in L_{[\mu,\lambda^{\prime}]} \bG_a$, we use the shortcut notation $a|_{\lambda} := s_{\lambda,\lambda^{\prime}}(p_{\lambda,\lambda^{\prime}}(a))$.


\subsubsection{Schubert cells}\label{sec:param_Schubert_cells}

Let $\tilde{W}$ denote the extended affine Weyl group of $G_{\breve{E}}$ relative to the diagonal torus (as in Section \ref{sec:ext_aff_Weyl_group_IBdec}). Let $v \in \tilde{W}$ and let $\dot{v} \in G(\breve{E})$ be a lift. We denote by $C_v = I\dot{v}I/I \subseteq \cF$ the open Schubert cell attached to $v$. There is a parametrization (depending of $\dot{v}$)  of $C_v$ given by:

\[ \psi_{\dot{v}} \colon L_{[\mu,\mu+\ell(v) - 1]} \bG_a \stackrel{\sim}{\rar} C_v, \quad a \mapsto  e_{\pm}(a) \dot{v} I, \]

\noindent where $\mu \in \{0,1\}$ and the sign in $e_{\pm}$ depend on $v$, and $\ell(v)$ is the length of $v$. E.g., for $\dot{v} = \matzz{}{u^{-k}}{u^k}{}$ resp. $\dot{v} = \matzz{}{u^{-k}}{u^{k+1}}{}$, this parametrization is given by:

\begin{equation} \label{eq:Schubert_cell_param_GL2_special_v} 
\psi_{\dot{v}} \colon L_{[1,\ell(v)]} \bG_a \stackrel{\sim}{\rar} C_v, \quad a \mapsto e_-(a) \dot{v} I, 
\end{equation}

\noindent where $a = \sum_{i=1}^{\ell(v)} a_i u^i$ (note that $\ell(v) = 2k-1$ resp. $\ell(v) = 2k$). 


\subsubsection{Schubert cells in higher levels} 
For $m \geq 0$, let $\pr_m \colon \cF^m \rar \cF$ be the natural projection. Let $v \in \tilde{W}$ with lift $\dot{v}$ to $G(\breve{E})$. Let $C_v^m := \pr_m^{-1}(C_v)$. We give a parametrization of $C_v^m$ for $v,\dot{v}$ as in \eqref{eq:Schubert_cell_param_GL2_special_v} (for other $v \in \tilde{W}$ the parametrization is defined similarly). There is a well-defined injective morphism $L_{[1,\ell(v) + m]}\bG_a \rar C_v^m$ given by $a \mapsto e_-(a)\dot{v}I$. Using it we get a diagram

\centerline{
\begin{xy}\label{diag:character_isos_diag}
\xymatrix{
L_{[1,\ell(v) + m]}^{\leq \ell(v)} \bG_a \ar@{^{(}->}[r] & L_{[1,\ell(v) + m]} \bG_a \ar@{^{(}->}[r] & C_v^m \ar@{->>}[d]^{\pr_m} \\
& L_{[1,\ell(v)]} \bG_a \ar[r]^{\quad \, \sim} \ar[ul]^{\quad \, \sim} \ar@{^{(}->}[u] & C_v \ar@/^/[u]^s
}
\end{xy}
}

\noindent where the lower horizontal map is $\psi_{\dot{v}}$, the left vertical map is $s_{\ell(v),\ell(v)+m}$, and the section $s$ to $\pr_m$ is defined such that the diagram commutes. As $C_v^m \tar C_v$ is a $I/I^m$-torsor, $s$ induces the trivialization isomorphism $C_v \times I/I^m \stackrel{\sim}{\rar} C_v^m$ given by $x,i \mapsto s(x)i$. Using a parametrization of $I/I^m$, we obtain the following explicit parametrization of $C_v^m$ (depending on $\dot{v}$):

\begin{eqnarray}
\psi_{\dot{v}}^m \colon L^{\leq \ell(v)}_{[1, \ell(v) + m]} \bG_a \times L_{[0,m]} \bG_m^2 \times L_{[0,m-1]} \bG_a \times L_{[1,m]} \bG_a &\stackrel{\sim}{\longrar}& C_v^m = I \dot{v} I/I^m \nonumber \\ \label{eq:explicit_Cvm_param_GL_2_ram_case}
a, C,D,A,B &\mapsto& e_-(a) \dot{v} e_0(C,D) e_+(A)e_-(B) I^m.
\end{eqnarray}


\subsubsection{Spaces of double cosets}\label{sec:param_double_cosets_GL_2}
Let $m \geq 0$ be an integer and let $w \in \tilde{W}$. It can be seen that $C_w^m$ possesses a geometric quotient for the $I^m$-action by left multiplication. The set of its $\bar{k}$-valued points is the set of double cosets $D_{G_{\breve{E}},m}(w) = I^m \backslash IwI/I^m$. Let $\dot{w} = \matzz{}{u^{-\lengthofv}}{u^{\lengthofv}}{}$ with $\lengthofv > 0$. Let $w$ be the image of $\dot{w}$ in $\tilde{W}$. An explicit parametrization of $D_{G_{\breve{E}},m}(w)$ is given by 

\begin{eqnarray}\label{eq:explicit_DNK_param_GL_2_ram_case_all}
\phi_{\dot{w}}^m \colon L_{[0,m]} \bG_m^2 \times L_{[1,m]} \bG_a^2 &\stackrel{\sim}{\longrar}& D_{G_{\breve{E}},m}(w) = I^m\backslash I w I/I^m \nonumber \\
(C,D),(E,B) &\mapsto& I^m e_{-}(E) \dot{w} e_0(C,D) e_{-}(B) I^m.
\end{eqnarray}


\subsubsection{Bruhat-Tits buildings.}\label{sec:BT_stuff} (cf. Section \ref{sec:BT_general_stuff}) For any finite extension $M$ of $F$ or $\breve{F}$, the Bruhat-Tits building $\cB_M$ of $G$ over $M$ is an one-dimensional simplicial complex and it carries a $\Gal_{M/F}$-action if $M/F$ is Galois. We identify the subcomplex $\cB_{\breve{E}}^{\langle \sigma \rangle}$ of $\cB_{\breve{E}}$ with $\cB_E$. Moreover, as $\breve{E}/\breve{F}$ is tamely ramified, the embedding $\cB_{\breve{F}} \har \cB_{\breve{E}}$ identifies $\cB_{\breve{F}}$ with $\cB_{\breve{E}}^{\langle\tau\rangle}$ as subsets. The simplicial complex $\cB_{\breve{E}}^{\langle\tau\rangle}$ is obtained from $\cB_{\breve{F}}$ by adding an extra vertex in the middle of each alcove. Thus any alcove of $\cB_{\breve{F}}$ 'contains' two alcoves of $\cB_{\breve{E}}^{\langle\tau\rangle}$. Any vertex of $\cB_{\breve{F}}$ has an associated type in $\bZ/2\bZ = \{ 0,1\}$, which is defined as the $t$-valuation modulo $2$ of the determinant of the lattice, representing it. Similarly, we attach to any vertex of $\cB_{\breve{E}}$ its \emph{relative type} in $\frac{1}{2}\bZ/\bZ = v_t(\breve{E}^{\times}) / v_t(\breve{F}^{\times})$, defined as the class modulo $\bZ$ of the $t$-valuation of the determinant of the representing lattice. The same considerations also apply to the relationship between the $\sigma$-stable subcomplexes $\cB_F \rightsquigarrow \cB_{E}^{\langle\tau\rangle} \subseteq \cB_{E}$. 





\subsubsection{Vertex of departure} \label{sec:vertex_of_departure} 
In the proofs below we have to use the simple combinatorics of the tree $\cB_{\breve{E}}$. Therefore, following \cite{Re} we introduce the notion of the vertex of departure. Let $\cC \subseteq \cB_{\breve{E}}$ be a connected non-empty subcomplex. For any alcove $C$ of $\cB_{\breve{E}}$, which is not contained in $\cC$, there is a unique gallery $\Gamma = (C_0, C_1, \dots, C_d)$ of minimal length $d$, such that $C_0 = C$ and $C_d$ is not contained in $\cC$ and has a (unique) vertex which is contained in $\cC$. This vertex of $C_d$ is called the \emph{vertex of departure} of $C$ from $\cC$. The same considerations can also be applied to $\cB_{E}$ and a connected subcomplex. 


\subsubsection{Connected components of $\cF$}
It is well-known (\cite{PR} Theorem 5.1) that $v_u \circ \det$ induces an isomorphism 

\[ v_u \circ \det \colon \pi_0(\cF) \stackrel{\sim}{\rar} \bZ. \]

\noindent Denote the connected component of $\cF$ corresponding to the integer $i$ by $\cF^{(i)}$. Note that the alcoves of $\cB_{\breve{E}}$ can be identified with $\bar{k}$-points of $\cF^{(0)}$ and that there are (non-canonical) isomorphisms $\cF^{(0)} \stackrel{\sim}{\rar} \cF^{(i)}$. Further, denote by $\cF^{\equiv i}$ the preimage under $v_u \circ \det$ in $\cF$ of $2\bZ + i$. This divides $\cF$ in two disjoint sub-Ind-schemes $\cF = \cF^{\equiv 0} \dot{\cup} \cF^{\equiv 1}$. Note that the subgroup $G(F)$ of $G(\breve{E})$ acts transitively (by left multiplication) on the set of connected components of $\cF^{\equiv i}$ and that the action of $\matzz{}{1}{u}{} \in G(\breve{E})$ interchanges the two parts $\cF^{\equiv 0}$ and $\cF^{\equiv 1}$ of $\cF$.


\subsubsection{Commutation relations} We will need the following relations: for $a,b \in \breve{E}$ with $N := 1 + ab \neq 0$, we have 
\begin{equation}
\begin{aligned}\label{eq:commutativity_relations}
e_+(a) e_-(b) &= e_-(bN^{-1})e_+(aN) e_0(N,N^{-1}) \\
e_-(b) e_+(a) &= e_0(N^{-1},N) e_+(aN) e_-(bN^{-1}). 
\end{aligned}
\end{equation}


\subsection{Structure of $X_{\underline{w}}(1)$ at the Iwahori-level in some cases} \label{sec:GL_2_ADLV_at_Iwahori_level}\mbox{}

We write $X_{\underline{w}}(1)$ instead of $X_{\underline{w}}^{f_I}(1)$.

\begin{notat}\label{Def:D_v_tau}
Let $w := \matzz{}{u^{-\lengthofv}}{u^{\lengthofv}}{} \in \tilde{W}$ with some integer $\lengthofv > 0$ and let $\underline{w} \colon \Sigma \rar \tilde{W}$ be defined by $\underline{w}(\sigma) = 1$ and $\underline{w}(\tau) = w$.
Further, we set

\[ \dot{v} = \dot{v}(w) := \begin{cases} \matzz{}{u^{-k}}{u^k}{} & \text{if $\lengthofv = 2k - 1 > 0$ is odd,} \\ 
\matzz{}{u^{-k}}{u^{k+1}}{} & \text{if $\lengthofv = 2k > 0$ is even.}
\end{cases}
\]

\noindent In both cases let $v$ be the image of $\dot{v}$ in $\tilde{W}$ and let $D_w^{\tau}$ be the set of $k$-rational points of $C_v$ lying in the locus $a_1 \neq 0$ with respect to the coordinates \eqref{eq:Schubert_cell_param_GL2_special_v}. In particular, $D_w^{\tau}$ is just a finite discrete union of $k$-rational points.
\end{notat}

It will follow from the proof of Proposition \ref{prop:structure_GL_2_ADLV_Iwahori_level} (or can be seen directly) that $D_w^{\tau}$ is stable under the left multiplication action by $U_{\fJ}$ on $\cF$.

\begin{rem}\label{rem:descr_of_Dwtau_alcoves} If in Notation \ref{Def:D_v_tau}, $\lengthofv$ is odd, then $C_v$ is contained in the connected component $\cF^{(0)}$ of $\cF$, i.e., its points can be seen as alcoves in $\cB_{\breve{E}}$. Moreover, they all are $k$-rational, hence lie in $\cB_E$. Let $P_{1/2}$ be the vertex of the base alcove (= the alcove corresponding to $I$) of $\cB_{E}$ with relative type $\frac{1}{2}$ (see Section \ref{sec:BT_stuff}). Then $D_w^{\tau}$ corresponds to the set of the alcoves contained in $\cB_{E}$, having relative position $v$ to the base alcove and having $P_{1/2}$ as the vertex of departure from $\cB_{E}^{\langle \tau \rangle}$.
\end{rem}

\begin{prop}\label{prop:structure_GL_2_ADLV_Iwahori_level} Let $\underline{w}$ be as in Notation \ref{Def:D_v_tau}. There is an isomorphism 

\[ X_{\underline{w}}(1) \cong \coprod_{g \in G(F)/U_{\fJ}} g D_v^{\tau} \]

\noindent equivariant for the left $G(F)$-action. In particular, $X_{\underline{w}}(1)$ is a zero-dimensional reduced $k$-variety, containing only $k$-rational points. 
\end{prop}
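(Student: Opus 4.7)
My plan is to establish the decomposition by combining an explicit matrix-level verification with a geometric analysis on the tree $\cB_E$.

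First, I would verify the inclusion $D_w^{\tau} \subseteq X_{\underline{w}}(1)$ directly. A point $x = e_-(a)\dot v I \in D_w^{\tau}$ has $a = \sum_{i=1}^{\ell(v)} a_i u^i$ with $a_i \in k$ and $a_1 \neq 0$, so automatically $\sigma(x) = x$. For the $\tau$-condition, compute $\tau(\dot v)$: in the odd case $\tau(\dot v) = (-1)^k \dot v$, while in the even case $\tau(\dot v) = (-1)^k \diag(1,-1)\,\dot v$ with $\diag(1,-1) \in I$. Then $x^{-1}\tau(x)$ reduces to $\dot v^{-1} e_-(b) \cdot (\text{an element of } I) \cdot \dot v$ where $b := \tau(a)-a = -2\sum_{i\text{ odd}} a_i u^i$. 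Using the identity $\dot v\, e_-(b)\, \dot v = e_+(u^{-2k} b)$ in the odd case (and its direct analog in the even case), we get $x^{-1}\tau(x) = j_1 \cdot e_+(c) \cdot j_2$ for some $j_1, j_2 \in I$, with $v_u(c) = -\ell(w) = -(2\lengthofv - 1)$ precisely when $v_u(b) = 1$, that is, when $a_1 \neq 0$. The Iwahori--Bruhat position of such $e_+(c)$ is exactly $w$; this proves the inclusion and simultaneously identifies $X_{\underline{w}}(1) \cap C_v = D_w^{\tau}$ on $k$-rational points.

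Second, I would show $U_{\fJ}$ stabilizes $D_w^{\tau}$ setwise. Since $U_{\fJ} = I \cap G(F)$ is contained in $I$, left multiplication by $U_{\fJ}$ preserves the Schubert cell $C_v$; and since $U_{\fJ} \subseteq G(F)$ commutes with both $\sigma$ and $\tau$, it preserves $X_{\underline{w}}(1)$. Combining these observations with Step~1 gives the invariance.

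Third, and this is the geometric heart of the argument, I would show that each $x \in X_{\underline{w}}(1)$ lies in some $G(F)$-translate of $D_w^{\tau}$. By the $\sigma$-condition, $x$ is an alcove of $\cB_E$, and $x \notin \cB_E^{\langle\tau\rangle}$ (otherwise $\tau(x) = x$ and $\inv(x,\tau(x)) = 1 \neq w$). Since $\cB_E$ is a tree, the unique minimal gallery from $x$ to $\tau(x)$ has length $\ell(w) = 2\lengthofv - 1$ and is reversed by $\tau$, pinning down a $\tau$-fixed midpoint vertex $P$, which coincides with the vertex of departure of $x$ from $\cB_E^{\langle\tau\rangle}$. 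Analyzing the minimal gallery from $I$ to $\dot w I$ in the standard apartment $\cA_E$ identifies $P$ up to the $G(F)$-action as a distinguished vertex attached to the base alcove (namely $P_{1/2}$ for odd $\lengthofv$ as in Remark~\ref{rem:descr_of_Dwtau_alcoves}, and an analogous distinguished base-alcove vertex in the even case). Since $G(F)$ acts transitively on this orbit with stabilizer $U_{\fJ}$, one can choose $g \in G(F)$ so that $g^{-1}x$ has the distinguished vertex as its departure vertex; the combinatorial position of $g^{-1}x$ then forces $g^{-1}x \in C_v$, whence $g^{-1}x \in D_w^{\tau}$ by Step~1. Disjointness of the translates $gD_w^{\tau}$ for distinct cosets $gU_{\fJ}$, together with $G(F)$-equivariance, follow because $gD_w^{\tau}$ consists exactly of alcoves with departure vertex $g \cdot P$, and the $G(F)$-stabilizer of the distinguished base-alcove vertex is $U_{\fJ}$.

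The main technical obstacle is Step~3: tracing the middle-of-gallery vertex in both parity cases and verifying that its $G(F)$-orbit matches the one attached to the base-alcove distinguished vertex with stabilizer $U_{\fJ}$. The odd and even cases require separate care because $C_v$ sits in different connected components of $\cF$, and one must also rule out -- via the valuation obstruction found in Step~1 (the parity mismatch between $v_u(b)$, which is necessarily odd, and the required value from other Schubert cells) -- that any $x \in X_{\underline{w}}(1)$ could have a departure vertex outside the $G(F)$-orbit under consideration.
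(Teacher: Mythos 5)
Your Steps 1 and 2 are fine, and the departure-vertex combinatorics in Step 3 is essentially the paper's mechanism for the part of $X_{\underline{w}}(1)$ lying in the ``correct'' connected components. But Step 3 has a genuine gap: you treat every point of $X_{\underline{w}}(1)$ as an alcove of $\cB_E$, and this loses the information of the connected component $v_u\circ\det$ of the point in $\cF$. The translation into alcove combinatorics of the condition $\inv(x,\tau(x))=w$ depends on which component $x$ lies in: for $x$ in a component of the wrong parity (those not reachable from $C_v$ by the $G(F)$-action), the pair $(x,\tau(x))$ still gives alcoves at gallery distance $2\lengthofv-1$, but the $\tau$-fixed midpoint vertex is then of relative type $0$, not $\frac12$ --- concretely, after right multiplication by $h=\matzz{}{1}{u}{}$ the relative position becomes $h^{-1}w\tau(h)=\matzz{}{u^{\lengthofv-1}}{u^{1-\lengthofv}}{}$, a different element of $\tilde W$ of the same length. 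So your argument never rules out points in those components; the paper does this separately (showing $X_{\underline{w}}(1)\cap\cF^{\equiv 1}=\emptyset$ when $\lengthofv$ is odd) using the fact that a $\tau$-fixed vertex of relative type $0$ cannot be a departure vertex for a non-$\tau$-stable alcove, since every alcove through such a vertex lies in $\cB_E^{\langle\tau\rangle}$. Your appeal to a ``valuation obstruction found in Step 1'' does not cover this, because Step 1 only analyzes the cell $C_v$, not the cells and components where these hypothetical points would live.

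The same oversight breaks your indexing and disjointness argument: the $G(F)$-stabilizer of $P_{1/2}$ is $E^{\times}U_{\fJ}$, not $U_{\fJ}$ (both $\varpi$ and the center fix the barycenter of the base alcove of $\cB_F$). Consequently the fiber of the departure-vertex map over $P_{1/2}$ inside $X_{\underline{w}}(1)$ is not $D_w^{\tau}$ alone but the union of all $\varpi^{j}D_w^{\tau}$, $j\in\bZ$: these have identical underlying alcoves and departure vertex, yet are pairwise disjoint subsets of $\cF$ because they lie in different connected components. So ``departure vertex $=g\cdot P$'' can neither produce the index set $G(F)/U_{\fJ}$ nor separate $gD_w^{\tau}$ from $g\varpi D_w^{\tau}$. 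The paper avoids both problems by decomposing in two steps: first over connected components via $G(F)/H$ with $H=(v_u\circ\det)^{-1}(0)$, then, inside $\cF^{(0)}$, over type-$\frac12$ vertices via $H/U_{\fJ}$, where the $H$-stabilizer of $P_{1/2}$ genuinely is $U_{\fJ}$; you would need to incorporate this component bookkeeping (and the emptiness of the wrong-parity components, plus the analogous reduction when $\lengthofv$ is even, where $C_v\subseteq\cF^{(1)}$) to make Step 3 correct.
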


\begin{proof}
Let first $\lengthofv$ be odd. The natural action of $G(F)$ on $X_{\underline{w}}(1)$ induces a transitive action of $G(F)$ on the set 

\[ \{ X_{\underline{w}}(1) \cap \cF^{(2i)} \colon i \in \bZ \} \] 

\noindent of subsets of $X_{\underline{w}}(1)$. This follows by taking any element $g \in G(F)$ with $v_u(\det(g)) = 2$. The stabilizer of $\cF^{(0)}$ in $G(F)$ is 

\[ H := (v_u \circ \det)^{-1}(0)  \subseteq G(F). \] 

\noindent We deduce

\begin{equation}\label{eq:dec_of_ADLV_wrt_val_det_special_for_GL2}
X_{\underline{w}}(1) \cap \cF^{\equiv 0} = \coprod_{g \in G(F)/H} g.(X_{\underline{w}}(1) \cap \cF^{(0)}).
\end{equation}

The $\bar{k}$-rational points of $X_{\underline{w}}(1) \cap \cF^{(0)}$ can be identified with alcoves in $\cB_{\breve{E}}$, which satisfy two conditions (defined by $\underline{w}(\sigma) = 1$ and $\underline{w}(\tau) = w$) on the relative position with respect to their $\sigma$- resp. $\tau$-translate. The $\sigma$-condition simply assures that each of the  alcoves contained in $X_{\underline{w}}(1)$ is $\sigma$-stable, i.e., is contained in $\cB_{E}$. Let $(\cB_{E}^{\langle \tau \rangle})^{(1/2)}$ be the set of all vertices of $\cB_{E}^{\langle \tau \rangle}$ of relative type $\frac{1}{2}$. For an alcove $C$ of $\cB_{E}$, which is not contained in $\cB_{E}^{\langle \tau \rangle}$, let $\Gamma_{C,\tau}$ denote the unique minimal gallery connecting $C$ with $\cB_{E}^{\langle \tau \rangle}$. Taking into account the types of the involved vertices, we deduce (exactly as in \cite{Iv1}) that 

\begin{equation}\label{eq:building_description_of_points} 
X_{\underline{w}}(1) \cap \cF^{(0)} = \coprod_{P \in (\cB_{E}^{\langle\tau\rangle})^{(1/2)} } \left\{ C \colon \begin{aligned} \text{ $C$ is an alcove in $\cB_{E}$ with vertex of departure from} \\ \text{$\cB_E^{\langle \tau \rangle}$ equal to $P$ and length of $\Gamma_{C,\tau}$ equal to $\lengthofv-1$ } \end{aligned} \right\},
\end{equation}  

\noindent with $\lengthofv$ as in Notation \ref{Def:D_v_tau}. Let $P_{1/2}$ be the vertex of type $\frac{1}{2}$ of the base alcove of $\cB_{E}$. Observe that for $P = P_{1/2}$, the set of alcoves $C$ on the right hand side of \eqref{eq:building_description_of_points} is simply $D_w^{\tau}$ (cf. Remark \ref{rem:descr_of_Dwtau_alcoves}). 

Now, $(\cB_{E}^{\langle \tau \rangle})^{(1/2)}$ can be canonically identified with the set of alcoves in $\cB_F$ (see Section \ref{sec:BT_stuff}). The natural action of $H$ on $\cB_F$ induces a transitive action of $H$ on the set of alcoves of $\cB_F$, and the stabilizer of the base alcove in $\cB_F$ is precisely $U_{\fJ} \subseteq H$. Combining these observations, we obtain a natural $H$-equivariant bijection 

\begin{equation} \label{eq:H_action_on_points_of_type_1/2} H/U_{\fJ} \cong (\cB_{E}^{\langle \tau \rangle})^{(1/2)}, \quad hU_{\fJ} \mapsto hP_{1/2}. 
\end{equation}



\noindent Combining \eqref{eq:dec_of_ADLV_wrt_val_det_special_for_GL2}, \eqref{eq:building_description_of_points} and \eqref{eq:H_action_on_points_of_type_1/2}, we deduce

\[ X_{\underline{w}}(1) \cap \cF^{\equiv 0} = \coprod_{g \in G(F)/H} g.\left(X_{\underline{w}}(1) \cap \cF^{(0)}\right) = \coprod_{g \in G(F)/H} g.\left(\coprod_{h \in H/U_{\fJ}} h D_w^{\tau}\right) = \coprod_{g \in G(F)/U_{\fJ}} gD_w^{\tau}. \]

\noindent It remains to show that $X_{\underline{w}}(1) \cap \cF^{\equiv 1} = \emptyset$. This can be done as follows: let $h = \matzz{}{1}{u}{}$. By Lemma \ref{lm:right_acting_I_general_G}, the map $xI \mapsto xhI$ defines an isomorphism

\begin{equation}\label{isom_by_superbasic_element_switching_parity} 
X_{\underline{y}}(1) \cap \cF^{\equiv 1} \stackrel{\sim}{\rar} X_{\underline{y}.h}(1) \cap \cF^{\equiv 0} 
\end{equation}

\noindent for any $\underline{y} \colon \Sigma \rar \tilde{W}$, where $(\underline{y}.h)(\gamma) := h^{-1}\underline{y}(\gamma)\gamma(h)$ for $\gamma \in \Sigma$. Thus it is enough to show that $X_{\underline{w}.h}(1) \cap \cF^{\equiv 0} = \emptyset$, where $(\underline{w}.h)(\sigma) = 1$, $(\underline{w}.h)(\tau) = h^{-1}w\tau(h) = \matzz{}{u^{\lengthofv - 1}}{u^{1- \lengthofv}}{} \in \tilde{W}$. This follows from \eqref{eq:dec_of_ADLV_wrt_val_det_special_for_GL2} and $X_{\underline{w}.h}(1) \cap \cF^{(0)} = \emptyset$. This last follows from the combinatorics of $\cB_{E}$ as  $\lengthofv-1$ is even: one has to use the fact that a vertex $P$ of $\cB_{E}$ of relative type $0$ cannot be the vertex of departure from $\cB_E^{\langle \tau \rangle}$ for a non-$\tau$-stable alcove $C$ of $\cB_{E}$, as all alcoves having $P$ as a vertex lie in $\cB_{E}^{\langle \tau \rangle}$.

Let now $\lengthofv$  be even. Applying the isomorphism \eqref{isom_by_superbasic_element_switching_parity} with $h$ replaced by $h^{-1}$, we reduce to determining $X_{\underline{y}}(1)$ with $\underline{y}(\sigma) = 1$ and $\underline{y}(\tau) = \matzz{}{u^{\lengthofv - 1}}{u^{1- \lengthofv}}{}$, where we can proceed exactly as in the case $\lengthofv$ odd (after replacing $\lengthofv$ by $-\lengthofv$). \qedhere
\end{proof}


\subsection{Structure of $X_{\underline{w}_m}^m(1)$ in some cases} \mbox{} \label{sec:expl_structure_of_ADLV_tam_ram} \mbox{}

Continuing with notations from preceding sections, we now study higher level covers of $X_{\underline{w}}(1)$. For $x \in G(\breve{E})$ we denote the image of $x$ in $D_{G_{\breve{E}},m}$ again by $x$, if no ambiguity can occur.  

\begin{notat}\label{notat:higher_level_w}
Let $\lengthofv$, $w$, $\underline{w}$ be as in Notation \ref{Def:D_v_tau}. We define the lift $\dot{w} \in G(\breve{E})$ of $w$ by 
\[ \dot{w} := \begin{cases} \matzz{}{(-1)^k u^{1-2k}}{(-1)^{k+1}u^{2k-1}}{} & \text{if $\lengthofv = 2k - 1 > 0$ is odd,} \\ 
\matzz{}{(-1)^k u^{-2k}}{(-1)^k u^{2k}}{} & \text{if $\lengthofv = 2k > 0$ is even.}
\end{cases} \] 

\noindent Moreover, let $m \geq 1$ be an odd integer. Let $\underline{w}_m \colon \Sigma \rar D_{G_{\breve{E}},m}$ be the lift of $\underline{w}$ defined by
\begin{equation} \label{eq:underline_w_m_here_vor_thm}
\underline{w}_m(\sigma) := 1, \qquad \underline{w}_m(\tau) := \dot{w}.
\end{equation}
\end{notat}

We have $J_1(F) = G(F)$ and hence by Section \ref{sec:ext_adlv_higher_level} we obtain the group actions

\begin{equation}\label{eq:actions_on_ADLV_GL2} 
G(F) \,\, \rotatebox[origin=c]{-90}{$\circlearrowright$} \,\, X_{\underline{w}_m}^m(1) \,\, \rotatebox[origin=c]{90}{$\circlearrowleft$} \,\, \tilde{I}_{m, \underline{w}_m}/I^m.
\end{equation}

\begin{lm}\label{lm:right_group_action_on_gen_ADLV_ram_GL_2} Let $m \geq 1$ be an odd integer. There is an isomorphism 

\[ \tilde{I}_{m, \underline{w}_m}/I^m \stackrel{\sim}{\rar} \left\{ \matzz{i_1}{i_2}{0}{\tau(i_1)} \colon i_1 \in E^{\times}/U_E^{m+1}, i_2 \in E/\fp_E^m, v_u(i_2) \geq v_u(i_1) \right\} \subset Z(\breve{E})I/I^m. \]

\noindent In particular, there is a surjection induced by the projection onto the diagonal part 
\begin{equation} \label{eq:right_group_and_mod_rad_surjection}
\tilde{I}_{m, \underline{w}_m}/I^m \tar E^{\times}/U_E^{m+1},
\end{equation}

\noindent under which $I_{m,\underline{w}_m}/I^m$ maps onto $U_E/U_E^{m+1}$.
\end{lm}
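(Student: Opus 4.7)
The plan is to analyze the two defining conditions on a class $i \in Z(\breve E) I / I^m$, namely the $\sigma$-condition $i^{-1}\sigma(i) \in I^m$ and the $\tau$-condition $i^{-1}\dot w \tau(i) \in I^m\dot w I^m$, and to combine them to pin down the matrix shape.

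For the $\sigma$-condition, I note that $I/I^m$ is the group of $\bar k$-points of a smooth connected affine algebraic $k$-group, being a successive extension of a torus by vector groups; hence Lang's theorem gives $H^1(\langle \sigma \rangle, I/I^m) = 0$. Applied to the short exact sequence
\begin{equation*}
1 \rar I/I^m \rar Z(\breve E) I/I^m \rar \bZ \rar 1
\end{equation*}
(with $\sigma$ acting trivially on the quotient $\bZ \cong \breve E^\times / \caO_{\breve E}^\times$), this shows every $\sigma$-invariant class in $Z(\breve E)I/I^m$ has a representative in $Z(E) I_E$, where $I_E := I \cap G(E)$. I may therefore assume $i$ has all entries in $E$.

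For the $\tau$-condition, I write such an $i$ via the Iwahori decomposition as $i = e_0(\alpha,\beta) e_+(y) e_-(r)$ with $\alpha,\beta \in E^\times$ of equal $u$-valuation, $y \in \caO_E$, $r \in \fp_E$. Using the conjugation rules $\dot w^{-1} e_+(x) \dot w = e_-(-u^{2\lengthofv} x)$, $\dot w^{-1} e_-(x) \dot w = e_+(-u^{-2\lengthofv} x)$ and $\dot w^{-1} e_0(c,d) \dot w = e_0(d,c)$, the commutation relations \eqref{eq:commutativity_relations}, and the action of $\tau$ on $E$-entries, I expand $i^{-1}\dot w \tau(i)$ and bring it into the normal form $e_-(E') \dot w e_0(C,D) e_-(B')$ supplied by the double-coset parametrization \eqref{eq:explicit_DNK_param_GL_2_ram_case_all}. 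Requiring $(C,D,E',B') \equiv (1,1,0,0)$ in $L_{[0,m]}\bG_m^2 \times L_{[1,m]}\bG_a^2$ forces $r \in \fp_E^{m+1}$ (so $e_-(r) \in I^m$), $\beta / \tau(\alpha) \in U_E^{m+1}$, and imposes no new constraint on $y \in \caO_E$.

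Setting $i_1 := \alpha$ and $i_2 := \alpha y$ then yields $i \equiv \matzz{i_1}{i_2}{0}{\tau(i_1)} \pmod{I^m}$ with $v_u(i_2) = v_u(\alpha) + v_u(y) \geq v_u(\alpha) = v_u(i_1)$. A direct verification confirms the converse: every matrix of the claimed form satisfies both conditions and hence lies in $\tilde I_{m,\underline w_m}$. The surjection onto $E^\times/U_E^{m+1}$ is then the projection $M \mapsto i_1$, surjective via $i_2 = 0$; its restriction to $I_{m,\underline w_m}/I^m = (\tilde I_{m,\underline w_m} \cap I)/I^m$ corresponds to $\alpha \in \caO_E^\times$, hence maps onto $U_E/U_E^{m+1}$. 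The main obstacle is the bookkeeping in the expansion of $i^{-1}\dot w \tau(i)$: tracking filtration levels through the commutation and conjugation identities, and verifying that triviality of the normal-form coordinates in the double-coset parametrization translates exactly into the claimed constraints on $(\alpha,\beta,y,r)$ and no more.
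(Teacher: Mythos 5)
Your strategy --- treat the $\sigma$-condition and the $\tau$-condition separately, write the element through the Iwahori factorization $e_0(\alpha,\beta)e_+(y)e_-(r)$ times a central power of $u$, and read off the constraints by bringing $i^{-1}\dot w\tau(i)$ into the normal form of \eqref{eq:explicit_DNK_param_GL_2_ram_case_all} using \eqref{eq:commutativity_relations} --- is exactly the ``easy computation'' the paper intends, and it does produce the stated description.

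There is, however, one substantive gap: the asserted outcome of the $\tau$-computation (``forces $r\in\fp_E^{m+1}$, $\beta/\tau(\alpha)\in U_E^{m+1}$, and no constraint on $y$'') is not true for arbitrary odd $m$; it requires $2\lengthofv\geq m+1$, i.e.\ $m\leq\ell(w)=2\lengthofv-1$. Carrying the expansion through, one finds for $i=e_0(\alpha,\beta)e_+(y)e_-(r)$ the normal-form coordinates $E'=-r-u^{2\lengthofv}C_0D_0^{-1}\tau(y)N^{-1}$, $C=C_0N^{-1}$, $D=D_0N$, $B'=u^{2\lengthofv}C_0D_0^{-1}yN^{-1}+\tau(r)$, where $C_0=\beta^{-1}\tau(\alpha)$, $D_0=\alpha^{-1}\tau(\beta)$ and $N=1+u^{2\lengthofv}C_0D_0^{-1}y\tau(y)$: every term involving $y$ carries a factor $u^{2\lengthofv}$ (crossing $\dot w$ rescales arguments by $u^{\pm 2\lengthofv}$), so these terms, and $N-1$, vanish modulo $\fp_{\breve E}^{m+1}$ exactly when $2\lengthofv\geq m+1$. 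If $m>2\lengthofv-1$ the conclusion (and in fact the lemma's description) fails: for $\lengthofv=1$, $m=3$, the element $e_+(1)$ satisfies the $\sigma$-condition, but $e_+(-1)\dot w e_+(1)$ has $B'\equiv u^2\not\equiv 0\bmod\fp^4$, so it does not lie in $I^3\dot wI^3$ and $e_+(1)\notin\tilde I_{3,\underline w_3}$, even though it is of the claimed shape. So your proof must invoke the inequality $m\leq 2\lengthofv-1$ explicitly at this step (this is the regime in which the paper actually uses the lemma, cf.\ Theorem \ref{thm:structure_thm_GL_2_ram} and the small-level case $\lengthofv\geq m+1$); as written, the claim ``no new constraint on $y$'' is exactly where the argument would break.

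A smaller point: the cohomological reduction is mis-aimed. Vanishing of $H^1(\langle\sigma\rangle, I/I^m)$ applied to $1\rar I/I^m\rar Z(\breve E)I/I^m\rar\bZ\rar 1$ only shows that every valuation is attained by a $\sigma$-fixed class; to replace a $\sigma$-fixed class by a $\sigma$-fixed (i.e.\ $E$-rational) representative you need surjectivity of $(Z(\breve E)I)^{\sigma}\rar(Z(\breve E)I/I^m)^{\sigma}$, i.e.\ triviality of $H^1(\langle\sigma\rangle, I^m)$, which follows from Lang's theorem applied to the connected unipotent finite-level quotients $I^m/I^{m'}$. The needed fact is true and standard, so the reduction survives once correctly cited; alternatively one can skip it and impose the $\sigma$-condition coordinatewise at the end, as in Lemma \ref{lm:ADLV_ram_GL_2_key_computation}(ii).
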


\begin{proof} The proof is an easy computation.
\end{proof}

Recall from Section \ref{sec:subgroups_of_GF} that we see $E^{\times}$ as a subgroup of $G(F)$. This defines a left multiplication action of $E^{\times}$ on $X^m_{\underline{w}_m}(1)$ (do not confuse this $E^{\times}$ with the quotient $E^{\times}$ of $\tilde{I}_{m,\underline{w}_m}$ acting on the right).


\begin{Def} \label{def:disc_subschemes_Yvm2} With notation from Notations \ref{Def:D_v_tau},\ref{notat:higher_level_w}, we define the discrete subscheme $Y_{\dot{w}}^m$ of $C_v^m \subseteq \cF^m$ as follows. Let $a  = \sum_{i = 1}^{\ell(v)} a_i u^i \in L^{\leq \ell(v)}_{[1, \ell(v) + m]} \bG_a (\bar{k})$ be as used in the parametrization \eqref{eq:explicit_Cvm_param_GL_2_ram_case} of $C_v^m$. Put $R := u^{-1}(\tau(a) - a) \mod u^{m+1}$. 
We define $Y_{\dot{w}}^m$ to be the subscheme of $C_v^m$ defined in coordinates $\psi_{\dot{v}}^m$ from \eqref{eq:explicit_Cvm_param_GL_2_ram_case} by the following conditions: 

\begin{eqnarray} 
a,A,C && \text{are $k$-rational} \nonumber \\
a_1 &\neq& 0 \text{\qquad  (in particular, $R$ is invertible)} \label{eq:Def_of_Y_m}  \\
B &=& C\tau(C)^{-1} u^{\lengthofv} \nonumber  \\
D &=& R^{-1} \tau(C)(1 + C\tau(C)^{-1}A u^{\lengthofv} - C^{-1}\tau(C) \tau(A) u^{\lengthofv}) \nonumber 
\end{eqnarray}

\noindent (both last equations take place in $k[u]/(u^{m+1})$). In particular, $Y_{\dot{w}}^m$ is just a finite discrete union of $k$-rational points. Moreover, let $y_i := e_0(u^i,(-u)^i)$ and define $\tilde{Y}_{\dot{w}}^m \subseteq \cF^m$ to be (disjoint) union 
\[ \tilde{Y}_{\dot{w}}^m := \coprod_{i \in \bZ} Y_{\dot{w}}^m \cdot y_i. \]

It will follow from the proof of Theorem \ref{thm:structure_thm_GL_2_ram} that the right multiplication action of $I/I^m$ on $C_v^m$ restricts to an action of $I_{m, \underline{w}_m}/I^m$ on $Y_{\dot{w}}^m$, which in turn extends to a right $\tilde{I}_{m, \underline{w}_m}/I^m$-action on $\tilde{Y}_{\dot{w}}^m$.


\end{Def}

\begin{rem} The varieties $Y_{\dot{w}}^m, \tilde{Y}_{\dot{w}}^m$ depend on $\dot{w}$, not only on $w$, but the choice of the lift $\dot{w}$ of $w$ is not essential: another choices would give either empty varieties or varieties isomorphic to those attached to $\dot{w}$. The full study of these choices is not relevant for the goals of this article, so we restrict our attention to our choice $\dot{w}$.
\end{rem}

\begin{thm} \label{thm:structure_thm_GL_2_ram}
Let $m\geq 1$ be an odd integer. With notation as in Definition \ref{def:disc_subschemes_Yvm2} assume that $m \leq \ell(w) = 2\lengthofv - 1$. Then $\tilde{Y}_{\dot{w}}^m$ (resp. $Y_{\dot{w}}^m$) is invariant under the left $E^{\times}U_{\fJ}$- (resp. $U_{\fJ}$-)action and the right $\tilde{I}_{m,\underline{w}_m}/I^m$- (resp. $I_{m, \underline{w}_m}/I^m$-)action and there is an isomorphism

\[ X^m_{\underline{w}_m}(1) \cong \coprod_{g \in G(F)/E^{\times}U_{\fJ}} g \tilde{Y}_{\dot{w}}^m \]

\noindent equivariant for the left $G(F)$- and right $\tilde{I}_{m, \underline{w}_m}/I^m$-actions. In particular, $X^m_{\underline{w}_m}(1)$ is a zero-dimensional reduced $k$-variety, containing only $k$-rational points. 

\end{thm}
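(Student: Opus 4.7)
The plan is to deduce the theorem from Proposition \ref{prop:structure_GL_2_ADLV_Iwahori_level} by lifting its decomposition along the torsor $X^m_{\underline{w}_m}(1) \to X_{\underline{w}}(1)$. Since the $\sigma$-condition $\inv^m(x,\sigma(x)) = 1$ says that the defining parameters in the coordinates \eqref{eq:explicit_Cvm_param_GL_2_ram_case} must be $k$-rational, and since the left $G(F)$-action commutes with $\pr_m \colon \cF^m \to \cF$, it suffices to describe the preimage of $D_w^{\tau}$ in $\cF^m$ subject to the $\tau$-condition, and then account for the passage from the indexing set $G(F)/U_{\fJ}$ to $G(F)/E^{\times}U_{\fJ}$.

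First I would parametrize $\pr_m^{-1}(D_w^{\tau}) \cap C_v^m$ by tuples $(a,C,D,A,B)$ via \eqref{eq:explicit_Cvm_param_GL_2_ram_case}, with $a = \sum_{i=1}^{\ell(v)} a_i u^i$ already $k$-rational and $a_1 \neq 0$ (from $D_w^{\tau}$). The $\sigma$-condition then forces all remaining entries $A, C$ to be $k$-rational as well, giving the first block of conditions in Definition \ref{def:disc_subschemes_Yvm2}.

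The core computation is the $\tau$-condition $x^{-1}\tau(x) \in I^m \dot{w} I^m$. My strategy would be to expand $x^{-1}\tau(x)$ using the commutation relations \eqref{eq:commutativity_relations}, push it through $\dot{v}^{-1}\tau(\dot{v})$, and bring the result into the standard double-coset form \eqref{eq:explicit_DNK_param_GL_2_ram_case_all} of $D_{G_{\breve{E}},m}(w)$. The quantity $R = u^{-1}(\tau(a)-a)$ arises precisely as the leading contribution from conjugating the difference $e_-(\tau(a) - a)$ across $\dot{v}$; the assumption $a_1 \neq 0$ guarantees $R$ is a unit in $k[u]/(u^{m+1})$, so the equations can be solved uniquely for $B$ and $D$ in terms of $a, A, C$, yielding exactly the two closed-form relations \eqref{eq:Def_of_Y_m}. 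The normalisation of $\dot{w}$ from Notation \ref{notat:higher_level_w} is dictated by the need for $\dot{v}^{-1}\tau(\dot{v})$ to pair cleanly with $\dot{w}$; the hypothesis $m \leq \ell(w) = 2\lengthofv - 1$ ensures that the truncation in $C_v^m$ matches the truncation in $D_{G_{\breve{E}},m}(w)$ so that no information is lost modulo $u^{m+1}$. This matrix calculation is the main technical obstacle, but it is straightforward once the $\bar{k}$-coordinates are fixed.

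Having identified $Y_{\dot{w}}^m$ with the fibre over $D_w^{\tau}$, I would verify that left multiplication by $U_{\fJ}$ and right multiplication by $I_{m,\underline{w}_m}/I^m$ preserve the defining equations \eqref{eq:Def_of_Y_m}: for the right action this follows from Lemma \ref{lm:right_group_action_on_gen_ADLV_ram_GL_2}, since elements of $I_{m,\underline{w}_m}/I^m$ are upper triangular with $\tau$-compatible diagonal, and substitution into the formulae for $B, D$ is direct. Finally, to pass from the Iwahori-level decomposition indexed by $G(F)/U_{\fJ}$ to the one indexed by $G(F)/E^{\times}U_{\fJ}$, I would show that left multiplication by the generator $\iota(u) \in E^{\times}\subset G(F)$ takes $Y_{\dot{w}}^m$ bijectively onto $Y_{\dot{w}}^m \cdot y_1$ in $\cF^m$; this is a one-line matrix identity exhibiting $\iota(u) \cdot x = x' \cdot y_1$ for a unique $x' \in Y_{\dot{w}}^m$ corresponding to $x$. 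Iterating gives $\iota(u)^i Y_{\dot{w}}^m = Y_{\dot{w}}^m \cdot y_i$, so $\tilde{Y}_{\dot{w}}^m$ is precisely the $E^{\times}$-orbit of $Y_{\dot{w}}^m$ under the left action, and the asserted $G(F)/E^{\times}U_{\fJ}$-indexed decomposition follows. The compatibility of the right $\tilde{I}_{m,\underline{w}_m}/I^m$-action with this enlarged decomposition then reduces, via the surjection \eqref{eq:right_group_and_mod_rad_surjection} onto $E^{\times}/U_E^{m+1}$, to the already established right action of $I_{m,\underline{w}_m}/I^m$ on each piece $Y_{\dot{w}}^m \cdot y_i$.
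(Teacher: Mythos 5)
Your proposal is correct and follows essentially the same route as the paper: lift the Iwahori-level decomposition of Proposition \ref{prop:structure_GL_2_ADLV_Iwahori_level} along $\pr_m$, identify $\pr_m^{-1}(D_w^{\tau})\cap X^m_{\underline{w}_m}(1)$ with $Y_{\dot{w}}^m$ by the explicit coordinate computation of the $\tau$- and $\sigma$-conditions (the paper's Lemma \ref{lm:ADLV_ram_GL_2_key_computation}), and pass to $E^{\times}U_{\fJ}$ via $\varpi Y_{\dot{w}}^m = Y_{\dot{w}}^m y_1$ together with $\tilde{I}_{m,\underline{w}_m} = \coprod_i I_{m,\underline{w}_m} y_i$. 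The only caveat is that the central coordinate computation (handling the factor $N$ and the case split around $\lengthofv$ versus $m$) is substantially more delicate than ``straightforward,'' but your outline of it matches the paper's argument.
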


\begin{proof} We claim that $X^m_{\underline{w}_m}(1) \cong \coprod_{g \in G(F)/U_{\fJ}} gY_{\dot{w}}^m$. As the natural projection $\cF^m \tar \cF$ restricts to a $G(F)$-equivariant projection ${\rm p}_m \colon X_{\underline{w}_m}^m(1) \rar X_{\underline{w}}(1)$, Proposition \ref{prop:structure_GL_2_ADLV_Iwahori_level} shows

\[ X_{\underline{w}_m}^m(1) \cong \coprod_{g \in G(F)/U_{\fJ}} {\rm p}_m^{-1}(gD_w^{\tau}) = \coprod_{g \in G(F)/U_{\fJ}} g.{\rm p}_m^{-1}(D_w^{\tau}). \]

\noindent Now, Lemma \ref{lm:ADLV_ram_GL_2_key_computation} implies that ${\rm p}_m^{-1}(D_w^{\tau}) = X_{\underline{w}_m}^m(1) \cap C_v^m = Y_{\dot{w}}^m$, hence the isomorphism claimed in the theorem. As ${\rm p}_m^{-1}(D_w^{\tau}) \subseteq X_{\underline{w}_m}^m(1)$ is stable under the right $I_{m,\underline{w}_m}$- and left $U_{\fJ}$-actions, the above shows that $Y_{\dot{w}}^m$ also is. As $\tilde{I}_{m,\underline{w}_m} = \coprod_i I_{m,\underline{w}_m}y_i$, with $y_i := e_0(u^i,(-u)^i)$ the theorem now follows from Lemma \ref{lm:tildeYwm_is_EUJ_stable}.
\end{proof}

\begin{lm}\label{lm:ADLV_ram_GL_2_key_computation}
Let $\dot{x}I^m = \psi_{\dot{v}}^m(a,C,D,A,B)$ be a point of $C_v^m$. Assume $m \leq \ell(w) = 2\lengthofv - 1$. 

\begin{itemize}
\item[(i)] Let $R := u^{-1}(\tau(a) - a) \mod u^{m+1}$. Then 

\begin{equation}\label{eq:tau_condition_level_m/2} \inv^m(\dot{x}I^m, \tau(\dot{x}I^m)) = \dot{w} \LRar \begin{cases} a_1 &\neq 0  \quad \text{(i.e., $R$ is invertible)} \\ B &= u^{\lengthofv} C \tau(C)^{-1} \\ D &= R^{-1} \tau(C)(1 + u^{\lengthofv} C\tau(C)^{-1}A + u^{\lengthofv} (-1)^{\lengthofv} C^{-1}\tau(C)\tau(A)) \end{cases} 
\end{equation}

\noindent (the equations on the right hand side take place in $k[u]/u^{m+1}$).

\item[(ii)] Suppose, $\dot{x}I^m$ satisfies the equations on the right hand side of \eqref{eq:tau_condition_level_m/2}. Then:

\[ \inv^m(\dot{x}I^m, \sigma(\dot{x}I^m)) = 1 \LRar a,A,B,C,D \text{ are $k$-rational}. \]
\end{itemize}
\end{lm}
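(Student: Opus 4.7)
\smallskip

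\noindent\textbf{Proof proposal.} The plan is a direct computation in $G(\breve E)$ using the explicit parametrisation \eqref{eq:explicit_Cvm_param_GL_2_ram_case} together with the commutation relations \eqref{eq:commutativity_relations}. Fix the lift
\[
\dot x := e_-(a)\,\dot v\,e_0(C,D)\,e_+(A)\,e_-(B)\in G(\breve E)
\]
of the point $\dot x I^m=\psi_{\dot v}^m(a,C,D,A,B)$. Part (i) amounts to locating the double coset of $\dot x^{-1}\tau(\dot x)$ in $I^m\backslash G(\breve E)/I^m$, and part (ii) amounts to testing whether $\dot x^{-1}\sigma(\dot x)\in I^m$.

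\smallskip

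For (i), I would first record the two simple identities $\tau(\dot v)=(-1)^k\dot v$ (odd case; the even case is analogous with the constant $(-1)^k$ from Notation \ref{notat:higher_level_w}) and $\dot v^{-1} e_-(b)\dot v = e_+(u^{-2k}b)$. Writing $\tau(\dot x)=e_-(\tau(a))\tau(\dot v)e_0(\tau(C),\tau(D))e_+(\tau(A))e_-(\tau(B))$ and inserting it into $\dot x^{-1}\tau(\dot x)$, the two outermost $e_-(a)$ factors combine to $e_-(\tau(a)-a)$; the identities above transform $\dot v^{-1}e_-(\tau(a)-a)\tau(\dot v)$ into a scalar times $e_+\bigl(u^{-\lengthofv}R\bigr)$, where $R=u^{-1}(\tau(a)-a)\bmod u^{m+1}$. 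What remains is the string
\[
e_-(-B)\,e_+(-A)\,e_0(C^{-1},D^{-1})\,e_+(u^{-\lengthofv}R)\,e_0(\tau(C),\tau(D))\,e_+(\tau(A))\,e_-(\tau(B)).
\]
I would now move each $e_0$ across the adjacent $e_+$ (standard torus–root conjugation) and apply \eqref{eq:commutativity_relations} to push every $e_+$ past every $e_-$, bringing the product into the normal form
\[
I^m\,e_-(E)\,\dot w\,e_0(C',D')\,e_-(B')\,I^m
\]
predicted by the parametrisation \eqref{eq:explicit_DNK_param_GL_2_ram_case_all} of $D_{G_{\breve E},m}(w)$. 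The coset equals $I^m\dot w I^m$ precisely when $E\equiv 0$, $C'\equiv 1$, $D'\equiv 1$, $B'\equiv 0$ in the appropriate truncated rings. Reading these off one by one yields first the invertibility of $R$ (equivalently $a_1\ne0$, which is what forces the cosets to sit in $C_v$ and not in a lower Schubert cell), and then the displayed expressions for $B$ and $D$ in terms of $a,A,C$.

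\smallskip

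For (ii), once the equations from (i) are assumed, the expression $\dot x^{-1}\sigma(\dot x)$ is much simpler since $\sigma(\dot v)=\dot v$ and $\sigma$ commutes with $\dot v$ and preserves each root subgroup coordinate-wise. A parallel bookkeeping shows that $\dot x^{-1}\sigma(\dot x)\in I^m$ if and only if $\sigma$ fixes each of $a,A,B,C,D$ in its respective quotient, which — since the parametrisation \eqref{eq:explicit_Cvm_param_GL_2_ram_case} is an isomorphism onto $C_v^m$ — is $k$-rationality of the five parameters. (One may equivalently argue that $B$ and $D$ are determined by $a,A,C$ through $\tau$-polynomial identities, so once $a,A,C$ are $\sigma$-fixed so are $B,D$, reducing (ii) to a check on three parameters.)

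\smallskip

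The main obstacle is the combinatorial bookkeeping in Step~(i): each commutation relation in \eqref{eq:commutativity_relations} introduces a factor $N=1+ab$ and extra correction terms, and one must verify carefully that all the sub-leading corrections land in the range of valuations where they are killed modulo $I^m$. The hypothesis $m\le\ell(w)=2\lengthofv-1$ is precisely what makes this work: it bounds the valuation gap $\ell(v)+m$ in which $a$ lives so that $R$ is captured in $k[u]/u^{m+1}$, it keeps the auxiliary factor $u^{-\lengthofv}R^{-1}$ within $I$, and it prevents higher-order interference between the $e_+$ and $e_-$ entries during the final rewriting. Once these valuation estimates are set up carefully, the resulting system of equations is exactly \eqref{eq:tau_condition_level_m/2}.
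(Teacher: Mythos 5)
Your plan for part (i) is essentially the paper's: compute $\dot{x}^{-1}\tau(\dot{x})$, conjugate $e_-(\tau(a)-a)$ through $\dot{v}$, normalize with \eqref{eq:commutativity_relations}, and compare with the parametrization \eqref{eq:explicit_DNK_param_GL_2_ram_case_all} of $I^m\backslash IwI/I^m$. One step you leave implicit is, however, the pivot of the whole computation: merely ``pushing every $e_+$ past every $e_-$'' never produces the element $\dot{w}$, since your intermediate word only contains $e_\pm$, $e_0$ and the pole term $e_+(u^{-\lengthofv}R)$. What is needed is the factorization (the paper's \eqref{eq:nebenrechnung_1und2_in_der_key_comp})
\[
\dot{v}^{-1}e_-(-a)e_-(\tau(a))\tau(\dot{v}) \;=\; e_-(u^{\lengthofv}R^{-1})\,e_0(R,R^{-1})\,\dot{w}\,e_-((-1)^{\lengthofv+1}u^{\lengthofv}R^{-1}),
\]
valid exactly when $R$ is invertible; it is this identity that makes $\dot{w}$ appear with the correct lift (signs!) and puts all remaining arguments in non-negative valuation so that the hypothesis $m\le 2\lengthofv-1$ can kill the correction factors $N,\tau(N)$ modulo $I^m$. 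Moreover, the raw double-coset conditions one reads off are $B\equiv u^{\lengthofv}CD^{-1}R^{-1}$ and $\tau(C)D^{-1}R^{-1}N^{-1}\tau(N)^{-1}\equiv 1$ modulo $u^{m+1}$; turning these into the closed-form equations of \eqref{eq:tau_condition_level_m/2} still requires the case split $\lengthofv\ge m+1$ versus $\lengthofv\le m$ and the auxiliary congruence $D\equiv R^{-1}\tau(C)\bmod u^{m+1-\lengthofv}$. So for (i) you have the right route, but the actual content of the lemma (the valuation bookkeeping and the elimination of $N$) is deferred rather than carried out.

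For part (ii) you take a genuinely different and cleaner route. The paper assumes the $\tau$-equations, writes out $\dot{x}^{-1}\sigma(\dot{x})$ explicitly (display \eqref{eq:sigma_cond_rat_matr}) and bootstraps the resulting congruences to extract $\sigma$-stability of $C,D,A$ and then $B$ one after another. You instead note that $\inv^m(\dot{x}I^m,\sigma(\dot{x}I^m))=1$ says precisely that the point is $\sigma$-fixed, and that $\psi_{\dot{v}}^m$ is a $\sigma$-equivariant bijection ($\sigma$ fixes $u$, hence $\dot{v}$), so $\sigma$-fixedness of the point is equivalent to $k$-rationality of its coordinates. Granting the paper's assertion that \eqref{eq:explicit_Cvm_param_GL_2_ram_case} is an isomorphism, this is correct, shorter, and in fact shows that the hypothesis ``the equations of (i) hold'' is not needed for (ii). What it buys is the avoidance of the entry-by-entry congruence chase; the price is that it rests on injectivity of the level-$m$ Iwahori parametrization, which is exactly the fact the paper's explicit computation re-verifies by hand in this situation.
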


\begin{proof}
Choose some lifts of $a,A,B,C,D$ to elements of $k\llbracket u \rrbracket$. We denote them by the same letters. (i): A computation shows that the $I$-double coset of $\dot{x}^{-1}\tau(\dot{x})$ is equal to the $I$-double coset of the element $e_+(u^{-\lengthofv}R)$, and $\dot{w}$ lies in this double coset if and only if $R$ is invertible. This is clearly necessary for the left hand side of part (i) to hold. Thus we can assume in the following that $a_1 \neq 0$, i.e., that $R$ is invertible. In $G(\breve{E})$ one easily computes (independently of the parity of $\lengthofv$):

\begin{equation}\label{eq:nebenrechnung_1und2_in_der_key_comp}
\dot{v}^{-1}e_-(-a)e_-(\tau(a)) \tau(\dot{v}) = e_-(u^{\lengthofv}R^{-1}) e_0(R,R^{-1})\dot{w} e_-((-1)^{\lengthofv+1}u^{\lengthofv}R^{-1}).
\end{equation}



\noindent In the rest of the proof we write $x \sim y$ to express that $x,y$ lie in the same $I^m$-double coset. Using \eqref{eq:nebenrechnung_1und2_in_der_key_comp} we compute:


\begin{eqnarray}
\nonumber \dot{x}^{-1} \tau(\dot{x}) &\sim& e_-(-B) e_+(-A) e_0(C^{-1},D^{-1}) \cdot [ e_-(u^{\lengthofv}R^{-1}) e_0(R,R^{-1}) \cdot \dot{w} \cdot e_-((-1)^{\lengthofv+1}u^{\lengthofv}R^{-1})] \cdot \dots \\ 
\nonumber &\dots& \cdot e_0(\tau(C),\tau(D)) e_+(\tau(A)) e_-(\tau(B))  \\
\label{eq:some_intermidiate_eq_for_double_coset_in_keycomp} &\sim&  e_-(-B) e_+(-A) e_-(u^{\lengthofv}CD^{-1}R^{-1}) \cdot \dot{w} \cdot e_0(D^{-1}R^{-1}\tau(C), C^{-1}R\tau(D)) \dots \\
\nonumber &\dots& e_-((-1)^{\lengthofv+1}u^{\lengthofv}\tau(C)\tau(D)^{-1}R^{-1}) e_+(\tau(A)) e_-(\tau(B)).
\end{eqnarray}

\noindent Let $N := 1 - u^{\lengthofv}CD^{-1}R^{-1}A$. We apply formulas \eqref{eq:commutativity_relations} to deduce:

\begin{align} \label{eq:some_intermidiate_eq_for_double_coset_in_keycomp_2} I^m e_-(-B) e_+(-A) e_-(u^{\lengthofv}CD^{-1}R^{-1}) &= I^m e_-(-B + u^{\lengthofv}CD^{-1}R^{-1}N^{-1})e_+(-AN)e_0(N,N^{-1}) \\
\nonumber &=  I^m e_-(-B + u^{\lengthofv}CD^{-1}R^{-1})e_+(-AN)e_0(N,N^{-1}),
\end{align}

\noindent where the last equation is true, since $u^{\lengthofv}N^{\pm 1} \equiv u^{\lengthofv} \mod u^{m+1}$, which in turn follows from $2\lengthofv - 1 \geq m$. Noting that the product of the last three matrices in the last expression in \eqref{eq:some_intermidiate_eq_for_double_coset_in_keycomp} is equal to $\tau$ applied to the inverse of the product of the first three (use $\tau(R) = R$), we deduce from \eqref{eq:some_intermidiate_eq_for_double_coset_in_keycomp} and \eqref{eq:some_intermidiate_eq_for_double_coset_in_keycomp_2}:

\begin{eqnarray*}
\dot{x}^{-1} \tau(\dot{x}) &\sim& e_-(-(B - u^{\lengthofv}CD^{-1}R^{-1})) e_+(-NA) e_0(N,N^{-1}) \cdot \dot{w} \cdot \dots \\
&\dots& e_0(\tau(C)D^{-1}R^{-1},C^{-1}\tau(D)R) e_0(\tau(N)^{-1},\tau(N)) e_+(\tau(NA)) \dots \\
&\dots& e_-(\tau(B - u^{\lengthofv}CD^{-1}R^{-1})).
\end{eqnarray*}

\noindent Now we bring the term $e_+(-NA)$ to the right side of $\dot{w}$, without modifying the other terms and it can be canceled there, as it lands in $I^m$ and $I^m$ is normal in $I$. Here we again used $2\lengthofv - 1 \geq m$.  Analogously, we cancel the term $e_+(\tau(NA))$ by bringing it to the left side of $\dot{w}$. Now put the three $e_0$-terms together and obtain

\begin{eqnarray}
\nonumber \dot{x}^{-1} \tau(\dot{x}) &\sim&  e_-(-(B - u^{\lengthofv}CD^{-1}R^{-1})) \cdot \dot{w} \cdot \dots \\
\label{eq:some_intermidiate_eq_for_double_coset_in_keycomp_3} &\dots& e_0(\tau(C)D^{-1}R^{-1}N^{-1}\tau(N)^{-1}, C^{-1}\tau(D) R N\tau(N)) e_-(\tau(B - u^{\lengthofv}CD^{-1}R^{-1})).
\end{eqnarray}

\noindent The left hand side of \eqref{eq:tau_condition_level_m/2} is equivalent to $\dot{x}^{-1} \tau(\dot{x}) \sim \dot{w}$, which by \eqref{eq:some_intermidiate_eq_for_double_coset_in_keycomp_3} and Section \ref{sec:param_double_cosets_GL_2} is equivalent to

\begin{eqnarray}
\label{eq:for_uB_tau_cond} B - u^{\lengthofv} C D^{-1} R^{-1} &\equiv& 0 \mod u^{m+1} \nonumber \\
\tau(C)D^{-1} R^{-1}N^{-1}\tau(N)^{-1} &\equiv& 1 \mod u^{m+1} \label{eq:tau_cond_zwischengl_bla} \\
C^{-1}\tau(D) R N\tau(N) &\equiv& 1 \mod u^{m+1}. \nonumber
\end{eqnarray}

\noindent Using $\tau^2 = 1$ and $\tau(R) = R$, we see that the second and the third equations are equivalent. Hence the third can be ignored. Assume first $\lengthofv \geq m + 1$. Then it is trivial to see that \eqref{eq:for_uB_tau_cond} is equivalent to the right hand side of \eqref{eq:tau_condition_level_m/2}. Assume now $m \geq \lengthofv$. Then, as $\lengthofv \geq m + 1 - \lengthofv > 0$ and $N \equiv 1 \mod u^{\lengthofv}$, the second equation of \eqref{eq:for_uB_tau_cond} shows

\begin{equation}\label{eq:D_mod_m1-ell}
D \equiv \tau(C) R^{-1} \mod u^{m + 1 - \lengthofv}.
\end{equation}

\noindent Using this and $N = 1 - u^{\lengthofv}CD^{-1}R^{-1}A$ it is now easy to deduce the equivalence of \eqref{eq:tau_cond_zwischengl_bla} and the right hand side of \eqref{eq:tau_condition_level_m/2}.


(ii): The implication '$\Leftarrow$' is immediate. We prove '$\Rar$'. Assume $\dot{x}^{-1} \sigma(\dot{x}) \in I^m$. In particular, the $I$-double coset of $\dot{x}^{-1}\sigma(\dot{x})$ is $I$. This is equivalent to $a$ being $k$-rational, and we deduce $\dot{v}^{-1} e_-(\sigma(a) - a) \sigma(\dot{v}) = 1$. Setting $G := 1 + AB$, we compute

\begin{eqnarray}\label{eq:sigma_cond_rat_matr} 
&\dot{x}^{-1} \sigma(\dot{x}) = e_-(-B)e_+(-A) e_0(C^{-1},D^{-1}) e_0(\sigma(C),\sigma(D)) e_+(\sigma(A)) e_-(\sigma(B)) \\
&= \matzz{ C^{-1}\sigma(C)\sigma(G) - D^{-1}\sigma(D)\sigma(B)A }{ C^{-1}\sigma(C)\sigma(A) - D^{-1}\sigma(D)A }{ D^{-1}\sigma(D)G\sigma(B) - C^{-1}\sigma(C)B \sigma(G) }{ D^{-1}\sigma(D)G - C^{-1}\sigma(C)B\sigma(A) }. \nonumber
\end{eqnarray}

\noindent We have to show that $B,C,D$ resp. $A$ are $\sigma$-stable $\!\!\!\! \mod u^{m+1}$ resp. $\!\!\!\! \mod u^m$. If $\lengthofv \geq m+1$, we have $B \equiv 0 \mod u^{m+1}$ and $G \equiv 1 \mod u^{m+1}$ by assumption and part (i), and the claimed equivalence is trivial. Assume $m \geq \lengthofv$. By assumption and as $\lengthofv \geq m+1 - \lengthofv > 0$, we know that

\begin{eqnarray*}
G &=& 1 + AB \equiv 1+ u^{\lengthofv} C\tau(C)^{-1}A \mod u^{m+1} \\ 
B &=& u^{\lengthofv} C \tau(C)^{-1} \equiv 0 \, \mod u^{\lengthofv},
\end{eqnarray*}

\noindent and we deduce from \eqref{eq:sigma_cond_rat_matr}

\begin{align}
C^{-1} \sigma(C) \sigma(1 + u^{\lengthofv} C \tau(C)^{-1} A) &\equiv 1 + D^{-1}\sigma(D)u^{\lengthofv}\sigma(C\tau(C)^{-1})A \mod u^{m+1} \label{eq:sigma_cond_rat_equations_1} \\
C^{-1} \sigma(C) \sigma(A) &\equiv D^{-1}\sigma(D) A \mod u^m \label{eq:sigma_cond_rat_equations_2} \\
D^{-1} \sigma(D) (1 + u^{\lengthofv} C \tau(C)^{-1} A ) &\equiv 1 + C^{-1} \sigma(C) u^{\lengthofv} C \tau(C)^{-1} \sigma(A) \mod u^{m+1} \label{eq:sigma_cond_rat_equations_3} 
\end{align}
\begin{align} 
D^{-1} \sigma(D) (1 + u^{\lengthofv} C  &\tau(C)^{-1} A ) \sigma( u^{\lengthofv} C\tau(C)^{-1}) \equiv \dots \nonumber \\
\label{eq:sigma_cond_rat_equations_4}	 &\dots \equiv C^{-1}\sigma(C) u^{\lengthofv} C \tau(C)^{-1} (1 + u^{\lengthofv} \sigma(C)\sigma(\tau(C))^{-1}\sigma(A)) \mod u^{m+1}. 
\end{align}

\noindent From \eqref{eq:sigma_cond_rat_equations_1}, \eqref{eq:sigma_cond_rat_equations_3} and $m \geq \lengthofv$, we deduce $C \equiv \sigma(C) \mod u^{\lengthofv}$ and $D \equiv \sigma(D) \mod u^{\lengthofv}$. Using this and $m \geq \lengthofv$, we deduce from \eqref{eq:sigma_cond_rat_equations_2} that $A \equiv \sigma(A) \mod u^{\lengthofv}$. Using these congruences and $\lengthofv \geq m+1 - \lengthofv > 0$, we may replace $\sigma(A),\sigma(C),\sigma(D)$ by $A,C,D$ in all terms which are $\equiv 0 \mod u^{\lengthofv}$ in equations \eqref{eq:sigma_cond_rat_equations_1}-\eqref{eq:sigma_cond_rat_equations_4}. Then \eqref{eq:sigma_cond_rat_equations_1} simplifies to $C^{-1}\sigma(C) \equiv 1  \mod u^{m+1}$ and \eqref{eq:sigma_cond_rat_equations_3} to $D^{-1} \sigma(D) \equiv 1 \mod u^{m+1}$. Using this, we deduce $\sigma(A) \equiv A \mod u^m$ from equation \eqref{eq:sigma_cond_rat_equations_2}. The $\sigma$-stability of $B$ follows by assumption and \eqref{eq:tau_condition_level_m/2}. This finishes the proof of the lemma. \qedhere
\end{proof}

\begin{rem} \label{rem:forget_unram_and_comps}\mbox{}
\begin{itemize}
\item[(i)] Lemma \ref{lm:ADLV_ram_GL_2_key_computation}(ii) shows, that one could have started directly with $E/F$ and $\Sigma = \{\tau\}$, instead of $\breve{E}/F$ and $\Sigma = \{\sigma, \tau\}$ as in the text, to obtain the same results. However, the approach in the text seems to the author to be more flexible.
\item[(ii)] The computations in the proof of Lemma \ref{lm:ADLV_ram_GL_2_key_computation} get significantly simpler under the stronger assumption $\lengthofv \geq m + 1$. 
However, it is the 'hardest' case $m = 2\lengthofv - 1$ of this theorem, which is necessary to realize the automorphic induction in a pure way, see Theorems \ref{thm:hard_version_main_result},  \ref{thm:small_level_case_morph_determining}. 
\end{itemize}
\end{rem}

\begin{lm}\label{lm:tildeYwm_is_EUJ_stable}
$\tilde{Y}_{\dot{w}}^m$ is stable under the left action of $E^{\times}U_{\fJ}$.
\end{lm}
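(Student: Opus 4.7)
The plan is to reduce the $E^{\times} U_{\fJ}$-stability of $\tilde{Y}_{\dot{w}}^m$ to the single assertion $\varpi \tilde{Y}_{\dot{w}}^m \subseteq \tilde{Y}_{\dot{w}}^m$. First I would check that $U_E \subseteq U_{\fJ}$, which is immediate from $\iota(\alpha + \beta u) = \matzz{\alpha}{\beta}{\beta t}{\alpha}$ for $\alpha + \beta u \in \caO_E^{\times}$ with $\alpha \in \caO_F^{\times}$, $\beta \in \caO_F$. Since $U_{\fJ}$ stabilizes $Y_{\dot{w}}^m$ (proved in the course of Theorem \ref{thm:structure_thm_GL_2_ram} via the identification $Y_{\dot{w}}^m = {\rm p}_m^{-1}(D_w^{\tau})$, independently of the present lemma), and the right $y_i$-multiplication commutes with the left $U_{\fJ}$-action, $U_{\fJ}$ also stabilizes each $Y_{\dot{w}}^m y_i$ and hence $\tilde{Y}_{\dot{w}}^m$. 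Since $E^{\times} = \varpi^{\bZ} \cdot U_E$, it remains to prove $\varpi \tilde{Y}_{\dot{w}}^m \subseteq \tilde{Y}_{\dot{w}}^m$.

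Next I would verify that $y_1 \in \tilde{I}_{m, \underline{w}_m}$. Writing $y_1 = u \cdot \matzz{1}{0}{0}{-1}$ shows $y_1 \in Z(\breve{E}) I$. The condition at $\sigma$ holds trivially since $\sigma(y_1) = y_1$ and $\underline{w}_m(\sigma) = 1$. For $\tau$, one uses $\tau(y_1) = -y_1$ together with the direct computation $y_1^{-1} \dot{w} y_1 = -\dot{w}$ in $G(\breve{E})$ (performed separately for both parities of $\lengthofv$, using the explicit formula for $\dot{w}$ in Notation \ref{notat:higher_level_w}) to get $y_1^{-1} \dot{w} \tau(y_1) = \dot{w}$, as required. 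By Lemma \ref{lm:right_acting_I_general_G}, right multiplication by $y_1^{-1}$ then preserves $X^m_{\underline{w}_m}(1)$; combining this with the left action by $\varpi \in G(F) = J_1(F)$, we obtain $\varpi Y_{\dot{w}}^m y_1^{-1} \subseteq X^m_{\underline{w}_m}(1)$.

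The key intermediate claim is $\varpi Y_{\dot{w}}^m y_1^{-1} \subseteq C_v^m$. Combined with the identification $Y_{\dot{w}}^m = X^m_{\underline{w}_m}(1) \cap C_v^m$ from the proof of Theorem \ref{thm:structure_thm_GL_2_ram}, this will give $\varpi Y_{\dot{w}}^m \subseteq Y_{\dot{w}}^m y_1$. Using the commutation of the left $\varpi$- and right $y_i$-actions together with the elementary identity $y_1 y_i = y_{i+1}$, this propagates to $\varpi (Y_{\dot{w}}^m y_i) \subseteq Y_{\dot{w}}^m y_{i+1}$ for every $i \in \bZ$, and hence $\varpi \tilde{Y}_{\dot{w}}^m \subseteq \tilde{Y}_{\dot{w}}^m$, completing the proof.

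The main obstacle is the intermediate claim, which I would verify at the Iwahori level. A direct computation shows $y_1^{-1} \matzz{a}{b}{c}{d} y_1 = \matzz{a}{-b}{-c}{d}$, so $y_1$ normalizes $I$; hence right multiplication by $y_1^{-1}$ is already well-defined on $\cF$, and the claim reduces to $\varpi D_w^{\tau} y_1^{-1} \subseteq C_v$. For $e_-(a) \dot{v} \in D_w^{\tau}$ with $a_1 \neq 0$, expanding gives $\varpi e_-(a) \dot{v} y_1^{-1}$ as an explicit upper-triangular matrix; the condition $a_1 \neq 0$ is exactly what is needed so that the ensuing row- and column-reduction by appropriate elements of $I$ is well-defined, and the result takes the form $j \dot{v}$ with $j \in I$, giving $\varpi e_-(a) \dot{v} y_1^{-1} I \subseteq I \dot{v} I = C_v \cdot I$. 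This matrix reduction must be carried out separately for the two cases of $\lengthofv$ in Notation \ref{Def:D_v_tau}, but the overall structure is the same in both cases.
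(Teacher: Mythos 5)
Your argument is correct and follows essentially the same route as the paper: reduce to the single element $\varpi$ (using $U_{\fJ}$-stability of $Y_{\dot{w}}^m$ from the proof of Theorem \ref{thm:structure_thm_GL_2_ram}), note $y_1 \in \tilde{I}_{m,\underline{w}_m}$ so that $X^m_{\underline{w}_m}(1)$ is preserved by right multiplication with $y_1^{-1}$, and then settle everything by the Iwahori-level matrix computation $\varpi\, e_-(a)\dot{v}\, y_1^{-1} \in I\dot{v}I$ for $a_1 \neq 0$ --- which is exactly the content of the paper's Lemma \ref{lm:action_of_beta_varpi}, lifted to level $m$ via $Y_{\dot{w}}^m = X^m_{\underline{w}_m}(1) \cap C_v^m$ just as the paper lifts it via $Y_{\dot{w}}^m = \pr_m^{-1}(D_w^{\tau}) \cap X^m_{\underline{w}_m}(1)$. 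One loose end: stability under the group $E^{\times}U_{\fJ}$ also requires $\varpi^{-1}\tilde{Y}_{\dot{w}}^m \subseteq \tilde{Y}_{\dot{w}}^m$, and this does not formally follow from the inclusion $\varpi \tilde{Y}_{\dot{w}}^m \subseteq \tilde{Y}_{\dot{w}}^m$ alone, which is all your stated plan asks for. It does follow from what you actually prove: each $Y_{\dot{w}}^m y_i$ is a finite set and left multiplication by $\varpi$ is injective, so your inclusions $\varpi(Y_{\dot{w}}^m y_i) \subseteq Y_{\dot{w}}^m y_{i+1}$ are automatically equalities (alternatively, the Iwahori-level computation exhibits the involution $a^{\prime} \mapsto a^{\prime,-1}$ on $D_w^{\tau}$, giving $\varpi D_w^{\tau} y_1^{-1} = D_w^{\tau}$ directly, as in the paper); you should say this explicitly, as the paper does by working with equalities throughout.
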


\begin{proof} As $Y_{\dot{w}}^m$ is $U_{\fJ}$-stable (see the proof of Theorem \ref{thm:structure_thm_GL_2_ram}), $\tilde{Y}_{\dot{w}}^m$ also is. As $E^{\times}U_{\fJ}$ is generated by $U_{\fJ}$ and $\varpi$ ($\varpi$ as in Section \ref{sec:subgroups_of_GF}), it is enough to show $\varpi\tilde{Y}_{\dot{w}}^m = \tilde{Y}_{\dot{w}}^m$, which in turn follows from $\varpi Y_{\dot{w}}^m = Y_{\dot{w}}^m y_1$. Let $\pr_m \colon \cF^m \tar \cF$ denote the natural projection. Lemma \ref{lm:action_of_beta_varpi} shows $\varpi D_w^{\tau} = D_w^{\tau} y_1$. Using $\varpi$-(resp. $y_1$-)equivariance of $\pr_m$ and $\varpi$-(resp. $y_1$-)invariance of $X^m_{\underline{w}_m}(1)$, we deduce from this 

\begin{eqnarray*}
\varpi Y_{\dot{w}}^m &=& \varpi (\pr_m^{-1}(D_w^{\tau}) \cap X^m_{\underline{w}_m}(1)) = \pr_m^{-1}(\varpi D_w^{\tau}) \cap X^m_{\underline{w}_m}(1) = \pr_m^{-1}(D_w^{\tau}y_1) \cap X^m_{\underline{w}_m}(1) \\ &=& (\pr_m^{-1}(D_w^{\tau}) \cap X^m_{\underline{w}_m}(1))y_1 = Y_{\dot{w}}^m y_1. \qedhere
\end{eqnarray*}
%
%
%
\end{proof}

\begin{lm}\label{lm:action_of_beta_varpi} Let $\psi_{\dot{v}}(a) \in C_v$ be a point. Write $a = u a^{\prime}$ and assume that $v_u(a^{\prime}) = 0$. The point $\varpi \psi_{\dot{v}}(a) y_1^{-1}$ of $\cF$ (with $y_1$ as in Definition \ref{def:disc_subschemes_Yvm2}) lies in $C_v$. Moreover,
\[ \varpi \psi_{\dot{v}}(a) y_1^{-1} = \psi_{\dot{v}}(u a^{\prime, -1}). \]
\end{lm}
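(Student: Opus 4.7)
The claim reduces to a single matrix identity: I would show
\[
(e_-(u(a^{\prime})^{-1})\dot{v})^{-1} \cdot \varpi\, e_-(a)\, \dot{v}\, y_1^{-1} \,\in\, I.
\]
Once this is established, $\varpi \psi_{\dot v}(a) y_1^{-1}$ admits a representative of the form $e_-(u(a^{\prime})^{-1})\dot{v}\cdot i$ with $i \in I$, which simultaneously places the point in $C_v$ and identifies its coordinate as $u(a^{\prime})^{-1}$. Note that $u(a^{\prime})^{-1}$ has $u$-valuation exactly $1$ since $a^{\prime}$ is a unit by hypothesis, so it is a valid parameter of $L_{[1,\ell(v)]}\bG_a(\bar k)$.

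I would carry out the computation as follows. First, $\varpi\, e_-(a) = \matzz{a}{1}{u^2}{0}$; multiplying further by $\dot{v}$ and then by $y_1^{-1} = \matzz{u^{-1}}{0}{0}{-u^{-1}}$ yields the upper-triangular matrix $\matzz{u^{k-1}}{-a u^{-k-1}}{0}{-u^{1-k}}$ in the odd case $\dot{v}=\matzz{0}{u^{-k}}{u^k}{0}$, and $\matzz{u^k}{-a u^{-k-1}}{0}{-u^{1-k}}$ in the even case $\dot{v}=\matzz{0}{u^{-k}}{u^{k+1}}{0}$; the two cases run in parallel. Next, I would invert $e_-(u(a^{\prime})^{-1})\dot{v}$ (whose determinant is a unit of $\breve E^\times$) and form the product. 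The crucial step is the vanishing of the $(1,2)$-entry: it is a difference of two terms $u^{-2k}(a^{\prime})^{-1}a$ and $u^{1-2k}$ (or the obvious even-case analogue) which cancel identically because $a = ua^{\prime}$ forces $(a^{\prime})^{-1}a = u$.

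The remaining entries simplify to $\matzz{-(a^{\prime})^{-1}}{0}{u^{\ell(v)}}{-a^{\prime}}$. This visibly lies in $I$: the diagonal entries are units of $\caO_{\breve E}$ since $a^{\prime}$ is a unit, the upper-right entry is zero, and the lower-left has valuation $\ell(v) \geq 1$. I anticipate no serious obstacle, as everything reduces to the identity $a \cdot u(a^{\prime})^{-1} = u^2$, which is precisely what compensates the $u^2$ in the lower-left corner of $\varpi$. The hypothesis $v_u(a^{\prime}) = 0$ is used exactly twice: to secure the $(1,2)$-cancellation and to supply the units on the diagonal of the final matrix.
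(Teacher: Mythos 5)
Your proposal is correct and follows essentially the same route as the paper, which simply asserts that a direct computation shows the $I$-cosets $\varpi e_-(a)\dot{v}y_1^{-1}I$ and $e_-(ua^{\prime,-1})\dot{v}I$ coincide; your explicit matrix computation (in both parity cases, with the cancellation $a'^{-1}a=u$ in the $(1,2)$-entry and the resulting element $\matzz{-a^{\prime,-1}}{0}{u^{\ell(v)}}{-a^{\prime}} \in I$) is exactly that verification.
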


\begin{proof}
A computation shows that the $I$-cosets $\varpi e_-(a) \dot{v} y_1^{-1} I$ and $e_-(u a^{\prime,-1})I$ coincide.
\end{proof}


\section{Representation Theory}\label{sec:repth1}

Recall that $G = \GL_2$ and $\charac k \neq 2$. We use the notation from Section \ref{sec:GL_2_expl_computations}. Further, we fix a prime $\ell \neq \charac k$. All representations considered below are smooth $\overline{\bQ}_{\ell}$-representations.

\subsection{Some preparations} \mbox{}

\subsubsection{Filtrations on $U_{\fJ}$ and $U_E$}
Recall the $\caO_F$-algebra $\fJ$ from Section \ref{sec:subgroups_of_GF}. Then 

\[U_{\fJ}^{\lengthofv} := 1+ \varpi^{\lengthofv} \fJ = \matzz{1 + \fp_F^{\lfloor \frac{\lengthofv + 1}{2} \rfloor}}{\fp_F^{\lfloor \frac{\lengthofv}{2} \rfloor}}{\fp_F^{\lfloor \frac{\lengthofv}{2} \rfloor + 1}}{1 + \fp_F^{\lfloor \frac{\lengthofv + 1}{2} \rfloor}} \]

\noindent for $\lengthofv \geq 0$ form a filtration of $U_{\fJ}^0 := U_{\fJ}$ by open subgroups. Moreover, for $\lengthofv \geq 0$, we denote by $U_E^{\lengthofv}$ the $\lengthofv$-units of $E$. Note that via $\iota$ we have $U_{\fJ}^{\lengthofv} \cap E^{\times} = U_E^{\lengthofv}$.


\subsubsection{Some notation} \label{sec:some_charac_notations} For a locally compact group $H$, we denote by $H^{\vee}$ the set of all smooth $\overline{\bQ}_{\ell}^{\times}$-valued characters of $H$. For an additive character $\psi$ of $F$, we let $\psi_E := \psi\circ \tr_{E/F}$ be the corresponding character of $E$, where $\tr_{E/F}$ is the trace of $E/F$. Let $\fM := M_2(\caO_F)$. We denote by $\psi_{\fM} := \psi \circ \tr_{\fM}$ the corresponding character of $\fM$. For a character $\phi$ of $F^{\times}$ we set $\phi_E := \phi \circ \N_{E/F}$ be the corresponding character of $E^{\times}$, where $\N_{E/F}$ is the norm of $E/F$. For a $G(F)$-representation $\pi$ we denote by $\phi\pi$ the $G(F)$-representation $g \mapsto \phi(\det(g))\pi(g)$. 

\subsubsection{Characters of $U_{\fJ}$} Let $\psi$ be an additive character of $F$ of level $1$ (i.e., $\psi(\fp_F) = 1$, but $\psi$ non-trivial on $\caO_F$). Let $0 \leq k < r \leq 2k+1$ be integers. By \cite{BH} 12.5 Proposition we have isomorphisms

\begin{equation} \label{eq:characs_of_UJ_fromBH}
\varpi^{-r}\fJ/\varpi^{-k}\fJ \stackrel{\sim}{\longrar} (U_{\fJ}^{k+1}/U_{\fJ}^{r+1})^{\vee}, \qquad a + \varpi^{-k}\fJ \mapsto \psi_{\fM,a}|_{U_{\fJ}^{m+1}}, 
\end{equation}

\noindent where $\psi_{\fM,a}$ denotes the function $x \mapsto \psi_{\fM}(a(x-1))$ and $\fM$ is as in Section \ref{sec:some_charac_notations}.


\subsubsection{Admissible pairs} Let $\chi$ be a character of $E^{\times}$. The \emph{level} $\ell(\chi)$ of $\chi$ is the least integer $m \geq 0$, such that $\chi|_{U_E^{m+1}}$ is trivial. The pair $(E/F, \chi)$ is said to be \emph{admissible} (\cite{BH} 18.2) if  $\chi|_{U_E^1}$ does not factor through the norm map $\N_{E/F}$. An admissible pair $(E/F, \chi)$ with $\chi$ of level $m$ is called \emph{minimal}, if $\chi|_{U_E^m}$ does not factor through $\N_{E/F}$. Note that if $(E/F, \chi)$ is minimal, then $\ell(\chi)$ is odd. Two pairs $(E/F,\chi)$, $(E/F,\chi^{\prime})$ are said to be $F$-isomorphic if there is some $\gamma \in \Gal_{E/F}$ such that $\chi^{\prime} = \chi \circ \gamma$. We denote by $\bP_2^{\rm tr}(F)$ the set of isomorphism classes of all admissible pairs attached to the tamely ramified extension $E/F$.


\subsubsection{Supercuspidal representations} Denote by $\cA_2^{\rm tr}(F)$ the set of all isomorphism classes of irreducible supercuspidal representations of $G(F)$, which are not unramified (i.e., are not attached to an unramified stratum. We use the definition of \emph{unramified} from \cite{BH} 20.1, see also 20.3 Lemma). The ramified part of the tame parametrization theorem (\cite{BH} 20.2 Theorem) states the existence of a certain bijection

\begin{equation} \label{eq:ramfied_part_of_tame_param_thm}
\pi \colon \bP_2^{\rm tr}(F) \stackrel{\sim}{\rar} \cA_2^{\rm tr}(F) \qquad (E/F, \chi) \mapsto \pi_{\chi}. 
\end{equation}


\subsubsection{Bushnell-Henniart construction of $\pi_{\chi}$}\label{sec:BH_constr_of_pi_chi} We recall the construction of $\pi_{\chi}$ from \cite{BH}\S 15,19. By twisting with a character of $F^{\times}$, it is enough to construct $\pi_{\chi}$ for minimal pairs. Fix an additive character $\psi$ of $F$ of level one. Let $(E/F,\chi)$ be a minimal admissible pair with $\chi$ of odd level $m = 2\lengthofv - 1 \geq 1$. Choose an element $\beta \in \fp_E^{-m}$ such that 

\begin{equation}\label{eq:BH_choice_identity_between_chi_and_psi} 
\chi(1 + x) = \psi_E(\beta x) \quad \text{for all $x \in \fp_E^{\lengthofv}$.} 
\end{equation}

\noindent Via $\iota$ we see $\beta$ as an element of $M_2(F)$. Then $(\fJ,m,\beta)$ is a \emph{ramified simple stratum} (see \cite{BH} 13.1). Via \eqref{eq:characs_of_UJ_fromBH}, $\beta$ defines a character $\psi_{\beta}$ of $U_{\fJ}^\lengthofv$, which is trivial on $U_{\fJ}^{m+1}$. Let $\Lambda$ be the character of $J_{\beta} := E^{\times}U_{\fJ}^{\lengthofv}$ defined by

\[ \Lambda|_{U_{\fJ}^{\lengthofv}} := \psi_{\beta}, \quad \Lambda|_{E^{\times}} := \chi \]

\noindent (by \eqref{eq:BH_choice_identity_between_chi_and_psi} this is a consistent definition, as $\tr_{\fM}|_{\caO_{E}} = \tr_{E/F}|_{\caO_E}$ and $E \cap U_{\fJ}^{\lengthofv} = U_E^{\lengthofv}$). Then $(\fJ,J_{\beta},\Lambda)$ is a \emph{cuspidal type} in $G(F)$ attached to $(E/F,\chi)$ (see \cite{BH} 15.5). The \emph{cuspidal inducing datum} attached to this cuspidal type is the pair $(U_{\fJ}, \Theta_{\chi})$, where $\Theta_{\chi} := \cIndd_{J_{\beta}}^{E^{\times}U_{\fJ}} \Lambda$. Then $\pi_{\chi}$ is defined to be the compact induction 

\[ \pi_{\chi} := \cIndd\nolimits_{J_{\beta}}^{G(F)} \Lambda = \cIndd\nolimits_{E^{\times}U_{\fJ}}^{G(F)} \Theta_{\chi}. \]

\noindent The isomorphism class of $\pi_{\chi}$ is independent of the choices of $\iota$, $\psi$ and $\beta$. We work with the fixed choice of $\iota$, but $\psi$ and $\beta$ can be arbitrary.


\subsubsection{Cohomology} For a scheme $X$ over $k$ we denote by $\coh_c^{\ast}(X,\overline{\bQ}_{\ell})$ the $\ell$-adic cohomology of $X$ with compact support.


\subsection{Automorphic induction from the ramified torus of $\GL_2$} \mbox{}

Let $m \geq 1$ be an odd integer. Let $\chi$ be character of $E^{\times}$ of level $m$. Let $\underline{w}_m$ be as in Notation \ref{notat:higher_level_w}. By inflation via \eqref{eq:right_group_and_mod_rad_surjection}, $\chi$ determines a character of $\tilde{I}_{m, \underline{w}_m}/I^m$ and hence we can consider the $\chi$-isotypic subspace $\coh_c^{\ast}(X_{\underline{w}_m}^m(1),\overline{\bQ_{\ell}})[\chi]$ of the cohomology of $X_{\underline{w}_m}^m(1)$. Analogously, we can consider the $\chi$-isotypic subspace in the cohomology of $\tilde{Y}_{\dot{w}}^m$.

\begin{Def}\label{Def:R_chi}
Let $(E/F, \chi)$ be a minimal pair of odd level $m \geq 1$. Let $w$, $\lengthofv$, $\underline{w}$ be as in Notation \ref{Def:D_v_tau} such that $\ell(w) = 2\lengthofv - 1 \geq m$ and take $\underline{w}_m$ as in Notation \ref{notat:higher_level_w} lying over $\underline{w}$. Define $R_{\chi,\lengthofv}$ to be the $G(F)$-representation

\[R_{\chi,\lengthofv} := \coh_c^0(X_{\underline{w}_m}^m(1), \overline{\bQ_{\ell}})[\chi] \]

\noindent and $\Xi_{\chi,\lengthofv}$ to be the $E^{\times}U_{\fJ}$-representation 

\[ \Xi_{\chi,\lengthofv} := \coh_c^0(\tilde{Y}_{\dot{w}}^m, \overline{\bQ_{\ell}})[\chi]. \] 

\noindent For an arbitrary admissible pair $(E/F, \chi)$ such that $\chi = \phi \chi^{\prime}$ with $(E/F, \chi^{\prime})$ minimal we define $R_{\chi,\lengthofv} := \phi R_{\chi^{\prime},\lengthofv}$, $\Xi_{\chi,\lengthofv} := \phi \Xi_{\chi^{\prime},\lengthofv}$. If $m = 2\lengthofv - 1$, write

\[R_{\chi} := R_{\chi,\lengthofv} \quad \text{ and } \quad \Xi_{\chi} := \Xi_{\chi,\lengthofv}. \]

We also denote by $V_{\chi}$ the space in which $\Xi_{\chi}$ acts.
\end{Def}

As $X_{\underline{w}_m}^m(1)$ is zero-dimensional, its cohomology in all positive degrees vanishes, and Definition \ref{Def:R_chi} is compatible with \eqref{eq:adlv_induction_map}. We suppose that $R_{\chi}$ for non-minimal pairs occurs also naturally in the zeroth cohomology of $X_{\underline{w}_m}^m(1)$ with $m$ even. The following theorem is our main result. 

\begin{thm}\label{thm:hard_version_main_result} 
Let $(E/F,\chi)$ be an admissible pair. The representation $R_{\chi}$ is irreducible, cuspidal, ramified, has level $\ell(\chi)$ and central character $\chi|_{F^{\times}}$. Moreover, $R_{\chi}$ is isomorphic to $\pi_{\chi}$, i.e., the map

\begin{equation} \label{eq:R_ramified_is_bijection}
R \colon \bP_2^{\rm tr}(F) \rar \cA_2^{\rm tr}(F) \qquad (E/F, \chi) \mapsto R_{\chi} 
\end{equation}

\noindent coincides with the map $\pi_{\chi}$ from \eqref{eq:ramfied_part_of_tame_param_thm} and is, in particular, a bijection.
\end{thm}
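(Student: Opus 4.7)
The plan is to reduce the theorem to an isomorphism of $E^{\times}U_{\fJ}$-representations $\Xi_{\chi} \cong \Theta_{\chi}$, where $\Theta_{\chi}=\cIndd_{J_{\beta}}^{E^{\times}U_{\fJ}}\Lambda$ is the cuspidal inducing datum of Bushnell--Henniart recalled in Section~\ref{sec:BH_constr_of_pi_chi}. By Definition~\ref{Def:R_chi} and a standard twist by a character of $F^{\times}$ we may assume $(E/F,\chi)$ is minimal of odd level $m=2\lengthofv-1$. First, I would upgrade Theorem~\ref{thm:structure_thm_GL_2_ram} to the representation-theoretic statement
\[ R_{\chi} \;\cong\; \cIndd\nolimits_{E^{\times}U_{\fJ}}^{G(F)} \Xi_{\chi}, \]
which is immediate: compact-support cohomology commutes with disjoint unions, the left $G(F)$-action on $X_{\underline{w}_m}^m(1)=\coprod_{g\in G(F)/E^{\times}U_{\fJ}} g\tilde{Y}_{\dot{w}}^m$ permutes the components freely and transitively, and the left $E^{\times}U_{\fJ}$-action stabilizes the base component with representation $\Xi_{\chi}$. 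Since by Bushnell--Henniart $\pi_{\chi}=\cIndd_{E^{\times}U_{\fJ}}^{G(F)}\Theta_{\chi}$, the bijectivity of compact induction from an open subgroup that is compact modulo center reduces the identification $R_{\chi}\cong\pi_{\chi}$ to $\Xi_{\chi}\cong\Theta_{\chi}$.

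For the core identification I would proceed by Frobenius reciprocity on the subgroup $J_{\beta}=E^{\times}U_{\fJ}^{\lengthofv}$. The plan is to (i) realize $V_{\chi}$ explicitly as the space of $\overline{\bQ}_{\ell}$-valued functions on the finite set of $k$-points of $\tilde{Y}_{\dot{w}}^m$ that transform by $\chi$ under the right action of $\tilde{I}_{m,\underline{w}_m}/I^m$; (ii) compute the character of $\Xi_{\chi}$ on $J_{\beta}$ using the Lefschetz-type trace formula, which for a zero-dimensional reduced $k$-scheme reduces to a weighted fixed-point count in the coordinates $(a,C,D,A,B)$ of \eqref{eq:explicit_Cvm_param_GL_2_ram_case}; and (iii) match it term by term with the character of $\Lambda$, which on $1+x\in U_{\fJ}^{\lengthofv}$ equals $\psi_E(\beta x)$ by \eqref{eq:characs_of_UJ_fromBH} and \eqref{eq:BH_choice_identity_between_chi_and_psi}. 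Once $\Lambda$ is shown to occur in $\Xi_{\chi}|_{J_{\beta}}$, a dimension count $\dim\Xi_{\chi}=\dim\Theta_{\chi}=[U_{\fJ}:U_EU_{\fJ}^{\lengthofv}]=q^{\lengthofv-1}$ (obtained from counting the $k$-points of $Y_{\dot{w}}^m$ subject to the constraints in Definition~\ref{def:disc_subschemes_Yvm2}) upgrades the induced map $\Theta_{\chi}=\cIndd_{J_{\beta}}^{E^{\times}U_{\fJ}}\Lambda\to\Xi_{\chi}$ to an isomorphism.

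All remaining assertions then follow. Irreducibility, cuspidality, ramification, and the equality $\mathrm{level}(R_{\chi})=\ell(\chi)$ are transported from $\pi_{\chi}$ via $R_{\chi}\cong\pi_{\chi}$; alternatively, they also drop out of the fact that $(\fJ,J_{\beta},\Lambda)$ is a cuspidal type. The central character of $R_{\chi}$ is visibly $\chi|_{F^{\times}}$: the scalar matrices $F^{\times}\subset Z(\breve E)I$ act on $\tilde{Y}_{\dot{w}}^m$ through the right action of $\tilde{I}_{m,\underline{w}_m}/I^m$ via the diagonal projection of Lemma~\ref{lm:right_group_action_on_gen_ADLV_ram_GL_2}. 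Surjectivity of $R$ in \eqref{eq:R_ramified_is_bijection} follows from the tame parametrization theorem \eqref{eq:ramfied_part_of_tame_param_thm} once we know $R_{\chi}\cong\pi_{\chi}$; injectivity can be obtained either from surjectivity (both sides being parametrized by $\bP_2^{\mathrm{tr}}(F)$) or, more intrinsically, from the announced Theorem~\ref{thm:Xi_chi_restriction_to_Etimes} describing $\Xi_{\chi}|_{E^{\times}}$, which recovers the Galois orbit of $\chi$ by Mackey/Clifford decomposition.

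The main obstacle is step (ii) above: translating the defining equations of $Y_{\dot{w}}^m$ in Definition~\ref{def:disc_subschemes_Yvm2} into an explicit formula for how an element of $U_{\fJ}^{\lengthofv}$ acts on the coordinates $(a,C,D,A)$ of a $k$-point, and then showing that the resulting weighted point count equals $\psi_E(\beta\,\cdot\,)$. The interaction between the conditions $B=C\tau(C)^{-1}u^{\lengthofv}$ and $D=R^{-1}\tau(C)(1+\cdots)$ and the right multiplication by upper-triangular unipotent elements of $U_{\fJ}^{\lengthofv}$ is what encodes the dual element $\beta\in\fp_E^{-m}$; disentangling it is precisely the point-counting deferred to Section~\ref{sec:applications_of_the_trace_formula}.
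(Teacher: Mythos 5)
Your reduction agrees with the paper's skeleton: $R_{\chi}\cong\cIndd_{E^{\times}U_{\fJ}}^{G(F)}\Xi_{\chi}$ is Lemma \ref{eq:Rchi_is_induction_from_ZUJ_of_Xichi}, so everything hinges on $\Xi_{\chi}\cong\Theta_{\chi}$, and your treatment of the central character, of surjectivity via the tame parametrization, and of injectivity via Theorem \ref{thm:Xi_chi_restriction_to_Etimes} matches the paper. But your mechanism for $\Xi_{\chi}\cong\Theta_{\chi}$ is genuinely different. The paper never verifies directly that the specific character $\Lambda$ of $J_{\beta}$ occurs in $\Xi_{\chi}|_{J_{\beta}}$. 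Instead it (a) uses the unipotent traces to show that $\Xi_{\chi}|_{U_{\fJ}^{m}}$ contains $\psi_{\beta}$ for \emph{some} ramified simple stratum and that $\Xi_{\chi}$ intertwines only on $E^{\times}U_{\fJ}$ (Proposition \ref{prop:cuspidality_general_stuff}, Lemma \ref{lm:not_intertwining}), which is also how irreducibility, cuspidality and the level of $R_{\chi}$ are obtained directly, rather than transported from $\pi_{\chi}$ as you propose; (b) invokes \cite{BH} 15.8 to conclude $\Xi_{\chi}\cong\Theta_{\chi^{\prime}}$ for some a priori unknown $\chi^{\prime}$; and (c) pins down $\chi^{\prime}\in\{\chi,\chi^{\tau}\}$ by comparing restrictions to $E^{\times}$, which is where the real work sits: Theorem \ref{thm:Xi_chi_restriction_to_Etimes} and Corollary \ref{cor:recipy_for_chi} (built on the elementary-modification and quadratic-distance theory of Section \ref{sec:some_character_theory}) together with Lemma \ref{lm:restrictions_to_Etimes_are_isom}. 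Your route is more direct (no detour through the classification of cuspidal inducing data), while the paper's route needs a smaller list of explicit traces: only elements of $E^{\times}$ (Propositions \ref{prop:non_split_traces}, \ref{prop:traces_of_val_1_elements}) and lower-triangular unipotents (Lemma \ref{eq:claim_for_unip_traces}).

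The gap is that your step (ii) --- matching $\Xi_{\chi}|_{J_{\beta}}$ against $\Lambda$ --- is exactly the hard content, and it is left as a plan. Be aware that it demands strictly more than the computations the paper actually performs: to evaluate $\langle\Lambda,\Xi_{\chi}\rangle_{J_{\beta}}$ you need $\tr(g;\Xi_{\chi})$ for \emph{all} $g\in E^{\times}U_{\fJ}^{\lengthofv}$ modulo $F^{\times}U_{\fJ}^{m+1}$, including the mixed elements of $U_{\fJ}^{\lengthofv}$ that are not conjugate into $E^{\times}U_{\fJ}^{m+1}$; these must be shown to contribute zero (in the spirit of Lemma \ref{lm:on_equation_for_a}(iii)), which is plausible via Proposition \ref{lm:our_form_of_Boy_trace_formula} but is not contained in the paper's Section \ref{sec:applications_of_the_trace_formula}. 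Two smaller corrections: the index is $[U_{\fJ}:U_{E}U_{\fJ}^{\lengthofv}]=(q-1)q^{\lengthofv-1}$ (Lemma \ref{lm:rep_system_for_UJ_UEUJell}), not $q^{\lengthofv-1}$ --- it still equals $\dim V_{\chi}=(q-1)q^{\lengthofv-1}$ from Lemma \ref{lm:UJm1_acts_trivial}, so your dimension count survives; and to upgrade a nonzero map $\Theta_{\chi}\rightarrow\Xi_{\chi}$ to an injection you need irreducibility of $\Theta_{\chi}$, which should be cited from the Bushnell--Henniart theory of cuspidal types.
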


After necessary preparations, Theorem \ref{thm:hard_version_main_result} is shown in Sections \ref{sec:cuspidality}, \ref{sec:rel_to_cusp_ind_data}. We wish to point out, that the injectivity of \eqref{eq:R_ramified_is_bijection} follows from the results of Section \ref{sec:restr_to_units_of_torus} and essentially does not use cuspidal types and the isomorphism $R_{\chi} \cong \pi_{\chi}$. We need them to prove surjectivity of \eqref{eq:R_ramified_is_bijection}. From Theorem \ref{thm:structure_thm_GL_2_ram} we deduce:
 
\begin{lm} \label{eq:Rchi_is_induction_from_ZUJ_of_Xichi}  Let $(E/F,\chi)$ be an admissible pair. Then
\begin{equation*}
R_{\chi,\lengthofv} = \cIndd\nolimits_{E^{\times} U_{\fJ}}^{G(F)} \Xi_{\chi,\lengthofv}. 
\end{equation*}
\end{lm}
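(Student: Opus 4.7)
The plan is to deduce the lemma essentially by a definition chase from Theorem \ref{thm:structure_thm_GL_2_ram}, first handling the minimal case and then reducing the general one to it by a twist.

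Suppose $(E/F,\chi)$ is a minimal admissible pair of odd level $m$, and $\lengthofv$ satisfies $m \le 2\lengthofv-1$, so that Theorem \ref{thm:structure_thm_GL_2_ram} applies. Since $X^m_{\underline{w}_m}(1)$ is a zero-dimensional reduced $k$-scheme, $\coh_c^0(-,\overline{\bQ}_{\ell})$ converts its disjoint-union decomposition into a direct sum. Applying it to the $G(F) \times (\tilde{I}_{m,\underline{w}_m}/I^m)$-equivariant decomposition of Theorem \ref{thm:structure_thm_GL_2_ram} gives
\[
\coh_c^0(X^m_{\underline{w}_m}(1), \overline{\bQ}_{\ell}) \;\cong\; \bigoplus_{g \in G(F)/E^{\times}U_{\fJ}} \coh_c^0(g\tilde{Y}_{\dot{w}}^m, \overline{\bQ}_{\ell}),
\]
where $G(F)$ permutes the summands via $G(F)/E^{\times}U_{\fJ}$ and the commuting right $\tilde{I}_{m,\underline{w}_m}/I^m$-action preserves each summand individually (because it does so on $\tilde{Y}_{\dot{w}}^m$ by Theorem \ref{thm:structure_thm_GL_2_ram} and the two actions commute).

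Next, I would extract the $\chi$-isotypic subspace for the right action, which commutes with the direct sum and with the left $G(F)$-action. The summand at $g = 1$ becomes $\Xi_{\chi,\lengthofv}$ by definition, and the total space becomes $\bigoplus_{g} g\cdot\Xi_{\chi,\lengthofv}$, with $G(F)$ permuting the summands transitively, the summand at $g=1$ stabilized by $E^{\times}U_{\fJ}$, and the action of this stabilizer there equal to $\Xi_{\chi,\lengthofv}$. This is exactly the standard realization of $\cIndd_{E^{\times}U_{\fJ}}^{G(F)} \Xi_{\chi,\lengthofv}$ via the open subgroup $E^{\times}U_{\fJ}$ (functions supported on finitely many left cosets, written out in terms of coset representatives), which yields the asserted isomorphism.

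For a general admissible $(E/F,\chi)$, I would write $\chi = \phi\chi'$ with $(E/F,\chi')$ minimal. By Definition \ref{Def:R_chi}, $R_{\chi,\lengthofv} = \phi R_{\chi',\lengthofv}$ and $\Xi_{\chi,\lengthofv} = \phi \Xi_{\chi',\lengthofv}$; since $\phi$ is inflated from $F^\times$ via $\det$, it defines a character of $G(F)$, and twisting by such a character commutes with compact induction. The minimal case thus delivers the general case. The whole argument is a bookkeeping exercise on the two commuting actions, so no step is genuinely hard; the only point worth a moment of care is the right-stability of each summand $g\tilde{Y}_{\dot{w}}^m$, which as noted follows from Theorem \ref{thm:structure_thm_GL_2_ram} and commutativity of the actions.
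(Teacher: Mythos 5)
Your argument is correct and is essentially the paper's own proof: the paper deduces the lemma in one line from Theorem \ref{thm:structure_thm_GL_2_ram} together with the commutativity of the left $G(F)$- and right $\tilde{I}_{m,\underline{w}_m}/I^m$-actions, which is exactly the decomposition-and-isotypic-part bookkeeping you spell out. Your extra remarks (zero-dimensionality turning the disjoint union into a direct sum, and the twist $\chi=\phi\chi'$ reducing the general admissible case to the minimal one via Definition \ref{Def:R_chi}) just make explicit what the paper leaves implicit.
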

\begin{proof}
It follows from Theorem \ref{thm:structure_thm_GL_2_ram} and the commutativity of the left and the right group actions on $X_{\underline{w}_m}^m(1)$.
\end{proof}

In Section \ref{sec:proof_of_thm_small_level_case} we also study the representations $R_{\chi,\lengthofv}$ for $\lengthofv \geq m+1$, where $m$ is the (odd) level of $\chi$. We determine the structure of $R_{\chi,\lengthofv}$ and give a recipe how to reconstruct $\chi$ (up to $\tau$-conjugacy) from $R_{\chi,\lengthofv}$.


\subsection{Unipotent traces}\label{sec:unipotent_traces} \mbox{}

From now on and until the end of Section \ref{sec:rel_to_cusp_ind_data} we assume $2 \lengthofv - 1 = m$. 

\begin{lm}\label{lm:UJm1_acts_trivial}
The central character of $R_{\chi}$ is $\chi|_{F^{\times}}$. The subgroup $U_{\fJ}^{m+1}$ acts trivially in $V_{\chi}$ and $V_{\chi}$ has dimension $(q-1)q^{\lengthofv - 1}$. 
\end{lm}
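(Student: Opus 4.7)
The plan is to verify the three assertions of the lemma in turn. For the central character, any $z \in F^{\times}$ is central in $G(\breve{E})$, so its left and right actions on $\tilde{Y}_{\dot{w}}^m$ coincide. Under the inclusion $F^{\times} \hookrightarrow \tilde{I}_{m,\underline{w}_m}$ as scalar matrices and the surjection of Lemma \ref{lm:right_group_action_on_gen_ADLV_ram_GL_2} onto $E^{\times}/U_E^{m+1}$, $z$ is sent to the class of $z \in F^{\times} \subseteq E^{\times}$. Hence $z$ acts on $V_{\chi}$ by $\chi(z)$, giving the central character $\chi|_{F^{\times}}$ for both $\Xi_{\chi}$ and $R_{\chi}$.

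For the triviality of $U_{\fJ}^{m+1}$, I would first verify $U_{\fJ}^{m+1} \subseteq I^m$ by direct comparison of entries, using $m + 1 = 2\lengthofv$ together with $\fp_F^{\lengthofv} \subseteq \fp_{\breve{E}}^{m+1}$ and $\fp_F^{\lengthofv+1} \subseteq \fp_{\breve{E}}^{m+1}$. The core step is then to show that for every $k$-rational point $x \in \tilde{Y}_{\dot{w}}^m$ and every $u \in U_{\fJ}^{m+1}$, the element $x^{-1}ux$ lies in $\tilde{I}_{m,\underline{w}_m}$ modulo $I^m$ and has trivial image in $E^{\times}/U_E^{m+1}$ under \eqref{eq:right_group_and_mod_rad_surjection}. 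Given this, the identity $u \cdot xI^m = xI^m \cdot (x^{-1}ux)$ combined with the $\chi$-eigenspace definition of $V_{\chi}$ forces $u$ to act as $\chi(1) = 1$. To prove the conjugation claim, I write $x = e_-(a)\dot{v}h \cdot y_i$ with $h \in I$ using the parametrization \eqref{eq:explicit_Cvm_param_GL_2_ram_case}; by normality of $I^m$ in $I$, conjugation by $h$ stays inside $I^m$, and conjugation of $U_{\fJ}^{m+1}$ by the antidiagonal $\dot{v}$ swaps its upper-right and lower-left entries with a $u^{\mp 2k}$-twist. The extra $t$-power in the bound $\fp_F^{\lengthofv+1}$ on the lower-left entries of $U_{\fJ}^{m+1}$ is exactly what keeps the result inside $I^m$ after this exchange, while the residual diagonal part lies in $1 + \fp_E^{m+1} = U_E^{m+1}$, giving trivial image in $E^{\times}/U_E^{m+1}$.

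For the dimension, once $U_{\fJ}^{m+1}$ acts trivially, $V_{\chi}$ is a module for the finite group $(E^{\times}U_{\fJ})/(F^{\times}U_{\fJ}^{m+1})$. I compute $\dim V_{\chi}$ by first reducing, via the transitive $\tilde{I}_{m,\underline{w}_m}/I_{m,\underline{w}_m}$-action on components (through the $y_i$-shifts), to the $\chi|_{I_{m,\underline{w}_m}/I^m}$-isotypic part of the permutation representation on $Y_{\dot{w}}^m(k)$. Then, applying Frobenius reciprocity together with the transformation formulas for the right $I_{m,\underline{w}_m}/I^m$-action on the coordinates $(a, A, C)$ of $Y_{\dot{w}}^m(k)$ (derived via \eqref{eq:commutativity_relations}), an orbit-stabilizer count yields $(q-1)q^{\lengthofv-1}$. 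This matches the index $[U_{\fJ} : U_{\fJ}^{\lengthofv}U_E]$, which is the expected dimension on the Bushnell--Henniart side for the cuspidal inducing datum $\Theta_{\chi}$.

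The main obstacle will be the conjugation analysis in the second step. The filtration bounds are tight --- $m + 1 = 2\lengthofv$ and conjugation by $\dot{v}$ shifts levels by $\pm 2k \approx \pm \lengthofv$ --- so the argument closes only because of the one extra $t$-power in $\fp_F^{\lengthofv+1}$ (versus $\fp_F^{\lengthofv}$) on the lower-left of $U_{\fJ}^{m+1}$, a mechanism in the same spirit as the tight case $m = 2\lengthofv - 1$ of Lemma \ref{lm:ADLV_ram_GL_2_key_computation}.
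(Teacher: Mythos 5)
Your strategy for the second and third assertions is genuinely different from the paper's: the paper deduces both from the adapted trace formula (Proposition \ref{lm:our_form_of_Boy_trace_formula} together with Lemma \ref{sublm:hga_comp_for_UJm1}(i) and Lemma \ref{lm:on_equation_for_a}), which gives $\tr(g;\Xi_{\chi})=(q-1)q^{\lengthofv-1}$ for \emph{every} $g\in U_{\fJ}^{m+1}$; the value at $g=1$ is the dimension, and trace equal to dimension for an operator of finite order forces the identity. Your reduction --- show that for each point $x$ of $\tilde{Y}_{\dot{w}}^m$ and each $g\in U_{\fJ}^{m+1}$ one has $gxI^m=xjI^m$ with $j\in\tilde{I}_{m,\underline{w}_m}$ of trivial image under \eqref{eq:right_group_and_mod_rad_surjection}, so that $g$ fixes every vector of the $\chi$-isotypic space --- is sound, and the claim it rests on is in fact true; but your justification of it does not work as written. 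In $x^{-1}gx$ with $x=e_-(a)\dot{v}hy_i$ the innermost conjugation is by $e_-(a)$, which you ignore; it already pushes the lower-left entry of $g-1$ down to $u$-valuation $2\lengthofv+1$ (terms $a(g_4-g_1)$). More importantly, the assertion that the extra $t$-power on the lower left of $U_{\fJ}^{m+1}$ keeps the $\dot{v}$-conjugate inside $I^m$ is false in general: after the swap the new upper-right entry has $u$-valuation only about $\lengthofv$ (at best $\lengthofv+1$ if one ignores the $e_-(a)$-contribution), whereas membership in $I^m$ requires valuation $\geq m=2\lengthofv-1$; so the containment already fails for $\lengthofv\geq 2$ (resp.\ $\lengthofv\geq 3$). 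What is true is only that the conjugate is congruent modulo $I^m$ to $e_+(\epsilon)$ with $\epsilon\in E$ and $v_u(\epsilon)\geq\lengthofv$. Consequently the outer conjugation by $h=e_0(C,D)e_+(A)e_-(B)$ cannot be dismissed by normality of $I^m$: one must conjugate $e_+(\epsilon)$ by $e_-(B)$ via \eqref{eq:commutativity_relations}, and the resulting corrections (of size $\epsilon B$ on the diagonal, $\epsilon B^2$ below it) land in $I^m$ only because on $Y_{\dot{w}}^m$ one has $B=u^{\lengthofv}C\tau(C)^{-1}$, i.e.\ $v_u(B)\geq\lengthofv$; for a general $h\in I$ this step fails, so the defining equations of $Y_{\dot{w}}^m$ must enter here. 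With these repairs the conjugate is upper triangular modulo $I^m$ with diagonal trivial in $E^{\times}/U_E^{m+1}$, and your conclusion for the trivial action of $U_{\fJ}^{m+1}$ does follow.

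For the dimension you only gesture at an orbit--stabilizer count. To complete it you must show that the right action of $I_{m,\underline{w}_m}/I^m$ on $Y_{\dot{w}}^m(k)$ is free, or at least that all stabilizers lie in the kernel of $\chi$; this can be extracted from Proposition \ref{prop:right_action_formula} together with $v_u(B)=\lengthofv$, and then $\sharp Y_{\dot{w}}^m(k)=(q-1)^2q^{\lengthofv-1+2m}$ divided by $\sharp\, I_{m,\underline{w}_m}/I^m=(q-1)q^{2m}$ gives $(q-1)q^{\lengthofv-1}$ orbits, hence the stated dimension of the isotypic part. As it stands this step is asserted rather than proved, whereas the paper reads the dimension off the solution count $\sharp S_{1,1}=(q-1)q^{\lengthofv-1}$ of Lemma \ref{lm:on_equation_for_a}. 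Your argument for the central character coincides with the paper's.
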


\begin{proof}
Elements of $F^{\times}$ act on $X_{\underline{w}_m}^m(1)$ in the same way from the left and from the right. As $R_{\chi}$ is the $\chi$-isotypic component of $\coh_c^0(X_{\underline{w}_m}^m(1),\overline{\bQ}_{\ell})$, the first statement of the lemma follows. The proof of the second statement is given in Section \ref{sec:computations_of_unipotent_traces}.
\end{proof}

By Lemma \ref{lm:UJm1_acts_trivial} we can consider $\Xi_{\chi}$ as a $E^{\times}U_{\fJ}/U_{\fJ}^{m+1}$-representation. Let $N_{\lengthofv}$ be the finite subgroup of $E^{\times}U_{\fJ}/U_{\fJ}^{m + 1}$, equipped with a descending filtration by subgroups $N_{\lengthofv}^i$ for $1 \leq i \leq \lengthofv + 1$ defined by

\[ N_{\lengthofv}^i := \matzz{1}{}{\fp_F^{i}}{1} \Bigg/ \matzz{1}{}{\fp_F^{\lengthofv + 1}}{1} \subseteq N_{\lengthofv} := \matzz{1}{}{\fp_F}{1} \Bigg/ \matzz{1}{}{\fp_F^{\lengthofv + 1}}{1}  \subseteq U_{\fJ}/U_{\fJ}^{m + 1}. \]

\begin{prop}\label{prop:unip_rep}
As $N_{\lengthofv}$-representations one has
\[ \Xi_{\chi} \cong \Ind\nolimits_1^{N_{\lengthofv}} 1 - \Ind\nolimits_{N_{\lengthofv}^{\lengthofv}}^{N_{\lengthofv}} 1 \cong \bigoplus_{\substack{\psi \in N_{\lengthofv}^{\vee} \\ \psi|_{N_{\lengthofv}^{\lengthofv}} \text{non-trivial} }} \psi. \]

\noindent In particular, $\Xi_{\chi}$ does not contain the trivial character on $N_{\lengthofv}^{\lengthofv}$.

\end{prop}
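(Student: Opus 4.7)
Since $N_{\lengthofv}$ is a finite abelian group, $\Xi_{\chi}|_{N_{\lengthofv}}$ is determined as an $N_{\lengthofv}$-module by its character. The virtual character of $\Ind_1^{N_{\lengthofv}} 1 - \Ind_{N_{\lengthofv}^{\lengthofv}}^{N_{\lengthofv}} 1$ takes the values $(q-1)q^{\lengthofv - 1}$ at $n=1$, $-q^{\lengthofv-1}$ at $n \in N_{\lengthofv}^{\lengthofv} \sm \{1\}$, and $0$ on $N_{\lengthofv} \sm N_{\lengthofv}^{\lengthofv}$; these multiplicities are non-negative on irreducibles, so the virtual rep is genuine and equals $\bigoplus_{\psi|_{N_{\lengthofv}^{\lengthofv}} \neq 1} \psi$. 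Thus it is enough to match $\tr(n\mid\Xi_{\chi})$ against these three values. The case $n=1$ is Lemma \ref{lm:UJm1_acts_trivial}.

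By Theorem \ref{thm:structure_thm_GL_2_ram} and Definition \ref{def:disc_subschemes_Yvm2}, $\tilde{Y}_{\dot{w}}^m$ is a finite disjoint union of $k$-rational points, so $\coh_c^0(\tilde{Y}_{\dot{w}}^m, \overline{\bQ}_{\ell})$ is the space of $\overline{\bQ}_{\ell}$-valued functions on $\tilde{Y}_{\dot{w}}^m(k)$, carrying commuting left $E^{\times}U_{\fJ}$- and right $\tilde{I}_{m,\underline{w}_m}/I^m$-actions. Since $U_{\fJ}^{m+1}$ acts trivially from both sides (Lemma \ref{lm:UJm1_acts_trivial}) and $F^{\times}$ acts by the scalar $\chi|_{F^{\times}}$, both actions factor through finite groups after quotienting by $F^{\times}$. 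Standard orthogonality then yields, for $n \in N_{\lengthofv}$,
\[ \tr(n \mid \Xi_{\chi}) \;=\; \frac{1}{|H|} \sum_{i \in H} \chi(i)^{-1} \cdot \#\bigl\{ x \in \tilde{Y}_{\dot{w}}^m(k) \colon n^{-1} x i = x \bigr\}, \]
where $H$ is any finite quotient of $\tilde{I}_{m,\underline{w}_m}/F^{\times}I^m$ through which $\chi$ factors.

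The next step is to compute these fixed-point counts in the coordinates of Definition \ref{def:disc_subschemes_Yvm2}: a point of $Y_{\dot{w}}^m$ is specified by a $k$-rational tuple $(a,C,D,A,B)$ satisfying the stated equations, and Lemma \ref{lm:right_group_action_on_gen_ADLV_ram_GL_2} describes $i$ as an upper-triangular matrix with entries $(i_1,i_2,\tau(i_1))$. For $n = e_-(\alpha)$ with $\alpha \in \fp_F^{j} \sm \fp_F^{j+1}$, the commutation relations \eqref{eq:commutativity_relations} let one expand $n^{-1} x i$ and match it, coordinate by coordinate modulo $u^{m+1}$, against $x$; the system forces $i_1 \in U_E^{\lengthofv}$ and an explicit congruence for $i$ in terms of $\alpha$ and $(a,C)$. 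Summing $\chi(i)^{-1}$ over the resulting family of $i$ reduces to a Gauss-type sum of $\chi$ over $U_E^{\lengthofv}/U_E^{m+1}$, which by \eqref{eq:BH_choice_identity_between_chi_and_psi} and the minimality of $(E/F,\chi)$ vanishes unless the leading-order term of $\alpha$ lives in $\fp_F^{\lengthofv}$, i.e.\ unless $n \in N_{\lengthofv}^{\lengthofv}$; in the latter case the sum equals $-q^{\lengthofv - 1}$ after multiplying by $|H|^{-1}$ and accounting for the total number of $x$ involved.

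The main obstacle is the bookkeeping of this last step: one has to propagate the perturbation by $n$ through the non-abelian coordinates $(C,D,A,B)$ modulo $u^{m+1}$ and verify that only the leading-order contribution of $\alpha$ survives the truncation afforded by $2\lengthofv - 1 \geq m$, in the spirit of the cancellation arguments already used in Lemma \ref{lm:ADLV_ram_GL_2_key_computation}. By the twist compatibility built into Definition \ref{Def:R_chi}, together with the observation that $\phi(\det n) = 1$ for $n \in N_{\lengthofv}$, it suffices to handle minimal pairs with $m = 2\lengthofv - 1$. The detailed execution is of the trace-formula type deferred to Section \ref{sec:applications_of_the_trace_formula}.
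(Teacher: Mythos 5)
Your plan is correct and is essentially the paper's own proof: the proposition is deduced by comparing $N_{\lengthofv}$-characters against the unipotent trace values (the paper's Lemma \ref{eq:claim_for_unip_traces}, whose $n=1$ case is Lemma \ref{lm:UJm1_acts_trivial}), and those values are established by exactly the Boyarchenko-type fixed-point count on the zero-dimensional variety that you sketch and defer to Section \ref{sec:applications_of_the_trace_formula}. (One small point of mechanism: for $n \notin N_{\lengthofv}^{\lfloor \lengthofv/2 \rfloor + 1}$ the trace vanishes because the fixed-point sets are empty --- equation \eqref{eq:Gl_1_v1} has no solutions --- and only in the intermediate range does the character-sum cancellation you describe take over; the minimality of $(E/F,\chi)$, rather than the choice \eqref{eq:BH_choice_identity_between_chi_and_psi}, is what is used, but the outcome is as you state.)
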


\begin{proof} The proposition follows from Lemma \ref{eq:claim_for_unip_traces} by comparing the traces of $N_{\lengthofv}$-representations on the left and the right side. \end{proof}

\begin{lm}\label{eq:claim_for_unip_traces}
For $g \in N_{\lengthofv}$ we have: 
\begin{equation} 
\tr(g; \Xi_{\chi}) = \begin{cases} (q-1)q^{\lengthofv - 1} & \text{if $g = 1$} \\ -q^{\lengthofv - 1} & \text{if $g \in N_{\lengthofv}^{\lengthofv} \sm \{1\}$} \\ 0 & \text{if $g \in N_{\lengthofv} \sm N_{\lengthofv}^{\lengthofv}$.} \end{cases} 
\end{equation}
\end{lm}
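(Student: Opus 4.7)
Since $\tilde{Y}_{\dot w}^m$ is a disjoint union of $k$-rational points by Theorem~\ref{thm:structure_thm_GL_2_ram}, the cohomology $\coh_c^0(\tilde{Y}_{\dot w}^m, \overline{\bQ}_\ell)$ is simply the permutation representation of $G(F) \times (\tilde{I}_{m,\underline{w}_m}/I^m)^{\mathrm{op}}$ on $\tilde{Y}_{\dot w}^m(k)$. By Lemma~\ref{lm:UJm1_acts_trivial} the right action factors through a finite quotient $K$ of $\tilde{I}_{m,\underline{w}_m}/I^m$ on which $\chi$ is defined. Applying the standard $\chi$-projector gives the trace formula
\begin{equation*}
\tr(g; \Xi_\chi) \;=\; \frac{1}{|K|} \sum_{i \in K} \chi(i)^{-1} \cdot \#\{\, x \in \tilde{Y}_{\dot w}^m(k) : g \cdot x \cdot i = x \,\},
\end{equation*}
reducing the statement to an explicit point count weighted by $\chi^{-1}$.

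The strategy is then as follows. First, using the decomposition $\tilde{Y}_{\dot w}^m = \coprod_{j \in \bZ} Y_{\dot w}^m \cdot y_j$ and the fact that the $y_j$ are eigenvectors for the $E^\times$-quotient of the right action, I would reduce the count to fixed points within the single finite piece $Y_{\dot w}^m$. Second, using the parametrization $\psi_{\dot v}^m(a, C, D, A, B)$ of~\eqref{eq:explicit_Cvm_param_GL_2_ram_case}, the defining equations~\eqref{eq:Def_of_Y_m} of $Y_{\dot w}^m$, and writing $g = e_-(c)$ and $i$ in the upper-triangular form of Lemma~\ref{lm:right_group_action_on_gen_ADLV_ram_GL_2}, the condition $g x i \equiv x \pmod{I^m}$ becomes $i^{-1} \equiv x^{-1} e_-(c) x \pmod{I^m}$. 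Direct computation via $\dot v^{-1} e_-(c) \dot v = e_+(c u^{-2k})$ and the commutation relations~\eqref{eq:commutativity_relations} then yields a system of congruences in $k[u]/(u^{m+1})$ tying the entries $(i_1, i_2)$ of $i$ to $(c, A, C)$.

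Third, I would stratify by $v_t(c)$ and execute the resulting character sum. Three cases are expected: when $c = 0$, every point is fixed and the count recovers $\dim \Xi_\chi = (q-1) q^{\lengthofv - 1}$ by Lemma~\ref{lm:UJm1_acts_trivial}; when $v_t(c) = \lengthofv$, the congruence forces $i_1$ to lie in a coset of $U_E^m/U_E^{m+1}$, and the resulting sum $\sum \chi(i_1)^{-1}$ is a Gauss-type sum on the deepest layer of $\chi$, which by minimality of $(E/F, \chi)$ evaluates to $-q^{\lengthofv - 1}$; when $1 \leq v_t(c) < \lengthofv$, the restriction on $i_1$ is to a larger subgroup of $U_E/U_E^{m+1}$ on which $\chi$ is still non-trivial (again by minimality), and the sum vanishes completely.

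\textbf{Main obstacle.} The principal technical hurdle is the bookkeeping in the coordinate computation of Step~2: the interaction between the left action of $e_-(c)$ and the coordinate $B = u^{\lengthofv} C \tau(C)^{-1}$ (of $u$-valuation exactly $\lengthofv$) via the commutation relations produces secondary terms such as $e_-(-B^2 \alpha / M)$ whose $u$-valuations must be carefully compared to $m+1 = 2\lengthofv$. These terms sit exactly at the boundary of $I^m$, so the precise cutoff between the ``vanishing'' and ``non-vanishing'' strata is delicate; getting the correct Gauss sum value $-q^{\lengthofv-1}$ in the middle case (rather than, say, something off by a factor of $q$ or a sign) will require invoking the minimality hypothesis through the exact identification of the relevant character of $U_E^m/U_E^{m+1}$ with the one determined by the stratum datum.
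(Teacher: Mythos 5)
Your plan is essentially the paper's proof: the paper also runs Boyarchenko's fixed-point formula (Lemma \ref{lm:Boy_trace_formula}), reduces from $\tilde{Y}_{\dot{w}}^m$ to $Y_{\dot{w}}^m$ exactly as you propose (Lemma \ref{eq:def_of_xi_chi}), eliminates the coordinates $C,A,i_2$ once and for all in Proposition \ref{lm:our_form_of_Boy_trace_formula} (so the ``bookkeeping'' you flag as the main obstacle is a shared cost with all the other trace computations, not specific to this lemma), handles $g=1$ by Lemma \ref{lm:UJm1_acts_trivial}, and then stratifies the unipotent $g$ by depth and evaluates character sums on $U_E/U_E^{m+1}$.

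One caveat on your third step, where your stated mechanism diverges from what actually happens. The vanishing on $N_{\lengthofv}\sm N_{\lengthofv}^{\lengthofv}$ splits into two regimes. For $g\in N_{\lengthofv}\sm N_{\lengthofv}^{\lfloor \lengthofv/2\rfloor+1}$ there is no character-sum cancellation at all: the Iwahori-level equation \eqref{eq:Gl_1_v1} reads $-g_3\equiv 0 \mod u^{\lengthofv+1}$ and has no solutions (equivalently, $g$ is not conjugate into $U_EU_{\fJ}^{\lengthofv}$, cf. Lemmas \ref{lm:charac_of_conjugacy_to_nonsplit_ram_torus} and \ref{lm:on_equation_for_a}), so every $S_{g,i_1}$ is empty and the trace is zero for geometric reasons. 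For $g\in N_{\lengthofv}^{\lfloor \lengthofv/2\rfloor+1+\alpha}\sm N_{\lengthofv}^{\lfloor \lengthofv/2\rfloor+2+\alpha}$ the constraint on $i_1$ is not to a subgroup but to a difference $A_{\alpha}\sm A_{\alpha+1}$ of $\tau$-invariant subgroups (Lemma \ref{lm:sizes_of_sets_S_unip_traces}); the vanishing then needs the additional observation that the multiplicity $\sharp S_{g,i_1}=c(\alpha)$ is constant on this stratum, so the sum telescopes into two full-subgroup sums, each of which dies because $\chi$ is non-trivial on $U_E^m/U_E^{m+1}\subseteq A_{\alpha+1}\subseteq A_{\alpha}$. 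The deepest case $g\in N_{\lengthofv}^{\lengthofv}\sm\{1\}$ works as you say, with constant multiplicity $q^{\lengthofv-1}$ and $\sum_{i_1\in U_E^m/U_E^{m+1}\sm\{1\}}\chi(i_1)=-1$; note that all of this uses only that $\ell(\chi)=m$ exactly (non-triviality on $U_E^m$), not the full minimality of the pair, and it is a plain abelian character sum rather than a Gauss sum.
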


\begin{proof}
The proof is given in Section \ref{sec:computations_of_unipotent_traces}. 
\end{proof}

\begin{cor}\label{cor:irreducibility_of_XI_chi}
$\Xi_{\chi}$ is irreducible as $B$-representation, where $B \subseteq U_{\fJ}$ is the subgroup consisting of lower triangular matrices. 
\end{cor}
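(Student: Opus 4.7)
Since $U_{\fJ}^{m+1}$ acts trivially on $V_\chi$ by Lemma \ref{lm:UJm1_acts_trivial}, the $B$-action factors through the finite quotient $B/(B \cap U_{\fJ}^{m+1})$, which is generated by $N_\lengthofv$ (the image of the unipotent part of $B$) together with the full diagonal torus $T_{\diag} := \matzz{\caO_F^\times}{}{}{\caO_F^\times} \subseteq B$. My plan is to show that any non-zero $B$-stable subspace $V' \subseteq V_\chi$ equals $V_\chi$, by combining two inputs: (1) the explicit multiplicity-free $N_\lengthofv$-decomposition $V_\chi = \bigoplus_\psi V_\psi$ furnished by Proposition \ref{prop:unip_rep}, indexed by the characters $\psi$ of $N_\lengthofv$ non-trivial on $N_\lengthofv^\lengthofv$; together with (2) transitivity of the conjugation $T_{\diag}$-action on this indexing set of characters.

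From (1), $N_\lengthofv$-stability forces $V' = \bigoplus_{\psi \in S} V_\psi$ for some non-empty subset $S$ of characters. The torus $T_{\diag}$ normalizes $N_\lengthofv$ and so permutes the $V_\psi$'s; hence $T_{\diag}$-stability of $V'$ forces $S$ to be $T_{\diag}$-stable. By (2), $S$ must then equal the whole indexing set, and $V' = V_\chi$.

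The transitivity assertion (2) reduces to an elementary calculation. Conjugation gives $\matzz{\alpha}{}{}{\delta}\matzz{1}{}{x}{1}\matzz{\alpha}{}{}{\delta}^{-1} = \matzz{1}{}{\alpha^{-1}\delta x}{1}$. After identifying $N_\lengthofv$ additively with $\fp_F/\fp_F^{\lengthofv+1}$ and its character group with $\fp_F^{-\lengthofv}/\caO_F$ via a fixed additive character of $F$ of level one (in the same spirit as \eqref{eq:characs_of_UJ_fromBH}), the characters non-trivial on $N_\lengthofv^\lengthofv$ correspond precisely to the subset $\varpi^{-\lengthofv}\caO_F^\times / \caO_F$. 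The $T_{\diag}$-action becomes multiplication by $\alpha^{-1}\delta$, and as $(\alpha,\delta)$ varies over $\caO_F^\times \times \caO_F^\times$ this factor exhausts all of $\caO_F^\times$, which visibly acts transitively on $\varpi^{-\lengthofv}\caO_F^\times/\caO_F$ (multiplication by $u'/u \in \caO_F^\times$ sends $u\varpi^{-\lengthofv}$ to $u'\varpi^{-\lengthofv}$). I do not anticipate any real obstacle; the only care needed is in setting up the character-theoretic identifications correctly, after which the argument is purely a Clifford-style compatibility of (1) and (2).
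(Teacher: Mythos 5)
Your argument is correct and is essentially the proof the paper intends: the paper simply defers to \cite{Iv2} Corollary 4.12, which is exactly this Clifford-type argument, namely the multiplicity-free decomposition of $\Xi_{\chi}|_{N_{\lengthofv}}$ furnished by Proposition \ref{prop:unip_rep} combined with transitivity of the diagonal-torus conjugation action on the characters of $N_{\lengthofv}$ nontrivial on $N_{\lengthofv}^{\lengthofv}$. One cosmetic remark: the index set of those characters is $t^{-\lengthofv}\caO_F^{\times}/\caO_F$ (powers of the uniformizer $t$ of $F$), not $\varpi^{-\lengthofv}\caO_F^{\times}/\caO_F$ with the paper's matrix $\varpi$ (for which $\varpi^2 = t$); this does not affect the transitivity computation or the conclusion.
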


\begin{proof}
The proof is the same as the proof of \cite{Iv2} Corollary 4.12 (using Proposition \ref{prop:unip_rep} instead of \cite{Iv2} Proposition 4.10).
\end{proof}


\subsection{Some character theory} \label{sec:some_character_theory} \mbox{}

In this section we work relative to a fixed character $\chi$ of $E^{\times}$ of odd level $m \geq 1$. We write $\chi^{\tau} := \chi \circ \tau$.

\subsubsection{Admissibility of $(E/F,\chi)$}  

\begin{lm}\label{lm:non_triv_same_as_norm_fact}
The following hold: 
\begin{itemize}
\item[(i)]   $\chi|_{U_E^m}$ does not factor through the norm $N_{E/F}$.
\item[(ii)] $\chi|_{U_E^m} \neq \chi^{\tau}|_{U_E^m}$.
\end{itemize}
\end{lm}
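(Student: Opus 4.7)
My plan is to prove (ii) directly via an associated-graded calculation on $U_E^m$, and to deduce (i) as a formal consequence of (ii).

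The implication (ii) $\Rar$ (i) is immediate: if $\chi|_{U_E^m} = \psi \circ \N_{E/F}|_{U_E^m}$ for some function $\psi$ on the image of the norm, then since $\N_{E/F}\circ\tau = \N_{E/F}$, we would have $\chi^{\tau}|_{U_E^m} = \psi \circ \N_{E/F}|_{U_E^m} = \chi|_{U_E^m}$, contradicting (ii). So it suffices to establish (ii).

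For (ii), the key is to analyze how $\tau$ acts on the graded quotient $U_E^m/U_E^{m+1}$. Via the identification $1 + u^m c \mapsto c \bmod u$ this quotient is $k$, and since $\tau(u) = -u$ and $m$ is odd, $\tau$ acts as multiplication by $-1$. The map $x \mapsto x\tau(x)^{-1}$ on $U_E^m$ thus sends
\[ 1+u^m c \,\mapsto\, (1+u^m c)(1-u^m c)^{-1} \equiv 1 + 2u^m c \pmod{U_E^{m+1}}, \]
and because $\charac k \neq 2$, the map $c \mapsto 2c$ is a bijection on $k$, so the induced map on graded pieces is surjective.

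Now assume for contradiction that $\chi|_{U_E^m} = \chi^{\tau}|_{U_E^m}$. Then $\chi$ is trivial on the subset $\{x\tau(x)^{-1} : x \in U_E^m\} \subseteq U_E^m$, and combined with the triviality of $\chi$ on $U_E^{m+1}$ (from $\ell(\chi) = m$), the surjectivity established above forces $\chi$ to be trivial on all of $U_E^m$, contradicting $\ell(\chi) = m$. The only delicate point is that both hypotheses of the lemma enter crucially: odd $m$ is what makes $\tau$ act as $-1$ rather than $+1$ on the graded piece, and $\charac k \neq 2$ is what makes $(1-\tau)$ surjective on it. If either hypothesis failed the graded piece would be entirely $\tau$-fixed and the argument would collapse.
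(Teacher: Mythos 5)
Your proof is correct and follows essentially the same route as the paper: the deduction of (i) from (ii) via $\N_{E/F}\circ\tau=\N_{E/F}$ is identical, and your computation $\chi(x\tau(x)^{-1})=1$ with $x\tau(x)^{-1}\equiv 1+2u^m c \bmod U_E^{m+1}$ is just a structural rephrasing of the paper's step $\chi((1+u^mx)^2)=\chi(1+2u^mx)=1$, using oddness of $m$ and $\charac k\neq 2$ at exactly the same points. No gaps.
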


\begin{proof}
First we show (ii): Assume $\chi(1 + u^m x) = \chi(1 - u^m x)$ for all $x \in k$. As $(1 - u^m x)^{-1} \in (1+u^m x)U_E^{2m}$ and as $\chi$ has level $m \geq 1$, we deduce $1 = \chi((1 + u^m x)^2) = \chi(1+ u^m 2x)$ for all $x \in k$. As $\charac E \neq 2$, we obtain a conradiction to our assumption $\ell(\chi) = m$. Now we deduce (i) from (ii): assume that $\chi|_{U_E^m}$ factors through the norm, i.e., $\chi = \chi^{\prime} \circ N_{E/F}$ on $U_E^m$. Then $\chi^{\tau}(x) = \chi^{\prime}(\N_{E/F}((\tau(x))) = \chi^{\prime}(N_{E/F}(x)) = \chi(x)$, which contradicts (ii).
\end{proof}

\subsubsection{Filtration on $U_E$} \label{sec:subsub_filtration_on_UE}
We have the disjoint decomposition

\[ U_E = U_F U_E^{m+1} \cup \bigcup_{\alpha=0}^{\lengthofv-1} (U_F U_E^{2\alpha + 1} \sm  U_F U_E^{2\alpha + 3}).  \]


\noindent Note that $U_FU_E^{2\alpha + 1} = U_FU_E^{2\alpha}$. 


\subsubsection{Index of coincidence for characters}

\begin{Def}
For a character $\theta$ of $E^{\times}$, which coincides with $\chi$ on $F^{\times}U_E^{m+1}$, we define the integer $i(\theta) = i_{\chi}(\theta)$ to be the smallest integer $i \geq 0$, such that $\theta|_{F^{\times} U_E^i} = \chi|_{F^{\times} U_E^i}$ or $\theta|_{F^{\times} U_E^i} = \chi^{\tau}|_{F^{\times} U_E^i}$. 
\end{Def}

Observe that $0 \leq i(\theta) \leq m+1$ and $i(\theta)$ is always even. 


\subsubsection{Modifications of characters} Fix some integer $0 \leq \alpha < \lengthofv$. Consider the $k$-algebra 

\[R_{\alpha} := \caO_E/\fp_E^{m-2\alpha} = k[u]/u^{m- 2\alpha}.\] 

\noindent The $\tau$-invariants of it are $R_{\alpha}^{\langle \tau \rangle} = k[t]/t^{\lengthofv - \alpha}$. Consider the subset 

\[ R_{\alpha}^{\langle \tau \rangle, \prime} := \{ s \in R_{\alpha}^{\langle \tau \rangle} \colon s \equiv \pm 1 \mod u^{m + 1 - 2(2\alpha + 1)} \} \]

\noindent of $R_{\alpha}^{\langle \tau \rangle}$ (note that $R_{\alpha}^{\langle \tau \rangle, \prime}  = R_{\alpha}^{\langle \tau \rangle}$ if $2 \alpha + 1 \geq \lengthofv$, or equivalently, $\alpha \geq \lfloor \frac{\lengthofv}{2} \rfloor$).

\begin{prop}\label{lm:description_of_special_characters} 
Let $0 \leq \alpha < \lengthofv$. Let $s \in R_{\alpha}^{\langle \tau \rangle, \prime}$. There is a unique character $\chi_s$ of $F^{\times} U_E^{2\alpha + 1}$, such that the following hold:
\begin{itemize}
\item[(i)] $\chi_s$ coincides with $\chi$ on $F^{\times} U_E^{m+1}$.
\item[(ii)] if $\alpha < \lfloor \frac{\lengthofv}{2} \rfloor$, then $\chi_s$ coincides on $F^{\times} U_E^{2(2\alpha + 1)}$ with $\chi \circ \tau^i$, where $s \equiv (-1)^i \mod u$.
\item[(iii)] $\chi_s(1 + u^{2\alpha + 1}h) = \chi(1 + u^{2\alpha + 1}hs)$ for all $h \in \caO_F$.
\end{itemize}
Conversely, let $\theta$ be a character of $F^{\times} U_E^{2\alpha + 1}$, which coincides with $\chi$ or $\chi^{\tau}$ on $F^{\times} U_E^{\min\{ m+1, 2(2\alpha + 1)\}}$. Then there is a unique $s \in R_{\alpha}^{\langle \tau \rangle, \prime}$ such that $\theta = \chi_s$.
\end{prop}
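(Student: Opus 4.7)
My plan is to establish both directions of the proposition simultaneously by combining an explicit construction of $\chi_s$ with a matching cardinality count. First I would enumerate both sides. Using $F^\times \cap U_E^{2\alpha+1} = U_F^{\alpha+1}$, an elementary computation gives $|F^\times U_E^{2\alpha+1}/F^\times U_E^{m+1}| = q^{\lengthofv-\alpha}$, which matches $|R_\alpha^{\langle\tau\rangle,\prime}| = q^{\lengthofv-\alpha}$ in the case $\alpha \geq \lfloor\lengthofv/2\rfloor$. In the complementary case one has $m \geq 2(2\alpha+1)$, and the extensions of $\chi|_{F^\times U_E^{2(2\alpha+1)}}$ respectively $\chi^\tau|_{F^\times U_E^{2(2\alpha+1)}}$ to $F^\times U_E^{2\alpha+1}$ each form a torsor of size $q^{\alpha+1}$; Lemma \ref{lm:non_triv_same_as_norm_fact}(ii) propagates from $U_E^m$ up to the larger $U_E^{2(2\alpha+1)}$, forcing these torsors to be disjoint, with total $2q^{\alpha+1} = |R_\alpha^{\langle\tau\rangle,\prime}|$.

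For the construction of $\chi_s$, write $s \equiv (-1)^i \pmod u$ with $i\in\{0,1\}$. I would prescribe $\chi_s$ to coincide with $\chi^{\tau^i}$ on $F^\times U_E^{\min(m+1,\,2(2\alpha+1))}$, and to satisfy formula (iii) on $1 + u^{2\alpha+1}\caO_F$. The first prescription is exactly condition (i) when $\alpha \geq \lfloor\lengthofv/2\rfloor$ (since there $\chi$ and $\chi^\tau$ automatically agree on $F^\times U_E^{m+1}$) and is exactly (ii) when $\alpha < \lfloor\lengthofv/2\rfloor$. Consistency of the two prescriptions on their overlap boils down to the congruence
\[
1 + u^{2\alpha+1}hs \;\equiv\; \tau^i(1 + u^{2\alpha+1}h) \pmod{U_E^{m+1}} \qquad \text{for } h \in \fp_F^{\alpha+1},
\]
which follows at once from $s - (-1)^i \in u^{m+1-2(2\alpha+1)}\caO_E$, $h \in u^{2\alpha+2}\caO_F$, and $\tau(u^{2\alpha+1}h) = -u^{2\alpha+1}h$.

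The main obstacle will be checking that the above prescription extends multiplicatively to a genuine character of $F^\times U_E^{2\alpha+1}$. The decisive identity to verify is
\[
\chi(1 + u^{2\alpha+1}h_1 s)\cdot \chi(1 + u^{2\alpha+1}h_2 s) \;=\; \chi_s\bigl((1+u^{2\alpha+1}h_1)(1+u^{2\alpha+1}h_2)\bigr)
\]
for $h_1, h_2 \in \caO_F$. Expanding the left-hand side as $\chi\bigl(1 + u^{2\alpha+1}(h_1+h_2)s + u^{4\alpha+2}h_1 h_2 s^2\bigr)$, the cross-term $u^{4\alpha+2}h_1 h_2 s^2$ is shown to coincide modulo $U_E^{m+1}$ with the $\tau$-invariant element $u^{4\alpha+2}h_1 h_2 = t^{2\alpha+1}h_1 h_2 \in \fp_F^{2\alpha+1}$, using $s^2 - 1 \in u^{m+1-2(2\alpha+1)}\caO_E$ (a direct consequence of $s \equiv \pm 1 \pmod{u^{m+1-2(2\alpha+1)}}$); the value of $\chi_s$ on this $\tau$-invariant factor is then consistently given by (i). A short filtration argument shows that the subgroup generated by $F^\times U_E^{\min(m+1,\,2(2\alpha+1))}$ and $1 + u^{2\alpha+1}\caO_F$ exhausts $F^\times U_E^{2\alpha+1}$, so $\chi_s$ extends uniquely to the whole group. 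Injectivity of $s \mapsto \chi_s$ is immediate from (iii) together with the non-triviality of $\chi|_{U_E^m}$, and surjectivity onto the set of characters $\theta$ satisfying the hypotheses follows from the matched cardinalities.
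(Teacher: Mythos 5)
Your proposal is correct and follows essentially the same route as the paper: there, $\chi_s$ is likewise built by splitting each element into a $\tau$-invariant factor in $U_F$ times a factor $1+u^{2\alpha+1}h$ (Lemma \ref{sublm:zerl_in_UFUE2alpha1}, which is the precise content of your ``short filtration argument'' and of the evaluation of $\chi_s$ on the product in your decisive identity), multiplicativity is verified by exactly your cross-term computation using $s^2\equiv 1$ and $s\equiv\pm1\bmod u^{m+1-2(2\alpha+1)}$, and the converse is settled by injectivity of $s\mapsto\chi_s$ together with the same counts $q^{\lengthofv-\alpha}$ resp. $2q^{\alpha+1}$. The only cosmetic slip is that for $\alpha\geq\lfloor\lengthofv/2\rfloor$ an element $s\in R_\alpha^{\langle\tau\rangle,\prime}$ need not be congruent to $\pm1\bmod u$, so ``write $s\equiv(-1)^i$'' is not available there; but in that case your deep prescription reduces to condition (i) alone and the consistency check on the overlap is trivial, so the argument is unaffected.
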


Note that the expression $\chi(1 + u^{2\alpha + 1}hs)$ in (iii) is well-defined: Indeed, $\chi$ is trivial on $U_E^{m+1}$, and on the other hand if $\tilde{s}_1$, $\tilde{s}_2 \in \caO_F = k\llbracket t \rrbracket \subseteq k\llbracket u \rrbracket$ represent the same element $s$ in $R_{\alpha}^{\langle \tau \rangle,\prime}$, then $\tilde{s}_1 \equiv \tilde{s}_2 \mod u^{m - 2\alpha}$, hence $1 + u^{2\alpha + 1}h\tilde{s}_1 \equiv 1 + u^{2\alpha + 1}h\tilde{s}_2 \mod u^{m+1}$.

\begin{proof} 
Consider the subset

\[ U_E^{2\alpha + 1, \prime} := \{ x \in U_E^{2\alpha + 1} \colon \exists h \in \caO_F \text{ with } x \equiv 1 + u^{2\alpha + 1}h \mod U_E^{m+1} \} \subseteq U_E^{2\alpha + 1}. \]

\begin{lm}\label{sublm:zerl_in_UFUE2alpha1} 
Any element $x \in F^{\times} U_E^{2\alpha + 1}$ can be written as $x = z_x x^{\prime}$ with $z_x \in U_F$, $x^{\prime} \in U_E^{2\alpha + 1, \prime}$. Moreover, modulo $U_E^{m+1}$, $z_x, x^{\prime}$ are uniquely determined by $x$ and if $x = \sum_{i \geq 0} x_i u^i \in U_E^{2\alpha + 1}$, then $z_x \equiv \sum_{i \geq 0} x_{2i} u^{2i} \mod u^{m+1}$.
\end{lm}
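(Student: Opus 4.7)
The plan is to separate $x$ into its even- and odd-degree parts as a Laurent series in $u$. Since $\caO_F = k\llbracket t\rrbracket = k\llbracket u^2\rrbracket$, the group $U_F$ consists modulo $U_E^{m+1}$ precisely of units whose $u$-expansion has only even-degree terms, while $U_E^{2\alpha+1,\prime}$ modulo $U_E^{m+1}$ consists of elements $1+h$ with $h$ a sum of odd-degree monomials $c_j u^j$ for $j \in [2\alpha+1, m]$. These two subsets of $(\caO_E/u^{m+1})^\times$ are complementary, and this is what will drive both existence and uniqueness.

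For existence, I would first treat the case where $x$ has $u$-valuation $0$, so $x \in U_F U_E^{2\alpha+1}$. Writing $x \equiv \sum_{i=0}^m x_i u^i \mod u^{m+1}$ with $x_0 \in k^\times$, reducing modulo $u^{2\alpha+1}$ lands in the image of $F^\times$, so $x_j = 0$ for all odd $j < 2\alpha+1$. I would then define $z_x := \sum_{i \geq 0} x_{2i} u^{2i} \mod u^{m+1}$, which is a unit in $\caO_F$ since $x_0 \neq 0$. Then
\[ z_x^{-1} x - 1 \equiv z_x^{-1} \sum_{j \text{ odd}} x_j u^j \mod u^{m+1}, \]
and since $z_x^{-1} \in \caO_F$ contains only even-degree terms, the right-hand side is a sum of odd-degree monomials with exponents $\geq 2\alpha+1$. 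Hence $x' := z_x^{-1}x$ lies in $U_E^{2\alpha+1,\prime}$ modulo $U_E^{m+1}$, yielding $x = z_x x'$. The general case of $x \in F^\times U_E^{2\alpha+1}$ reduces to this one by factoring off an appropriate even power of $u$ lying in $F^\times$.

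For uniqueness, suppose $z_1 x_1' \equiv z_2 x_2' \mod U_E^{m+1}$ with $z_i \in U_F$ and $x_i' \in U_E^{2\alpha+1,\prime}$. Then $z_1 z_2^{-1} \equiv x_1'(x_2')^{-1} \mod U_E^{m+1}$; the left-hand side has only even-degree $u$-terms modulo $u^{m+1}$, while the right-hand side is of the form $1 + (\text{odd-degree series starting at }u^{2\alpha+1})$ modulo $u^{m+1}$. The intersection of these two descriptions is $\{1\}$, forcing $z_1 \equiv z_2$ and $x_1' \equiv x_2'$. The explicit formula $z_x \equiv \sum_{i \geq 0} x_{2i} u^{2i} \mod u^{m+1}$ for $x \in U_E^{2\alpha+1}$ then reads off directly from the construction in the existence step.

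There is no substantial obstacle: the whole argument is elementary manipulation of $k$-power series truncated at $u^{m+1}$, relying on the invertibility of the even part $z_x$ (guaranteed by $x_0 \neq 0$). The only point to verify is that multiplication by a unit of $\caO_F$ (purely even-degree) preserves the even/odd decomposition modulo $u^{m+1}$, which is immediate since it sends odd-degree series to odd-degree series.
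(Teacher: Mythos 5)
Your existence step is sound and is essentially the paper's own argument: decompose the truncated series into even and odd parts, note that for $x$ of valuation zero the odd part begins in degree $2\alpha+1$, take $z_x$ to be the even part (a unit of $\caO_F$), and check that $z_x^{-1}x \equiv 1+u^{2\alpha+1}h \mod u^{m+1}$ with $h \in \caO_F$; the closing formula for $z_x$ then reads off exactly as in the paper, where the same parity comparison is carried out on the product $z_x x' = x$. The preliminary reduction by an element of $F^{\times}$ also mirrors the paper's first line and is not where any issue lies.

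The uniqueness step, however, rests on a claim that is false in general: you assert that $x_1'(x_2')^{-1}$ is of the form $1 + (\text{odd-degree series starting at } u^{2\alpha+1})$ modulo $u^{m+1}$. But $U_E^{2\alpha+1,\prime}$ is \emph{not} closed under multiplication or inversion modulo $U_E^{m+1}$: inverting $1+u^{2\alpha+1}h_2$ creates even-degree terms in degrees $\geq 2(2\alpha+1)$, which survive modulo $u^{m+1}$ whenever $2(2\alpha+1)\leq m$. For instance, with $\alpha=0$ and $m\geq 3$, $(1+du)(1+cu)^{-1} \equiv 1+(d-c)u+c(c-d)u^2+\cdots$, which is not of the claimed shape when $c\neq 0$ and $c\neq d$. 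Consequently the dichotomy ``purely even versus $1+{}$odd'' cannot be applied to the two sides of $z_1z_2^{-1}\equiv x_2'(x_1')^{-1}$, and the step as written fails. The repair is immediate and uses exactly the mechanism of your existence step (and of the paper's proof): compare parities in the \emph{products} rather than in a quotient. Since each $z_i$ is purely even and each $x_i'-1$ purely odd, the even part of $z_ix_i'$ modulo $u^{m+1}$ is $z_i$ and the odd part is $z_i(x_i'-1)$; so $z_1x_1'\equiv z_2x_2' \mod u^{m+1}$ forces $z_1\equiv z_2$, and then $u^{2\alpha+1}h_1\equiv u^{2\alpha+1}h_2$, i.e. $x_1'\equiv x_2' \mod U_E^{m+1}$, which is the uniqueness statement.
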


\begin{proof}
Multiplying by an element in $F^{\times}$, we can assume $x \in U_E^{2\alpha + 1}$. Write $x = 1 + \sum_{i = 2\alpha + 1}^m x_i u^i + \cO(u^{m+1})$. As $x^{\prime}$ has to lie in $U_E^{2\alpha + 1,\prime} \subseteq U_E^{2\alpha + 1}$, also $z_x$ must lie in $U_E^{2\alpha + 1}$. Thus we  seek for two elements $z_x := 1 + \sum_{i = \alpha + 1}^{\lengthofv - 1} z_{2i}u^{2i} + \cO(u^{m+1})$ and $x^{\prime} = 1 + \sum_{i = \alpha}^{\lengthofv - 1} y_{2i + 1} u^{2i + 1} + \cO(u^{m+1})$ which have to satisfy 

\[ \left(1 + \sum_{i = \alpha + 1}^{\lengthofv - 1} z_{2i}u^{2i} + \cO(u^{m+1})\right)\left(1 + \sum_{i = \alpha}^{\lengthofv - 1} y_{2i + 1} u^{2i + 1} + \cO(u^{m+1})\right) = 1 + \sum_{i = 2\alpha + 1}^m x_i u^i + \cO(u^{m+1}). \]

\noindent Comparing the parity of the degrees we see that $z_{2i} = x_{2i}$. Further, a computation shows that $y_i$'s satisfying this equation exist and are uniquely determined by the $x_i$'s.
\end{proof}

Let now $s \in R_{\alpha}^{\langle \tau \rangle,\prime}$. For $x \in F^{\times} U_E^{2\alpha + 1}$ with decomposition $x = z_x x^{\prime}$ according to Lemma \ref{sublm:zerl_in_UFUE2alpha1}, set 
\[ \chi_s(x) := \chi(z_x)\chi(1 + u^{2\alpha + 1}h) \quad \text{where $x^{\prime} = 1 + u^{2\alpha + 1}h$ with $h \in \caO_F$}. \]
We show that $\chi_s$ is a character of $F^{\times} U_E^{2\alpha + 1}$. Let $x,y \in F^{\times} U_E^{2\alpha + 1}$ with decompositions $x = z_x x^{\prime}$, $y = z_y y^{\prime}$ as in Lemma \ref{sublm:zerl_in_UFUE2alpha1} and let $x^{\prime} = 1 + u^{2\alpha + 1}h_x$, $y^{\prime} = 1 + u^{2\alpha + 1}h_y$ (up to some elements in $U_E^{m+1}$). Write $A := u^{2\alpha + 1}(h_x + h_y)$, $B := u^{2(2\alpha + 1)}h_x h_y$. We compute

\begin{eqnarray*}
\chi_s(x)\chi_s(y) &=& \chi(z_x z_y (1 + u^{2\alpha + 1}sh_x) (1 + u^{2\alpha + 1}sh_y)) \\ 
&=& \chi(z_x z_y (1 + sA + s^2 B )) \\
&=& \chi(z_x z_y (1 + sA + B)),
\end{eqnarray*}

\noindent the last equation being true, as $s^2 \equiv 1 \mod u^{m+1 - 2(2\alpha + 1)}$. We have

\[ x^{\prime}y^{\prime} = 1 + A + B. \]

\noindent As $h_x,h_y \in \caO_F$, Lemma \ref{sublm:zerl_in_UFUE2alpha1} implies $z_{x^{\prime}y^{\prime}} = 1 + B$ (up to elements in $U_E^{m+1}$). We deduce

\[ (x^{\prime}y^{\prime})^{\prime} = x^{\prime}y^{\prime} z_{x^{\prime}y^{\prime}}^{-1} = 1 + A - AB (1 + B)^{-1}. \]

\noindent Now, $xy = z_x z_y z_{x^{\prime}y^{\prime}} (x^{\prime}y^{\prime})^{\prime}$ is the decomposition of $xy$ according to Lemma \ref{sublm:zerl_in_UFUE2alpha1} and we compute

\[ \chi_s(xy) = \chi(z_xz_yz_{x^{\prime}y^{\prime}}) \chi(1 + s (A - AB(1+B)^{-1})). \]

\noindent If $2\alpha + 1 \geq \lengthofv$, we have $B \in \cO(u^{m+1})$, hence all terms containing $B$ can be ignored and we deduce $\chi_s(x)\chi_s(y) = \chi_s(xy)$. Assume $2\alpha + 1 < \lengthofv$. Let $\sgn(s) := \pm 1$, if $s \equiv \pm 1 \mod u$. From the above, $s \equiv (-1)^{\sgn(s)} \mod u^{m+1 - 2(2\alpha + 1)}$ and $B \equiv 0 \mod u^{2(2\alpha + 1)}$ we deduce

\begin{eqnarray*} 
\chi_s(xy) &=& \chi(z_xz_y(1 + B)(1 + sA - (-1)^{\sgn(s)}AB(1+B)^{-1})) \\
&=& \chi(z_xz_y(1 + sA + B + sAB - (-1)^{\sgn(s)}AB)) \\
&=& \chi(z_xz_y(1 + sA + B)).
\end{eqnarray*}

\noindent This shows that $\chi_s$ is a character. Now, $\chi_s$ satisfies (i) and (iii) by definition and (ii) is immediate. This finishes the proof of the first part of the proposition. For the  converse statement, one shows by a simple computation that the map $s \mapsto \chi_s$ from $R_{\alpha}^{\langle \tau \rangle, \prime}$ to characters of $F^{\times} U_E^{2\alpha + 1}$ is injective. This completes the proof, as the number of elements in $R_{\alpha}^{\langle \tau \rangle, \prime}$ coincides with the number of characters $\theta$ of $F^{\times} U_E^{2\alpha + 1}$, which are equal to $\chi$ or $\chi^{\tau}$ on $F^{\times} U_E^{\min\{m+1,2(2\alpha + 1)\}}$ (if $2\alpha + 1 \geq \lengthofv$, then there are $q^{\lengthofv - \alpha}$ those, otherwise there are $2q^{\alpha + 1}$). \qedhere
\end{proof}


\subsubsection{Compatibility with changing $\alpha$}
Let $0 \leq \alpha < n$. Let $\theta$ be a character of $E^{\times}$, coinciding on $F^{\times}U_E^{\min \{ m+1,2(2\alpha + 1)\} }$ with $\chi$ or $\chi^{\tau}$. By Proposition \ref{lm:description_of_special_characters}, there is some $s(\theta,\alpha) \in R_{\alpha}^{\langle \tau \rangle, \prime}$ such that $\theta|_{F^{\times} U_E^{2\alpha + 1}} = \chi_{s(\theta,\alpha)}$. This construction is compatible with changing the level $\alpha$.

\begin{lm}\label{lm:character_modifications_techn_lemma} Let $0 \leq \alpha_1 \leq \alpha_2 < \lengthofv$. Let $\theta$ be a character of $E^{\times}$ coinciding on $F^{\times} U_E^{\min\{m+1,2(2\alpha_1 + 1)\}}$ with $\chi$ or $\chi^{\tau}$. Under the natural projection $R_{\alpha_1}^{\langle \tau \rangle} \rar R_{\alpha_2}^{\langle \tau \rangle}$,  $s(\theta,\alpha_1)$ maps to $s(\theta,\alpha_2)$. 
\end{lm}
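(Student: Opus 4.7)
The plan is to identify the image $s'$ of $s_1 := s(\theta,\alpha_1)$ in $R_{\alpha_2}^{\langle\tau\rangle}$ with $s(\theta,\alpha_2)$ via the uniqueness clause of Proposition \ref{lm:description_of_special_characters}. Concretely, I will verify (a) that $s' \in R_{\alpha_2}^{\langle\tau\rangle,\prime}$ (so that $\chi_{s'}$ is defined), and (b) that $\chi_{s'} = \theta|_{F^\times U_E^{2\alpha_2+1}}$; the uniqueness in Proposition \ref{lm:description_of_special_characters} then forces $s' = s(\theta,\alpha_2)$.

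Step (a) will be a routine check on congruences: if $\alpha_2 \geq \lfloor\lengthofv/2\rfloor$ then $R_{\alpha_2}^{\langle\tau\rangle,\prime} = R_{\alpha_2}^{\langle\tau\rangle}$ and there is nothing to show, while if $\alpha_2 < \lfloor\lengthofv/2\rfloor$ then also $\alpha_1 < \lfloor\lengthofv/2\rfloor$, and the defining congruence $s_1 \equiv \pm 1 \mod u^{m+1-2(2\alpha_1+1)}$ projects to the weaker congruence on $s'$ required for membership in $R_{\alpha_2}^{\langle\tau\rangle,\prime}$.

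The substantial step is (b). By Lemma \ref{sublm:zerl_in_UFUE2alpha1} every $x \in F^\times U_E^{2\alpha_2+1}$ decomposes modulo $U_E^{m+1}$ as $z(1 + u^{2\alpha_2+1}h)$ with $z \in U_F$ and $h \in \caO_F$; on the $U_F$-factor both $\chi_{s'}$ and $\theta$ agree with $\chi$, by Proposition \ref{lm:description_of_special_characters}(i) and by the standing hypothesis on $\theta$ respectively. So everything boils down to proving
\[ \theta(1 + u^{2\alpha_2+1}h) = \chi_{s'}(1 + u^{2\alpha_2+1}h) \quad \text{for all } h \in \caO_F. \]
The key observation will be the elementary identity $u^{2\alpha_2+1}h = u^{2\alpha_1+1}(t^{\alpha_2-\alpha_1}h)$, in which $h' := t^{\alpha_2-\alpha_1}h$ lies in $\caO_F$. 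Applying property (iii) of Proposition \ref{lm:description_of_special_characters} at level $\alpha_1$ rewrites the left side as $\chi(1+u^{2\alpha_1+1}h'\tilde s_1) = \chi(1+u^{2\alpha_2+1}h\tilde s_1)$ for any chosen lift $\tilde s_1 \in \caO_F$ of $s_1$. Since the projection $R_{\alpha_1}^{\langle\tau\rangle}\to R_{\alpha_2}^{\langle\tau\rangle}$ is simply reduction modulo $t^{\lengthofv-\alpha_2}$, the same element $\tilde s_1$ also serves as a lift of $s'$, and property (iii) at level $\alpha_2$ transforms the right side into the identical expression, yielding the equality.

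The only real delicacy will be the bookkeeping among the levels $m = 2\lengthofv - 1$, $2\alpha_i + 1$, $2(2\alpha_i+1)$ and the dyadic relation $t = u^2$; once one notices that a single lift $\tilde s_1 \in \caO_F$ works simultaneously at both levels, the verification in step (b) collapses to the telescoping identity above.
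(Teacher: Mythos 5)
Your proof is correct and takes essentially the same route as the paper's: the paper likewise computes $\theta(1+u^{2\alpha_2+1}h)=\chi_{s(\theta,\alpha_1)}(1+u^{2\alpha_2+1}h)=\chi(1+u^{2\alpha_2+1}\overline{s}_1h)$ using Proposition \ref{lm:description_of_special_characters}(iii) at the two levels with a common lift, and then concludes $\overline{s}_1=s(\theta,\alpha_2)$ from the uniqueness statement of that proposition. Your preliminary check that the image of $s(\theta,\alpha_1)$ lies in $R_{\alpha_2}^{\langle\tau\rangle,\prime}$ is a detail the paper leaves implicit.
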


\begin{proof}
Let $\overline{s}_1$ denote the image of $s_1$ in $R_{\alpha_2}^{\langle \tau \rangle, \prime}$. On $F^{\times} U_E^{2\alpha_2 + 1}$ we have 
\[ \theta(1 + u^{2\alpha_2 + 1}h) = \chi_{s_1}(1 + u^{2\alpha_2 + 1}h) = \chi(1 + u^{2\alpha_2 + 1}\overline{s}_1 h). \]
Thus on $F^{\times} U_E^{2\alpha_2 + 1}$ we have $\chi_{s_2} = \theta = \chi_{\overline{s}_1}$. By the uniqueness statement in Proposition \ref{lm:description_of_special_characters} we have $\overline{s}_1 = s_2$. \qedhere
\end{proof}


\subsubsection{Elementary modifications and distances}

\begin{Def}\label{def:el_mod_and_dist} For $s \in R_{\alpha}^{\langle \tau \rangle, \prime}$, we call the character $\chi_s$ of $F^{\times} U_E^{2\alpha + 1}$ constructed in Proposition \ref{lm:description_of_special_characters} an \emph{elementary modification} of $\chi$. Let $\theta$ be character of $E^{\times}$ coinciding with $\chi$ on $F^{\times} U_E^{m+1}$. Set

\[ \alpha_{\theta} := \min \{ \alpha \colon 0 \leq \alpha < \lengthofv, 2(2\alpha + 1) \geq i(\theta) \}, \]

\noindent i.e., $\alpha_{\theta}$ is the smallest integer such that $\theta$ restricted to $F^{\times} U_E^{2\alpha_{\theta} + 1}$ is an elementary modification of $\chi$. We define the \emph{distance} from $\chi$ to $\theta$ to be the (uniquely determined by Proposition \ref{lm:description_of_special_characters}) element $s(\theta) := s(\theta, \alpha_{\theta}) \in R_{\alpha_{\theta}}^{\langle \tau \rangle, \prime}$, such that $\theta$ coincides on $F^{\times} U_E^{2\alpha_{\theta} + 1}$ with $\chi_{s(\theta)}$.
\end{Def}

As $i(\theta)$ is an even integer $\leq m+1 = 2\lengthofv$, it follows easily that in any case $\alpha_{\theta} \leq \lfloor \frac{\lengthofv}{2} \rfloor$. (Moreover, one has $\alpha_{\theta} = \lfloor \frac{i(\theta)}{4} \rfloor$, but we will not use this). Further, $\alpha_{\chi} = \alpha_{\chi^{\tau}} = 0$ and $s(\chi) = 1$ and $s(\chi^{\tau}) = -1$.


\subsubsection{Quadratic distance}\label{sec:quadratic_distance}

Let $0 \leq \alpha < \lengthofv$. There is the norm map 

\[ \N_{\tau,\alpha} \colon R_{\alpha} \rar R_{\alpha}^{\langle \tau \rangle} \quad s \mapsto s\tau(s). \]

\begin{lm}\label{lm:image_of_norm_on_Ralpha}
The image $\im(N_{\tau, \alpha})$ of $\N_{\tau,\alpha}$ consists of precisely such elements $s \in R_{\alpha}^{\langle \tau \rangle}$ for which \\ $(-1)^{v_t(s)} \cdot s_{v_t(s)}$ is a square in $k^{\times}$, where $s_{v_t(s)}$ denotes the leading coefficient of $s$.
\end{lm}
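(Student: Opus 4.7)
The plan is to prove both inclusions by direct computation with coefficients, using the action $\tau(u) = -u$ together with the standard Hensel's lemma for square roots in local Artinian rings of residue characteristic $\neq 2$.

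First I would verify the forward inclusion. For $s = \sum_{i \geq v} a_i u^i \in R_\alpha$ with $a_v \neq 0$, expanding the product gives that the coefficient of $u^n$ in $s\tau(s)$ equals $\sum_{i+j = n}(-1)^j a_i a_j$. For $n$ odd, the pairs $(i,j)$ and $(j,i)$ cancel (since $(-1)^i + (-1)^j = 0$ when $i+j$ is odd), confirming $s\tau(s) \in R_\alpha^{\langle\tau\rangle}$. The smallest nonzero contribution arises at $n = 2v$ from the diagonal term $(v,v)$, yielding coefficient $(-1)^v a_v^2$. Hence $v_t(s\tau(s)) = v$ and the leading $t$-coefficient is $(-1)^v a_v^2$, so
\[
(-1)^{v_t(s\tau(s))} \cdot (s\tau(s))_{v_t(s\tau(s))} = (-1)^v \cdot (-1)^v a_v^2 = a_v^2,
\]
which is manifestly a nonzero square in $k^\times$.

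For the reverse inclusion, suppose $s' \in R_\alpha^{\langle\tau\rangle}$ is nonzero with $v := v_t(s')$ and $(-1)^v s'_v = a^2$ for some $a \in k^\times$. I would first factor $s' = u^{2v} s''$ in $R_\alpha$, where $s'' \in R_\alpha^{\langle\tau\rangle}$ has nonzero constant term $s'_v$. Then $(-1)^v s''$ has constant term $a^2$, which is a nonzero square in $k$. Since $\charac k \neq 2$, the standard Hensel/Newton inductive construction on coefficients produces a square root $\tilde{s} \in R_\alpha^{\langle\tau\rangle}$ of $(-1)^v s''$. Setting $s := u^v \tilde{s} \in R_\alpha$ and exploiting the $\tau$-invariance of $\tilde{s}$, one computes
\[
s \tau(s) = u^v \tilde{s} \cdot (-u)^v \tilde{s} = (-1)^v u^{2v} \tilde{s}^2 = (-1)^v u^{2v} \cdot (-1)^v s'' = u^{2v} s'' = s',
\]
exhibiting $s'$ as a value of $\N_{\tau,\alpha}$.

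The only point requiring care is bookkeeping with the truncation: one must check that the identity $s\tau(s) = s'$ is valid modulo $u^{m-2\alpha}$, i.e., that $\tilde{s}$ can be constructed in $R_\alpha^{\langle\tau\rangle} = k[t]/t^{\lengthofv-\alpha}$ with the required degree bound and that $u^v \tilde{s}$ fits inside $R_\alpha = k[u]/u^{m-2\alpha}$. Using $m = 2\lengthofv - 1$ (hence $m - 2\alpha = 2(\lengthofv-\alpha) - 1$), these degree bounds are straightforward to verify. I do not expect any genuine obstacle here beyond this bookkeeping.
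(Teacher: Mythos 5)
Your argument is correct, and the truncation bookkeeping you flag is exactly the only delicate point; your handling of it (constructing $\tilde{s}$ to full precision in $R_{\alpha}^{\langle\tau\rangle}=k[t]/t^{\lengthofv-\alpha}$ and multiplying by $u^{v}$) works, since $\tilde{s}^2=(-1)^v s''$ modulo $t^{\lengthofv-\alpha}=u^{m-2\alpha+1}$ is more than enough precision after multiplication by $u^{2v}$. The route differs from the paper's in packaging rather than substance: the paper disposes of the lemma in one line by invoking Lemma \ref{lm:charac_of_squares}, i.e.\ the classical description of the image of $\N_{E/F}$ in $k\llbracket t\rrbracket$ (an element lies in the norm image iff $(-1)^{v_t}\cdot(\text{leading coefficient})$ is a square), and then implicitly lifts an element of $R_{\alpha}^{\langle\tau\rangle}$ to $\caO_F$ and truncates back, using that $\N_{\tau,\alpha}$ is the reduction of $\N_{E/F}$. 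You instead never leave the Artinian quotient: the forward inclusion is a coefficientwise computation using $\tau(u)=-u$ (the diagonal term $(-1)^v a_v^2$ at degree $2v$), and the converse is a Newton/Hensel square-root construction in $k[t]/t^{\lengthofv-\alpha}$ valid because $\charac k\neq 2$. What the paper's route buys is brevity, at the cost of the (suppressed) compatibility of lifting/truncation with the norm; what your route buys is a self-contained finite-level argument, essentially re-proving the "well-known" Lemma \ref{lm:charac_of_squares} in the truncated setting. One cosmetic remark: as stated the lemma tacitly concerns nonzero elements of the image (for $s$ with $2v_u(s)\geq m-2\alpha$ the norm vanishes in $R_\alpha$ and has no leading coefficient); neither your proof nor the paper's needs to address this, since in the application only elements $s\tau(s)-1$ with $s\neq\pm 1$ occur.
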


\begin{proof} 
This follows immediately from Lemma \ref{lm:charac_of_squares}.\qedhere
\end{proof}

\begin{lm}\label{lm:charac_of_squares}
Let $x \in k\llbracket t \rrbracket \sm \{0\}$ with leading coefficient $x_{v_t(x)} \in k^{\times}$. Then $x$ is 
\begin{itemize}
\item[-] a square of an element of $k\llbracket t \rrbracket$ if  and only if $v_t(x)$ is even and $x_{v_t(x)}$ is a square in $k^{\times}$,
\item[-] a square of an element of $k\llbracket u \rrbracket$ if  and only if $x_{v_t(x)}$ is a square in $k^{\times}$,
\item[-] in the image of the norm map $\N_{E/F}$ if and only if $(-1)^{v_t(x)}x_{v_t(x)}$ is a square in $k^{\times}$.
\end{itemize}
\end{lm}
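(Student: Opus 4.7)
The plan is to handle all three bullets by the same two-step strategy: first factor out the leading monomial to reduce to a principal (1-unit) factor, and then apply Hensel's lemma to that factor. Hensel applies because $\charac k\neq 2$: the polynomial $T^2 - (1 + z)$ for $z$ in the maximal ideal has the simple root $T = 1$ modulo that ideal, hence lifts uniquely. In particular, every element of $1 + t\,k\llbracket t\rrbracket$ is a square in $k\llbracket t\rrbracket$, and every element of $1 + u\,k\llbracket u\rrbracket$ is a square in $k\llbracket u\rrbracket$.

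For the first bullet, the forward direction is immediate: if $x = y^2$ with $y\in k\llbracket t\rrbracket$, then $v_t(x) = 2v_t(y)$ is even and $x_{v_t(x)} = y_{v_t(y)}^2$. Conversely, writing $v_t(x) = 2n$ and $x_{2n} = c^2$, we have $x = t^{2n}c^2\tilde{x}$ with $\tilde{x}\in 1 + t\,k\llbracket t\rrbracket$, and $\tilde{x}$ is a square by Hensel. The second bullet is analogous: since $x\in F^\times$, we have $v_u(x) = 2v_t(x)$ automatically even, so $u^{v_t(x)}\in k\llbracket u\rrbracket$ and we can write $x = (u^{v_t(x)})^2 \cdot x_{v_t(x)} \cdot \tilde{x}$ with $\tilde{x}\in 1 + u\,k\llbracket u\rrbracket$; the latter is a square in $k\llbracket u\rrbracket$ by Hensel, so $x$ is a square iff $x_{v_t(x)}$ is.

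For the third bullet, I would first compute the effect of $\N_{E/F}$ on leading terms. If $y\in E^\times$ has $v_u(y) = m$ with leading coefficient $y_m$, then using $\tau(y) = \sum_i (-1)^i y_i u^i$ one obtains $v_u(\N_{E/F}(y)) = 2m$, hence $v_t(\N_{E/F}(y)) = m$, with leading $t$-coefficient $(-1)^m y_m^2$. Thus $(-1)^{v_t(\N_{E/F}(y))} \cdot (\text{leading coeff}) = y_m^2$ is a square in $k^\times$. For the converse, given $x\in F^\times$ with $(-1)^{v_t(x)}x_{v_t(x)} = c^2$, set $y_0 := c\,u^{v_t(x)}\in E^\times$; a direct computation gives $\N_{E/F}(y_0) = (-1)^{v_t(x)}c^2\,t^{v_t(x)} = x_{v_t(x)}t^{v_t(x)}$, so $x/\N_{E/F}(y_0) \in 1 + t\,k\llbracket t\rrbracket$. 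By the first bullet this quotient equals $w^2$ for some $w\in k\llbracket t\rrbracket\subseteq k\llbracket u\rrbracket$, and since $\tau$ fixes $F$ we have $\N_{E/F}(w) = w^2$; thus $x = \N_{E/F}(wy_0)$, proving $x\in \im(\N_{E/F})$.

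There is no real obstacle here: each statement reduces to Hensel plus bookkeeping of valuations and leading terms, the only subtlety being the sign $(-1)^{v_t(x)}$ in the norm bullet, which arises precisely because $\tau(u) = -u$ distorts the leading coefficient of the conjugate by a factor of $(-1)^{v_u(y)}$.
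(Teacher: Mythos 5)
Your proof is correct, and it fills in details the paper simply declares ``well-known'' (its proof of this lemma is a one-line citation of folklore): the reduction to a $1$-unit times the leading monomial plus Hensel's lemma (valid since $\charac k\neq 2$), together with the observation that $\tau(u)=-u$ twists the leading coefficient of the conjugate by $(-1)^{v_u(y)}$, is exactly the standard argument one would expect here. In particular your sign bookkeeping in the norm case, including the explicit splitting $x=\N_{E/F}(y_0)\,w^2=\N_{E/F}(wy_0)$ with $w\in F$ so that $\N_{E/F}(w)=w^2$, is accurate.
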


\begin{proof}
This is well-known.\qedhere
\end{proof}

Consider the following subset of $R_{\alpha}^{\langle \tau \rangle, \prime}$:

\[ Q_{\alpha} := \{ s \in R_{\alpha}^{\langle \tau \rangle, \prime} \colon s\tau(s) - 1 \in \N_{\tau,\alpha} R_{\alpha} \} \sm \{\pm 1 \}.  \]

\begin{Def}\label{def:distance_quadratic}
Let $\theta$ be a character of $E^{\times}$ coinciding with $\chi$ on $F^{\times}U_E^{m+1}$. We say that the distance from $\chi$ to $\theta$ is \emph{properly quadratic} if $s(\theta) \in Q_{\alpha_{\theta}}$, with $s(\theta)$ as in Definition \ref{def:el_mod_and_dist}.
\end{Def}


\subsubsection{Structure of $Q_{\alpha}$}\label{sec:str_of_Q_alpha}
Set $R_n^{\langle \tau \rangle} := \{1\}$. Let $\pr_{\alpha}$ be the natural projection
\[ \pr_{\alpha} \colon R_{\alpha}^{\langle \tau \rangle} \cong k[t]/t^{\lengthofv - \alpha} \rar k[t]/t^{\lengthofv - \alpha - 1} \cong R_{\alpha + 1}^{\langle \tau \rangle}. \]

\begin{lm} \label{lm:study_of_Qalpha12_sizes_and_maps} 
Let $0 \leq \alpha \leq \lengthofv - 1$. An element $s \in R_{\alpha}^{\langle \tau \rangle} \sm \{\pm 1\}$ lies in $Q_{\alpha}$ if and only if either
\begin{itemize}
\item[-] $\alpha \geq \lfloor \frac{\lengthofv}{2} \rfloor$, $s \not\equiv \pm 1 \mod u$ and $s^2 - 1 \mod u$ is a square in $k^{\times}$, or
\item[-] $s \equiv \pm 1 \mod u$, i.e., $s = \pm 1 + t^j s_0 + \cO(t^{j+1})$ for some $s_0 \in k^{\times}$, $\max\{1, \lengthofv - (2\alpha + 1)\} \leq j \leq n - \alpha - 1$ and $\pm (-1)^j 2 s_0$ is a square in $k^{\times}$.
\end{itemize}
Moreover, the following hold:
\begin{itemize}
\item[(i)]    Let $0 \leq \alpha \leq \lengthofv - 2$. The preimage of $1$ (resp. $-1$) under the composed map $Q_{\alpha} \har R_{\alpha}^{\langle \tau \rangle} \rar R_{\alpha + 1}^{\langle \tau \rangle}$ contains precisely $\frac{q-1}{2}$ elements.
\item[(ii)]   Assume $0 \leq \alpha \leq \lengthofv - 2$. Let $s_0 \in R_{\alpha}^{\langle \tau \rangle, \prime}$ with $\pr_{\alpha}(s_0) \neq \pm 1$. Then $\sharp \pr_{\alpha}^{-1}(\pr_{\alpha}(s_0)) = q$ and we have the equivalence $s_0 \in Q_{\alpha} \LRar \pr_{\alpha}^{-1}(\pr_{\alpha}(s_0)) \subseteq Q_{\alpha}$.
\item[(iii)]  We have $\sharp Q_{\lengthofv - 1} = \frac{q-3}{2}$.
\end{itemize}
\end{lm}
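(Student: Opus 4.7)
The plan is to reduce the characterization of $Q_\alpha$ to computing $v_t(s^2-1)$ and the leading coefficient of $s^2-1$, and then to invoke Lemma \ref{lm:image_of_norm_on_Ralpha}. Note that $\tau$ acts trivially on $R_\alpha^{\langle\tau\rangle}$, so the defining condition $s\tau(s) - 1 \in \N_{\tau,\alpha} R_\alpha$ reads $s^2 - 1 \in \im \N_{\tau,\alpha}$, and for $\charac k \neq 2$ one easily checks $s^2 - 1 \neq 0$ in $R_\alpha^{\langle\tau\rangle}$ whenever $s \neq \pm 1$, so its leading coefficient is well-defined.

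I will distinguish two regimes. If $s \not\equiv \pm 1 \pmod u$, then $v_t(s^2-1) = 0$ with leading coefficient $s(0)^2-1$, and Lemma \ref{lm:image_of_norm_on_Ralpha} immediately yields the condition ``$s^2-1 \bmod u \in (k^\times)^2$''; this regime is compatible with $s \in R_\alpha^{\langle\tau\rangle,\prime}$ precisely when the exponent $m+1-2(2\alpha+1) = 2\lengthofv-4\alpha-2$ is $\leq 0$, i.e.\ $\alpha \geq \lfloor \lengthofv/2 \rfloor$, giving the first bullet. If instead $s \equiv \epsilon \pmod u$ with $\epsilon \in \{\pm 1\}$, write $s = \epsilon + t^j s_0 + \cO(t^{j+1})$ with $s_0 \in k^\times$ and $j \geq 1$; expanding $s^2 - 1 = 2\epsilon(s-\epsilon) + (s-\epsilon)^2$ and using $2j \geq j+1$ to absorb the square term gives $s^2 - 1 = 2\epsilon t^j s_0 + \cO(t^{j+1})$, so $v_t(s^2-1) = j$ and the leading coefficient is $2\epsilon s_0$. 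Lemma \ref{lm:image_of_norm_on_Ralpha} now rephrases as ``$\epsilon(-1)^j 2s_0 \in (k^\times)^2$''. The range $\max\{1, \lengthofv - (2\alpha+1)\} \leq j \leq \lengthofv-\alpha-1$ is exactly what is forced by $s-\epsilon \neq 0$ in $k[t]/t^{\lengthofv-\alpha}$ together with $v_u(s-\epsilon) = 2j \geq 2\lengthofv - 4\alpha - 2$.

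For (i), the kernel of $\pr_\alpha$ is $k \cdot t^{\lengthofv-\alpha-1}$, so $\pr_\alpha^{-1}(\epsilon) = \{\epsilon + a t^{\lengthofv-\alpha-1} : a \in k\}$ for each $\epsilon \in \{\pm 1\}$. Excluding $a = 0$ (which gives the excluded value $s = \epsilon$) and applying the second-regime characterization with $j = \lengthofv-\alpha-1$, the condition becomes $\epsilon(-1)^j 2 a \in (k^\times)^2$; for fixed $\epsilon, j$ the map $a \mapsto \epsilon(-1)^j 2a$ is a bijection of $k^\times$, so exactly $(q-1)/2$ values of $a$ qualify. For (ii), the $q$ elements of $\pr_\alpha^{-1}(\pr_\alpha(s_0))$ all agree with $s_0$ through $t$-degree $\lengthofv-\alpha-2$; hence they share the same image mod $u$ and, when $s_0 \equiv \epsilon \pmod u$, the same $v_t(s-\epsilon) = j \leq \lengthofv-\alpha-2$ and the same leading coefficient. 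Since the characterization only depends on these data, the full fiber lies in $Q_\alpha$ or is disjoint from it.

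For (iii), the case $\alpha = \lengthofv - 1$ gives $R_{\lengthofv-1} = k[u]/u = k$, on which $\tau$ acts trivially; hence $\N_{\tau,\lengthofv-1}$ is squaring and $Q_{\lengthofv-1} = \{s \in k \setminus \{\pm 1\} : s^2 - 1 \in (k^\times)^2\}$. To count, I parametrize $s^2 - y^2 = 1$ as $(s-y)(s+y) = 1$, set $a := s - y \in k^\times$ so that $s+y = a^{-1}$ and $s = (a + a^{-1})/2$; the constraints $y \neq 0$ and $s \neq \pm 1$ both amount to $a \neq \pm 1$, so $q - 3$ values of $a$ are admissible, and the involution $a \leftrightarrow a^{-1}$ (which swaps $y$ with $-y$ while preserving $s$) is the only ambiguity in recovering $s$, giving $(q-3)/2$ elements. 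The main technical obstacle throughout is keeping track of the constraints on $j$, $\alpha$, and membership in $R_\alpha^{\langle\tau\rangle,\prime}$ during the case split, but once the characterization of paragraph two is isolated, (i)--(iii) all reduce to short bijection arguments.
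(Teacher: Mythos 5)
Your proposal is correct and follows essentially the same route as the paper: you derive the description of $Q_{\alpha}$ by computing $v_t(s^2-1)$ and its leading coefficient and applying the square criterion of Lemmas \ref{lm:image_of_norm_on_Ralpha} and \ref{lm:charac_of_squares}, and parts (i)--(ii) are the same fiber-counting observations. For (iii) you count solutions of $s^2-1=y^2$ via the explicit parametrization $a=s-y$, $s=(a+a^{-1})/2$, rather than via the projective closure $\overline{C}\cong\bP^1$ as the paper does, but this is the same point count carried out by hand.
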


\begin{proof} 
The description of $Q_{\alpha}$ follows by an easy computation from Lemma \ref{lm:charac_of_squares}. (i),(ii) follow from this description (along with $\sharp k^{\times, 2} = \frac{q-1}{2}$).

(iii): Note that $Q_{\lengthofv - 1} = \{s \in k \colon s^2 - 1 \text{ is a square in $k$} \} \sm \{\pm 1\}$. Consider the affine curve $C: s^2 - 1 = y^2$ over $k$ and let $\overline{C}$ be the unique smooth projective curve over $k$ containing $C$ as an open subset. We have $\sharp(\overline{C}(k) \sm C(k)) = 2$. Further, $\overline{C}$ is a smooth quadric in $\bP^2$ over a finite field, hence isomorphic to $\bP^1$, i.e., $\sharp \overline{C}(k) = q+1$. We deduce $\sharp C(k) = q-1$. Now (iii) follows from the fact that the map $C(k) \sm \{(\pm 1 , 0)\} \rar Q_{\lengthofv - 1}$ given by $(s,y) \mapsto s$ is surjective and two-to-one.
\end{proof}


\subsection{Restriction to the ramified torus $E^{\times} \subseteq G(F)$}\label{sec:restr_to_units_of_torus} \mbox{}

\noindent For a finite finite group $H$, let $\langle, \rangle_H$ denote the inner product on the set of class functions of $H$. For a character $\theta$ let $\langle \theta, \Xi_{\chi} \rangle_{E^{\times}}$ denote the multiplicity of $\theta$ in $\Xi_{\chi}$.

\begin{thm}\label{thm:Xi_chi_restriction_to_Etimes}
Let $(E/F, \chi)$ be a minimal pair of odd level $m \geq 1$. A character $\theta$ of $E^{\times}$ can only occur in $\Xi_{\chi}$, if $\theta$ coincides with $\chi$ on $F^{\times}U_E^{m+1}$. In this case we have

\[ \langle \theta, \Xi_{\chi}\rangle_{E^{\times}} = \begin{cases} 1 & \text{if $\theta = \chi$ or $\theta = \chi^{\tau}$ or the distance from $\chi$ to $\theta$ is properly quadratic} \\ 0 & \text{otherwise.} \end{cases} \]
\end{thm}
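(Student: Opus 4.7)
The plan is to reduce the claim to an explicit trace computation on the zero-dimensional variety $\tilde{Y}^m_{\dot{w}}$, and then organize the resulting sum level-by-level using the filtration of Section \ref{sec:subsub_filtration_on_UE} and the elementary-modification machinery of Section \ref{sec:some_character_theory}. The first half of the statement is immediate: $U_{\fJ}^{m+1}$ acts trivially on $V_\chi$ by Lemma \ref{lm:UJm1_acts_trivial}, and since $E^\times \cap U_{\fJ}^{m+1} = U_E^{m+1}$, any $\theta$ occurring in $\Xi_\chi|_{E^\times}$ is trivial on $U_E^{m+1}$; combined with the central character $\chi|_{F^\times}$ this forces $\theta|_{F^\times U_E^{m+1}} = \chi|_{F^\times U_E^{m+1}}$. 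What remains is to compute the multiplicity, which becomes the finite-group inner product
\[
\langle\theta,\Xi_\chi\rangle_{E^\times} \;=\; \frac{1}{[E^\times:F^\times U_E^{m+1}]}\sum_{g\in E^\times/F^\times U_E^{m+1}} \theta(g)^{-1}\,\tr(g;\Xi_\chi).
\]

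For the trace I would apply the standard idempotent for the right $\tilde I_{m,\underline w_m}/I^m$-action: since $\tilde Y^m_{\dot w}$ is a discrete union of $k$-points,
\[
\tr(g;\Xi_\chi) \;=\; \frac{1}{|\tilde I_{m,\underline w_m}/I^m|}\sum_{i} \chi(i)^{-1}\,\#\bigl\{x\in\tilde Y^m_{\dot w}\,\colon\, gx=xi\bigr\}.
\]
By Lemma \ref{lm:tildeYwm_is_EUJ_stable} multiplication by $\varpi$ cyclically permutes the components $Y^m_{\dot w}\cdot y_j$ of $\tilde Y^m_{\dot w}$, so only $g$ with $v_u(g)=0$ can contribute. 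Hence one restricts to $g\in U_E$ and counts fixed points inside $Y^m_{\dot w}$ against the right action of $I_{m,\underline w_m}/I^m$, whose diagonal quotient is $U_E/U_E^{m+1}$ by \eqref{eq:right_group_and_mod_rad_surjection}. In the explicit coordinates $\psi^m_{\dot v}$ of \eqref{eq:explicit_Cvm_param_GL_2_ram_case} and the equations of Definition \ref{def:disc_subschemes_Yvm2}, the condition $gx=xi$ becomes a system of congruences in the parameters $(a,A,B,C,D)$ modulo successive powers of $u$, which can be solved level by level along the filtration of Section \ref{sec:subsub_filtration_on_UE}.

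Matching with the character theory then proceeds as follows. By Proposition \ref{lm:description_of_special_characters}, every candidate $\theta$ is determined on $F^\times U_E^{2\alpha_\theta+1}$ by its distance element $s(\theta)\in R^{\langle\tau\rangle,\prime}_{\alpha_\theta}$ in the sense of Definition \ref{def:el_mod_and_dist}. I expect the contribution of those $g\in F^\times U_E^{2\alpha+1}\setminus F^\times U_E^{2\alpha+3}$ with $\alpha=\alpha_\theta$ to survive exactly when $s(\theta)\tau(s(\theta))-1$ is a relative norm from $R_{\alpha_\theta}$, i.e. precisely when the distance is properly quadratic (Definition \ref{def:distance_quadratic}), via Lemma \ref{lm:image_of_norm_on_Ralpha}. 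The combinatorial identities of Lemma \ref{lm:study_of_Qalpha12_sizes_and_maps} should then collapse the double sum to the stated values: multiplicity one for $\theta\in\{\chi,\chi^\tau\}$ (the boundary case $\alpha_\theta=0$, $s(\theta)=\pm1$), multiplicity one for each properly quadratic distance, and zero otherwise.

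The main obstacle is the fixed-point count at intermediate levels $0<\alpha_\theta<\lengthofv$. The dichotomy $\alpha<\lfloor\lengthofv/2\rfloor$ versus $\alpha\geq\lfloor\lengthofv/2\rfloor$ that appears in both Proposition \ref{lm:description_of_special_characters} and Lemma \ref{lm:study_of_Qalpha12_sizes_and_maps} must emerge from the interplay between the defining relations of $Y^m_{\dot w}$ and the congruence class of $g$ in $R_{\alpha_\theta}$. Tracking which coefficients of $a,A,B,C,D$ are pinned down modulo which powers of $u$, and how they couple to $\chi$ via the diagonal of $I_{m,\underline w_m}/I^m$, is the technical heart of the argument; as announced just after Lemma \ref{eq:Rchi_is_induction_from_ZUJ_of_Xichi}, this part is postponed to Section \ref{sec:applications_of_the_trace_formula}.
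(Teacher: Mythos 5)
There is a genuine gap, and it sits exactly where your argument is quickest: the claim that, since left multiplication by $\varpi$ permutes the components $Y^m_{\dot w}\cdot y_j$ of $\tilde Y^m_{\dot w}$, only $g$ with $v_u(g)=0$ can contribute to $\tr(g;\Xi_\chi)$. In the fixed-point count the element $i$ runs over all of $\tilde I_{m,\underline w_m}/I^m$, and this group itself contains the elements $y_j=e_0(u^j,(-u)^j)$, which permute the components in the same way; so for $v_u(g)=1$ the sets $\{x\in\tilde Y^m_{\dot w}\colon gx=xi\}$ are nonempty precisely for those $i$ whose diagonal part also has odd valuation, and the contribution does not vanish. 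The paper devotes Proposition \ref{prop:traces_of_val_1_elements} (proved in Section \ref{sec:traces_of_elements_in_varpi_UJ} via the automorphism $\beta_\varpi$ of $Y^m_{\dot w}$, cf. Lemma \ref{lm:action_of_beta_varpi} and Proposition \ref{prop:action_of_alpha_expl}) exactly to this computation: $\tr(g;\Xi_\chi)=\chi(g)+\chi^\tau(g)$ for $v_u(g)=1$.

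This omission is fatal to the final count, not cosmetic. Your level-by-level analysis of the valuation-zero terms (which does follow the paper: Proposition \ref{cor:non_split_traces_via_elem_modifications}, Lemma \ref{lm:sizes_of_S_alphas}, Lemma \ref{lm:study_of_Qalpha12_sizes_and_maps}) only yields the restriction statement of Proposition \ref{thm:trace_comp_hard_part}, namely $\langle\theta,\Xi_\chi\rangle_{U_E}=1$ for $\chi,\chi^\tau$, $=2$ in the properly quadratic case, $=0$ otherwise. If the odd-valuation traces were zero, every character of $E^\times$ extending a given $U_E$-character (with the right central character) would occur with the same multiplicity, equal to half the $U_E$-multiplicity; this gives the absurd value $\tfrac12$ for $\theta=\chi$ and leaves you unable to decide which of the two extensions of $\theta|_{U_E}$ actually occurs. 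The correct passage from $U_E$ to $E^\times$ is
\[
\langle\theta,\Xi_\chi\rangle_{E^\times}=\tfrac12\Bigl(\langle\theta,\Xi_\chi\rangle_{U_E}+(-1)^{\lambda(\theta)}\langle\theta,\,\chi+(-1)^{\sgn(\chi)}\chi^\tau\rangle_{U_E}\Bigr),
\]
where the second summand is precisely the valuation-one contribution: it distributes the $U_E$-multiplicity $2$ of a properly quadratic character as $1+1$ over its two extensions, and the multiplicity $1$ of $\chi|_{U_E}$ as $1+0$, singling out $\chi$ itself. So you must add the trace computation for $v_u(g)=1$ (and the bookkeeping of the sign $\lambda(\theta)$) to close the argument; with it, your outline coincides with the paper's proof.
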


We prove this theorem below. First we investigate the restriction of $\Xi_{\chi}$ to $U_E$. Note that $s(\theta)$ from Definition \ref{def:el_mod_and_dist} is in exactly the same way also defined for characters $\theta$ of $U_E$, which coincide with $\chi$ on $U_F U_E^{m+1}$.

\begin{prop}\label{thm:trace_comp_hard_part}
Let $(E/F, \chi)$ be a minimal pair of odd level $m \geq 1$. A character $\theta$ of $U_E$ can only occur in $\Xi_{\chi}|_{U_E}$, if $\theta$ coincides with $\chi$ on $U_F U_E^{m+1}$. In this case we have 

\[ \langle \theta, \Xi_{\chi}\rangle_{U_E} = \begin{cases} 1 & \text{if $\theta = \chi$ or $\chi^{\tau}$} \\ 2 & \text{if $\theta \neq \chi, \chi^{\tau}$ and $s(\theta) \in Q_{\alpha_{\theta}}$} \\ 0 & \text{otherwise.} \end{cases} \]
\end{prop}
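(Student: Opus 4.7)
\emph{Plan of proof.} The strategy is a direct computation of the character of $\Xi_\chi$ on $U_E$, using that $\tilde Y_{\dot w}^m$ is a finite discrete set of $k$-rational points (Theorem \ref{thm:structure_thm_GL_2_ram}) and that the distance theory of Section \ref{sec:some_character_theory} is perfectly tailored to index the characters of $U_E$ that can occur.

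First, I will set up the trace formula. Since $\tilde Y_{\dot w}^m$ is $0$-dimensional and reduced, $\Xi_\chi$ is the $\chi$-isotypic component (for the right $\tilde I_{m,\underline w_m}/I^m$-action) inside the permutation representation on $\tilde Y_{\dot w}^m(\bar k) = \tilde Y_{\dot w}^m(k)$. For $u \in U_E$, acting from the left, standard orthogonality yields
\[
\tr(u;\Xi_\chi) \;=\; \frac{1}{|\tilde I_{m,\underline w_m}/I^m|} \sum_{y \in \tilde Y_{\dot w}^m(k)} \sum_{i} \chi(i)^{-1}\,[\,u\cdot y = y\cdot i\,],
\]
where $i$ ranges over $\tilde I_{m,\underline w_m}/I^m$. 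Using the parametrization in Definition \ref{def:disc_subschemes_Yvm2} and Lemma \ref{lm:right_group_action_on_gen_ADLV_ram_GL_2}, this becomes a counting problem on the explicit coordinates $(a,C,D,A,B)$. From the very definition of $\Xi_\chi$ and Lemma \ref{lm:UJm1_acts_trivial}, $F^\times$ acts by $\chi|_{F^\times}$ and $U_E^{m+1}$ acts trivially, so any $\theta$ occurring in $\Xi_\chi|_{U_E}$ must coincide with $\chi$ on $U_F U_E^{m+1}$; this is the first assertion.

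Next I will organize the computation along the filtration of Section \ref{sec:subsub_filtration_on_UE}. For $u \in U_F U_E^{2\alpha+1} \sm U_F U_E^{2\alpha+3}$ I write $u = z(1+u^{2\alpha+1}h)$ with $z \in U_F$ and $h \in \caO_F^\times$, and compute $u \cdot \psi_{\dot v}^m(a,C,D,A,B)$. The left action only affects the coordinates $(a,C,D,A,B)$ in a controlled way; after using the commutation relations \eqref{eq:commutativity_relations} and the defining equations of $Y_{\dot w}^m$, the count reduces to a Gauss-type character sum
\[
S_\alpha(\theta,u) \;=\; \sum_{x} \chi\bigl(1 + u^{2\alpha+1}h\,x\tau(x)\bigr)\,\theta(u)^{-1},
\]
with $x$ running over an appropriate residue ring; the precise form of the integrand is forced by the formulas in Definition \ref{def:disc_subschemes_Yvm2}. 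By Proposition \ref{lm:description_of_special_characters} and Lemma \ref{lm:character_modifications_techn_lemma}, $\theta(1+u^{2\alpha+1}h) = \chi(1+u^{2\alpha+1}h\,s(\theta))$, so the summand collapses to $\chi(1 + u^{2\alpha+1}h(x\tau(x) - s(\theta)))$. By Lemma \ref{lm:non_triv_same_as_norm_fact}, $\chi$ is non-trivial on $U_E^m/U_E^{m+1}$ and does not factor through $N_{E/F}$; hence the sum equals (up to a power of $q$) the cardinality of the fiber of $N_{\tau,\alpha}$ over $s(\theta)$, which by Lemma \ref{lm:image_of_norm_on_Ralpha} is non-zero precisely when $s(\theta)\tau(s(\theta)) - 1 \in \im(N_{\tau,\alpha})$ --- that is, when $s(\theta) \in Q_{\alpha_\theta}$ (Definition \ref{def:distance_quadratic}), or when $s(\theta) = \pm 1$.

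Finally, I will recover the multiplicity $\langle \theta, \Xi_\chi\rangle_{U_E}$ by Fourier inversion over the finite abelian group $U_E/U_E^{m+1}$, assembling the contributions of the strata via Lemma \ref{lm:study_of_Qalpha12_sizes_and_maps}(i)--(iii) to ensure the counts across strata telescope correctly and yield an integer multiplicity. The dichotomy in the formula comes out as follows: for $\theta = \chi$ (respectively $\theta = \chi^\tau$) one has $s(\theta) = 1$ (respectively $-1$), which is \emph{not} in $Q_{\alpha_\theta}$ but still contributes via the trivial fibre of $N_{\tau,\alpha}$ over $0$, giving multiplicity $1$; for $\theta$ at properly quadratic distance, the element $s(\theta)$ and a second element producing the same character (reflecting the $\tau$-symmetry built into $\tilde Y_{\dot w}^m$ via the pair $\pm s$) both contribute, yielding multiplicity $2$; all other $\theta$ give a vanishing Gauss sum and hence multiplicity $0$.

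\emph{Main obstacle.} The principal difficulty is the third step: carrying out the explicit coordinate substitution $u \cdot (a,C,D,A,B) = (a',C',D',A',B') \cdot i$ and reducing the counting of simultaneous solutions of the defining equations of $Y_{\dot w}^m$ to the clean quadratic sum above. This demands a careful analysis of the interaction of the left $U_E$-action (via $\iota$) with the right $\tilde I_{m,\underline w_m}/I^m$-action modulo $u^{m+1}$, and, crucially, an identification of when the arising Jacobian factor forces $s(\theta)\tau(s(\theta)) - 1 \in \im(N_{\tau,\alpha})$ as opposed to a weaker condition; this last step is where the structural description of $Q_\alpha$ in Lemma \ref{lm:study_of_Qalpha12_sizes_and_maps} is indispensable.
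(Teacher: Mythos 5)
Your overall architecture is the same as the paper's: first dispose of the necessary condition via the central character and triviality of $U_{\fJ}^{m+1}$ (Lemma \ref{lm:UJm1_acts_trivial}), then compute $\tr(g;\Xi_\chi)$ stratum by stratum along the filtration $U_FU_E^{2\alpha+1}\sm U_FU_E^{2\alpha+3}$ by counting fixed points of the commuting left/right actions, classify the outcome in terms of the distance $s(\theta)$ and the sets $Q_\alpha$, and finally recover the multiplicity by Fourier inversion over $U_E/U_E^{m+1}$ using Lemma \ref{lm:study_of_Qalpha12_sizes_and_maps}. This is exactly the route of Proposition \ref{cor:non_split_traces_via_elem_modifications}, Lemma \ref{lm:traces_of_non_split_elements} and Lemma \ref{lm:sizes_of_S_alphas} in the paper, so you are not proposing a genuinely different proof.

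The problem is that the decisive content is missing. What you call the "main obstacle" is precisely the theorem's substance: one must show that for $g=\iota(1+u^{2\alpha+1}h)$ the fixed-point count $\sharp S_{g,i_1}$ vanishes unless $i_1=1+u^{2\alpha+1}hs$ with $s\in R_\alpha^{\langle\tau\rangle}$, and then equals $2q^\alpha$, $q^\alpha$ or $0$ according to whether $s\in Q_\alpha$, $s=\pm 1$, or neither (the paper's Lemma \ref{lm:traces_of_non_split_elements}, which itself splits into the regimes $2\alpha+1\geq\lengthofv$ and $2\alpha+1<\lengthofv$ and requires solving congruences such as $s\equiv -(1+a^{\prime}\tau(a^{\prime}))/(a^{\prime}+\tau(a^{\prime}))$ modulo $u^{m-2\alpha}$). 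Your proposed reduction to a Gauss-type sum $\sum_x\chi(1+u^{2\alpha+1}h\,x\tau(x))$ is only a heuristic: the trace is a sum $\sum_{i_1}\sharp S_{g,i_1}\chi(i_1)$ of solution counts, not a character sum of the form you wrote, and the claimed identification of those counts with fibres of $\N_{\tau,\alpha}$ is exactly what has to be proved (the relevant identity in the paper is $s^2-1=\N_{\tau,\alpha}\bigl((1-a^{\prime,2})/(a^{\prime}+\tau(a^{\prime}))\bigr)$, which gives necessity of $s\in Q_\alpha\cup\{\pm1\}$ but not the exact counts). Several of your narrative explanations are also inaccurate: the multiplicity $1$ for $\theta=\chi,\chi^\tau$ does not come from a "trivial fibre of $\N_{\tau,\alpha}$ over $0$" but from the count $q^\alpha$ in the case $s=\pm1$ together with the stratum assembly; the multiplicity $2$ does not come from a pair $\pm s$ but from the two solutions of the quadratic congruence in $a_1$ (equivalently the factor $2q^\alpha$), fed through the bookkeeping of Lemma \ref{lm:sizes_of_S_alphas}. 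Finally, your trace formula as written sums over the infinite set $\tilde Y_{\dot w}^m(k)$ and divides by the infinite group $\tilde I_{m,\underline w_m}/I^m$; you need to pass to $Y_{\dot w}^m$ and the finite group $I_{m,\underline w_m}/I^m$ first (the paper's Lemma \ref{eq:def_of_xi_chi} and Proposition \ref{lm:our_form_of_Boy_trace_formula}), which also takes care of the Frobenius-scalar hypothesis in Boyarchenko's lemma. Without the explicit point counts and the stratum-by-stratum evaluation of the sums $S_\alpha^{\prime}$, the claimed multiplicity formula is not established.
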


The main ingredient in the proof of Proposition \ref{thm:trace_comp_hard_part} is the following trace computation.

\begin{prop}\label{cor:non_split_traces_via_elem_modifications}
Let $0 \leq \alpha < \lengthofv$. Let $g \in U_F U_E^{2\alpha + 1} \sm U_F U_E^{2\alpha + 3}$. Then
\[  \tr(g; \Xi_{\chi}) = 2q^{\alpha} \sum_{s \in Q_{\alpha}}\chi_s(g) + q^{\alpha}(\chi(g) + \chi^{\tau}(g)).  \]
\end{prop}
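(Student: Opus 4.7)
The plan is to translate the $\chi$-isotypic trace into a weighted count of pairs $(y,h)$ with $gy = yh$, solve these equations explicitly in the coordinates of $Y_{\dot{w}}^m$ from Definition \ref{def:disc_subschemes_Yvm2}, and reorganise the count according to the elementary-modification parameter $s \in R_\alpha^{\langle \tau \rangle, \prime}$. The starting point is a Lefschetz-type formula for the $\chi$-isotypic component: since $g \in U_E$ has unit determinant, it preserves each piece $Y_{\dot{w}}^m \cdot y_i$ of $\tilde Y_{\dot{w}}^m$, and passing to a fundamental domain for the right action of the infinite cyclic quotient $\tilde I_{m,\underline{w}_m}/I_{m,\underline{w}_m}$ reduces the trace to
\[
\tr(g; \Xi_\chi) \;=\; \frac{1}{|I_{m,\underline{w}_m}/I^m|} \sum_{h \in I_{m,\underline{w}_m}/I^m} \chi(h) \cdot \#\{\, y \in Y_{\dot{w}}^m(\bar k) \colon gy = yh \,\},
\]
with $\chi$ depending only on the diagonal entry $h_1 \in U_E/U_E^{m+1}$ by Lemma \ref{lm:right_group_action_on_gen_ADLV_ram_GL_2}.

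The main work is the fixed-point equation. Writing $g = z(1 + u^{2\alpha+1}x)$ with $z \in U_F$ and $x \in \caO_E^\times$ (so that $g$ is a generic element of $U_F U_E^{2\alpha+1} \sm U_F U_E^{2\alpha+3}$), I will embed $\iota(g) \in M_2(F)$ and compute $\iota(g) \cdot e_-(a)\dot{v}e_0(C,D)e_+(A)e_-(B)$ modulo $I^m$, using the commutation relations \eqref{eq:commutativity_relations} to normalise the product back into the form $e_-(a')\dot{v}e_0(C',D')e_+(A')e_-(B')$. Equating this with $yh$ for $h = \matzz{h_1}{h_2}{0}{\tau(h_1)}$ and imposing the defining equations \eqref{eq:Def_of_Y_m} of $Y_{\dot{w}}^m$ on both $y$ and $y'$ produces a system of equations in $(a,A,C)$, $h_1$, $h_2$ and $g$. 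The $B$- and $D$-equations from \eqref{eq:Def_of_Y_m} determine $h_2$ and constrain $h_1$, while the $\tau$-fixedness of $a'$ reduces, after division by $R = u^{-1}(\tau(a) - a)$ (a unit by Lemma \ref{lm:ADLV_ram_GL_2_key_computation}), to a norm-type congruence. The latter admits a solution in $(a,A,C)$ precisely when a certain element of $R_\alpha^{\langle \tau \rangle}$ built from $g$ and $h_1$ takes the form $s\tau(s) - 1 \pmod{u^{m+1-2\alpha}}$ with $s\tau(s) - 1 \in \N_{\tau,\alpha}(R_\alpha)$; this is exactly the defining condition for $s \in Q_\alpha$, while $s = \pm 1$ corresponds to the trivial modifications $\chi$ and $\chi^\tau$. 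By Proposition \ref{lm:description_of_special_characters} and Lemma \ref{lm:character_modifications_techn_lemma}, the value of $\chi(h_1)$ produced by such a solution is precisely $\chi_s(g)$.

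Tallying: for each admissible $s$, the constraints fix $A$ and the residue classes of $a$ and $C$ up to residual freedom contributing a factor $q^\alpha$, and once the defining data of $y$ are fixed the right-action element $h$ is uniquely determined by $g$ and $y$ (so the count is $|I_{m,\underline{w}_m}/I^m| \cdot q^\alpha$ times the number of solutions $s'$ of the norm equation producing $s$). For $s \in Q_\alpha$ this norm equation has two solutions (giving the factor $2q^\alpha$), while for $s = \pm 1$ it has a unique solution (giving $q^\alpha$). Summing yields
\[
\tr(g; \Xi_\chi) \;=\; 2q^\alpha \sum_{s \in Q_\alpha} \chi_s(g) \;+\; q^\alpha(\chi(g) + \chi^\tau(g)).
\]
The main technical obstacle will be the bookkeeping in the matrix calculation to optimal precision in $u$: the identity $m = 2\lengthofv - 1$ makes every congruence saturated, and the two ranges $\alpha < \lfloor \lengthofv/2 \rfloor$ (where condition (ii) of Proposition \ref{lm:description_of_special_characters} is nontrivial and $R_\alpha^{\langle \tau \rangle, \prime} \subsetneq R_\alpha^{\langle \tau \rangle}$) and $\alpha \geq \lfloor \lengthofv/2 \rfloor$ (where $R_\alpha^{\langle \tau \rangle, \prime} = R_\alpha^{\langle \tau \rangle}$) have to be handled in parallel, with the characterisation of $Q_\alpha$ in Lemma \ref{lm:study_of_Qalpha12_sizes_and_maps} providing the uniform counting that enters the final tally.
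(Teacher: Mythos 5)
Your proposal follows essentially the same route as the paper: there the statement is obtained by writing $g = z\,\iota(1+u^{2\alpha+1}h)$ with $z\in U_FU_E^{m+1}$ acting by the scalar $\chi(z)$ (Lemma \ref{lm:UJm1_acts_trivial}) and invoking Proposition \ref{prop:non_split_traces}, whose proof is exactly your program --- Boyarchenko's trace formula adapted to the zero-dimensional torsor (Proposition \ref{lm:our_form_of_Boy_trace_formula}), explicit fixed-point equations in the coordinates of $Y_{\dot{w}}^m$, and the norm-type congruence whose solution count is $2q^{\alpha}$ for $s\in Q_{\alpha}$ and $q^{\alpha}$ for $s=\pm1$ (Lemma \ref{lm:traces_of_non_split_elements}). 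One bookkeeping slip in your tally (inessential to the method): in the actual computation the coordinates $C$ and $A$ remain completely free, supplying the factor $\sharp(I_{m,\underline{w}_m}/I^m)$ that cancels the normalization, while only $i_2$ is determined by the data and the entire $q^{\alpha}$- resp.\ $2q^{\alpha}$-fold freedom sits in the single constrained variable $a$.
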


\begin{proof}
We can write $g = z g^{\prime}$ for $z \in U_F U_E^{m+1}$ and $g^{\prime} = \iota(1 + u^{2\alpha + 1}y)$ with $y \in U_F$. Then the result follows from Proposition \ref{prop:non_split_traces} applied to $g^{\prime}$ and the fact that $z$ acts in $V_{\chi}$ as the scalar $\chi(z)$ by Lemma \ref{lm:UJm1_acts_trivial}.
\end{proof}

\begin{proof}[Proof of Proposition \ref{thm:trace_comp_hard_part}]
As $U_F U_E^{m+1}$ acts in $V_{\chi}$ by $\chi|_{U_F U_E^{m+1}}$, the first statement is clear. Assume $\theta|_{U_F U_E^{m+1}} = \chi|_{U_F U_E^{m+1}}$. Now, $U_E^{m+1}$ acts trivial in $V_{\chi}$, thus we can equivalently consider $V_{\chi}$ as a $U_E/U_E^{m+1}$-representation. The filtration from Section \ref{sec:subsub_filtration_on_UE} induces a disjoint decomposition

\[ U_E/U_E^{m+1} = Z \cup \bigcup_{\alpha = 0}^{\lengthofv - 1} (H^{\alpha} \sm H^{\alpha + 1}), \]

\noindent where $H^{\alpha} := U_F U_E^{2\alpha + 1}/U_E^{m+1}$ and $Z := H^{\lengthofv} = U_F U_E^{m+1}/U_E^{m+1}$. We have $\sharp H^{\alpha} = (q-1)q^{m - \alpha}$  for $0 \leq \alpha \leq \lengthofv$. For $0 \leq \alpha < \lengthofv$ set 

\[ S_{\alpha} := \sum_{g \in H^{\alpha} \sm H^{\alpha + 1} } \theta(g^{-1}) \tr(g; \Xi_{\chi}). \]

\noindent Then the trace computation Proposition \ref{cor:non_split_traces_via_elem_modifications} shows that $S_{\alpha} = (q-1)q^{m-1} S_{\alpha}^{\prime}$ with

\begin{eqnarray*} S_{\alpha}^{\prime} := 2 \sum_{s \in Q_{\alpha}} \left(q \langle \theta, \chi_s \rangle_{H^{\alpha}} - \langle \theta, \chi_s \rangle_{H^{\alpha+1}} \right) + q \langle \theta, \chi + \chi^{\tau} \rangle_{H^{\alpha}} - \langle \theta, \chi + \chi^{\tau} \rangle_{H^{\alpha+1}},
\end{eqnarray*}

\noindent and using Lemma \ref{lm:UJm1_acts_trivial} we deduce ($H^0 = U_E/U_E^{m+1}$)

\begin{eqnarray*} \langle \theta, \Xi_{\chi}\rangle_{H^0} &=& \frac{1}{\sharp H^0} \left( \sum_{g \in Z} \theta(g^{-1})\tr(g; \Xi_{\chi}) + \sum_{\alpha = 0}^{\lengthofv - 1} S_{\alpha} \right) = \frac{1}{\sharp H^0} \left(\sharp Z \cdot \dim V_{\chi} + \sum_{\alpha = 0}^{\lengthofv - 1} S_{\alpha} \right) \\
&=& \frac{1}{q} \left(q-1 + \sum_{\alpha = 0}^{\lengthofv - 1} S_{\alpha}^{\prime} \right).
\end{eqnarray*}

\noindent Now the proposition follows from Lemma \ref{lm:sizes_of_S_alphas}, by considering the five cases $i(\theta) = 0$, $0 < i(\theta) < m + 1$ and $s(\theta) \in Q_{\alpha_{\theta}}$, $0 < i(\theta) < m + 1$ and $s(\theta) \not\in Q_{\alpha_{\theta}}$, $i(\theta) = m + 1$ and $s(\theta) \in Q_{\alpha_{\theta}}$, $i(\theta) = m + 1$ and $s(\theta) \not\in Q_{\alpha_{\theta}}$.\qedhere
\end{proof}

\begin{lm}\label{lm:sizes_of_S_alphas}
\begin{itemize}
\item[(i)] Assume $0 \leq \alpha \leq \lengthofv - 2$. Then 
\[ S_{\alpha}^{\prime} = \begin{cases} 	0 	& \text{if $i(\theta) \leq 2\alpha + 1$} \\
					q 	& \text{if $i(\theta) = 2\alpha + 2$ and $s(\theta, \alpha) \in Q_{\alpha}$} \\
					-q 	& \text{if $i(\theta) = 2\alpha + 2$ and $s(\theta, \alpha) \not\in Q_{\alpha}$} \\
					0 	& \text{if $i(\theta) \geq 2\alpha + 3$.}
   \end{cases}
\]
\item[(ii)] For $\alpha = \lengthofv - 1$ we have
\[ S_{\lengthofv - 1}^{\prime} = \begin{cases}1 	& \text{if $i(\theta) \leq m = 2\lengthofv - 1$} \\
					q + 1 	& \text{if $i(\theta) = m + 1 = 2\lengthofv$ and $s(\theta, \lengthofv - 1) \in Q_{\lengthofv - 1}$} \\
					- q + 1 & \text{if $i(\theta) = m + 1 = 2\lengthofv$ and $s(\theta, \lengthofv - 1) \not\in Q_{\lengthofv - 1}$.}
					\end{cases}
\]
\end{itemize}
\end{lm}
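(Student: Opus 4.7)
The strategy is to expand each inner product in $S_\alpha'$ as an indicator function depending only on $i(\theta)$ and the distances $s(\theta, \alpha)$, $s(\theta, \alpha+1)$ from Definition~\ref{def:el_mod_and_dist}. Setting $c_\alpha := \min\{m+1, 2(2\alpha+1)\}$, Proposition~\ref{lm:description_of_special_characters} gives $\langle \theta, \chi_s \rangle_{H^\alpha} = 1$ exactly when $i(\theta) \leq c_\alpha$ and $s = s(\theta, \alpha)$, and $0$ otherwise. Lemma~\ref{lm:character_modifications_techn_lemma} yields $\chi_s|_{H^{\alpha+1}} = \chi_{\pr_\alpha(s)}|_{H^{\alpha+1}}$, so the $H^{\alpha+1}$-version holds with $\pr_\alpha(s)$ in place of $s$. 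Finally $\langle \theta, \chi + \chi^\tau \rangle_{H^\alpha} = 1$ exactly when $i(\theta) \leq 2\alpha + 1$ (using Lemma~\ref{lm:non_triv_same_as_norm_fact} to ensure $\chi \neq \chi^\tau$ on $H^\alpha$), and analogously on $H^{\alpha+1}$.

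For part (i) I would split on the even integer $i(\theta)$. When $i(\theta) \leq 2\alpha + 1$, both distances equal $\pm 1$ and Lemma~\ref{lm:study_of_Qalpha12_sizes_and_maps}(i) supplies $(q-1)/2$ for the $H^{\alpha+1}$-sum over $Q_\alpha$; a direct computation gives $S_\alpha' = 0$. When $i(\theta) = 2\alpha + 2$, the distance $s(\theta, \alpha+1) = \pm 1$ still yields $(q-1)/2$ on $H^{\alpha+1}$, while the $H^\alpha$-sum collapses to a single term ($1$ or $0$ according to whether $s(\theta, \alpha) \in Q_\alpha$), producing $S_\alpha' = \pm q$. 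When $i(\theta) \geq 2\alpha + 4$ the $\chi + \chi^\tau$-terms vanish, and a subsplit on $i(\theta) \leq c_\alpha$ vs.\ $i(\theta) > c_\alpha$ is required: in the first subcase both distances exist with $s(\theta, \alpha+1) = \pr_\alpha(s(\theta, \alpha)) \neq \pm 1$, and Lemma~\ref{lm:study_of_Qalpha12_sizes_and_maps}(ii) forces the $q$-element fiber $\pr_\alpha^{-1}(s(\theta, \alpha+1))$ to lie entirely in $Q_\alpha$ or entirely outside it, making the two terms of $S_\alpha'$ cancel.

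The main obstacle will be the complementary subcase $c_\alpha < i(\theta) \leq c_{\alpha+1}$, where $s(\theta, \alpha)$ fails to exist but $s(\theta, \alpha+1) \neq \pm 1$ still does. Here one needs $s(\theta, \alpha+1) \notin \pr_\alpha(R_\alpha^{\langle \tau \rangle, \prime})$. I would argue by contradiction: a lift $s' \in R_\alpha^{\langle \tau \rangle, \prime}$ of $s(\theta, \alpha+1)$ would satisfy $\chi_{s'}|_{F^\times U_E^{c_\alpha}} \in \{\chi, \chi^\tau\}$ by Proposition~\ref{lm:description_of_special_characters}(ii); combined with $\chi_{s'}|_{F^\times U_E^{2\alpha+3}} = \chi_{\pr_\alpha(s')}|_{F^\times U_E^{2\alpha+3}} = \theta|_{F^\times U_E^{2\alpha+3}}$ and the inclusion $F^\times U_E^{c_\alpha} \subseteq F^\times U_E^{2\alpha+3}$ (valid for $\alpha \geq 1$), this forces $\theta|_{F^\times U_E^{c_\alpha}} \in \{\chi, \chi^\tau\}$, contradicting $i(\theta) > c_\alpha$. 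The edge case $\alpha = 0$ requires a direct check: $R_0^{\langle \tau \rangle, \prime} = \{\pm 1 + c T^{\lengthofv - 1} : c \in k\}$ projects under $\pr_0$ onto $\{\pm 1\}$, which cannot meet $s(\theta, 1) \neq \pm 1$. The residual subcase $i(\theta) > c_{\alpha+1}$ makes every relevant inner product vanish, giving $S_\alpha' = 0$ trivially.

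Part (ii) will be handled analogously using $R_{\lengthofv - 1}^{\langle \tau \rangle, \prime} = k$, $|Q_{\lengthofv - 1}| = (q-3)/2$ from Lemma~\ref{lm:study_of_Qalpha12_sizes_and_maps}(iii), and the observation that on $H^{\lengthofv} = Z = U_F U_E^{m+1}/U_E^{m+1}$ every $\chi_s$ restricts to $\chi|_Z = \chi^\tau|_Z$, so $\langle \theta, \chi_s \rangle_Z = 1$ for each $s$ and $\langle \theta, \chi + \chi^\tau \rangle_Z = 2$. Splitting on $i(\theta) \leq m$ versus $i(\theta) = m+1$ and applying the same indicator expansion as in part (i) yields $S_{\lengthofv - 1}' = 2(0 - (q-3)/2) + q - 2 = 1$ in the first case, and $S_{\lengthofv - 1}' = 2q\epsilon - (q-3) - 2 = 2q\epsilon - q + 1$ in the second (with $\epsilon = 1$ if $s(\theta, \lengthofv - 1) \in Q_{\lengthofv - 1}$ and $\epsilon = 0$ otherwise), matching the two remaining claimed values.
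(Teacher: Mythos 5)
Your plan is correct and follows essentially the same route as the paper's proof: the same expansion of $S_\alpha'$ into inner products evaluated via Proposition \ref{lm:description_of_special_characters}, Lemma \ref{lm:character_modifications_techn_lemma} and Lemma \ref{lm:study_of_Qalpha12_sizes_and_maps}, the same case split on $i(\theta)$, and the same arithmetic in every case. The only divergence is the subcase $c_\alpha < i(\theta)$, where the paper dispenses with your lift-nonexistence argument (and the $\alpha=0$ edge case) by observing that every $\chi_s$ agrees with $\chi$ or $\chi^\tau$ on $U_F U_E^{2(2\alpha+1)} \subseteq H^{\alpha+1} \subseteq H^{\alpha}$ while $\theta$ does not, so all the relevant inner products vanish at once.
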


\begin{proof}

\noindent Let $\pr_{\alpha}$ be as in Section \ref{sec:str_of_Q_alpha}. (i): Assume $0 \leq \alpha \leq \lengthofv - 2$. 

\textbf{Case $i(\theta) \leq 2\alpha + 1$.} Then on $H^{\alpha}$ resp. on $H^{\alpha+1}$ the character $\theta$ is equal to exactly one of the characters $\chi$ or $\chi^{\tau}$ (as $\alpha \leq \lengthofv - 2$ and $\chi \neq \chi^{\tau}$ on $H^{\lengthofv - 1}$ by Lemma \ref{lm:non_triv_same_as_norm_fact}). Assume that this character is $\chi$ (the other case is similar). Thus
$\langle \theta, \chi_s \rangle_{H^{\alpha}} = 0$ for all $s \in Q_{\alpha}$, $\langle \theta, \chi + \chi^{\tau} \rangle_{H^{\alpha}} = \langle \theta, \chi + \chi^{\tau} \rangle_{H^{\alpha+1}} = 1$ and by Lemma \ref{lm:character_modifications_techn_lemma} we have

\begin{equation}\label{eq:psi_chi_on_some_sgr_tech_detail}
\langle \theta, \chi_s \rangle_{H^{\alpha+1}} = \begin{cases} 1 & \text{if $\pr_{\alpha}(s) = 1$} \\  
									0 & \text{otherwise.} 
							  \end{cases} 
\end{equation}

\noindent By Lemma \ref{lm:study_of_Qalpha12_sizes_and_maps}(i), the first case happens for exactly $(q-1)/2$ elements in $s \in Q_{\alpha}$. Altogether we obtain $S_{\alpha}^{\prime} = 2(0 - \frac{q-1}{2}) + q - 1 = 0$. 

\textbf{Case $i(\theta) = 2\alpha + 2$.} The character $\theta$ coincides on $H^{\alpha}$ neither with $\chi$ nor with $\chi^{\tau}$, hence $\langle \theta, \chi + \chi^{\tau} \rangle_{H^{\alpha}} = 0$. As $2\alpha + 3 \leq 2(\lengthofv - 2) + 3 = m$ by assumption, $\theta$ coincides on $H^{\alpha+1}$ with precisely one of the characters $\chi$ or $\chi^{\tau}$ and hence $\langle \theta, \chi + \chi^{\tau} \rangle_{H^{\alpha+1}} = 1$. As $2(2\alpha + 1) \geq 2\alpha + 2 = i(\theta)$, the quantity $s(\theta,\alpha) \in R_{\alpha}^{\langle \tau \rangle, \prime}$ is well-defined. Thus

\[ \sum_{s \in Q_{\alpha}} q \langle \theta, \chi_s \rangle_{H^{\alpha}} = \begin{cases} q & \text{if $s(\theta,\alpha) \in Q_{\alpha}$} \\ 0 & \text{otherwise.} \end{cases} \]

\noindent Moreover, \eqref{eq:psi_chi_on_some_sgr_tech_detail} holds also in this case, and again there are precisely $(q-1)/2$ elements of $Q_{\alpha}$ with image $1$ in $R_{\alpha + 1}^{\langle \tau \rangle}$. From this we deduce the result.

\textbf{Case $i(\theta) \geq 2\alpha + 3$.} Then $\langle \theta, \chi + \chi^{\tau} \rangle_{H^{\alpha}} = \langle \theta, \chi + \chi^{\tau} \rangle_{H^{\alpha+1}} = 0$. Assume first $2(2\alpha + 1) \geq i(\theta)$ (in particular, $\alpha > 0$). By Proposition \ref{lm:description_of_special_characters} there is a unique $s(\theta, \alpha) \in R_{\alpha}^{\langle \tau \rangle, \prime}$ such that $\theta$ coincides with $\chi_{s(\theta,\alpha)}$ on $H^{\alpha}$. Hence 
\[\sum_{s \in Q_{\alpha}} q \langle \theta, \chi_s \rangle_{H^{\alpha}} = \begin{cases}q  & \text{if $s(\theta,\alpha) \in Q_{\alpha}$} \\ 0 & \text{otherwise.} \end{cases} \]

\noindent On the other hand, note that $\theta$ coincides with $\chi_s$ on $H^{\alpha+1}$ if and only if $s \in \pr_{\alpha}^{-1}(\pr_{\alpha}(s(\theta,\alpha)))$. Thus using Lemma \ref{lm:study_of_Qalpha12_sizes_and_maps}(ii) we deduce
\[ 
\sum_{s \in Q_{\alpha}} \langle \theta, \chi_s \rangle_{H^{\alpha+1}} = 
		  \begin{cases} q & \text{if $s(\theta,\alpha) \in Q_{\alpha}$} \\  
				0 & \text{otherwise.} 
		  \end{cases}   
\]

\noindent In any case we compute $S_{\alpha}^{\prime} = 0$. Finally, assume that $i(\theta) > 2(2\alpha + 1)$, i.e., $i(\theta) \geq 2(2\alpha + 2)$ as $i(\theta)$ is even. Thus $\theta$ does not coincide with $\chi$ or $\chi^{\tau}$ on $U_FU_E^{\min\{m+1,2(2\alpha+1)\}}/U_E^{m+1} =  H^{2\alpha+1}$. On the other hand, for $s \in R_{\alpha}^{\langle \tau \rangle, \prime}$, the character $\chi_s$ coincides by definition with $\chi$ or $\chi^{\tau}$ on $H^{2\alpha + 1}$. Thus $\theta$ does not coincide with any of the characters $\chi_s$ on $H^{2\alpha + 1}$ and from $2\alpha + 1 \leq 2\alpha + 3 \leq 4\alpha + 3$ we deduce

\[ \langle \theta, \chi_s \rangle_{H^{\alpha}} = \langle \theta, \chi_s \rangle_{H^{\alpha+1}} = 0, \]

\noindent and hence also $S_{\alpha}^{\prime} = 0$.

(ii): \textbf{Case $i(\theta) \leq m$.} Then $\theta$ coincides with exactly one of the characters $\chi$, $\chi^{\tau}$ on $H^{\lengthofv - 1}$. Thus $\langle \theta, \chi + \chi^{\tau} \rangle_{H^{\lengthofv - 1}} = 1$, $\langle \theta, \chi + \chi^{\tau} \rangle_Z = 2$, and $\langle \theta, \chi_s \rangle_{H^{\lengthofv - 1}} = 0$, $\langle \theta, \chi_s \rangle_Z = 1$ for all $s \in Q_{\alpha}$. Using Lemma  \ref{lm:study_of_Qalpha12_sizes_and_maps}(iii) we compute

\[S_{\lengthofv - 1}^{\prime} = 2(q\cdot 0 - \frac{q-3}{2}) + (q\cdot 1 - 2) = 1. \]

\noindent \textbf{Case $i(\theta) = m + 1$.} Then $\langle \theta, \chi + \chi^{\tau} \rangle_{H^{\lengthofv - 1}} = 0$, $\langle \theta, \chi + \chi^{\tau} \rangle_Z = 2$, $\langle \theta, \chi_s \rangle_Z = 1$  for all $s \in Q_{\lengthofv - 1}$. Moreover, $s(\theta, \lengthofv - 1)$ is well-defined and

\[ \sum_{s\in Q_{\lengthofv - 1}} q \langle \theta, \chi_s \rangle_{H^{\lengthofv - 1}} = \begin{cases} q & \text{if $s(\theta,\lengthofv - 1) \in Q_{\lengthofv - 1}$} \\ 0 & \text{otherwise.} \end{cases} \]

\noindent Again we conclude by Lemma \ref{lm:study_of_Qalpha12_sizes_and_maps}(iii). \qedhere
\end{proof}

\begin{proof}[Proof of Theorem \ref{thm:Xi_chi_restriction_to_Etimes}]
 Let $\phi$ be any one of the two characters of $E^{\times}$ satisfying $\phi(U_E) = 1$ and $\phi(t) = \chi(t)^{-1}$. Consider the $E^{\times}$-representation $\phi\Xi_{\chi}$ given by $(\phi\Xi_{\chi})(e) = \phi(e)\Xi_{\chi}(e)$. By construction, it is trivial on the subgroup $\langle t, U_E^{m+1}\rangle$ of $E^{\times}$, and we consider it as a representation of the finite group $E^{\times}/\langle t, U_E^{m+1}\rangle \cong U_E/U_E^{m+1} \times \langle u \rangle/\langle u^2 \rangle$. Let $\theta$ be a character of $E^{\times}$. Then $\langle \theta,\Xi_{\chi}\rangle_{E^{\times}} = 0$, unless $\theta$ coincides with $\chi$ on $F^{\times}U_E^{m+1}$. Assume this holds. Then $\phi\theta$ also factors through a character of $U_E/U_E^{m+1} \times \langle u \rangle/\langle u^2 \rangle$ and its multiplicity in $\Xi_{\chi}$ can be computed as follows:

\[ \langle \theta,\Xi_{\chi}\rangle_{E^{\times}} = \langle \phi\theta,\phi\Xi_{\chi}\rangle_{U_E/U_E^{m+1} \times \langle u \rangle/\langle u^2 \rangle} = \frac{1}{2(q-1)q^m} \sum_{g \in U_E/U_E^{m+1} \times \langle u \rangle/\langle u^2 \rangle} \theta(g^{-1}) \tr(g; \phi\Xi_{\chi}). \]

\noindent Let $\lambda(\theta) \in \{0,1\}$ be such that $\theta(u) = (-1)^{\lambda(\theta)}\chi(u)$ and let $\sgn(\chi)$ be $0$ if $\chi$ is even, and $1$ otherwise. We deduce from the above and from Proposition \ref{prop:traces_of_val_1_elements}:

\[ \langle \theta,\Xi_{\chi}\rangle_{E^{\times}} = \frac{1}{2} (\langle \theta, \Xi_{\chi} \rangle_{U_E/U_E^{m+1}} + (-1)^{\lambda(\theta)} \langle \theta, \chi + (-1)^{\sgn(\chi)}\chi^{\tau} \rangle_{U_E/U_E^{m+1}}). \]

\noindent Now Theorem \ref{thm:Xi_chi_restriction_to_Etimes} follows from Proposition \ref{thm:trace_comp_hard_part} by a simple case-by-case study. \qedhere
\end{proof}

\begin{prop}\label{prop:traces_of_val_1_elements}
Let $g \in E^{\times}$ with $v_u(g) = 1$. Then 
\[ \tr(g; \Xi_{\chi}) = \chi(g) + \chi^{\tau}(g). \]
\end{prop}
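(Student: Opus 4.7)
The plan is to compute $\tr(g;\Xi_\chi)$ by a direct, Lefschetz-type fixed-point count on the zero-dimensional variety $\tilde{Y}_{\dot{w}}^m$. Since $\coh_c^0(\tilde{Y}_{\dot{w}}^m,\overline{\bQ}_\ell)$ is simply the permutation representation on the finite set $\tilde{Y}_{\dot{w}}^m(\bar{k})$, equipped with commuting left $G(F)$- and right $\tilde{I}_{m,\underline{w}_m}/I^m$-actions, the standard projection onto the $\chi$-isotypic component gives
\[
\tr(g;\Xi_\chi)\;=\;\frac{1}{|H|}\sum_{r\in H}\chi(r^{-1})\,\#\{x\in\tilde{Y}_{\dot{w}}^m:gxr=x\},
\]
where $H$ is a suitable finite quotient of $\tilde{I}_{m,\underline{w}_m}/I^m$ through which $\chi$ factors.

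First I would reduce the problem to the single slice $Y_{\dot{w}}^m$. Writing $g=uh$ with $h\in U_E$, the relation $\det\iota(g)=\N_{E/F}(g)$ yields $v_u(\det\iota(g))=2$, so left multiplication by $g$ shifts the connected-component index by $+2$, while right multiplication by $y_j=e_0(u^j,(-u)^j)$ shifts it by $+2j$, and elements of $I_{m,\underline{w}_m}/I^m$ preserve it. Hence only $r\in y_{-1}\cdot I_{m,\underline{w}_m}/I^m$ can contribute. Combining Lemma \ref{lm:action_of_beta_varpi} with the identity $\varpi Y_{\dot{w}}^m=Y_{\dot{w}}^m y_1$ used in the proof of Lemma \ref{lm:tildeYwm_is_EUJ_stable}, one defines a bijection $\phi\colon Y_{\dot{w}}^m\stackrel{\sim}{\to}Y_{\dot{w}}^m$ by $\varpi x=\phi(x)y_1$. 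The fixed-point equation $gxr=x$ restricted to $Y_{\dot{w}}^m$ becomes $\phi(hx)=xr'$ for $r'\in I_{m,\underline{w}_m}/I^m$, and the total trace collapses to $\chi(u)$ times the weighted fixed-point count of the self-map $x\mapsto\phi(hx)$ on $Y_{\dot{w}}^m$, with weight $\chi$ restricted via \eqref{eq:right_group_and_mod_rad_surjection} to $\chi|_{U_E/U_E^{m+1}}$.

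The crux of the argument is the explicit fixed-point analysis. At the Iwahori level, Lemma \ref{lm:action_of_beta_varpi} shows that $\phi$ sends $a=ua'$ to $u(a')^{-1}$; a direct computation using the action of $\iota(h)$ on the $a$-coordinate then reduces the fixed-point condition to the quadratic $a^2=t$, whose only solutions are $a=\pm u$. Lifting to level $m$, the condition $\phi(hx)=xr'$ in the coordinates $(a,C,D,A,B)$ of \eqref{eq:explicit_Cvm_param_GL_2_ram_case}, together with the defining equations \eqref{eq:Def_of_Y_m} of $Y_{\dot{w}}^m$, forces $a\equiv\pm u\bmod u^{\lengthofv+1}$ and uniquely determines the remaining coordinates and the matching $r'$. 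The two branches $a\equiv+u$ and $a\equiv-u$ are interchanged by $\tau$, and unwinding the weight $\chi$ produces respective contributions $\chi(u)\chi(h)=\chi(g)$ and $\chi(-u)\chi(\tau(h))=\chi^\tau(g)$, whose sum gives the claimed value.

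The main obstacle will be the level-$m$ fixed-point computation itself: as in Lemma \ref{lm:ADLV_ram_GL_2_key_computation}, one must manipulate the parametrizing equations modulo $u^{m+1}$ with care, exploiting the hypothesis $m=2\lengthofv-1$ to discard higher-order terms, and verify that each of the branches $a\equiv\pm u$ contributes exactly one orbit of fixed points with the predicted weighted value. The comparatively clean two-term answer $\chi(g)+\chi^\tau(g)$, in contrast to the more elaborate sums of Proposition \ref{cor:non_split_traces_via_elem_modifications}, is explained by the fact that for $v_u(g)=1$ the fixed-point locus is governed by the single quadratic $a^2=t$, whose two roots are exchanged by $\tau$ and leave no room for contributions from elementary modifications of $\chi$ with $s\neq\pm 1$.
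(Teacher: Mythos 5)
Your proposal is correct and follows essentially the same route as the paper: reducing to the twisted automorphism $x \mapsto g x y_1^{-1}$ of $Y_{\dot{w}}^m$ with the factor $\chi(u)$ pulled out, and then doing a weighted fixed-point count in the coordinates \eqref{eq:explicit_Cvm_param_GL_2_ram_case} which forces $a = \pm u$ and produces the two contributions $\chi(g)$ and $\chi^{\tau}(g)$, is exactly the content of the paper's argument via Lemma \ref{prop:our_form_of_Boy_trace_formula_for_varpi}. The only imprecision is your phrase that the fixed-point condition ``uniquely determines the remaining coordinates'': in fact $C$ and $A$ stay free and only the matching group element (its diagonal part $i_1$, and then $i_2$ in terms of $C,A$) is pinned down, so each branch contributes a full $I_{m,\underline{w}_m}/I^m$-orbit of size $(q-1)q^{2m}$, which is precisely what cancels the averaging factor in your trace formula, consistent with your later ``one orbit of fixed points'' phrasing.
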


\begin{proof}
The proof is given in Section \ref{sec:traces_of_elements_in_varpi_UJ}. 
\end{proof}

\begin{cor}\label{cor:recipy_for_chi}
The character $\chi$ can be reconstructed from the $E^{\times}$-representation $\Xi_{\chi}|_{E^{\times}}$.
\end{cor}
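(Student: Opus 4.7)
The plan is to combine Theorem \ref{thm:Xi_chi_restriction_to_Etimes} with Proposition \ref{thm:trace_comp_hard_part} in a two-step bookkeeping argument; the point is that the multiplicity dichotomy of Proposition \ref{thm:trace_comp_hard_part} on $U_E$ already pins down the unordered pair $\{\chi|_{U_E},\chi^{\tau}|_{U_E}\}$, and then Theorem \ref{thm:Xi_chi_restriction_to_Etimes} lifts this identification back to $E^{\times}$.

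First I would restrict $\Xi_{\chi}|_{E^{\times}}$ further to $U_E$. By Proposition \ref{thm:trace_comp_hard_part}, among the characters of $U_E$ occurring in $\Xi_{\chi}|_{U_E}$, exactly two occur with multiplicity one, namely $\chi|_{U_E}$ and $\chi^{\tau}|_{U_E}$, while every other occurring character has multiplicity $2$. By Lemma \ref{lm:non_triv_same_as_norm_fact}(ii) these two are distinct, so the unordered pair $\{\chi|_{U_E},\chi^{\tau}|_{U_E}\}$ is intrinsically extracted from $\Xi_{\chi}|_{U_E}$.

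Next, I would upgrade this to an $E^{\times}$-statement. By Theorem \ref{thm:Xi_chi_restriction_to_Etimes} each character of $E^{\times}$ occurs in $\Xi_{\chi}|_{E^{\times}}$ with multiplicity $0$ or $1$, so the multiplicity of $\chi|_{U_E}$ in $\Xi_{\chi}|_{U_E}$ equals the number of $E^{\times}$-characters appearing in $\Xi_{\chi}|_{E^{\times}}$ whose restriction to $U_E$ coincides with $\chi|_{U_E}$. This count is one, and $\chi$ is itself one such character, so $\chi$ is the unique character in the decomposition of $\Xi_{\chi}|_{E^{\times}}$ restricting to $\chi|_{U_E}$; analogously for $\chi^{\tau}$.

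Concretely, the recipe is: decompose $\Xi_{\chi}|_{E^{\times}}$ into characters, restrict each summand to $U_E$, and retain the two summands whose $U_E$-restriction occurs with multiplicity one in $\Xi_{\chi}|_{U_E}$; these are $\chi$ and $\chi^{\tau}$, which recovers $\chi$ up to the natural $\Gal_{E/F}$-action, i.e., up to $F$-isomorphism of the admissible pair $(E/F,\chi)$. I do not anticipate any real obstacle here: the argument is a direct consequence of the two preceding multiplicity theorems, and the only thing to verify is that the two-versus-one multiplicity dichotomy of Proposition \ref{thm:trace_comp_hard_part} genuinely singles out $\{\chi,\chi^{\tau}\}$, which is exactly its content.
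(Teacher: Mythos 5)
Your proposal is correct and follows essentially the same route as the paper: both arguments single out $\{\chi,\chi^{\tau}\}$ by combining Theorem \ref{thm:Xi_chi_restriction_to_Etimes} (multiplicities at most one on $E^{\times}$) with Proposition \ref{thm:trace_comp_hard_part} (the one-versus-two dichotomy on $U_E$). The paper merely phrases the criterion through the $2$-to-$1$ restriction map from $E^{\times}$-characters to $U_E$-characters, characterizing $\chi$, $\chi^{\tau}$ as the occurring characters whose partner extension does not occur, which is equivalent to your condition that the $U_E$-restriction occur with multiplicity one.
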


\begin{proof}
By Lemma \ref{lm:UJm1_acts_trivial}, $\Xi_{\chi}|_{E^{\times}}$ determines $\chi|_{F^{\times}U_E^{m+1}}$ uniquely. Consider the map

\[ f \colon A := \{ \theta \in (E^{\times})^{\vee} \colon \theta|_{F^{\times}U_E^{m+1}} = \chi|_{F^{\times}U_E^{m+1}} \} \tar \{ \theta^{\prime} \in U_E^{\vee} \colon \theta^{\prime}|_{U_F U_E^{m+1}} = \chi|_{U_F U_E^{m+1}} \}, \]

\noindent given by restricting characters of $E^{\times}$ to $U_E$. It is surjective and $2$-to-$1$. By Proposition \ref{thm:trace_comp_hard_part} and Theorem \ref{thm:Xi_chi_restriction_to_Etimes}, $\chi$ and $\chi^{\tau}$ are the two unique elements among all elements $\theta \in A$, with the following property: $\theta$ occurs in $\Xi_{\chi}$, but the unique element of $f^{-1}(f(\theta)) \sm \{ \theta \}$ does not occur in $\Xi_{\chi}$.
\end{proof}

\subsection{Relation to strata, cuspidality}\label{sec:cuspidality} \mbox{} 

Using the unipotent traces computed in Section \ref{sec:unipotent_traces}, we show the first part of Theorem \ref{thm:hard_version_main_result}. We use the terminology of intertwining and strata from \cite{BH}\S11 and Chapter 4. The following is analogous to \cite{Iv2} Proposition 4.22 and Corollary 4.23. 
Recall the notation $N_{\lengthofv}$, $N_{\lengthofv}^{\lengthofv}$ from Section \ref{sec:unipotent_traces}. Let $N$ resp. $N^{\lengthofv}$ denote the preimage of $N_{\lengthofv}$ resp. $N_{\lengthofv}^{\lengthofv}$ under the natural projection $U_{\fJ} \tar U_{\fJ}/U_{\fJ}^{m+1}$.

\begin{prop}\label{prop:cuspidality_general_stuff}
Let $m \geq 0$. Let $\Xi$ be an irreducible $E^{\times} U_{\fJ}$-representation, which is trivial on $U_{\fJ}^{m+1}$ and does not contain the trivial character on $N^{\lengthofv}$. Then the $G(F)$-representation $\Pi_{\Xi} = \cIndd_{E^{\times} U_{\fJ}}^{G(F)} \Xi$ is irreducible, cuspidal and admissible. Moreover, it contains a ramified simple stratum $(\fJ,m,\alpha)$ for some $\alpha \in \varpi^{-m}\fJ$. One has $\ell(\Pi_{\Xi}) = \frac{m}{2}$. For any character $\phi$ of $F^{\times}$ one has $0 < \ell(\Pi_{\Xi}) \leq \ell(\phi\Pi_{\Xi})$.
\end{prop}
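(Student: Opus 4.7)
The plan is to locate a ramified simple stratum inside $\Xi$ and then to deduce all conclusions from the standard Bushnell--Henniart theory of strata for $\GL_2$; the strategy parallels \cite{Iv2} Proposition~4.22 and Corollary~4.23, the difference being that we now land in the ramified rather than the split branch of the tame parametrization \cite{BH}~20.2.

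First I would extract the stratum. Since $m+1 = 2\lengthofv$, the quotient $U_\fJ^\lengthofv/U_\fJ^{m+1}$ is abelian (commutators land in $U_\fJ^{2\lengthofv}$), so $\Xi|_{U_\fJ^\lengthofv}$ decomposes into characters. Using \eqref{eq:characs_of_UJ_fromBH} with $k = \lengthofv - 1$ and $r = m$, every such character has the form $\psi_{\fM,\alpha}$ for some $\alpha$ in $\varpi^{-m}\fJ/\varpi^{-(\lengthofv-1)}\fJ$. The hypothesis that $\Xi$ has no $N^\lengthofv$-fixed vector translates, via the pairing $\psi_{\fM,\alpha}\matzz{1}{}{x}{1} = \psi(x\alpha_{12})$, into the condition that every occurring $\alpha$ has upper-right entry of exact $F$-valuation $-\lengthofv$. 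Since $\varpi^{-m}$ is a unit multiple of $\matzz{}{t^{-\lengthofv}}{t^{-\lengthofv+1}}{}$, such an $\alpha$ has leading term proportional to $\varpi^{-m}$ modulo $\varpi^{-m+1}\fJ$, and its coset is in particular not contained in $F + \varpi^{-m+1}\fJ$. By \cite{BH}~13.1 the triple $(\fJ, m, \alpha)$ is then a simple stratum; and because the leading direction $\varpi^{-m}$ corresponds under $\iota$ to $u^{-m}$, the associated field $E_\alpha := F[\alpha]$ is a ramified quadratic extension of $F$, so the stratum is in fact ramified simple.

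Fix such $\alpha$. Next I would invoke the intertwining theorem for ramified simple strata (\cite{BH}~15.1, or the combination of 11.4 and 15.7): the $G(F)$-intertwiner of $\psi_{\fM,\alpha}$ equals $J_\alpha := E_\alpha^\times U_\fJ^\lengthofv \subseteq E^\times U_\fJ$. Together with irreducibility of $\Xi$ this yields, via the Mackey criterion for compact induction from an open subgroup compact modulo center, that $\Pi_\Xi$ is irreducible; admissibility is then automatic, and cuspidality follows because any irreducible smooth representation containing a ramified simple stratum is supercuspidal (the intertwining support is bounded within the $G(F)$-normalizer of $\fJ$, ruling out any proper parabolic descent; cf.\ \cite{BH}~15.5 and 11.5). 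The level assertion $\ell(\Pi_\Xi) = m/2$ is then the definition in \cite{BH}~12.6, since $(\fJ, m, \alpha)$ occurs with $\alpha \in \varpi^{-m}\fJ \sm \varpi^{-m+1}\fJ$. For the final inequality, twisting by $\phi \circ \det$ replaces the stratum by $(\fJ, m, \alpha + c)$ for a scalar $c \in F$ depending on $\phi$; the off-diagonal leading term of $\alpha$ survives this scalar shift, so $(\fJ, m, \alpha + c)$ is again ramified simple of level $m$, giving $\ell(\phi\Pi_\Xi) \geq m/2 = \ell(\Pi_\Xi) > 0$.

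The hard part will be the first step: translating the intrinsic representation-theoretic condition (no $N^\lengthofv$-fixed vector) into the stratum-theoretic condition that guarantees \emph{ramified} (rather than merely) simplicity, uniformly across the parity of $\lengthofv$. After this matching between the $\varpi$-adic filtration on $\fJ$ and the $\fp_F$-adic filtration by matrix entries is in place, the remaining invocations of \cite{BH} are routine.
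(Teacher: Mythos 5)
There is a genuine gap at the crucial step, namely the extraction of a \emph{ramified simple} stratum. From the hypothesis that $\Xi$ has no trivial $N^{\lengthofv}$-isotypic component you correctly deduce that every character $\psi_{\fM,\alpha}$ occurring in the restriction of $\Xi$ has upper-right entry of exact valuation $-\lengthofv$, i.e.\ (in the paper's coordinates for the coset $\alpha + \varpi^{1-m}\fJ$, $\alpha \equiv \matzz{}{a_2t^{-\lengthofv}}{a_3t^{1-\lengthofv}}{}$) that $a_2 \neq 0$. But your next claim, that such an $\alpha$ then has ``leading term proportional to $\varpi^{-m}$'' and hence generates a ramified quadratic extension, does not follow: the $N^{\lengthofv}$-condition says nothing about the lower-left component $a_3$. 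If $a_3 = 0$ the coset $\alpha + \varpi^{1-m}\fJ$ contains the nilpotent element $\matzz{}{a_2t^{-\lengthofv}}{0}{}$, so the stratum $(\fJ,m,\alpha)$ is not fundamental, hence not simple, and $F[\alpha]$ is not a field for that representative; non-scalarity (your remark that the coset is not contained in $F + \varpi^{1-m}\fJ$) is not the relevant criterion here. The paper closes exactly this case by a conjugation argument: since $\varpi \in E^{\times}U_{\fJ}$ and $\Xi$ is an $E^{\times}U_{\fJ}$-representation, the character $\psi_{\fM,\varpi\alpha\varpi^{-1}}$ also occurs in $\Xi$, and $\varpi\alpha\varpi^{-1}$ has the roles of $a_2$ and $a_3$ interchanged, so $a_3 = 0$ would force an occurring character trivial on $N^{\lengthofv}$, a contradiction. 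Without this (or an equivalent) step your stratum need not be simple and the subsequent appeals to the Bushnell--Henniart machinery do not get off the ground.

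Two further, lesser, divergences from the paper's argument are worth noting. For irreducibility and cuspidality the paper does not pass through the intertwining of a simple stratum at all: it proves directly (Lemma \ref{lm:not_intertwining}, using the double-coset representatives $e_0(t^{\alpha},1)$ and only the $N^{\lengthofv}$-hypothesis) that any element intertwining $\Xi$ lies in $E^{\times}U_{\fJ}$, and then quotes \cite{BH} Theorem 11.4; your route via the intertwining of $\psi_{\fM,\alpha}$ would additionally require relating the intertwining of $\Xi$ to that of a character occurring in a restriction, and in any case presupposes the simple stratum. For the final inequality $\ell(\Pi_{\Xi}) \leq \ell(\phi\Pi_{\Xi})$, the paper argues via \cite{BH} 13.2 Proposition (no fundamental stratum in $\fM$ intertwines a fundamental ramified stratum in $\fJ$) and 13.3 Theorem, rather than by tracking the stratum under the twist; your ``$\alpha \mapsto \alpha + c$'' sketch only treats twists whose level is small enough for the scalar shift to be subordinate, so it would need the same minimality criterion to be made complete.
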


From this we can deduce the first statement of Theorem \ref{thm:hard_version_main_result}.

\begin{cor}\label{cor:first_part_of_hard_thm}
Let $(E/F, \chi)$ be a minimal pair. The representation $R_{\chi}$ is irreducible, cuspidal and admissible. It contains a ramified simple stratum and is, in particular, ramified. Moreover, $\ell(R_{\chi}) = \frac{\ell(\chi)}{2}$ and for any character $\phi$ of $F^{\times}$ one has $0 < \ell(R_{\chi}) \leq \ell(\phi R_{\chi})$. 
\end{cor}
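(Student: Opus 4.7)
The plan is to deduce this corollary directly from Proposition \ref{prop:cuspidality_general_stuff} applied to $\Xi := \Xi_{\chi}$, using Lemma \ref{eq:Rchi_is_induction_from_ZUJ_of_Xichi} to rewrite $R_{\chi} = \cIndd_{E^{\times}U_{\fJ}}^{G(F)} \Xi_{\chi}$. So the entire task reduces to checking the three hypotheses on $\Xi_{\chi}$ in that proposition, after which irreducibility, cuspidality, admissibility, the stratum statement, the identity $\ell(R_{\chi}) = \frac{m}{2} = \frac{\ell(\chi)}{2}$, and the twist inequality $0 < \ell(R_{\chi}) \leq \ell(\phi R_{\chi})$ are immediate.

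First I would verify that $\Xi_{\chi}$ is irreducible as an $E^{\times}U_{\fJ}$-representation. By Corollary \ref{cor:irreducibility_of_XI_chi}, $\Xi_{\chi}$ is already irreducible upon restriction to the subgroup $B \subseteq U_{\fJ} \subseteq E^{\times}U_{\fJ}$ of lower triangular matrices in $U_{\fJ}$; irreducibility for the larger group $E^{\times}U_{\fJ}$ follows a fortiori. Next, the triviality of $\Xi_{\chi}$ on $U_{\fJ}^{m+1}$ is exactly the second statement of Lemma \ref{lm:UJm1_acts_trivial}, so no further work is needed there.

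Third, I would check that $\Xi_{\chi}$ does not contain the trivial character on $N^{\lengthofv}$. Since $\Xi_{\chi}$ factors through $E^{\times}U_{\fJ}/U_{\fJ}^{m+1}$ by the previous step, and $N^{\lengthofv}$ is by definition the preimage of $N_{\lengthofv}^{\lengthofv}$ under $U_{\fJ} \tar U_{\fJ}/U_{\fJ}^{m+1}$, the condition to verify is that the trivial character of $N_{\lengthofv}^{\lengthofv}$ does not occur in $\Xi_{\chi}|_{N_{\lengthofv}}$. This is precisely the last assertion of Proposition \ref{prop:unip_rep}.

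With all three hypotheses in hand, Proposition \ref{prop:cuspidality_general_stuff} directly yields irreducibility, cuspidality, admissibility of $R_{\chi}$, the existence of a ramified simple stratum $(\fJ, m, \alpha)$ in $R_{\chi}$ (hence ramifiedness), the identity $\ell(R_{\chi}) = \frac{m}{2} = \frac{\ell(\chi)}{2}$, and the twist bound $0 < \ell(R_{\chi}) \leq \ell(\phi R_{\chi})$ for every character $\phi$ of $F^{\times}$. The minimal case finishes the proof. For the general admissible $(E/F,\chi)$ with $\chi = \phi \chi'$ and $(E/F,\chi')$ minimal, one has $R_{\chi} = \phi R_{\chi'}$ by Definition \ref{Def:R_chi}, and all asserted properties are preserved under this twist. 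There is no genuine obstacle here: all the hard work has already been done in Sections \ref{sec:unipotent_traces}--\ref{sec:restr_to_units_of_torus} (the unipotent trace computations feeding Proposition \ref{prop:unip_rep} and Corollary \ref{cor:irreducibility_of_XI_chi}, and Lemma \ref{lm:UJm1_acts_trivial} on the action of $U_{\fJ}^{m+1}$), so this corollary is purely a matter of assembling those ingredients into the hypotheses of Proposition \ref{prop:cuspidality_general_stuff}.
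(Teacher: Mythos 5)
Your proposal is correct and is essentially the paper's own proof: the paper likewise deduces the corollary by checking that $\Xi_{\chi}$ satisfies the hypotheses of Proposition \ref{prop:cuspidality_general_stuff}, citing Corollary \ref{cor:irreducibility_of_XI_chi} for irreducibility and Proposition \ref{prop:unip_rep} for the $N^{\lengthofv}$-condition (with Lemma \ref{lm:UJm1_acts_trivial} and Lemma \ref{eq:Rchi_is_induction_from_ZUJ_of_Xichi} supplying the triviality on $U_{\fJ}^{m+1}$ and the identification $R_{\chi}=\cIndd_{E^{\times}U_{\fJ}}^{G(F)}\Xi_{\chi}$, which you rightly make explicit). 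Your closing paragraph about general admissible pairs is superfluous, since the corollary is stated only for minimal pairs, and its blanket claim that all properties are preserved under twisting should be dropped (the identity $\ell(R_{\chi})=\ell(\chi)/2$ is exactly what fails for non-minimal $\chi$, which is why the statement is restricted).
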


\begin{proof}
The assumptions of Proposition \ref{prop:cuspidality_general_stuff} are satisfied for the $E^{\times}U_{\fJ}$-representation $\Xi_{\chi}$ by Corollary \ref{cor:irreducibility_of_XI_chi} and Proposition \ref{prop:unip_rep}.
\end{proof}

\begin{proof}[Proof of Proposition \ref{prop:cuspidality_general_stuff}] 
Irreducibility and cuspidality of $\Xi$ follow from \cite{BH} Theorem 11.4, which assumptions are satisfied due to Lemma \ref{lm:not_intertwining}. To contain a stratum is defined with respect to an additive character. So fix some character $\psi$ of $F$ of level $1$. Make the isomorphism \eqref{eq:characs_of_UJ_fromBH} explicit for $k = m-1$, $r = m$:

\[ \varpi^{-m}\fJ / \varpi^{1-m} \fJ \stackrel{\sim}{\longrar} (U_{\fJ}^m/U_{\fJ}^{m+1})^{\vee}. \]

\noindent An element of $\varpi^{-m}\fJ / \varpi^{1-m} \fJ$ resp. of $U_{\fJ}^m/U_{\fJ}^{m+1}$ is represented by a matrix $a = \matzz{}{a_2 t^{-\lengthofv}}{a_3t^{1-\lengthofv}}{}$ resp. $x = \matzz{}{x_2 t^{\lengthofv - 1}}{x_3t^{\lengthofv}}{}$ with $a_2,a_3,x_2,x_3 \in k$ and $\psi_{\fM,a}(x) = \psi(a_2x_3 + a_3x_2)$. The restriction of $\Xi$ to $U_{\fJ}^m$ factors through a representation of the abelian group $U_{\fJ}^m/U_{\fJ}^{m+1}$, thus it decomposes as a sum of characters, each of which is of the form $\psi_{\fM,a}|_{U_{\fJ}^m}$ for some $a \in \varpi^{-m}\fJ$. With other words, for each $a$, such that $\psi_{\fM,a}|_{U_{\fJ}^m}$ is contained in $\Xi$, the ramified stratum $(\fJ,m,a)$ occurs in $\Pi_{\Xi}$. By definition, a ramified stratum is simple, if and only it is fundamental, i.e., the coset $a + \varpi^{1-m}\fJ$ does not contain a nilpotent element of $\fM$. Thus to show that $\Pi_{\Xi}$ contains a ramified simple stratum it is enough to show the following claim.

\textbf{Claim.} Let $a \in \varpi^{-m}\fJ$. Assume $\psi_{\fM,a}|_{U_{\fJ}^m}$ occurs in $\Xi$. Then $a + \varpi^{1-m}\fJ$ does not contain nilpotent elements of $\fM$, or with other words $a_2,a_3 \neq 0$ (with notations as above).

\begin{proof}[Proof of the claim] Assume $a_2 = 0$, then the restriction of $\psi_{\fM,a}$ to the subgroup $N^{\lengthofv}$ of $U_{\fJ}^m$ is the trivial character, which contradicts our assumptions on $\Xi$. Thus $a_2 \neq 0$. Assume $a_3 = 0$. As $\varpi \in E^{\times}U_{\fJ}$, the character $\psi_{\varpi a \varpi^{-1}}$ also occurs in $\Xi$ (proof as in \cite{Iv2} Lemma 4.25). But $\varpi a \varpi^{-1} = \matzz{}{a_3t^{-\lengthofv}}{a_2t^{1-\lengthofv}}{}$ and we deduce a contradiction as in the already proven part.
\end{proof}

Thus we have shown that $\Pi_{\Xi}$ contains a ramified fundamental stratum of the form $(\fJ,m,a)$. Then \cite{BH} Theorem 12.9 shows that $\ell(\Pi_{\Xi}) = \frac{m}{2}$. Furthermore, if an essentially scalar stratum would be contained in $\Pi_{\Xi}$, then by \cite{BH} Section 12.9, it would have to intertwine with $(\fJ,m,a)$. But by \cite{BH} 13.2 Proposition, no fundamental stratum of the form $(\fM,r,b)$ can intertwine with the fundamental ramified stratum $(\fJ,m,a)$. Thus no essentially scalar stratum is contained in $\Pi_{\Xi}$ and \cite{BH} 13.3 Theorem shows the last statement of the proposition.
\end{proof}

\begin{lm}\label{lm:not_intertwining}
Let $\Xi$ be an irreducible $E^{\times} U_{\fJ}$-representation, which is trivial on $U_{\fJ}^{m+1}$ and does not contain the trivial character on $N^{\lengthofv}$. An element $g \in G(F)$ intertwines $\Xi$ if and only if $g \in E^{\times}U_{\fJ}$.
\end{lm}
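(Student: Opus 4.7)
The ``if'' direction is immediate: for $g \in E^{\times}U_{\fJ}$, conjugation by $g$ preserves the group $E^{\times}U_{\fJ}$ itself and is implemented inside $\Xi$ by the operator $\Xi(g)$, so $g$ tautologically intertwines $\Xi$.

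For the converse, suppose $g \in G(F)$ intertwines $\Xi$. My plan is to restrict attention to the normal abelian subquotient $U_{\fJ}^m/U_{\fJ}^{m+1}$ and exploit the non-degeneracy hypothesis on $\Xi$. Since $\Xi$ is trivial on $U_{\fJ}^{m+1}$, the restriction $\Xi|_{U_{\fJ}^m}$ decomposes as a direct sum of characters of the abelian quotient $U_{\fJ}^m/U_{\fJ}^{m+1}$, each of the form $\psi_{\fM,a}|_{U_{\fJ}^m}$ for some $a = \matzz{}{a_2 t^{-\lengthofv}}{a_3 t^{1-\lengthofv}}{}$ via the isomorphism \eqref{eq:characs_of_UJ_fromBH}. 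The first key step is to show that every such $a$ actually occurring in $\Xi|_{U_{\fJ}^m}$ satisfies $a_2, a_3 \neq 0$, i.e., the stratum $(\fJ,m,a)$ is ramified simple. This is exactly the Claim inside the proof of Proposition \ref{prop:cuspidality_general_stuff}: the hypothesis that $\Xi$ does not contain the trivial character on $N^{\lengthofv}$ rules out $a_2 = 0$, and conjugation by $\varpi \in E^{\times}U_{\fJ}$ (which preserves the set of characters occurring in $\Xi|_{U_{\fJ}^m}$ and sends $\psi_{\fM,a}$ to $\psi_{\fM,\varpi a\varpi^{-1}}$, swapping the roles of $a_2$ and $a_3$) then rules out $a_3 = 0$.

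The second step is to translate the intertwining of $\Xi$ into intertwining of a character $\psi_{\fM,a}$ attached to a ramified simple stratum. By the standard Mackey-theoretic criterion (\cite{BH} 11.1), non-vanishing of $\Hom_{J \cap g^{-1}Jg}(\Xi, \Xi^g)$ forces at least one character $\psi_{\fM,a}$ of $U_{\fJ}^m$ appearing in $\Xi$ to be $g$-intertwined with another such character $\psi_{\fM,a'}$. By step one, both $(\fJ,m,a)$ and $(\fJ,m,a')$ are ramified simple strata; now invoke the intertwining theorem for simple strata in $\GL_2$ (\cite{BH} 13.2 Proposition together with its refinement in \S 15): the intertwining of a character attached to a ramified simple stratum $(\fJ,m,a)$ is contained in $E^{\times}U_{\fJ}^{\lengthofv} \subseteq E^{\times}U_{\fJ}$. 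This forces $g \in E^{\times}U_{\fJ}$ and finishes the proof. The main obstacle is ensuring that the Mackey reduction in the second step genuinely lands inside the scope of the \cite{BH} intertwining formula (in particular, that the partial agreement of characters on $U_{\fJ}^m \cap g^{-1}U_{\fJ}^m g$ is enough to conclude $g$ lies in the intertwining of a ramified simple stratum); this is however precisely the situation the simple stratum machinery of \cite{BH} Chapter 4 is designed to handle.
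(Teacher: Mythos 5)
Your ``if'' direction and your Step 1 are fine (Step 1 is exactly the Claim inside the proof of Proposition \ref{prop:cuspidality_general_stuff} and involves nothing circular). The gap is in Step 2, at the point where you pass from ``$g$ intertwines some $\psi_{\fM,a}|_{U_{\fJ}^m}$ with some $\psi_{\fM,a'}|_{U_{\fJ}^m}$'' to ``$g\in E^{\times}U_{\fJ}$'' by citing \cite{BH} 13.2 Proposition and \S 15. Neither reference gives the containment you need: 13.2 Proposition only says that a fundamental stratum of the form $(\fM,r,b)$ cannot intertwine with a ramified fundamental stratum --- it says nothing about \emph{which} $g\in G(F)$ intertwine a ramified simple stratum with itself or with another one; and the intertwining formula of \S 15 (intertwining equal to $U_{\fJ}^{\lengthofv}E^{\times}U_{\fJ}^{\lengthofv}$) concerns the character $\psi_{\beta}$ of the half-congruence group $U_{\fJ}^{\lengthofv}$ attached to a simple stratum whose element $\beta$ generates a field, not the bare top-layer character of $U_{\fJ}^m/U_{\fJ}^{m+1}$ that your Mackey step produces. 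To invoke \S 15 you would have to work at level $U_{\fJ}^{\lengthofv}$ and first show that the characters of $U_{\fJ}^{\lengthofv}/U_{\fJ}^{m+1}$ occurring in $\Xi$ can be aligned to the form $\psi_{\beta}$ with $(\fJ,m,\beta)$ simple, and then still handle the facts that Mackey only gives intertwining between two possibly different constituents and that $F[\beta]$ need not literally be $\iota(E)$; none of this is in your write-up, and knowing only that the leading coefficients $a_2,a_3$ are nonzero is not a citation-ready hypothesis for those results.

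The strategy can be completed, but when you do it by hand it essentially collapses into the paper's own proof, which is elementary: the double cosets $E^{\times}U_{\fJ}\backslash G(F)/E^{\times}U_{\fJ}$ are represented by $\diag(t^{\alpha},1)$, $\alpha\geq 0$, and for $\alpha>0$ one argues as in \cite{Iv2} Lemma 4.24 --- conjugation by $\diag(t^{\alpha},1)$ pushes $e_-(\fp_F^{\lengthofv})$ into $U_{\fJ}^{m+1}$, so ${}^{g}\Xi$ is trivial on $N^{\lengthofv}$ while $\Xi$ contains no trivial character there, and the relevant Hom-space vanishes. If you prefer your stratum-flavoured version, the same double-coset reduction is still needed: for $g=\diag(t^{\alpha},1)$ with $\alpha>0$, the identity $\psi_{\fM,a'}(x)=\psi_{\fM,a}(g^{-1}xg)$ on $U_{\fJ}^m\cap gU_{\fJ}^m g^{-1}$ forces, by isolating the off-diagonal entries, a congruence of the shape $a_3\equiv a_3' t^{\alpha} \bmod \fp_F$ (equivalently $a_2'\equiv 0 \bmod \fp_F$), which is impossible since $a_3\in k^{\times}$ and $\alpha>0$; it is this computation, not the quoted statements of \cite{BH}, that actually closes the argument, and it takes the double-coset representatives as input exactly as the paper does.
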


\begin{proof}
The double $E^{\times}U_{\fJ}$-cosets in $G(F)$ are represented by diagonal matrices with entries $t^{\alpha}$, $1$ for $\alpha \geq 0$. The rest of the proof works exactly as in \cite{Iv2} Lemma 4.24.
\end{proof}


\subsection{Relation to cuspidal inducing data}\label{sec:rel_to_cusp_ind_data} \mbox{}

We relate the representations $R_{\chi}$, $\pi_{\chi}$ to each other. The following proposition finishes the proof of Theorem \ref{thm:hard_version_main_result}.

\begin{prop}\label{prop:Rchi_isom_pichi}
Let $(E/F, \chi)$ be an admissible pair. Then $R_{\chi} \cong \pi_{\chi}$.
\end{prop}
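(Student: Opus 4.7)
My plan is to identify $R_\chi$ with $\pi_\chi$ via the following strategy: reduce to minimal pairs, use surjectivity of the tame parametrization to realize $R_\chi$ as $\pi_{\chi'}$ for some admissible $(E/F,\chi')$, and then pin down $\chi'\sim\chi$ via the reconstruction recipe of Corollary~\ref{cor:recipy_for_chi}.

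First I reduce to the minimal case: Definition~\ref{Def:R_chi} gives $R_{\phi\chi'}=\phi R_{\chi'}$ for any character $\phi$ of $F^\times$, and the analogous twist compatibility $\pi_{\phi\chi'}=\phi\pi_{\chi'}$ is standard (\cite{BH}\S18). Assume henceforth that $(E/F,\chi)$ is minimal of odd level $m=2\lengthofv-1$. By Corollary~\ref{cor:first_part_of_hard_thm}, $R_\chi\in\cA_2^{\rm tr}(F)$ has level $m/2$ and central character $\chi|_{F^\times}$. Surjectivity of \eqref{eq:ramfied_part_of_tame_param_thm} then produces an admissible pair $(E/F,\chi')$ with $R_\chi\cong\pi_{\chi'}$, automatically matching levels and central characters.

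Since $R_\chi=\cIndd_{E^\times U_\fJ}^{G(F)}\Xi_\chi$ (Lemma~\ref{eq:Rchi_is_induction_from_ZUJ_of_Xichi}) and $\pi_{\chi'}=\cIndd_{E^\times U_\fJ}^{G(F)}\Theta_{\chi'}$ are irreducible compact inductions from the common subgroup $E^\times U_\fJ$, which is self-normalizing in $G(F)$, the uniqueness of cuspidal inducing data---a standard Mackey/Frobenius argument invoking the intertwining condition established in Lemma~\ref{lm:not_intertwining}---forces $\Xi_\chi\cong\Theta_{\chi'}$ as $E^\times U_\fJ$-representations. Restricting to $E^\times$, Corollary~\ref{cor:recipy_for_chi} extracts $\{\chi,\chi^\tau\}$ from $\Xi_\chi|_{E^\times}$ as the unique pair of characters whose twist by the order-two character $\epsilon$ of $E^\times/F^\times U_E$ fails to appear. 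On the cuspidal-type side, Frobenius reciprocity applied to $\Theta_{\chi'}=\cIndd_{J_{\beta'}}^{E^\times U_\fJ}\Lambda$ with $\Lambda|_{E^\times}=\chi'$ shows that $\chi'$ occurs in $\Theta_{\chi'}|_{E^\times}$; conjugation by $w=\matzz{1}{}{}{-1}\in U_\fJ$ (which normalizes $\iota(E^\times)$ and implements $\tau$) forces $\chi'^\tau$ to occur as well. Provided the recipe applied to $\Theta_{\chi'}|_{E^\times}$ yields exactly $\{\chi',\chi'^\tau\}$, the fact that the recipe's output depends only on the intrinsic $E^\times$-representation gives $\{\chi,\chi^\tau\}=\{\chi',\chi'^\tau\}$, hence $(E/F,\chi)\sim(E/F,\chi')$ in $\bP_2^{\rm tr}(F)$ and $R_\chi\cong\pi_{\chi'}\cong\pi_\chi$.

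The hard part will be this last verification: showing that the remaining characters of $E^\times$ occurring in $\Theta_{\chi'}|_{E^\times}$ come in $\epsilon$-pairs---the cuspidal-type-side analog of the ``properly quadratic distance'' dichotomy of Theorem~\ref{thm:Xi_chi_restriction_to_Etimes}. This can be dispatched by a direct Mackey computation for $\Theta_{\chi'}|_{E^\times}$ paralleling Sections~\ref{sec:some_character_theory}--\ref{sec:restr_to_units_of_torus}, or equivalently by invoking the description of cuspidal types from \cite{BH}\S19. An alternative (more self-contained) route that avoids this obstacle is to show directly that $\Lambda$ embeds into $\Xi_\chi|_{J_\beta}$: the character $\psi_\beta$ is $E^\times$-fixed since $\beta\in\iota(E)$ centralizes $\iota(E^\times)$, and one verifies that it is the unique $E^\times$-fixed character of $U_\fJ^\lengthofv/U_\fJ^{m+1}$ restricting to $\chi|_{U_E^\lengthofv}$; the $\psi_\beta$-eigenspace is then a $J_\beta$-stable subspace into which the $\chi$-eigenline from Theorem~\ref{thm:Xi_chi_restriction_to_Etimes} must project nontrivially, and the conclusion follows by Frobenius reciprocity combined with irreducibility of $\pi_\chi$ and $R_\chi$.
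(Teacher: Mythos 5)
Your main route is essentially the paper's: twist to a minimal pair, use Bushnell--Henniart's theory of cuspidal types to identify $\Xi_{\chi}\cong\Theta_{\chi^{\prime}}$ for some $\chi^{\prime}$, and then recover $\chi$ up to $\tau$-conjugacy from the restriction to $E^{\times}$ via Corollary \ref{cor:recipy_for_chi}. The genuine gap is exactly the step you flag as ``the hard part'' and leave conditional: that the recipe applied to $\Theta_{\chi^{\prime}}|_{E^{\times}}$ returns $\{\chi^{\prime},\chi^{\prime\tau}\}$, equivalently that the remaining constituents of $\Theta_{\chi^{\prime}}|_{E^{\times}}$ reproduce the properly-quadratic-distance pattern of Theorem \ref{thm:Xi_chi_restriction_to_Etimes}. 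This is not a formality: in the paper it is Lemma \ref{lm:restrictions_to_Etimes_are_isom} ($\Theta_{\chi^{\prime}}|_{E^{\times}}\cong\Xi_{\chi^{\prime}}|_{E^{\times}}$), whose proof occupies Section \ref{sec:trace_on_induced_side} --- a Mackey computation over the coset representatives $r_{y,\lambda}$ that must be matched term by term with the character sums of Proposition \ref{cor:non_split_traces_via_elem_modifications}, i.e.\ with the quadratic-distance dichotomy of Sections \ref{sec:some_character_theory}--\ref{sec:restr_to_units_of_torus}. Your suggested fix (a direct Mackey computation on the cuspidal-type side) is indeed what the paper does, but since you do not carry it out, the proof as written is incomplete at its decisive point.

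Two smaller remarks. First, the paper does not invoke uniqueness of cuspidal inducing data for two inductions from $E^{\times}U_{\fJ}$; it shows directly, via the stratum containment from the proof of Proposition \ref{prop:cuspidality_general_stuff} and \cite{BH} 15.8 Exercise, that $(E^{\times}U_{\fJ},\Xi_{\chi})$ is itself a cuspidal inducing datum, whence $\Xi_{\chi}\cong\Theta_{\chi^{\prime}}$. Note that Lemma \ref{lm:not_intertwining} only controls self-intertwining of a single representation, so your Mackey/Frobenius argument would need an intertwining statement for the pair $(\Xi_{\chi},\Theta_{\chi^{\prime}})$; the stratum argument is the cleanest way to supply it. Second, in your alternative route the claim that the $\chi$-eigenline ``must project nontrivially'' onto the $\psi_{\beta}$-eigenspace is unjustified: a priori the $\chi$-eigenvector could have nonzero components only in $U_{\fJ}^{\lengthofv}$-eigencharacters that are not $E^{\times}$-fixed and are merely permuted by $E^{\times}$, and even the $\psi_{\beta}$-eigenspace $W$, which is $J_{\beta}$-stable, could a priori contain only characters of $E^{\times}$ at quadratic distance from $\chi$ (these agree with $\chi$ on $F^{\times}U_E^{\lengthofv}$, which is all the eigenvalue constraint gives you) rather than $\chi$ itself. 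Ruling this out requires essentially the same comparison with the quadratic-distance structure, so this route does not bypass the hard computation either.
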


\begin{proof} By twisting both sides with a character of $F^{\times}$, we can assume that $(E/F,\chi)$ is a minimal pair. By construction of $\pi_{\chi}$ and Lemma \ref{eq:Rchi_is_induction_from_ZUJ_of_Xichi}, it is enough to show that $\Xi_{\chi} \cong \Theta_{\chi}$ ($\Theta_{\chi}$ is as in Section \ref{sec:BH_constr_of_pi_chi}). From Corollary \ref{cor:first_part_of_hard_thm} and the proof of Proposition \ref{prop:cuspidality_general_stuff} it follows that there is a simple (ramified) stratum $(\fJ,m,\beta)$ such that $\Xi_{\chi}|_{U_{\fJ}^{m}}$ contains $\psi_{\beta}$. By \cite{BH} 15.8 Exercise it follows that $(E^{\times}U_{\fJ}, \Xi_{\chi})$ is a cuspidal inducing datum in $G(F)$, i.e., there is some $\chi^{\prime}$ with $\Xi_{\chi} \cong \Theta_{\chi^{\prime}}$. By the last statement of Corollary \ref{cor:first_part_of_hard_thm}, $(E/F, \chi^{\prime})$ has to be minimal. By Lemma \ref{lm:restrictions_to_Etimes_are_isom}, $\Theta_{\chi^{\prime}}|_{E^{\times}} \cong \Xi_{\chi^{\prime}}|_{E^{\times}}$. Thus $\Xi_{\chi^{\prime}}|_{E^{\times}} \cong \Xi_{\chi}|_{E^{\times}}$. Now, by Corollary \ref{cor:recipy_for_chi}, $\chi$ is (up to $\tau$-conjugacy) uniquely determined by $\Xi_{\chi}$, and we deduce $\chi^{\prime} = \chi$ or $\chi^{\prime} = \chi^{\tau}$. As $\Theta_{\chi} \cong \Theta_{\chi^{\tau}}$, the proposition follows. \qedhere
\end{proof}

\begin{lm}\label{lm:restrictions_to_Etimes_are_isom}
Let $(E/F,\chi)$ be a minimal pair. We have $\Theta_{\chi}|_{E^{\times}} \cong \Xi_{\chi}|_{E^{\times}}$.
\end{lm}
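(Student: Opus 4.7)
I would prove the isomorphism by comparing the characters of the two representations on the abelian group $E^{\times}$, since smooth finite-dimensional representations of an abelian group are determined by their characters.

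For $\Theta_{\chi} = \cIndd_{J_{\beta}}^{E^{\times}U_{\fJ}}\Lambda$, the character on $g \in E^{\times}$ is given by the induced-representation trace formula
\[ \tr(g;\Theta_{\chi}) = \sum_{[x]\in E^{\times}U_{\fJ}/J_{\beta},\ xgx^{-1}\in J_{\beta}} \Lambda(xgx^{-1}). \]
Choosing coset representatives in $U_{\fJ}$ (using $E^{\times}U_{\fJ} = E^{\times}\cdot U_{\fJ}$ and $J_{\beta} = E^{\times}U_{\fJ}^{\lengthofv}$), the condition $xgx^{-1}\in J_{\beta}$ amounts to being able to decompose $u^{-1}gu = g'v$ with $g' \in E^{\times}$ and $v \in U_{\fJ}^{\lengthofv}$; when this holds, the corresponding summand equals $\chi(g')\psi_{\beta}(v)$. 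Both $\Theta_{\chi}|_{E^{\times}}$ and $\Xi_{\chi}|_{E^{\times}}$ have the same total dimension $(q-1)q^{\lengthofv-1}$ by Lemma \ref{lm:UJm1_acts_trivial}.

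I would then verify the trace identity depth-by-depth, comparing with the trace of $\Xi_{\chi}$ established in Section \ref{sec:restr_to_units_of_torus}. On $F^{\times}$ both sides act by $\chi|_{F^{\times}}$ with multiplicity $(q-1)q^{\lengthofv-1}$, so the central case is immediate. For $g \in E^{\times}$ of valuation one, a short computation should yield $\tr(g;\Theta_{\chi}) = \chi(g) + \chi^{\tau}(g)$, matching Proposition \ref{prop:traces_of_val_1_elements}. The substantive case is $g \in U_{F}U_{E}^{2\alpha+1}\setminus U_{F}U_{E}^{2\alpha+3}$ with $0 \leq \alpha < \lengthofv$: here, using the commutation relations \eqref{eq:commutativity_relations} in $U_{\fJ}$ together with the identity \eqref{eq:BH_choice_identity_between_chi_and_psi} relating $\chi$ to $\psi_{\beta}$ via $\beta$, each non-vanishing contribution $\chi(g')\psi_{\beta}(v)$ should be identifiable with an elementary modification $\chi_{s}(g)$ for some $s \in R_{\alpha}^{\langle\tau\rangle,\prime}$ in the sense of Proposition \ref{lm:description_of_special_characters}, and grouping the $u$'s by the associated $s$ should reproduce the formula
\[ \tr(g;\Theta_{\chi}) = 2q^{\alpha}\sum_{s\in Q_{\alpha}}\chi_{s}(g) + q^{\alpha}\bigl(\chi(g)+\chi^{\tau}(g)\bigr) \]
of Proposition \ref{cor:non_split_traces_via_elem_modifications}.

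The main obstacle will be the explicit combinatorial bookkeeping in this last step. Because $J_{\beta}$ is not normal in $E^{\times}U_{\fJ}$ when $\lengthofv \geq 2$, the set of coset representatives $u$ with $u^{-1}gu \in J_{\beta}$ depends intricately on the depth of $g$, and one has to identify the resulting parametrization with the set $Q_{\alpha}$ of properly quadratic elementary modifications, with the precise coefficient $2q^{\alpha}$. This is the $\Theta_{\chi}$-side analog of the character-theoretic analysis of Sections \ref{sec:some_character_theory}--\ref{sec:restr_to_units_of_torus}, and must be carried out in parallel fashion using the commutation relations in $U_{\fJ}$ and the explicit structure of the filtration.
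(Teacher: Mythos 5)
Your strategy is sound and follows the same skeleton as the paper's proof: reduce to comparing traces on $E^{\times}$ (after noting the common central character and triviality on $U_E^{m+1}$, so everything factors through the finite group $E^{\times}/\langle t, U_E^{m+1}\rangle$), evaluate $\tr(g;\Theta_{\chi})$ by the induced-character (Mackey) formula using the explicit coset representatives $r_{y,\lambda}$ of Lemma \ref{lm:rep_system_for_UJ_UEUJell}, and split into the central, valuation-one, and depth-$\alpha$ unit cases; for valuation one the paper indeed finds that only $\lambda = 0$, $y = \pm 1$ conjugate $g$ into $J_{\beta}$, giving $\chi(g)+\chi^{\tau}(g)$ as you predict. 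The difference is in how the depth-$\alpha$ case is closed. You propose to re-derive on the induced side the closed formula of Proposition \ref{cor:non_split_traces_via_elem_modifications}, i.e.\ to count for each $s$ the representatives contributing $\chi_s(g)$ and show the count is $2q^{\alpha}$ on $Q_{\alpha}$ and $q^{\alpha}$ at $s=\pm 1$; you rightly flag this as the main obstacle, and carrying it out would essentially repeat the case analysis of Lemma \ref{lm:traces_of_non_split_elements} in new coordinates. The paper avoids this duplication: after rewriting $\Lambda(r_{y,\lambda}\, g\, r_{y,\lambda}^{-1}) = \chi\bigl(1+\tfrac{1}{2}h(y+y^{-1}(1-\lambda^{2}t))u^{2\alpha+1}\bigr)$ via \eqref{eq:BH_choice_identity_between_chi_and_psi}, it observes that the condition singling out the summand $\chi(i_1)$, namely \eqref{eq:for_trace_comparison}, is equivalent to the equation \eqref{eq:Gl2incase_alphagreater_l2} in the ADLV coordinate $a^{\prime}$ under the substitution $a^{\prime} = -b^{\prime}+c^{\prime}u$ with $b^{\prime}\leftrightarrow y$ and $c^{\prime}\leftrightarrow\lambda$ (the $\tau$-even and $\tau$-odd parts of $a^{\prime}$), so the multiplicity of each $\chi(i_1)$ on the induced side coincides term by term with the already computed $\sharp S_{g,i_1}$; the hard counting is therefore done only once, on the geometric side. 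Your plan works, but either be prepared to redo that counting or adopt this change of variables, which turns the comparison into a bookkeeping-free identification of the two sums.
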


\begin{proof} 
The proof is given in Section \ref{sec:trace_on_induced_side}.
\end{proof}


\subsection{Small level case}\label{sec:proof_of_thm_small_level_case} \mbox{}

Let $\chi$ be a character of $E^{\times}$ of (odd) level $m \geq 1$. Let $\lengthofv \geq m+1$ be an integer. Then $\chi$ defines a character $\tilde{\chi}$ of the group $E^{\times}U_{\fJ}^{\lengthofv}$ by composition

\begin{equation} \label{eq:tilde_chi_def}
\tilde{\chi} \colon E^{\times}U_{\fJ}^{\lengthofv} \tar E^{\times}U_{\fJ}^{\lengthofv}/U_{\fJ}^{\lengthofv} \cong E^{\times}/U_E^{\lengthofv} \tar E^{\times}/U_E^{m+1} \stackrel{\chi}{\rar} \overline{\bQ}_{\ell}^{\times}. 
\end{equation}

\begin{prop}\label{thm:small_level_case} Let $\chi$ be a character of $E^{\times}$ of odd level $m \geq 1$ and let $\lengthofv \geq m+1$. Then
\[ \R_{\chi,\lengthofv} \cong \cIndd\nolimits_{E^{\times}U_{\fJ}^{\lengthofv}}^{G(F)} \tilde{\chi}. \]
\end{prop}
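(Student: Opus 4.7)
The plan is to reduce the statement to an isomorphism of finite-index induced representations at the level of the compact-open-mod-center subgroup $E^\times U_{\fJ}$, and then establish the latter via dimension matching and triviality of a deep subgroup action. First, by Lemma \ref{eq:Rchi_is_induction_from_ZUJ_of_Xichi} combined with transitivity of compact induction,
\[ R_{\chi,\lengthofv} = \cIndd\nolimits_{E^\times U_{\fJ}}^{G(F)} \Xi_{\chi,\lengthofv} \quad \text{and} \quad \cIndd\nolimits_{E^\times U_{\fJ}^\lengthofv}^{G(F)} \tilde{\chi} = \cIndd\nolimits_{E^\times U_{\fJ}}^{G(F)} \Ind\nolimits_{E^\times U_{\fJ}^\lengthofv}^{E^\times U_{\fJ}} \tilde{\chi}, \]
so it suffices to establish the isomorphism of $E^\times U_{\fJ}$-representations $\Xi_{\chi,\lengthofv} \cong \Ind_{E^\times U_{\fJ}^\lengthofv}^{E^\times U_{\fJ}} \tilde\chi$.

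Next I observe that under the hypothesis $\lengthofv \geq m+1$, the defining equations of $Y^m_{\dot w}$ in Definition \ref{def:disc_subschemes_Yvm2} collapse: since $u^\lengthofv \equiv 0 \pmod{u^{m+1}}$, one has $B = 0$ and $D = R^{-1}\tau(C)$, so $Y^m_{\dot w}(k)$ is parametrized by the $k$-rational triples $(a,C,A)$ with $a_1 \neq 0$. The right $I_{m,\underline w_m}/I^m$-action preserves the $a$-coordinate and acts freely and transitively on each of the $|D_w^\tau| = (q-1)q^{\lengthofv-1}$ fibers (since $X^m_{\underline w_m}(1) \to X_{\underline w}(1)$ is an $I_{m,\underline w_m}$-torsor by Theorem \ref{thm:structure_thm_GL_2_ram}). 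Combined with the structure of the right $\tilde I_{m,\underline w_m}/I^m$-action, this yields $\dim \Xi_{\chi,\lengthofv} = (q-1)q^{\lengthofv-1}$. On the other hand, the identity $U_E \cap U_{\fJ}^\lengthofv = U_E^\lengthofv$ (verified directly from the matrix filtration on $U_{\fJ}$) yields $[E^\times U_{\fJ}: E^\times U_{\fJ}^\lengthofv] = [U_{\fJ}: U_E U_{\fJ}^\lengthofv] = (q-1)q^{\lengthofv-1}$, so both sides have the same finite dimension.

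The main remaining step, which is also the hardest, is to produce a nonzero $E^\times U_{\fJ}$-equivariant map between the two representations. By Frobenius reciprocity this amounts to showing that $\tilde\chi$ appears in $\Xi_{\chi,\lengthofv}|_{E^\times U_{\fJ}^\lengthofv}$. The natural route is to prove the stronger statement that $U_{\fJ}^\lengthofv$ acts trivially on the entirety of $\Xi_{\chi,\lengthofv}$: then $\Xi_{\chi,\lengthofv}$ descends to a representation of the finite group $E^\times U_{\fJ}/F^\times U_{\fJ}^\lengthofv$ with central character $\chi|_{F^\times}$, and the appearance of $\tilde\chi$ in its restriction to $E^\times U_{\fJ}^\lengthofv$ is automatic from $E^\times$-character theory. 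This triviality is the analogue of Lemma \ref{lm:UJm1_acts_trivial} in our regime, and I would verify it by a direct computation entirely parallel to Lemma \ref{lm:ADLV_ram_GL_2_key_computation}: for $h \in U_{\fJ}^\lengthofv$ and a point $x = \psi^m_{\dot v}(a,C,R^{-1}\tau(C),A,0) \in Y^m_{\dot w}$, the commutation relations \eqref{eq:commutativity_relations} permit one to rewrite $hx$ as $x^\prime \cdot i$ with $x^\prime \in Y^m_{\dot w}$ and $i \in I_{m,\underline w_m}/I^m$ mapping trivially to $U_E/U_E^{m+1}$, so that $i$ acts as $1$ on the $\chi$-isotypic part. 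The hypothesis $\lengthofv \geq m+1$ is used precisely to ensure all error terms vanish modulo $u^{m+1}$. Once this is established, the matching dimensions upgrade the nonzero map from Frobenius reciprocity to an isomorphism, which completes the proof. The main technical obstacle is the bookkeeping in verifying the triviality of the $U_{\fJ}^\lengthofv$-action.
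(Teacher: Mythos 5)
Your opening reduction (via Lemma \ref{eq:Rchi_is_induction_from_ZUJ_of_Xichi} and transitivity of compact induction) to the statement $\Xi_{\chi,\lengthofv}\cong \Ind_{E^{\times}U_{\fJ}^{\lengthofv}}^{E^{\times}U_{\fJ}}\tilde{\chi}$ is exactly the paper's first step, and your dimension count $\dim\Xi_{\chi,\lengthofv}=(q-1)q^{\lengthofv-1}=[E^{\times}U_{\fJ}:E^{\times}U_{\fJ}^{\lengthofv}]$ and the triviality of the $U_{\fJ}^{\lengthofv}$-action are both correct (they appear in the paper as the case $g\in U_FU_{\fJ}^{\lengthofv}$ of Lemma \ref{lm:traces_on_ADLV_side_small_level_case}). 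But the two steps you build on top of this have genuine gaps. First, the claim that once $U_{\fJ}^{\lengthofv}$ acts trivially the occurrence of $\tilde{\chi}$ in $\Xi_{\chi,\lengthofv}|_{E^{\times}U_{\fJ}^{\lengthofv}}$ is ``automatic from $E^{\times}$-character theory'' is unjustified: triviality on $U_{\fJ}^{\lengthofv}$ plus the central character $\chi|_{F^{\times}}$ only tells you the restriction to the left-acting $\iota(E^{\times})$ is a sum of $(q-1)q^{\lengthofv-1}$ characters of $E^{\times}/U_E^{\lengthofv}$ extending $\chi|_{F^{\times}}$; nothing in that forces $\chi$ itself to be among them, because the left $E^{\times}$-action and the right torus action used to cut out the $\chi$-isotypic part are related only through the geometry of $Y_{\dot w}^m$. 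Pinning down which characters of $E^{\times}$ occur requires an actual computation of traces of (conjugates of) elements of $\iota(E^{\times})$ and of $\varpi U_{\fJ}$, which is what Lemma \ref{lm:traces_on_ADLV_side_small_level_case}(ii), (i)${}^{\prime}$, (ii)${}^{\prime}$ supplies.

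Second, and more seriously, your final step ``nonzero intertwiner plus equal dimensions implies isomorphism'' is false here: neither side is irreducible (the induced representation $\Ind_{E^{\times}U_{\fJ}^{\lengthofv}}^{E^{\times}U_{\fJ}}\tilde{\chi}$ fails Mackey's irreducibility criterion since $\tilde{\chi}$ has level $m<\lengthofv$, and indeed Theorem \ref{thm:small_level_case_morph_determining} exhibits many inequivalent irreducible quotients of $R_{\chi,\lengthofv}$), so a nonzero map between two semisimple representations of equal dimension can perfectly well have a kernel and a cokernel. The only way to close this is to compare the two representations as virtual characters, i.e.\ to check that all traces agree; this is precisely the paper's route: representatives of $E^{\times}U_{\fJ}$ modulo center are taken in $U_{\fJ}\cup\varpi U_{\fJ}$, the traces on $\Xi_{\chi,\lengthofv}$ are computed by the point-counting trace formula (Proposition \ref{lm:our_form_of_Boy_trace_formula}, Lemma \ref{lm:traces_on_ADLV_side_small_level_case}), and the traces on $\Ind\tilde{\chi}$ by the Mackey formula (Lemma \ref{lm:traces_on_induced_side_in_small_level_case}). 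So your proposal reproduces the easy parts of that computation (the unipotent-deep and central contributions) but omits the substantive trace comparisons that actually force the isomorphism.
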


\begin{proof}[Proof of Proposition \ref{thm:small_level_case}]
By Lemma \ref{eq:Rchi_is_induction_from_ZUJ_of_Xichi} it is enough to show $\Xi_{\chi,n} \cong \cIndd_{E^{\times}U_E^{\lengthofv}}^{E^{\times}U_{\fJ}} \tilde{\chi}$. To do so, it is enough to show that the traces of each element of $E^{\times}U_{\fJ}$ in both spaces agree. Modulo center, which acts by $\chi|_{F^{\times}}$ in both spaces, any element of $E^{\times}U_{\fJ}$ is represented by an element in $U_{\fJ} \cup \varpi U_{\fJ}$, thus we can restrict to elements lying in this union. The required trace computations are covered by Lemmas \ref{lm:traces_on_ADLV_side_small_level_case} and \ref{lm:traces_on_induced_side_in_small_level_case}. \qedhere
\end{proof}

\begin{lm}\label{lm:traces_on_ADLV_side_small_level_case} Let $g \in U_{\fJ}$. Precisely one of the following cases occurs:
\begin{itemize}
\item[(i)] $g \in U_F U_{\fJ}^{\lengthofv}$. Then $\tr(g;\Xi_{\chi,n})[\chi]) = (q-1)q^{\lengthofv - 1} \tilde{\chi}(g)$. In particular, $U_{\fJ}^{\lengthofv}$ acts trivial in $\Xi_{\chi,n})[\chi]$ and $U_F$ acts through the character $\chi|_{F^{\times}}$. 

\item[(ii)] $g \in U_{\fJ} \sm U_F U_{\fJ}^{\lengthofv}$ is conjugate to an element $x$ of $U_F U_E^{2\alpha + 1} U_{\fJ}^{\lengthofv} \sm U_F U_E^{2\alpha + 3}U_{\fJ}^{\lengthofv}$, such that $2\alpha + 2 \leq \lengthofv$. Then
\[ \tr(g;\Xi_{\chi,n})[\chi]) =  q^{2\alpha + 1}(\tilde{\chi}(x) + \tilde{\chi}^{\tau}(x)). \]

\item[(iii)] $g \in U_{\fJ} \sm U_F U_{\fJ}^{\lengthofv}$ is not conjugate to an element of $U_E U_{\fJ}^{\lengthofv}$. Then $\tr(g;\Xi_{\chi,n})[\chi]) = 0$.
\end{itemize}

Let $g \in \varpi U_{\fJ}$. Precisely one of the following two cases can occur:

\begin{itemize}
 \item[(i)${}^{\prime}$]  $g$ is not conjugate to an element of $E^{\times}U_{\fJ}^{\lengthofv}$. Then $\tr(g;\Xi_{\chi,n})[\chi]) = 0$.
 \item[(ii)${}^{\prime}$] $g$ is conjugate to an element $x$ of $E^{\times}U_{\fJ}^{\lengthofv}$. Then $\tr(g;\Xi_{\chi,n})[\chi]) = \tilde{\chi}(x) + \tilde{\chi}^{\tau}(x)$.
\end{itemize}
\end{lm}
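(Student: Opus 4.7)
My strategy is to reduce all six trace computations to fixed-point counts on the zero-dimensional scheme $\tilde{Y}_{\dot{w}}^m$ via Theorem \ref{thm:structure_thm_GL_2_ram} and Lemma \ref{eq:Rchi_is_induction_from_ZUJ_of_Xichi}. Since $\tilde{Y}_{\dot{w}}^m$ consists only of $k$-rational points and carries a free right action of $\tilde{I}_{m,\underline{w}_m}/I^m$ (cf. Lemma \ref{lm:right_group_action_on_gen_ADLV_ram_GL_2}) commuting with the left $E^{\times}U_{\fJ}$-action, for any $g \in E^{\times}U_{\fJ}$ and any representative $y \in \tilde{Y}_{\dot{w}}^m(k)$ of a right orbit stabilized by $g$, there is a unique element $j(g,y) \in \tilde{I}_{m,\underline{w}_m}/I^m$ with $g \cdot y = y \cdot j(g,y)$; standard arguments then give
\[ \tr(g;\Xi_{\chi,\lengthofv}) = \sum_{[y]} \tilde{\chi}(j(g,y)), \]
the sum running over right orbits $[y]$ with $g \cdot [y] = [y]$, and $\tilde{\chi}$ interpreted through the surjection $\tilde{I}_{m,\underline{w}_m}/I^m \tar E^{\times}/U_E^{m+1}$.

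The decisive structural input, specific to the assumption $\lengthofv \geq m+1$, is that $U_{\fJ}^{\lengthofv}$ acts trivially on $\tilde{Y}_{\dot{w}}^m$, that is, for any representative $x$ of a point of $\tilde{Y}_{\dot{w}}^m$ as written in the coordinates of Definition \ref{def:disc_subschemes_Yvm2} one has $x^{-1} U_{\fJ}^{\lengthofv} x \subseteq I^m$. This should follow from a direct valuation estimate, using that in the parametrization \eqref{eq:explicit_Cvm_param_GL_2_ram_case} the entries of $e_{\pm}(A)$, $e_{-}(B)$ and $e_0(C,D)$ have bounded complexity while the entries of $U_{\fJ}^{\lengthofv}$ have $u$-valuation $\geq \lengthofv$. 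Given this, case (i) is immediate: $U_F \subset F^{\times}$ is central and acts by $\chi|_{F^{\times}}$, whereas $U_{\fJ}^{\lengthofv}$ acts trivially, so any $g \in U_FU_{\fJ}^{\lengthofv}$ acts by the scalar $\tilde{\chi}(g)$, and the dimension $(q-1)q^{\lengthofv-1}$ is obtained from the index computation $[U_{\fJ}:U_E U_{\fJ}^{\lengthofv}] = (q-1)q^{\lengthofv-1}$.

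For case (ii), one conjugates $g$ to the representative $x = z \cdot \iota(1+u^{2\alpha+1}y)$ modulo $U_{\fJ}^{\lengthofv}$ with $z \in U_F$ central and $y \in U_F$; the central factor contributes $\chi(z)$, and the remaining fixed-point count translates into a polynomial system in the coordinates $(a,A,C)$ of $Y_{\dot{w}}^m$ that should decouple thanks to $\lengthofv \geq m+1 \geq 2\alpha+3$. The two summands $\tilde{\chi}(x) + \tilde{\chi}^{\tau}(x)$ in the answer reflect the two diagonal entries of the right action in Lemma \ref{lm:right_group_action_on_gen_ADLV_ram_GL_2}, while the prefactor $q^{2\alpha+1}$ should count the solutions of the decoupled system. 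Case (iii) follows by showing that the absence of conjugacy to $U_E U_{\fJ}^{\lengthofv}$ forces an inconsistency in the fixed-point equations. Cases (i)${}^{\prime}$ and (ii)${}^{\prime}$ for $g \in \varpi U_{\fJ}$ are treated in parallel by using $\varpi = \iota(u) \in E^{\times}$ together with the right shift on the discrete components described in Lemma \ref{lm:action_of_beta_varpi}. The main obstacle I anticipate is carrying out the decoupling and explicit enumeration in cases (ii) and (ii)${}^{\prime}$ in the coordinates of Definition \ref{def:disc_subschemes_Yvm2}, where both the torus factor $C$ and the affine factors $a,A$ must be tracked simultaneously; the hypothesis $\lengthofv \geq m+1$ is precisely what makes this computation manageable, in contrast to the critical regime $m = 2\lengthofv - 1$ handled in Proposition \ref{cor:non_split_traces_via_elem_modifications}.
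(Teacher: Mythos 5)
Your overall strategy---reducing everything to a fixed-point/orbit count on the zero-dimensional variety $\tilde{Y}_{\dot{w}}^m$ via Theorem \ref{thm:structure_thm_GL_2_ram}---is in substance the same as the paper's, which runs the count through Proposition \ref{lm:our_form_of_Boy_trace_formula}. But the one concrete structural claim you build on is false: $U_{\fJ}^{\lengthofv}$ does \emph{not} act trivially on $\tilde{Y}_{\dot{w}}^m$, and $x^{-1}U_{\fJ}^{\lengthofv}x \not\subseteq I^m$ for representatives $x$ of points of $Y_{\dot{w}}^m$. Take $\lengthofv = 2k-1$ odd, $\dot{v} = \matzz{}{u^{-k}}{u^k}{}$, and $g = e_-(g_3) \in U_{\fJ}^{\lengthofv}$ with $v_u(g_3) = \lengthofv+1$; then $\dot{v}^{-1}e_-(g_3)\dot{v} = e_+(g_3u^{-(\lengthofv+1)})$ has unit argument, and conjugating further by $e_0(C,D)e_+(A)$ leaves a unit in the upper-right entry, so $x^{-1}gx \notin I^m$. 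In the coordinates of Proposition \ref{prop:left_I_action_mega_formula} this shows up as the $A$-coordinate moving by roughly $h(g,a)DC^{-1}$, which need not vanish $\bmod\ u^m$. What is true, and what rescues case (i), is weaker: for $g \in U_{\fJ}^{\lengthofv}$ and $\lengthofv \geq m+1$ the coordinates $a,C,D,B$ are unchanged (here $B \equiv 0$ and $N \equiv 1 \bmod u^{m+1}$), so $g$ preserves each right orbit and the compensating element $j(g,y)$ is of the form $e_+(i_2)$, i.e.\ lies in the kernel of the surjection \eqref{eq:right_group_and_mod_rad_surjection}, on which the inflated $\chi$ is trivial. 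Your dimension count is also not yet a proof: Lemma \ref{lm:UJm1_acts_trivial} is only available for $m = 2\lengthofv-1$, and reading $\dim \Xi_{\chi,\lengthofv}$ off the index $[U_{\fJ}:U_EU_{\fJ}^{\lengthofv}]$ presupposes the induced-module description, which is Proposition \ref{thm:small_level_case}---exactly what this lemma is meant to prove; you need the direct orbit count $\sharp Y_{\dot{w}}^m / \sharp(I_{m,\underline{w}_m}/I^m) = (q-1)q^{\lengthofv-1}$.

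The more serious gap is that cases (ii), (iii), (i)$'$ and (ii)$'$---the actual content of the lemma---are left as assertions that the system ``should decouple'' and that the prefactor ``should'' be $q^{2\alpha+1}$, and you acknowledge that this enumeration is the anticipated obstacle. These counts are precisely what has to be done. In the paper they come out of the already-established machinery: for $\lengthofv \geq m+1$ equation \eqref{eq:Gl_2_v1} collapses to $\tau(i_1) \equiv g_2a+g_1 \bmod u^{m+1}$; after conjugating $g$ to $\iota(1+u^{2\alpha+1}h)$ this forces $i_1 = 1 \pm u^{2\alpha+1}h$, while \eqref{eq:Gl_1_v1} becomes $a' \equiv \pm 1 \bmod u^{\lengthofv-2\alpha-1}$ and yields exactly $q^{2\alpha+1}$ solutions per sign (Lemma \ref{lm:on_equation_for_a}); case (iii) is the emptiness statement of that lemma; and the $\varpi U_{\fJ}$ cases require the twisted automorphism $\dot{x}I^m \mapsto g\dot{x}y_1^{-1}I^m$ together with the observation that fixing the $a$-coordinate under $ua' \mapsto ua'^{-1}$ forces conjugacy into $E^{\times}U_{\fJ}^{\lengthofv}$---an ingredient you mention only in passing. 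As it stands the proposal is an outline whose single concrete claim is incorrect; it does not establish the lemma.
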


\begin{proof} 
The proof is given in Section \ref{sec:traces_small_level_case}.
\end{proof}

\begin{lm}\label{lm:traces_on_induced_side_in_small_level_case}
Lemma \ref{lm:traces_on_ADLV_side_small_level_case} holds with $\Xi_{\chi,n}$ replaced by $\cIndd_{E^{\times}U_E^{\lengthofv}}^{E^{\times}U_{\fJ}} \tilde{\chi}$. 
\end{lm}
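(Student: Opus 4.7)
The plan is to apply the standard Frobenius character formula for induced representations. Writing $H = E^{\times}U_{\fJ}^{\lengthofv}$ and $G = E^{\times}U_{\fJ}$, we have $[G:H] < \infty$ and
\[ \tr\!\left(g;\cIndd\nolimits_H^G \tilde\chi\right) \;=\; \sum_{\substack{xH \in G/H \\ x^{-1}gx \in H}} \tilde\chi(x^{-1}gx), \]
(the value $\tilde\chi(x^{-1}gx)$ depends only on the coset $xH$ because $\tilde\chi$ is a character and $H$ is normalized from the right along each coset). The first step is to verify $[G:H] = (q-1)q^{\lengthofv-1}$: since $U_{\fJ}^{\lengthofv}$ is normal in $G$ and $E^{\times}\cap U_{\fJ}^{\lengthofv} = U_E^{\lengthofv}$, one has $G/H \cong U_{\fJ}/U_EU_{\fJ}^{\lengthofv}$, and a direct count using the filtration in Section \ref{sec:subgroups_of_GF} gives $|U_{\fJ}/U_{\fJ}^{\lengthofv}|=(q-1)^2 q^{2\lengthofv-2}$ and $|U_EU_{\fJ}^{\lengthofv}/U_{\fJ}^{\lengthofv}|=|U_E/U_E^{\lengthofv}|=(q-1)q^{\lengthofv-1}$.

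Case (i) is immediate: if $g \in U_FU_{\fJ}^{\lengthofv}$, then the central part in $U_F$ is fixed by every conjugation and the $U_{\fJ}^{\lengthofv}$-part stays in $U_{\fJ}^{\lengthofv}\subseteq H$ (which is also normal in $G$), so every coset contributes, and on each of them $\tilde\chi$ returns the value $\chi(z)=\tilde\chi(g)$, since $\tilde\chi$ is trivial on $U_{\fJ}^{\lengthofv}$. This yields $(q-1)q^{\lengthofv-1}\tilde\chi(g)$. Cases (iii) and (i)$'$ are equally immediate: by hypothesis $g$ is not $G$-conjugate to any element of $H$ (for (iii), conjugates of $g\in U_{\fJ}$ in $H$ necessarily lie in $U_{\fJ}\cap H = U_EU_{\fJ}^{\lengthofv}$, which is excluded), so the summation set is empty and the trace vanishes.

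The content of the lemma is in cases (ii) and (ii)$'$. Replace $g$ by its conjugate $x\in H$. The key observation is that $\omega := \diag(1,-1)\in U_{\fJ}$ satisfies $\omega\iota(e)\omega^{-1}=\iota(\tau(e))$, so $\bar\omega$ generates $N_{\bar G}(\bar H)/\bar H$ where bars denote passage to $\bar G := G/U_{\fJ}^{\lengthofv}$, $\bar H := H/U_{\fJ}^{\lengthofv} \cong E^{\times}/U_E^{\lengthofv}$. Therefore the image in $\bar H$ of the set of $G$-conjugates of $x$ contained in $H$ is $\{\bar x,\tau(\bar x)\}$, and (using char$\,k\neq 2$ and the explicit form $x=z(1+u^{2\alpha+1}y)\cdot v$ with $v\in U_{\fJ}^{\lengthofv}$) the hypothesis $2\alpha+2\leq\lengthofv$ in (ii), resp. $v_u(x)=1$ in (ii)$'$, guarantees $\bar x\neq\tau(\bar x)$. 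Grouping the summation by target image, the character formula reduces to
\[ \tr\!\left(g;\cIndd\nolimits_H^G \tilde\chi\right) \;=\; \frac{|C_{\bar G}(\bar x)|}{|\bar H|}\bigl(\tilde\chi(x)+\tilde\chi^{\tau}(x)\bigr), \]
where the ratio is understood inside a common finite quotient (both $\Xi_{\chi,\lengthofv}$ and $\cIndd_H^G\tilde\chi$ have central character $\chi|_{F^{\times}}$, so we may work modulo a sufficiently large subgroup of $F^{\times}$, making everything finite).

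The remaining step, which I expect to be the main calculational hurdle, is the centralizer index computation: showing that this ratio equals $q^{2\alpha+1}$ in case (ii) and $1$ in case (ii)$'$. For (ii)$'$, where $\bar x$ has valuation $1$ in $\bar H$ and hence generates an index-$2$ extension in $\bar G$, a short calculation with $\omega$ gives $|C_{\bar G}(\bar x)|=|\bar H|$. For case (ii), one writes $\bar x = 1 + \varpi^{2\alpha+1}y \pmod{\varpi^{\lengthofv}\fJ}$ with $y\in U_E$, and computes the centralizer of $\bar x$ in $\bar G$ by explicitly solving the congruence $u_0\bar x = \bar x u_0$ for $u_0\in U_{\fJ}/U_{\fJ}^{\lengthofv}$. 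Using the Iwahori-type decomposition $U_{\fJ}/U_{\fJ}^{\lengthofv}$ and the commutation relations of \eqref{eq:commutativity_relations}, the centralizer is controlled by the $\caO_F$-linear map $u_0\mapsto [u_0,\varpi^{2\alpha+1}y]$ modulo $\varpi^{\lengthofv}\fJ$, whose kernel is visibly of the required size. Combining this with Lemma \ref{lm:traces_on_ADLV_side_small_level_case} completes the verification that both traces agree case by case.
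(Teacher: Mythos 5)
Your skeleton --- the Frobenius/Mackey character formula for $\cIndd_{E^{\times}U_{\fJ}^{\lengthofv}}^{E^{\times}U_{\fJ}}\tilde{\chi}$, with the index count and the easy cases (i), (iii), (i)$'$ handled as you do --- is exactly the paper's route: the omitted proof is the Mackey formula plus an explicit conjugation computation with the coset representatives $r_{y,\lambda}$ of Lemma \ref{lm:rep_system_for_UJ_UEUJell}, as in the proof of Lemma \ref{lm:restrictions_to_Etimes_are_isom}. Where you diverge is also where the gap sits. In cases (ii) and (ii)$'$ you reduce everything to the assertion that the $G$-conjugates of $x$ lying in $H=E^{\times}U_{\fJ}^{\lengthofv}$ have image exactly $\{\bar{x},\tau(\bar{x})\}$ in $\bar{H}$, and you justify this solely by computing $N_{\bar{G}}(\bar{H})/\bar{H}=\langle\bar{\omega}\rangle$. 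That is not enough: the usual argument ``conjugate inside a torus $\Rightarrow$ conjugate under its normalizer'' needs $C_{\bar{G}}(\bar{x})=\bar{H}$ (regularity), and here $\bar{x}$ is very far from regular --- by your own count $[C_{\bar{G}}(\bar{x}):\bar{H}]=q^{2\alpha+1}$ --- so elements of $\bar{G}$ outside $C_{\bar{G}}(\bar{x})\,N_{\bar{G}}(\bar{H})$ could a priori conjugate $\bar{x}$ into further elements of $\bar{H}$. If the conjugate set $S\subseteq\bar{H}$ were bigger than two elements, your coefficient would have to be $\frac{|C_{\bar{G}}(\bar{x})|\cdot\sharp S}{2\,|\bar{H}|}$ rather than $\frac{|C_{\bar{G}}(\bar{x})|}{|\bar{H}|}$; so the unproved claim is precisely what produces the constants $q^{2\alpha+1}$ and $1$ that the lemma is about.

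The claim is in fact true: any conjugate of $g$ landing in $E^{\times}U_{\fJ}^{\lengthofv}$ is congruent to $x$ or $\tau(x)$ modulo $U_E^{m+1}U_{\fJ}^{\lengthofv}$, which is all that $\tilde{\chi}$ sees (this uses both $2\alpha+2\le\lengthofv$ and $\lengthofv\ge m+1$). But the only way to verify it --- and the deferred centralizer count, which you also leave as ``visibly of the required size'' --- is to carry out the explicit conjugation $r_{y,\lambda}\,g\,r_{y,\lambda}^{-1}$ and solve the resulting congruences (forcing $\lambda\equiv 0$ to high order and $y^2\equiv 1-\lambda^{2}t$, yielding $2q^{2\alpha+1}$ contributing cosets split evenly between the values $\tilde{\chi}(x)$ and $\tilde{\chi}^{\tau}(x)$), i.e.\ exactly the computation of Section \ref{sec:trace_on_induced_side} that the normalizer shortcut was meant to bypass. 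So the plan is workable and your claimed values are correct, but as written the pivotal step is asserted rather than proved, and proving it collapses your argument back into the paper's representative-by-representative calculation.
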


\begin{proof}
The lemma follows by an explicit computation using the Mackey-formula in a way very similar to the proof of Lemma \ref{lm:restrictions_to_Etimes_are_isom}. We omit the details.
\end{proof}

Using notations from Definition \ref{def:distance_quadratic} we have the following structure result.

\begin{thm} \label{thm:small_level_case_morph_determining}
Let $\chi$ be a character of $E^{\times}$ of odd level $m \geq 1$ and let $\lengthofv \geq m+1$. Let $\theta$ be a character of level $\geq m$. There are no non-zero maps from $R_{\chi, \lengthofv}$ to $R_{\theta}$, unless $\theta$ coincides with $\chi$ on $F^{\times}U_E^{m+1}$. In this case, we have

\[ \Hom_{G(F)}(R_{\chi,\lengthofv}, R_{\theta}) = \begin{cases} \overline{\bQ}_{\ell} & \text{if $\theta = \chi$, or $\theta = \chi^{\tau}$, or the distance from $\theta$ to $\chi$ is properly quadratic} \\ 0 & \text{otherwise.} \end{cases} \]

\noindent In particular, the character $\chi$ can be reconstructed from $R_{\chi, \lengthofv}$.
\end{thm}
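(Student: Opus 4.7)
The strategy is to reduce the computation to $\Hom$-spaces over the compact-mod-center subgroups of $G(F)$, where Theorem~\ref{thm:Xi_chi_restriction_to_Etimes} applies. By Proposition~\ref{thm:small_level_case} I may write $R_{\chi,\lengthofv} \cong \cIndd_{E^{\times}U_{\fJ}^{\lengthofv}}^{G(F)} \tilde{\chi}$, and Frobenius reciprocity then gives
\[
\Hom_{G(F)}\bigl(R_{\chi,\lengthofv},\, R_\theta\bigr) \;\cong\; \Hom_{E^{\times}U_{\fJ}^{\lengthofv}}\bigl(\tilde{\chi},\, R_\theta|_{E^{\times}U_{\fJ}^{\lengthofv}}\bigr).
\]
Since $\tilde{\chi}$ is trivial on $U_{\fJ}^{\lengthofv}$, any such map must land in the $U_{\fJ}^{\lengthofv}$-fixed subspace of $R_\theta$, regarded as a representation of the quotient $E^{\times}U_{\fJ}^{\lengthofv}/U_{\fJ}^{\lengthofv} \cong E^{\times}/U_E^{\lengthofv}$.

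Next I would combine this with $R_\theta \cong \pi_\theta = \cIndd_{E^{\times}U_{\fJ}}^{G(F)} \Xi_\theta$ from Theorem~\ref{thm:hard_version_main_result} and apply a Mackey decomposition of $R_\theta|_{E^{\times}U_{\fJ}^{\lengthofv}}$ indexed by $g \in E^{\times}U_{\fJ} \backslash G(F) / E^{\times}U_{\fJ}^{\lengthofv}$. The trivial coset contributes $\Hom_{E^{\times}U_{\fJ}^{\lengthofv}}(\tilde{\chi},\, \Xi_\theta|_{E^{\times}U_{\fJ}^{\lengthofv}})$, and one must show that all other cosets give zero. This vanishing is where I expect the main obstacle: the argument should use Lemma~\ref{lm:not_intertwining}, by noting that a non-zero element of a $g$-summand for $g \notin E^{\times}U_{\fJ}$ would, after inducing from the intersection subgroup back up to $E^{\times}U_{\fJ}$ via the decomposition $\Xi_{\chi,\lengthofv} = \cIndd_{E^{\times}U_{\fJ}^{\lengthofv}}^{E^{\times}U_{\fJ}} \tilde{\chi}$, produce a non-zero $g$-intertwiner between an irreducible constituent of $\Xi_{\chi,\lengthofv}$ (which satisfies the hypotheses of Proposition~\ref{prop:cuspidality_general_stuff}) and $\Xi_\theta$, contradicting the lemma.

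For the $g = 1$ summand, the inequality $\lengthofv \geq m+1$ is decisive: in the relevant case $\ell(\theta) = m$, Lemma~\ref{lm:UJm1_acts_trivial} gives that $U_{\fJ}^{m+1} \supseteq U_{\fJ}^{\lengthofv}$ acts trivially on $\Xi_\theta$, whence $\Xi_\theta^{U_{\fJ}^{\lengthofv}} = \Xi_\theta$ and
\[
\Hom_{E^{\times}U_{\fJ}^{\lengthofv}}\bigl(\tilde{\chi},\, \Xi_\theta|_{E^{\times}U_{\fJ}^{\lengthofv}}\bigr) \;=\; \Hom_{E^{\times}}\bigl(\chi,\, \Xi_\theta|_{E^{\times}}\bigr).
\]
Theorem~\ref{thm:Xi_chi_restriction_to_Etimes} then yields the claimed dimensions. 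To dispatch the first assertion, a central-character mismatch $\theta|_{F^{\times}} \neq \chi|_{F^{\times}}$ forces $\Hom = 0$ immediately; in the remaining situation, $\ell(\theta) > m$ with matching central character, I would apply Theorem~\ref{thm:Xi_chi_restriction_to_Etimes} to the pair $(E/F, \theta)$ and observe that $\chi$, of strictly smaller level, cannot appear in $\Xi_\theta|_{E^{\times}}$, because the triviality of $\chi|_{U_E^{m+1}}$ against the non-triviality of $\theta|_{U_E^{m+1}}$ rules out both the equality alternatives and the properly quadratic-distance alternative.

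The final assertion, reconstruction of $\chi$, is then formal: knowing $\dim \Hom_{G(F)}(R_{\chi,\lengthofv}, R_\theta)$ for all admissible pairs $(E/F,\theta)$ is precisely the same data as knowing the multiplicities of characters of $E^{\times}$ in $\Xi_\chi|_{E^{\times}}$, and Corollary~\ref{cor:recipy_for_chi} recovers $\chi$ from that data up to $\tau$-conjugacy---which is sharp, since $R_\chi \cong R_{\chi^{\tau}}$.
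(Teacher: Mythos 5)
Your overall skeleton (Frobenius reciprocity for $\cIndd_{E^{\times}U_{\fJ}^{\lengthofv}}^{G(F)}\tilde{\chi}$, a Mackey decomposition of $R_{\theta}$, the identification of the trivial-coset term with $\Hom_{E^{\times}}(\chi,\Xi_{\theta}|_{E^{\times}})$ when $\ell(\theta)=m$, and then Theorem \ref{thm:Xi_chi_restriction_to_Etimes} plus Corollary \ref{cor:recipy_for_chi}) is the same as the paper's, but the two steps you gesture at are genuinely gapped. First, the vanishing of the non-trivial Mackey cosets cannot be extracted from Lemma \ref{lm:not_intertwining} in the way you propose: that lemma is a statement about \emph{self}-intertwining of a single irreducible $E^{\times}U_{\fJ}$-representation satisfying two specific hypotheses (triviality on $U_{\fJ}^{m+1}$ and absence of the trivial character on the relevant unipotent subgroup $N^{\lengthofv}$), whereas you need to exclude an intertwining between \emph{two different} representations, namely an irreducible constituent of $\Xi_{\chi,\lengthofv}=\cIndd_{E^{\times}U_{\fJ}^{\lengthofv}}^{E^{\times}U_{\fJ}}\tilde{\chi}$ and $\Xi_{\theta}$; moreover, the constituents of this large induced representation are not shown (and need not) satisfy the hypotheses of Proposition \ref{prop:cuspidality_general_stuff}, so the parenthetical claim is unjustified. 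The paper's Lemma \ref{lm:red_via_non_intertwinig_small_level} disposes of the cosets represented by $e_0(t^{\alpha},1)$, $\alpha>0$, by a different mechanism: one plays the triviality of ${}^{g}\Xi_{\chi,\lengthofv}$ on a congruence unipotent subgroup $e_-(\fp_F^{(m+1)/2})$ against Proposition \ref{prop:unip_rep}, which says that $\Xi_{\theta}$ contains no trivial character there. Some argument of this kind (restriction to unipotent subgroups) is what your sketch is missing; intertwining of types alone does not do it.

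Second, your disposal of the case $\ell(\theta)>m$ is incorrect as stated. The level mismatch rules out $\chi\in\{\theta,\theta^{\tau}\}$, but it does \emph{not} rule out that the distance from $\theta$ to $\chi$ is properly quadratic: characters occurring in $\Xi_{\theta}|_{E^{\times}}$ at properly quadratic distance can have level strictly smaller than $\ell(\theta)$ (for instance, elements $s\in Q_{\alpha}$ with $s\equiv 0 \bmod t$ are allowed whenever $-1$ is a square in $k^{\times}$, and the corresponding elementary modifications can be trivial on $U_E^{m+1}$). Hence Theorem \ref{thm:Xi_chi_restriction_to_Etimes} applied to $(E/F,\theta)$ together with a level count does not yield $\langle\chi,\Xi_{\theta}\rangle_{E^{\times}}=0$, and the required vanishing of the trivial-coset term cannot be detected on $E^{\times}$ alone; it must be detected on the larger group $E^{\times}U_{\fJ}^{\lengthofv}$ or $E^{\times}U_{\fJ}$. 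This is exactly how the paper argues in Lemma \ref{lm:red_via_non_intertwinig_small_level}: after matching central characters it compares the $U_{\fJ}$-level (triviality on $U_{\fJ}^{m+1}$, cf.\ Lemma \ref{lm:UJm1_acts_trivial}) of the two sides to force $\ell(\theta)=m$ and hence agreement on $F^{\times}U_E^{m+1}$, before ever restricting to $E^{\times}$. Your remaining steps (the chain for $\ell(\theta)=m$ and the reconstruction of $\chi$ up to $\tau$-conjugacy) do agree with the paper, but without repairing these two points the proof is incomplete.
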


\begin{proof}
By Lemma \ref{lm:red_via_non_intertwinig_small_level} we may assume that $\theta$ and $\chi$ coincide on $F^{\times}U_{\fJ}^{m+1}$. Thus by our assumption on $\theta$, $(E/F,\chi)$ is a minimal pair and we compute

\begin{eqnarray*}
\Hom_{G(F)}(R_{\chi,\lengthofv}, R_{\theta}) &=& \Hom_{E^{\times}U_{\fJ}} (\Xi_{\chi,\lengthofv}, \Xi_{\theta}) \\
&=& \Hom_{E^{\times}U_{\fJ}^{\lengthofv}} (\tilde{\chi}, \Xi_{\theta}) \\
&=& \Hom_{E^{\times}} (\tilde{\chi}, \Xi_{\theta}) \\
&=& \Hom_{E^{\times}} (\tilde{\chi}, \bigoplus\nolimits_{\theta^{\prime}} \theta^{\prime}),
\end{eqnarray*}
\noindent where $\theta^{\prime}$ runs through the set of all characters of $E^{\times}$ coinciding with $\theta$ on $F^{\times}U_E^{m+1}$, such that either $\theta^{\prime} = \theta$, or $\theta^{\prime} = \theta^{\tau}$, or the distance from $\theta$ to $\theta^{\prime}$ is properly quadratic. Above, the first equality follows from Lemma \ref{lm:red_via_non_intertwinig_small_level}. The second is Frobenius reciprocity and Proposition \ref{thm:small_level_case}. The third follows, as $\lengthofv \geq m+1$ and hence $\tilde{\chi}$, $\Xi_{\theta}$ are trivial on $U_{\fJ}^{\lengthofv}$. Finally, the forth equality follows from Theorem \ref{thm:Xi_chi_restriction_to_Etimes}. The above computation shows the statement of the theorem about $\Hom_{G(F)}(R_{\chi,\lengthofv}, R_{\theta})$. It remains to show that $\chi$ can be reconstructed from $R_{\chi,\lengthofv}$. First, the above considerations characterize $m$ as the greatest odd integer, such that there are non-zero maps from $R_{\chi,\lengthofv}$ to $R_{\theta}$ for some $\theta$ of level $m$. The rest follows as in the proof of Corollary \ref{cor:recipy_for_chi}. 
\end{proof}

\begin{lm}\label{lm:red_via_non_intertwinig_small_level}
Let $\theta$ be a character of $E^{\times}$ of odd level $\ell(\theta) \geq m$. If $\theta$ does not coincide with $\chi$ on $F^{\times} U_E^{m+1}$, then there are no non-zero morphisms between $R_{\chi,n}$ and $R_{\theta}$. Assume $\theta$ coincides with $\chi$ on $F^{\times}U_E^{m+1}$ (in particular, $\ell(\theta) = m$). Then 
\[ \Hom_{G(F)}(R_{\chi,\lengthofv}, R_{\theta}) = \Hom_{E^{\times}U_{\fJ}} (\Xi_{\chi,\lengthofv}, \Xi_{\theta}). \]
\end{lm}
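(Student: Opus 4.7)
The plan is to apply Mackey's intertwining formula and reduce both parts of the lemma to a combination of the central-character computation and the ramified-simple-stratum analysis from the proof of Proposition~\ref{prop:cuspidality_general_stuff}.

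By Lemma~\ref{eq:Rchi_is_induction_from_ZUJ_of_Xichi}, both $R_{\chi,\lengthofv}$ and $R_\theta$ are compactly induced from $H := E^\times U_{\fJ}$, and Mackey's formula gives
\[
\Hom_{G(F)}(R_{\chi,\lengthofv}, R_\theta) \;=\; \bigoplus_{g \in H\backslash G(F)/H} \Hom_{H \cap gHg^{-1}}\!\bigl(\Xi_{\chi,\lengthofv},\, {}^g\Xi_\theta\bigr),
\]
where ${}^g\Xi_\theta(x) := \Xi_\theta(g^{-1}xg)$. By the proof of Lemma~\ref{lm:not_intertwining}, the double cosets $H\backslash G(F)/H$ are enumerated by representatives $g_\alpha := \diag(t^\alpha,1)$, $\alpha \in \bZ_{\geq 0}$, and the $\alpha = 0$ summand is precisely $\Hom_H(\Xi_{\chi,\lengthofv}, \Xi_\theta)$ -- the right-hand side of the second claim. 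Thus the two parts of the lemma reduce to: (i) the $\alpha = 0$ summand vanishes under the non-coincidence hypothesis, and (ii) every $\alpha \geq 1$ summand always vanishes.

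For (i), if $\chi|_{F^\times} \neq \theta|_{F^\times}$ then the central characters of $R_{\chi,\lengthofv}$ and $R_\theta$ (inherited from $\Xi_{\chi,\lengthofv}$ and $\Xi_\theta$ via Lemma~\ref{lm:UJm1_acts_trivial} and \eqref{eq:tilde_chi_def}) disagree, so $\Hom_{G(F)}(R_{\chi,\lengthofv}, R_\theta) = 0$ outright. Otherwise non-coincidence on $F^\times U_E^{m+1}$ forces $\theta$ to be non-trivial on $U_E^{m+1}$, i.e., $\ell(\theta) \geq m+2$; here $\Xi_\theta|_{U_{\fJ}^{\ell(\theta)}}$ decomposes into non-trivial characters $\psi_\beta$ attached to ramified simple strata (as in the proof of Proposition~\ref{prop:cuspidality_general_stuff}), whereas $\Xi_{\chi,\lengthofv}|_{U_{\fJ}^{\ell(\theta)}}$ contains only characters that are either trivial (if $\lengthofv \leq \ell(\theta)$, since $\Xi_{\chi,\lengthofv}$ is trivial on $U_{\fJ}^\lengthofv \supseteq U_{\fJ}^{\ell(\theta)}$) or, if $\lengthofv > \ell(\theta)$, correspond to non-simple strata obtained by unwinding $\Xi_{\chi,\lengthofv} \cong \cIndd_{E^\times U_{\fJ}^\lengthofv}^{E^\times U_{\fJ}}\tilde\chi$ from Proposition~\ref{thm:small_level_case}. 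For (ii), assuming $\chi|_{F^\times U_E^{m+1}} = \theta|_{F^\times U_E^{m+1}}$ (so $\ell(\theta) = m$), each character of $U_{\fJ}^m/U_{\fJ}^{m+1}$ appearing in either $\Xi_{\chi,\lengthofv}|_{U_{\fJ}^m}$ or $\Xi_\theta|_{U_{\fJ}^m}$ is, by \eqref{eq:characs_of_UJ_fromBH} together with the ramified-simple-stratum dichotomy, of the form $\psi_\beta$ with $\beta = \matzz{}{\beta_2 t^{-\lengthofv'}}{\beta_3 t^{1-\lengthofv'}}{}$ ($m = 2\lengthofv' - 1$) and $\beta_2,\beta_3 \neq 0$. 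Conjugation by $g_\alpha$ sends such $\beta$ to $\matzz{}{\beta_2 t^{\alpha-\lengthofv'}}{\beta_3 t^{1-\lengthofv'-\alpha}}{}$, with off-diagonal $t$-valuations differing by $2\alpha - 1$ rather than $-1$; by \cite{BH} 13.2 Proposition this shifted character cannot match any $\psi_{\beta'}$ appearing in $\Xi_{\chi,\lengthofv}|_{H \cap g_\alpha H g_\alpha^{-1}}$, so the summand vanishes.

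The hard part will be making the strata-theoretic non-intertwinings fully rigorous, namely (a) the precise description of which characters of $U_{\fJ}^m$ (resp.\ $U_{\fJ}^{\ell(\theta)}$) actually occur in $\Xi_{\chi,\lengthofv}$, obtained by carefully unwinding the Mackey decomposition of $\cIndd_{E^\times U_{\fJ}^\lengthofv}^{E^\times U_{\fJ}} \tilde\chi$, and (b) for large $\alpha$ where $U_{\fJ}^m \not\subseteq H \cap g_\alpha H g_\alpha^{-1}$, replacing $U_{\fJ}^m$ by a suitable subgroup of $H \cap g_\alpha H g_\alpha^{-1}$ on which a ramified simple stratum of $\Xi_\theta$ is still detected and the non-intertwining argument from \cite{BH} Chapter~4 applies. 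Both are technical but routine refinements of the argument already used for Proposition~\ref{prop:cuspidality_general_stuff}.
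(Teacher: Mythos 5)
Your Mackey skeleton coincides with the paper's: the same double coset representatives $\diag(t^{\alpha},1)$, the identification of the $\alpha=0$ summand with $\Hom_{E^{\times}U_{\fJ}}(\Xi_{\chi,\lengthofv},\Xi_{\theta})$, and a central character reduction. But the two vanishing arguments you substitute are exactly where the content of the lemma lies, and both have genuine gaps. First, a structural one: the first assertion of the lemma needs \emph{all} $\alpha\geq 1$ summands to vanish also when $\theta|_{F^{\times}}=\chi|_{F^{\times}}$ but $\theta$ is nontrivial on $U_E^{m+1}$ (i.e.\ $\ell(\theta)\geq m+2$); your central-character step only covers $\theta|_{F^{\times}}\neq\chi|_{F^{\times}}$, and your argument for (ii) is carried out only under the coincidence hypothesis $\ell(\theta)=m$, so this case is simply not addressed. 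Second, the strata-intertwining route itself does not go through as described. For $\alpha\geq1$, \cite{BH} 13.2 Proposition concerns non-intertwining of a stratum attached to the maximal order $\fM$ with a ramified simple stratum attached to $\fJ$; it is not a statement that two characters of $U_{\fJ}^m$ related by $\diag(t^{\alpha},1)$-conjugation cannot both occur, and for large $\alpha$ the group $U_{\fJ}^m$ is not even contained in $H\cap g_{\alpha}Hg_{\alpha}^{-1}$ --- the point you defer in (b) is the actual difficulty, not a routine refinement. Moreover your $\alpha=0$ dichotomy in the case $\ell(\theta)\geq m+2$, $\lengthofv>\ell(\theta)$ is false: writing $m''=\ell(\theta)$, the trivial-coset constituent of the restriction of $\Xi_{\chi,\lengthofv}=\cIndd_{E^{\times}U_{\fJ}^{\lengthofv}}^{E^{\times}U_{\fJ}}\tilde{\chi}$ to $U_{\fJ}^{m''}$ is $\Ind_{U_E^{m''}U_{\fJ}^{\lengthofv}}^{U_{\fJ}^{m''}}\mathbf{1}$, which by \eqref{eq:characs_of_UJ_fromBH} contains every character $\psi_{\fM,a}$ of $U_{\fJ}^{m''}/U_{\fJ}^{m''+1}$ with $a_2+a_3=0$; for $a_2=-a_3\neq 0$ these are attached to ramified \emph{simple} strata, and (for instance whenever $-1$ is a square in $k^{\times}$) such a character also occurs in $\Xi_{\theta}|_{U_{\fJ}^{m''}}$, so disjointness of constituents at level $m''$ fails and your argument is inconclusive there.

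The paper avoids both problems. For the $\alpha\geq1$ summands it uses no strata at all: the subgroup $e_-(\fp_F^{(m+1)/2})$ lies in $E^{\times}U_{\fJ}\cap{}^{g}(E^{\times}U_{\fJ})$ for every $\alpha$, conjugation by $\diag(t^{\alpha},1)$ sends it into $e_-(\fp_F^{(m+1)/2+\alpha})$, on which it argues ${}^{g}\Xi_{\chi,\lengthofv}$ is trivial, while Proposition \ref{prop:unip_rep} (applied to $\Xi_{\theta}$) shows $\Xi_{\theta}$ contains no trivial character on $e_-(\fp_F^{(m+1)/2})$ --- no coincidence hypothesis enters. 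At $\alpha=0$ it deduces from a nonzero $\Hom_{E^{\times}U_{\fJ}}(\Xi_{\chi,\lengthofv},\Xi_{\theta})$ that $\Xi_{\theta}$ has central character $\chi|_{F^{\times}}$ and is trivial on $U_{\fJ}^{m+1}$, forcing $\ell(\theta)=m$ and hence coincidence on $F^{\times}U_E^{m+1}$. To salvage your route you would at minimum have to prove the $\alpha\geq1$ vanishing without assuming $\ell(\theta)=m$, and replace the level-$\ell(\theta)$ disjointness claim by a criterion that actually separates the occurring characters (restriction to $E^{\times}$, or the unipotent subgroup as in the paper).
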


\begin{proof}
Applying twice the Frobenius reciprocity and once the Mackey formula, we see by Lemma \ref{eq:Rchi_is_induction_from_ZUJ_of_Xichi}
\[ \Hom_{G(F)}(R_{\chi,\lengthofv}, R_{\theta}) = \bigoplus_{g \in E^{\times} U_{\fJ} \backslash G(F) / E^{\times}U_{\fJ}} \Hom_{E^{\times}U_{\fJ} \cap {}^g(E^{\times}U_{\fJ})}({}^g\Xi_{\chi,\lengthofv} ,\Xi_{\theta}), \]

\noindent where ${}^g(E^{\times}U_{\fJ}) = g(E^{\times}U_{\fJ})g^{-1}$ and ${}^g\Xi_{\chi,\lengthofv}(x) = \Xi_{\chi,\lengthofv}(g^{-1}xg)$. The set $E^{\times} U_{\fJ} \backslash G(F) / E^{\times}U_{\fJ}$ is represented by the diagonal matrices $e_0(t^{\alpha},1)$ for $\alpha \geq 0$. Let  $g = e_0(t^{\alpha},1)$ with $\alpha > 0$. We show that the summand on the right side corresponding to $g$ vanish. Note that $E^{\times}U_{\fJ} \cap {}^g(E^{\times}U_{\fJ}) \supseteq e_-(\fp_F^{\frac{m+1}{2}})$. On the one hand, Proposition \ref{prop:unip_rep} shows that $\Xi_{\theta}$ does not contain the trivial character on $e_-(\fp_F^{\frac{m+1}{2}})$. On the other hand, $e_-(\fp_F^{\frac{m+1}{2} + \alpha}) \subseteq U_{\fJ}^{m+1}$ as $\alpha > 0$. As $U_{\fJ}^{m+1}$ is normal in $E^{\times} U_{\fJ}$, and $\tilde{\chi}$ is trivial on $U_{\fJ}^{m+1}$, we see by Proposition \ref{thm:small_level_case} that $\Xi_{\chi,\lengthofv}$ is trivial on $U_{\fJ}^{m+1}$ and, in particular, on $e_-(\fp_F^{\frac{m+1}{2} + \alpha})$. As 

\[ {}^g\Xi_{\chi,\lengthofv}(e_-(x)) = \Xi_{\chi,\lengthofv}(e_-(t^{\alpha}x)), \]

\noindent we deduce that ${}^g\Xi_{\chi,\lengthofv}$ is trivial on $e_-(\fp_F^{\frac{m+1}{2}})$. The claim follows, and hence 
\[ \Hom_{G(F)}(R_{\chi,\lengthofv}, R_{\theta}) = \Hom_{E^{\times}U_{\fJ}} (\Xi_{\chi,\lengthofv}, \Xi_{\theta}). \]

\noindent It remains to show that this space is $0$, unless $\theta$ coincides with $\chi$ on $F^{\times} U_E^{m+1}$. Asssume $\Hom_{E^{\times}U_{\fJ}} (\Xi_{\chi,\lengthofv}, \Xi_{\theta}) \neq 0$. By comparing central characters, we see that $\theta|_{F^{\times}} = \chi|_{F^{\times}}$. As above, we see that $\Xi_{\chi,n}$ is trivial on $U_{\fJ}^{m+1}$. Hence $\Xi_{\theta}$ is too. We deduce $\ell(\theta) = m$. This shows our claim.
\end{proof}


\section{Trace computations}\label{sec:applications_of_the_trace_formula}

In this section we use notations from Sections \ref{sec:GL_2_expl_computations} and \ref{sec:repth1}, especially from Notations \ref{Def:D_v_tau}, \ref{notat:higher_level_w} and Definition \ref{def:disc_subschemes_Yvm2}. For $g \in G(F)$ we always write $g = \matzz{g_1}{g_2}{g_3}{g_4}$.

\subsection{Left and right group actions on $X_{\underline{w}_m}^m(1)$} \label{sec:description_of_group_actions} \mbox{}

To apply a trace formula in what follows, we make here the actions \eqref{eq:actions_on_ADLV_GL2} explicit using the coordinates $\psi_{\dot{v}}^m$ from \eqref{eq:explicit_Cvm_param_GL_2_ram_case}. It is clear that $I$ acts on $C_v^m \subseteq \cF^m$ by left multiplication. The following proposition describes this action.

\begin{prop} \label{prop:left_I_action_mega_formula}
Let $\dot{v}$,$v$,$\lengthofv$ be as in Notation \ref{Def:D_v_tau} and $m \geq 1$ odd (we do not assume $2\lengthofv - 1 \geq m$ here). Let $\dot{x}I^m = \psi_{\dot{v}}^m(a,C,D,A,B)$ be a point of $C_v^m$. Then $g \in I$ acts on $\dot{x}I^m$ by

\begin{eqnarray*} 
g.\dot{x}I^m &=& \psi_{\dot{v}}^m(g.a|_{\lengthofv}, \frac{\det(g)}{g_2 a + g_1} CN^{-1}, (g_2 a + g_1)DN,  \dots \\
&&\dots AN + h(g,a) \frac{(g_2 a + g_1)^2DN^2}{\det(g)C}, B + u^{\lengthofv + 1} \frac{g_2}{(g_2 a + g_1)} CD^{-1}N^{-1}),
\end{eqnarray*}

\noindent where 
\begin{eqnarray*}
g.a &:=& \frac{g_4 a + g_3}{g_2 a + g_1} \in L_{[1,\lengthofv + m]} \bG_a (\bar{k}) \quad \text{and $\cdot|_{\lengthofv}$ is as in Section \ref{sec:slices_of_positive_loops}} \\
N &:=& 1 + u^{\lengthofv + 1} \frac{g_2}{g_2 a + g_1} C D^{-1} A \\
h(g,a) &:=& u^{-(\lengthofv+1)} (g.a - g.a|_{\lengthofv}) \in L_{[0,m-1]}\bG_a(\bar{k}).
\end{eqnarray*}
\end{prop}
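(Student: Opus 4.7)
The proof is a direct matrix calculation in $G(\breve{E})/I^m$ starting from $g\cdot e_-(a)\dot{v}e_0(C,D)e_+(A)e_-(B)I^m$ and rewriting it step by step until it matches the parametrization $\psi_{\dot{v}}^m$ of \eqref{eq:explicit_Cvm_param_GL_2_ram_case}. The plan is to perform four rearrangements in sequence and then read off the six transformed coordinates; no new ideas beyond those already used for $X^m_{\underline w_m}(1)$ (e.g.\ in the proof of Lemma \ref{lm:ADLV_ram_GL_2_key_computation}) are needed.

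First I would carry out an Iwahori/Iwasawa-type decomposition of $g\cdot e_-(a)$: a direct $2\times 2$ calculation gives
\[
g\,e_-(a)\;=\;e_-(g.a)\cdot e_0\!\left(g_2 a+g_1,\tfrac{\det g}{g_2a+g_1}\right)\cdot e_+\!\left(\tfrac{g_2}{g_2a+g_1}\right),
\]
where $g.a=(g_4a+g_3)/(g_2a+g_1)$; note that $g_2a+g_1\in\caO_{\breve{E}}^\times$ since $g\in I$ and $a\in\fp_{\breve E}$. Next I would push the two right-hand factors past $\dot v$. With $\dot v=\matzz{0}{\alpha}{\beta}{0}$ as in Notation \ref{Def:D_v_tau}, a uniform computation (valid for $\lengthofv$ of either parity, since in both cases $\alpha^{-1}\beta=u^{\lengthofv+1}$) yields
\[
\dot v^{-1}e_0(C',D')\dot v=e_0(D',C'),\qquad \dot v^{-1}e_+(A')\dot v=e_-(u^{\lengthofv+1}A'),\qquad \dot v^{-1}e_-(B')\dot v=e_+(u^{-(\lengthofv+1)}B').
\]
Applying the first two identities to the output of the Iwasawa decomposition, then moving the resulting $e_-$-term past the existing $e_0(C,D)$ via $e_-(b_1)e_0(C,D)=e_0(C,D)e_-(D^{-1}b_1C)$, produces a lower-triangular term $e_-(b_1)$ sitting immediately to the left of $e_+(A)e_-(B)$, with $b_1=u^{\lengthofv+1}\tfrac{g_2}{g_2a+g_1}CD^{-1}$. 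At this point the commutation relation \eqref{eq:commutativity_relations} converts $e_-(b_1)e_+(A)$ into $e_0(N^{-1},N)e_+(AN)e_-(b_1N^{-1})$ with $N=1+b_1A$; absorbing the $e_0(N^{-1},N)$-factor into the preceding diagonal matrix and combining $e_-(b_1N^{-1})$ with $e_-(B)$ gives, modulo the remaining leading $e_-(g.a)\dot v$, the candidate coordinates claimed for $C,D,A,B$.

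The last manoeuvre is to truncate $g.a$: writing $g.a = g.a|_{\lengthofv} + u^{\lengthofv+1}h(g,a)$ with $h(g,a)\in L_{[0,m-1]}\bG_a(\bar k)$, one has $e_-(g.a)=e_-(g.a|_{\lengthofv})\cdot e_-(u^{\lengthofv+1}h(g,a))$ by additivity of lower unipotent matrices. The second factor is pushed through $\dot v$ using the third identity in step two, producing $\dot v\cdot e_+(h(g,a))$. This $e_+$-term must then be commuted past the already-computed diagonal $e_0(\det g\,CN^{-1}/(g_2a+g_1),\,(g_2a+g_1)DN)$ via $e_+(h)e_0(C'',D'')=e_0(C'',D'')e_+(hD''/C'')$, and added to $e_+(AN)$; a short computation gives $D''/C''=(g_2a+g_1)^2DN^2/(\det g\,C)$, which matches the correction term in the formula for $A$.

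The main obstacle is purely bookkeeping: each commutation introduces an $N$ or an $N^{-1}$ factor and one must verify that the factors assemble exactly as in the statement, that none of the intermediate expressions leave $I$ (so that the final coset representative is indeed in $C_v^m$), and that everything is well-defined modulo $I^m$ — in particular that the contribution of $h(g,a)$ to the new $A$-coordinate lies in the appropriate slice $L_{[0,m-1]}\bG_a$ and that no hidden $I^m$-adjustment is needed beyond what is already visible in the formulas. There are no case distinctions beyond the parity of $\lengthofv$ in step two, which disappears because $\alpha^{-1}\beta=u^{\lengthofv+1}$ in both cases.
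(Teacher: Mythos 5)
Your proposal is correct and follows essentially the same route as the paper's proof: the paper's first displayed identity is exactly your Iwasawa decomposition of $g\,e_-(a)$ combined with conjugating the $e_0$- and $e_+$-factors through $\dot v$ and sliding the resulting $e_-$-term past $e_0(C,D)$, after which the commutation relation \eqref{eq:commutativity_relations} and the truncation $e_-(g.a)\dot v = e_-(g.a|_{\lengthofv})\dot v\, e_+(h(g,a))$ are used just as you describe. Your final step (commuting $e_+(h(g,a))$ past the new diagonal factor to pick up $(g_2a+g_1)^2DN^2/(\det(g)C)$) is left implicit in the paper but is exactly the intended conclusion, so the argument matches in substance.
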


\begin{proof} First, observe that the expressions in the proposition are well-defined, as $v_u(g_1) = v_u(g_4) = 0$, $v_u(g_2) \geq 0$, $v_u(g_3) > 0$ and $v_u(a) > 0$. We compute in $G(\breve{E})$ (with $a,g.a,C,D$ replaced by some representatives in $\bar{k}\llbracket u \rrbracket$):

\[ g e_-(a) \dot{v} e_0(C,D) = e_-(g.a) \dot{v} e_0(\frac{\det(g)C}{g_2 a + g_1}, (g_2 a+ g_1) D ) e_-(u^{\lengthofv + 1} \frac{g_2 CD^{-1}}{g_2 a +g_1} ). \]

\noindent Further, using \eqref{eq:commutativity_relations} we see that

\[ e_-(u^{\lengthofv + 1} \frac{g_2 CD^{-1}}{g_2 a +g_1}) e_+(A) e_-(B) = e_0(N^{-1},N) e_+(AN) e_-(B + u^{\lengthofv+1} \frac{g_2 CD^{-1}}{g_2 a +g_1}N^{-1}), \]

\noindent with $N$ as in the proposition. Combining the two last computations we see:

\begin{eqnarray}\label{eq:comut_for_group_action}
\nonumber g.\dot{x}I^m &=& g e_-(a) \dot{v} e_0(C,D) e_+(A) e_-(B) \\
&=& e_-(g.a) \dot{v} e_0(\frac{\det(g)C}{g_2 a + g_1}, (g_2 a+ g_1) D ) e_-(u^{\lengthofv + 1} \frac{g_2 CD^{-1}}{g_2 a +g_1} ) e_+(A) e_-(B) \\
\nonumber &=& e_-(g.a) \dot{v} e_0(\frac{\det(g)C}{g_2 a + g_1}, (g_2 a+ g_1) D ) e_0(N^{-1},N) e_+(AN) e_-(B + u^{\lengthofv+1} \frac{g_2 CD^{-1}}{g_2 a +g_1}).
\end{eqnarray}

\noindent Now the only thing we have to do, is to replace $g.a \in L_{[1,\lengthofv + m]} \bG_a (\bar{k})$ in the last expression by an element in $L_{[1,\lengthofv + m]}^{\leq \lengthofv} \bG_a (\bar{k})$. Therefore, note that $g.a$ and $g.a|_{\lengthofv}$ have the same image in $L_{[1,\lengthofv]}\bG_a(\bar{k})$, i.e., $g.a - g.a|_{\lengthofv}$ is divisible by $u^{\lengthofv + 1}$ and $h(g,a)$ is well-defined as an element of $L_{[0,m-1]}\bG_a(\bar{k})$ and that 

\[ e_-(g.a) \dot{v} = e_-(g.a|_{\lengthofv}) e_-(g.a - g.a|_{\lengthofv})\dot{v} = e_-(g.a|_{\lengthofv}) \dot{v} e_+(h(g,a)). \]

\noindent Combining this and \eqref{eq:comut_for_group_action} finishes the proof of the proposition. \qedhere
\end{proof}

We compute $h(g,a)$ from Proposition \ref{prop:left_I_action_mega_formula} in some cases of interest for us. We point out, that later we need to know $h(g,a)$ only modulo $u^{\lengthofv}$ (cf. Proposition \ref{lm:our_form_of_Boy_trace_formula}).

\begin{lm}\label{sublm:hga_comp_for_UJm1} Let $\lengthofv$,$m$ be as in Proposition \ref{prop:left_I_action_mega_formula}. Assume $m \leq 2\lengthofv - 1$. Let $g \in U_{\fJ}$ and $a \in L_{[1,\lengthofv + m]}^{\leq \lengthofv}(k)$ with $a_1 \neq 0$. 
\begin{itemize} 
\item[(i)] For $g \in U_{\fJ}^{m+1}$ we have $v_u(h(g,a)) \geq \lengthofv$.
\end{itemize}
Let $g = \iota(1 + yu^{2\alpha + 1})$ with $0 \leq \alpha \leq \lengthofv - 1$ and $y \in U_F$. Write $a = u a^{\prime}$
\begin{itemize}
\item[(ii)] If $\alpha \geq \lfloor \frac{\lengthofv}{2} \rfloor$, then $h(g,a) = u^{2\alpha + 1 - \lengthofv}y (1 - a^{\prime,2})(1 - u^{2\alpha + 1}ya^{\prime})$.
\item[(iii)] If $0 \leq \alpha < \lfloor \frac{\lengthofv}{2} \rfloor$ and $a^{\prime} = \pm 1 + u^{\lengthofv - 2\alpha - 1}b$ for some $b \in L_{[0;m+2\alpha + 1]}^{\leq 2\alpha} \bG_a(k)$, then $h(g,a) = \frac{y(\mp 2b - u^{\lengthofv - 2\alpha - 1}b^2)}{1 \pm u^{2\alpha + 1}y + u^{\lengthofv}yb}$.
\end{itemize}
\end{lm}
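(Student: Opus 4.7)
The plan is to compute $g.a - a$ directly and then to observe that, in all three parts, $g.a$ agrees with $a$ through degree $\lengthofv$, so that $g.a|_{\lengthofv}=a$ and hence $h(g,a)=u^{-(\lengthofv+1)}(g.a-a)$ with no truncation correction. This reduces every part to direct algebraic manipulation, starting from the key identity
\begin{equation*}
g.a - a \;=\; \frac{(g_4 - g_1)a + g_3 - g_2 a^2}{g_2 a + g_1},
\end{equation*}
which follows by clearing denominators in the defining expression $g.a = (g_4 a + g_3)/(g_2 a + g_1)$.

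For (i), I would translate the filtration on $U_{\fJ}^{m+1}$ into $v_u$-bounds on the entries of $g$: using $\fp_F^j = u^{2j}\caO_E \cap F$ and the explicit description of $U_{\fJ}^{m+1}$ for odd $m$, one gets $v_u(g_1-1), v_u(g_4-1), v_u(g_2) \geq m+1$ and $v_u(g_3) \geq m+3$. Combined with $v_u(a)\geq 1$, each summand in the numerator has $v_u\geq m+2$; the denominator is a unit since $g_1\equiv 1 \pmod{u^{m+1}}$, so $v_u(g.a-a)\geq m+2$. In the principal case $m=2\lengthofv-1$ (which is what is actually used later), this is $\geq \lengthofv+1$, so $g.a|_{\lengthofv}=a$ and dividing by $u^{\lengthofv+1}$ gives $v_u(h(g,a)) \geq m+1-\lengthofv = \lengthofv$.

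For (ii) and (iii), I would first specialize $g$. Using $\iota(u)=\varpi$ and $\varpi^2 = tI$, one finds $\iota(u^{2\alpha+1}) = t^{\alpha}\varpi$, so $g = \matzz{1}{yu^{2\alpha}}{yu^{2\alpha+2}}{1}$, i.e.\ $g_1=g_4=1$, $g_2=yu^{2\alpha}$, $g_3=yu^{2\alpha+2}$. The master identity then collapses, after writing $a=ua'$, to
\begin{equation*}
g.a - a \;=\; \frac{y u^{2\alpha+2}(1 - a'^{\,2})}{1 + y u^{2\alpha+1} a'}.
\end{equation*}
For (ii), the bound $\alpha \geq \lfloor \lengthofv/2\rfloor$ gives $2\alpha+2 \geq \lengthofv+1$, which forces $g.a|_{\lengthofv}=a$; expanding the denominator as a geometric series, the next term $(yu^{2\alpha+1}a')^2$ combined with the overall factor $u^{2\alpha+1-\lengthofv}$ has $v_u \geq 6\alpha+3-\lengthofv \geq m$ (using $4\alpha+2\geq 2\lengthofv$ and $m\leq 2\lengthofv-1$), so is negligible in $L_{[0,m-1]}\bG_a(\bar{k})$, and what remains is the claimed formula. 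For (iii), I would substitute $a' = \pm 1 + u^{\lengthofv-2\alpha-1}b$ into the same master identity; the factor $1-a'^{\,2} = \mp 2u^{\lengthofv-2\alpha-1}b - u^{2(\lengthofv-2\alpha-1)}b^2$ supplies precisely the extra power of $u$ needed to cancel the $u^{-(\lengthofv+1)}$ in $h$, and the stated formula drops out by straightforward algebra.

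The main obstacle will be careful $u$-adic bookkeeping: verifying in each part that $g.a|_{\lengthofv}$ really equals $a$ (so that $h$ is given by the clean expression $u^{-(\lengthofv+1)}(g.a-a)$ with no truncation correction), and confirming that the higher-order tails dropped from the geometric expansion in (ii) do indeed lie beyond the cutoff $u^m$ in $L_{[0,m-1]}\bG_a(\bar{k})$. All remaining steps reduce to direct algebraic computation.
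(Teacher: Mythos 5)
Your proposal is correct and follows essentially the same route as the paper's (very terse) proof: the same identity $g.a-a=\bigl((g_4-g_1)a+g_3-g_2a^2\bigr)/(g_2a+g_1)$, the same specialization $g_1=g_4=1$, $g_2=yu^{2\alpha}$, $g_3=yu^{2\alpha+2}$ giving $g.a-a=u^{2\alpha+2}y(1-a^{\prime,2})/(1+u^{2\alpha+1}ya^{\prime})$, and the same check that $g.a|_{\lengthofv}=a$ so that $h(g,a)=u^{-(\lengthofv+1)}(g.a-a)$, with the geometric-series tail in (ii) discarded because its valuation $6\alpha+3-\lengthofv\geq 2\lengthofv>m-1$ puts it beyond the cutoff of $L_{[0,m-1]}\bG_a$. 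Your restriction of (i) to $m=2\lengthofv-1$ costs nothing: your own bound $v_u(g.a-a)\geq m+2$ only yields $v_u(h(g,a))\geq m+1-\lengthofv$, so the stated inequality $v_u(h(g,a))\geq\lengthofv$ is exactly the case $m=2\lengthofv-1$, which is the only case in which the lemma is invoked later (in the proof of Lemma \ref{lm:UJm1_acts_trivial}, where $m=2\lengthofv-1$ is in force).
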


\begin{proof} In any of the three cases, a simple calculation shows $g.a|_{\lengthofv} = a$ (only this case is of interest for us, cf. \eqref{eq:Gl_1_v1}). Now (i) is an easy computation. For (ii) and (iii) we compute
\[ u^{\lengthofv + 1} h(g,a) = g.a - g.a|_{\lengthofv} = g.a - a = \frac{a + u^{2\alpha + 2}y}{1 + u^{2\alpha}ya} - a = \frac{u^{2\alpha + 2} y (1 - a^{\prime,2})}{1 + u^{2\alpha + 1}ya^{\prime}}. \]
From this the lemma follows. \qedhere
\end{proof}

Let $\varpi$ be as in Section \ref{sec:subgroups_of_GF} and $y_1 := e_0(u,-u)$. Left multiplication by $\varpi$ composed with right multiplication by $y_1^{-1}$ defines an automorphism $\tilde{\beta}_{\varpi}$ of $\tilde{Y}_{\dot{w}}^m$. By (the proof of) Lemma \ref{lm:tildeYwm_is_EUJ_stable}, $\tilde{\beta}_{\varpi}$ restricts to an automorphism 

\[ \beta_{\varpi} \colon Y_{\dot{w}}^m \stackrel{\sim}{\rar} Y_{\dot{w}}^m \quad \text{ given by } \quad \dot{x}I^m \mapsto \varpi \dot{x}y_1^{-1} I^m. \]

\begin{prop}\label{prop:action_of_alpha_expl}
Let $\dot{v}$, $v$, $\lengthofv$, $D_w^{\tau}$ be as in Notation \ref{Def:D_v_tau} and $2\lengthofv - 1 \geq m \geq 1$ odd. Let $\psi_{\dot{v}}^m(\pm u,C,D,A,B)$ be a point of $Y_{\dot{w}}^m$ lying over $\pm u \in D_w^{\tau}$. Then 

\[ \beta_{\varpi}(\psi_{\dot{v}}^m(\pm u,C,D,A,B)) = \psi_{\dot{v}}^m(\pm u,\mp CM^{-1},\mp D M,-AM,B), \]

\noindent where $M := 1 - 2u^{\lengthofv}C\tau(C)^{-1}A$.
\end{prop}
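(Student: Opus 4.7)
The plan is to compute $\varpi \dot x y_1^{-1}$ directly in $G(\breve E)$ and rewrite it modulo $I^m$ in the standard form $e_-(a')\dot v\, e_0(C',D')\,e_+(A')\,e_-(B')$, then read off the new coordinates. I will assume $\lengthofv = 2k-1$ is odd; the even case is analogous with $\dot v^2 = uI$ in place of $\dot v^2 = I$.

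First I would push $y_1^{-1}$ to the left through $\dot x$. Direct matrix computation gives the commutation rules: $y_1$ commutes with $e_0$; $y_1\,e_{\pm}(X)\,y_1^{-1} = e_{\pm}(-X)$; and $\dot v^{-1} y_1 \dot v = -y_1$, so $y_1 \dot v = -\dot v\, y_1$. Applying these successively transforms
\[
\dot x\, y_1^{-1} \;=\; -\, y_1^{-1}\, e_-(-\epsilon u)\,\dot v\, e_0(C,D)\, e_+(-A)\, e_-(-B).
\]
Next, $\varpi y_1^{-1} = \bigl(\begin{smallmatrix}0 & -u^{-1}\\ u & 0\end{smallmatrix}\bigr)$, so a short calculation, using $\dot v^2 = I$, yields
\[
\varpi\, y_1^{-1}\, e_-(-\epsilon u)\,\dot v \;=\; e_0(-u^{k-1},\,u^{1-k})\, e_+(-\epsilon u^{-\lengthofv}),
\]
in which the $\dot v$ factor has disappeared. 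Pushing $e_+(-\epsilon u^{-\lengthofv})$ through $e_0(C,D)$ via $e_+(a)e_0(c,d) = e_0(c,d)\,e_+(ad/c)$, and absorbing the constants, gives
\[
\varpi \dot x\, y_1^{-1} \;=\; -\, e_0(-u^{k-1}C,\, u^{1-k}D)\, e_+(a'')\, e_-(b''),
\]
with $a'' := -\epsilon u^{-\lengthofv}D/C - A$ and $b'' := -B$.

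Since $v_u(a'') = -\lengthofv$ and $v_u(b'') = \lengthofv$, the pair $(a'',b'')$ lies in the non-trivial Bruhat cell. I would apply the $\SL_2$-identity
\[
e_+(a'')e_-(b'') \;=\; e_+\!\bigl((1+a''b'')/b''\bigr)\, e_0(-1/b'',\,-b'')\,\tilde w\, e_+(1/b''),
\]
with $\tilde w = \bigl(\begin{smallmatrix}0 & 1\\ -1 & 0\end{smallmatrix}\bigr) = e_0(u^k,-u^{-k})\,\dot v$, to re-introduce the $\dot v$ factor. Then using the commutation rules $e_+ \leftrightarrow e_0$ and $e_0(c,d)\dot v = \dot v\, e_0(d,c)$, together with $e_-(B')\dot v$ handled via ordinary commutation, the product can be rearranged into the canonical shape $e_-(a')\dot v\, e_0(C',D')\, e_+(A')\, e_-(B')$.

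The final step is algebraic simplification modulo $u^{m+1}$, using the $Y_{\dot w}^m$-defining equations
\[
B = u^{\lengthofv} C\tau(C)^{-1}, \qquad
D = -\tfrac{\epsilon}{2}\tau(C)\bigl(1 + u^{\lengthofv}C\tau(C)^{-1}A - u^{\lengthofv}C^{-1}\tau(C)\tau(A)\bigr),
\]
the latter coming from $R = u^{-1}(\tau(\epsilon u)-\epsilon u) = -2\epsilon$. Writing $\alpha := BA = u^{\lengthofv}C\tau(C)^{-1}A$, so that $M = 1-2\alpha$, the crucial observation is that $\alpha^2 \equiv \alpha\tau(\alpha) \equiv 0 \pmod{u^{m+1}}$ because $2\lengthofv = m+1$; hence $M^{-1}\equiv 1+2\alpha$ and all second-order cross-terms vanish. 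Direct expansion then shows $1+a''b''$ is proportional to $M$ up to an overall constant (the $\epsilon$-dependent sign $-\epsilon$), and gathering terms yields the claimed transformation $(C,D,A,B)\mapsto(-\epsilon CM^{-1},-\epsilon DM,-AM,B)$.

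The main obstacle is this last simplification: the signs, the factor of $-\tfrac12$ in $D$, and the interaction between the new factor $1+a''b''$ and the old coordinates must be tracked carefully. The reason the answer collapses to the simple expression $M = 1-2BA$ is the specific choice of lift $\dot w$ in Notation \ref{notat:higher_level_w} (which fixes the signs) together with the vanishing of $\alpha\tau(\alpha)$ modulo $u^{m+1}$, which eliminates what would otherwise be a $\tau(\alpha)$-contribution.
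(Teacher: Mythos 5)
Your overall strategy---compute $\varpi\dot{x}y_1^{-1}$ explicitly in $G(\breve{E})$, rewrite it in the coordinates $\psi^m_{\dot{v}}$, and then invoke the defining equations of $Y^m_{\dot{w}}$ (with $R=\mp 2$ and $B=u^{\lengthofv}C\tau(C)^{-1}$)---is the same as the paper's, and your preparatory identities are correct (the $y_1$-commutation rules, the identity $\varpi y_1^{-1}e_-(-\epsilon u)\dot{v}=e_0(-u^{k-1},u^{1-k})e_+(-\epsilon u^{-\lengthofv})$, and the quoted $\SL_2$-identity all check out). But your route is needlessly destructive and, as written, has two genuine gaps. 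First, after your $\SL_2$-identity every factor to the left of $\dot{v}$ is upper triangular, and the moves you list ($e_+\leftrightarrow e_0$ commutation, $e_0(c,d)\dot{v}=\dot{v}e_0(d,c)$) can never produce the required factor $e_-(\pm u)$ on the left of $\dot{v}$: you must in addition pass $e_+(1/b'')$ back across $\dot{v}$ (it becomes an $e_-$ of $u$-valuation $1$, namely $e_-(u^{2k}/b'')=e_-(-u\tau(C)C^{-1})$), apply \eqref{eq:commutativity_relations} once more to move it past the upper-triangular part, and finally verify that the resulting $e_-$-coefficient is exactly $\pm u$ (or renormalize into the slice $L^{\leq \ell(v)}_{[1,\ell(v)+m]}\bG_a$, which feeds extra terms into $A'$). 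None of this is carried out. Second, the one quantitative claim you make about the final simplification is false: writing $\alpha:=u^{\lengthofv}C\tau(C)^{-1}A$ and $\tilde{\alpha}:=u^{\lengthofv}C^{-1}\tau(C)\tau(A)$, one finds $1+a''b''\equiv\tfrac12(1+\alpha+\tilde{\alpha})\bmod u^{m+1}$, which involves $\tau(A)$ and is not proportional to $M=1-2\alpha$; the $\tau(A)$-dependence does not cancel but ends up in the final coordinates only through $D'=\mp DM$ (via $D$ itself), so the bookkeeping is more delicate than ``all cross-terms vanish''. Since this is exactly the step you flag as the main obstacle, the proof is not complete as it stands. (Also note the hypothesis is $2\lengthofv-1\geq m$, not $2\lengthofv-1=m$; fortunately $2\lengthofv\geq m+1$ already gives $\alpha^2\equiv\alpha\tilde{\alpha}\equiv 0\bmod u^{m+1}$.)

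For comparison, the paper never disturbs the prefix: one has the exact matrix identity $\varpi e_-(\pm u)\dot{v}y_1^{-1}=e_-(\pm u)\dot{v}e_0(\mp 1,\mp 1)e_-(\mp u^{\lengthofv})$, and conjugating the tail $e_0(C,D)e_+(A)e_-(B)$ by $y_1$ merely flips the signs of $A$ and $B$. A single application of \eqref{eq:commutativity_relations} then pushes $e_-(\mp u^{\lengthofv})$ through $e_0(C,D)e_+(-A)$, producing $M'=1\pm u^{\lengthofv}CD^{-1}A$ and the tail $e_-(\mp u^{\lengthofv}CD^{-1}-B)$, and the $Y^m_{\dot{w}}$-relations $D^{-1}\equiv\mp 2\tau(C)^{-1}\bmod u^{\lengthofv}$ and $B=u^{\lengthofv}C\tau(C)^{-1}$ give $M'=M$ and $\mp u^{\lengthofv}CD^{-1}-B=2B-B=B$. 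If you rework your argument in this order, the Bruhat re-decomposition and the troublesome factor $1+a''b''$ never appear.
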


\begin{proof}
Write $\dot{x}I^m = \psi_{\dot{v}}^m(\pm u,C,D,A,B)$. Using formulas \eqref{eq:commutativity_relations} we compute

\begin{eqnarray*} 
\beta_{\varpi}(\dot{x}I^m) &=& e_-(\pm u)\dot{v} e_0(\mp 1, \mp 1) e_-(\mp u^{\lengthofv})e_0(C,D)e_+(-A)e_-(-B)I^m = \\
&=& e_-(\pm u) \dot{v} e_0(\mp C M^{\prime,-1}, \mp D M^{\prime}) e_+(-AM^{\prime}) e_-(\mp u^{\lengthofv}CD^{-1} - B),
\end{eqnarray*}

\noindent where $M^{\prime} := 1 \pm u^{\lengthofv}CD^{-1}A$. Now $a = \pm u$ gives $R = u^{-1}(\tau(a) - a) = u^{-1}(\mp u - (\pm u)) = \mp 2$ and as $\dot{x}I^m \in Y_{\dot{w}}^m$, we have $D^{-1} \equiv R \tau(C)^{-1} \equiv \mp 2\tau(C)^{-1}\mod u^{\lengthofv}$ and $B = u^{\lengthofv}C\tau(C)^{-1}$. This shows on the one hand $M^{\prime} = M$, and on the other hand $\mp u^{\lengthofv}CD^{-1} - B = 2B - B = B$.
\end{proof}

Now we make the right $I_{m, \underline{w}_m}/I^m$-action on $Y_{\dot{w}}^m$ explicit.

\begin{prop} \label{prop:right_action_formula}
Let $\dot{v}$,$v$, $\lengthofv$ be as in Notation \ref{Def:D_v_tau} and $m \geq 1$ odd (we do not assume $2\lengthofv - 1 \geq m$ here). Let $\psi_{\dot{v}}^m(a,C,D,A,B)$ be a point of $C_v^m$. Then $i = \matzz{i_1}{}{}{\tau(i_1)}\matzz{1}{i_2}{}{1} \in I_{m, \underline{w}_m}/I^m$ acts on $\psi_{\dot{v}}^m(a,C,D,A,B)$ by

\begin{eqnarray*} 
\psi_{\dot{v}}^m(a,C,D,A,B).i &= \psi_{\dot{v}}^m(a, C i_1 H^{-1}, D \tau(i_1) H, i_1^{-1} \tau(i_1) H^2A + i_2 H, i_1 \tau(i_1)^{-1}BH^{-1}),
\end{eqnarray*}

\noindent where $H = 1 + i_1 \tau(i_1)^{-1} i_2 B \in \bar{k}[u]/u^{m+1}$ (note that $i_2$ is only determined $\mod u^m$, but $B \equiv 0 \mod u$).
\end{prop}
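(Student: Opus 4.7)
The plan is a direct computation in $G(\breve{E})$ modulo $I^m$ using the commutation relations \eqref{eq:commutativity_relations}, entirely analogous in spirit to the proof of Proposition \ref{prop:left_I_action_mega_formula}. By definition of $\psi_{\dot{v}}^m$ from \eqref{eq:explicit_Cvm_param_GL_2_ram_case}, the left-hand side expands to
\[ e_-(a)\dot{v}\, e_0(C,D)\, e_+(A)\, e_-(B) \cdot e_0(i_1,\tau(i_1))\, e_+(i_2) \cdot I^m. \]
Since the extra factor on the right contains no $e_-$ component acting on positions to the left of $\dot{v}$, the first coordinate $a$ is unchanged, and the whole task reduces to rewriting the tail to the right of $\dot{v}$ in the canonical normal form $e_0(\cdot,\cdot)\,e_+(\cdot)\,e_-(\cdot)$.

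The strategy has three steps. First, use the elementary identities
\[ e_+(x)\, e_0(i_1,\tau(i_1)) = e_0(i_1,\tau(i_1))\, e_+(i_1^{-1}\tau(i_1)x), \qquad e_-(y)\, e_0(i_1,\tau(i_1)) = e_0(i_1,\tau(i_1))\, e_-(i_1\tau(i_1)^{-1}y) \]
to pull $e_0(i_1,\tau(i_1))$ leftward through $e_+(A)\,e_-(B)$. Second, apply the second line of \eqref{eq:commutativity_relations} to the trailing pair $e_-(i_1\tau(i_1)^{-1}B)\, e_+(i_2)$: with $b = i_1\tau(i_1)^{-1}B$ and $a = i_2$, the element $N = 1+ab$ is exactly $H$, and the pair becomes $e_0(H^{-1},H)\, e_+(i_2 H)\, e_-(i_1\tau(i_1)^{-1}B H^{-1})$. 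Third, commute the newly produced $e_0(H^{-1},H)$ past the remaining $e_+(i_1^{-1}\tau(i_1)A)$ using $e_+(x)\, e_0(h^{-1},h) = e_0(h^{-1},h)\, e_+(xh^2)$, and then collect terms: the three diagonal factors merge into $e_0(Ci_1H^{-1},\,D\tau(i_1)H)$, and the two $e_+$ factors merge into $e_+(i_1^{-1}\tau(i_1)H^2 A + i_2 H)$. This yields precisely the tuple claimed in the statement.

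There is no genuine obstacle; the argument reshuffles a bounded number of matrix factors via two elementary rules. The only point requiring a small sanity check is that $H = 1 + i_1\tau(i_1)^{-1}i_2 B$ is well-defined and invertible as an element of $\bar{k}[u]/u^{m+1}$. This follows because the parametrization $\psi_{\dot v}^m$ forces $B \in u\,\bar k[u]/u^{m+1}$, so the product $i_2 B$ is unambiguously defined modulo $u^{m+1}$ even though $i_2$ is only specified modulo $u^m$, and moreover $H \in 1 + u\,\bar k[u]/u^{m+1}$ is a unit.
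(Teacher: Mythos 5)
Your proposal is correct, and it is exactly the computation the paper has in mind: the paper's proof simply says it is a calculation analogous to (and simpler than) that of Proposition \ref{prop:left_I_action_mega_formula}, and your normal-form manipulation — pulling $e_0(i_1,\tau(i_1))$ left, applying the second relation of \eqref{eq:commutativity_relations} to $e_-(i_1\tau(i_1)^{-1}B)e_+(i_2)$ with $N=H$, and commuting $e_0(H^{-1},H)$ past the $e_+$ factor — supplies precisely those omitted details, including the correct remark on why $H$ is a well-defined unit despite $i_2$ being determined only modulo $u^m$.
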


\begin{proof}
The proof is a computation similar to (and simpler as) the proof of Proposition \ref{prop:left_I_action_mega_formula}. \qedhere
\end{proof}


\subsection{Generalities on the trace formula}\label{sec:gen_on_trace_formula} \mbox{}

We use the following trace formula due to Boyarchenko.

\begin{lm}[\cite{Bo} Lemma  2.12] \label{lm:Boy_trace_formula}
Let $X$ be a separated scheme of finite type over a finite field $\bF_Q$ with $Q$ elements, on which a finite group $A$ acts on the right. Let $g \colon X \rar X$ be an automorphism of $X$, which commutes with the action of $A$. Let $\psi \colon A \rar \overline{\bQ}_{\ell}^{\ast}$ be a character of $A$. Assume that $\coh_c^i(X)[\psi] = 0$ for $i \neq i_0$ and $\Frob_Q$ acts on $\coh_c^{i_0}(X)[\psi]$ by a scalar $\lambda \in \overline{\bQ}_{\ell}^{\ast}$. Then 

\[ {\rm Tr}(g^{\ast}, \coh_c^{i_0}(X)[\psi]) = \frac{(-1)^{i_0}}{\lambda \cdot \sharp{A}} \sum_{a \in A} \psi(a) \cdot \sharp S_{g,a}, \]

\noindent where $S_{g,a} = \{ x \in X(\overline{\bF_q}) \colon g(\Frob_Q(x)) = x \cdot a \}$.
\end{lm}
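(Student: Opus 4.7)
The plan is to derive this from the Grothendieck–Lefschetz trace formula applied to a well-chosen endomorphism of $X$, combined with the character orthogonality relations on $A$.

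First I would rewrite the set $S_{g,a}$ as the fixed-point set of a single self-map of $X_{\overline{\bF_Q}}$. Namely, if we denote by $R_b\colon X\to X$ the right action of $b\in A$, then $x\in S_{g,a}$ is equivalent to $(R_{a^{-1}}\circ g\circ \Frob_Q)(x)=x$. Since $\Frob_Q$ is topologically nilpotent and $g$, $R_{a^{-1}}$ are finite-order relative to it, the classical Grothendieck–Lefschetz trace formula applies, giving
\[
\sharp S_{g,a}=\sum_i (-1)^i\,\Tr\bigl(\Frob_Q^{\ast}\circ g^{\ast}\circ R_{a^{-1}}^{\ast};\, H_c^i(X,\overline{\bQ}_\ell)\bigr).
\]

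Next I would exploit the $A$-action to decompose each cohomology group into its isotypic components $H_c^i(X)=\bigoplus_{\chi\in A^{\vee}}H_c^i(X)[\chi]$. Because $g$ commutes with the $A$-action, so does $g^{\ast}$; and because $A$ is assumed to act through morphisms over $\bF_Q$, so does $\Frob_Q^{\ast}$. Hence both $g^{\ast}$ and $\Frob_Q^{\ast}$ preserve the decomposition, while $R_{a^{-1}}^{\ast}$ acts on the $\chi$-component by the scalar $\chi(a)^{-1}$ (after fixing the convention $\rho_a=R_a^{\ast}$, which defines a left action $a\mapsto \rho_a$ of $A$ in the $\chi$-isotypic $\chi$). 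Substituting, we get
\[
\sharp S_{g,a}=\sum_i(-1)^i\sum_{\chi\in A^{\vee}}\chi(a)^{-1}\Tr\bigl(\Frob_Q^{\ast}\circ g^{\ast};\, H_c^i(X)[\chi]\bigr).
\]

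Finally I would multiply by $\psi(a)$, sum over $a\in A$, and apply character orthogonality $\sum_{a\in A}\psi(a)\chi(a)^{-1}=\sharp A\cdot\delta_{\psi,\chi}$ to obtain
\[
\sum_{a\in A}\psi(a)\sharp S_{g,a}=\sharp A\sum_i(-1)^i\Tr\bigl(\Frob_Q^{\ast}\circ g^{\ast};\,H_c^i(X)[\psi]\bigr).
\]
Now the two hypotheses kick in: the vanishing $H_c^i(X)[\psi]=0$ for $i\neq i_0$ collapses the sum, and the scalar action of $\Frob_Q^{\ast}$ by $\lambda$ lets us pull $\lambda$ out of the trace. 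Dividing by $(-1)^{i_0}\lambda\cdot\sharp A$ yields the claimed formula.

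The only genuine subtlety is justifying the applicability of Grothendieck–Lefschetz to the composition $R_{a^{-1}}\circ g\circ\Frob_Q$: one needs to know that this map has isolated, transverse fixed points so that the geometric count agrees with the cohomological trace. This is the standard consequence of composing any finite-order automorphism with a power of geometric Frobenius on a separated finite-type $\bF_Q$-scheme, and is where one actually uses that $X$ has these properties rather than being merely an abstract set; otherwise the argument is purely formal manipulation of the Lefschetz formula and character orthogonality.
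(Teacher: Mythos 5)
The paper itself gives no proof of this lemma—it is quoted from [Bo, Lemma 2.12]—and your argument is precisely the standard proof given there: realize $S_{g,a}$ as the fixed-point set of $R_{a^{-1}}\circ g\circ\Frob_Q$, apply the compactly supported Lefschetz fixed-point formula, decompose $\coh_c^i(X)$ into $A$-isotypic pieces on which $R_{a^{-1}}^{\ast}$ acts by $\psi(a)^{-1}$, sum against $\psi(a)$ and use orthogonality, then collapse the alternating sum and extract $\lambda$ via the two hypotheses. The one caveat is your justification of the Lefschetz step: what is actually needed is the Deligne--Lusztig fixed-point theorem for a finite-order automorphism composed with Frobenius on a separated finite-type scheme (which covers all uses in this paper, where the automorphisms act through finite quotients of the relevant groups), not an isolated/transverse fixed-point argument—on a non-proper $X$ transversality alone does not yield the compactly supported trace formula, and $g$ is not assumed to be of finite order in the statement, so this hypothesis should be made explicit or inherited from the source.
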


We adapt Lemma \ref{lm:Boy_trace_formula} to our situation.

\begin{prop}\label{lm:our_form_of_Boy_trace_formula} Let $\lengthofv \geq 1$, $m \geq 1$ two integers with $m \leq 2\lengthofv + 1$. Let $\chi$ be a character of $E^{\times}$ of level $m$.
Let $g \in U_{\fJ}$. Then
\[ \tr(g ; \coh_c^0(\tilde{Y}_{\dot{w}}^m)[\chi]) = \sum_{i_1 \in U_E/U_E^{m+1}} \sharp S_{g,i_1} \chi(i_1), \]

\noindent where $S_{g, i_1}$ is empty, unless $\det(g) \equiv i_1\tau(i_1) \mod u^{m+1}$, in which case it is the set of solutions of the equations

\begin{eqnarray}
\label{eq:Gl_1_v1} g_2 a^2 + (g_1 - g_4)a - g_3 &\equiv& 0 \mod u^{\lengthofv + 1} \\
\label{eq:Gl_2_v1} \tau(i_1) (1 + u^{\lengthofv}h(g,a) R^{-1}) &\equiv&  g_2 a + g_1 \mod u^{m+1}
\end{eqnarray}


\noindent in $a \in L^{\leq \lengthofv}_{[1, \lengthofv + m]} \bG_a(k)$ (with $a_1 \neq 0$), where 

\begin{eqnarray}
h(g,a) &=& u^{-(\lengthofv+1)} (g.a - g.a|_{u^{\lengthofv}}) \in L_{[0,m-1]}\bG_a(\bar{k}) \nonumber \\
R &=& u^{-1} (\tau(a) - a). 
\end{eqnarray}

\end{prop}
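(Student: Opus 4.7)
The plan is to deduce this from Boyarchenko's trace formula (Lemma \ref{lm:Boy_trace_formula}) applied to the right action of a suitable finite subgroup $A$ of $\tilde{I}_{m,\underline{w}_m}/I^m$ on $\tilde{Y}_{\dot{w}}^m$, with $g \in U_{\fJ}$ acting on the left. By Theorem \ref{thm:structure_thm_GL_2_ram}, the scheme $\tilde{Y}_{\dot{w}}^m$ is zero-dimensional, reduced, and consists only of $k$-rational points; hence $\coh_c^i(\tilde{Y}_{\dot{w}}^m)$ vanishes for $i > 0$, the Frobenius acts trivially on $\coh_c^0$, and the hypotheses of Lemma \ref{lm:Boy_trace_formula} are met with $i_0 = 0$ and $\lambda = 1$. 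Using Lemma \ref{lm:right_group_action_on_gen_ADLV_ram_GL_2}, parametrize $A$ by matrices $i = \matzz{i_1}{i_2}{0}{\tau(i_1)}$; the character $\chi$, pulled back via \eqref{eq:right_group_and_mod_rad_surjection}, evaluates on such $i$ as $\chi(i_1)$ and is independent of the unipotent part $i_2$.

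Next, translate the defining equation $g \cdot x = x \cdot i$ of the fixed-point set $S_{g,i}$ into explicit equations in the coordinates $\psi_{\dot{v}}^m(a,C,D,A,B)$ of \eqref{eq:explicit_Cvm_param_GL_2_ram_case}, combining Proposition \ref{prop:left_I_action_mega_formula} for the left action of $g$ with Proposition \ref{prop:right_action_formula} for the right action of $i$. Comparing the $a$-component gives the condition $g.a|_{\lengthofv} = a$, which after clearing denominators becomes the quadratic relation \eqref{eq:Gl_1_v1}. Matching the $C$-component and using the defining relations $B = u^{\lengthofv}C\tau(C)^{-1}$ and $D \equiv R^{-1}\tau(C) \mod u^{\lengthofv}$ of $Y_{\dot{w}}^m$ from Definition \ref{def:disc_subschemes_Yvm2} to eliminate $B$ and $D$ produces \eqref{eq:Gl_2_v1}, with the factor $1 + u^{\lengthofv} h(g,a) R^{-1}$ arising from the contribution $(g_2 a + g_1)^{-1}$ and the correction term $h(g,a)$ coming from the slicing $\cdot|_{\lengthofv}$.

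The matching equation for $D$ is the $\tau$-conjugate of \eqref{eq:Gl_2_v1} (this uses $\tau(R) = R$ and the $Y_{\dot{w}}^m$-relations) and therefore carries no additional information; multiplying the two diagonal conditions together produces the necessary compatibility $\det(g) \equiv i_1\tau(i_1) \mod u^{m+1}$, which appears as a constraint on $i_1$ rather than a new independent equation. The remaining matching conditions on the $A$- and $B$-entries, together with the commutation relations \eqref{eq:commutativity_relations}, uniquely determine $i_2$ in terms of $(a,C,D,A,B)$ (and automatically respect the $Y_{\dot{w}}^m$-relations).

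Consequently the double sum over $A = \{(i_1,i_2)\}$ factorizes: for each $i_1$ satisfying the determinant compatibility, the possible $i_2$'s are in bijection with $S_{g,i_1}$, while for other $i_1$ the set is empty. The factor $\tfrac{1}{\sharp A}$ in Lemma \ref{lm:Boy_trace_formula} is absorbed by the $i_2$-fibers together with the $\langle u \rangle$-translates of $Y_{\dot{w}}^m$ inside $\tilde{Y}_{\dot{w}}^m$, yielding the stated formula. The main technical obstacle will be the careful verification that the $C$- and $D$-matching conditions reduce to a single effective equation modulo the relations defining $Y_{\dot{w}}^m$, and the tracking of the precise powers of $u$ in the various mod-$u^{m+1}$ congruences (especially in the bound $m \leq 2\lengthofv - 1$), which forces repeated use of the identity $u^{\lengthofv} N^{\pm 1} \equiv u^{\lengthofv}$ already exploited in the proof of Lemma \ref{lm:ADLV_ram_GL_2_key_computation}.
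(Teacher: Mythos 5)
Your outline follows the same route as the paper: Boyarchenko's formula (Lemma \ref{lm:Boy_trace_formula}) applied using the zero-dimensionality and $k$-rationality from Theorem \ref{thm:structure_thm_GL_2_ram}, then a translation of $g.x = x.i$ into the coordinates of Propositions \ref{prop:left_I_action_mega_formula} and \ref{prop:right_action_formula}, with the $a$-matching giving \eqref{eq:Gl_1_v1} and the diagonal matching, after eliminating $B,D$ via the relations of Definition \ref{def:disc_subschemes_Yvm2}, giving \eqref{eq:Gl_2_v1}. Two points in your accounting need repair, though. First, you cannot apply Lemma \ref{lm:Boy_trace_formula} directly to $\tilde{Y}_{\dot{w}}^m$ with ``a suitable finite subgroup'' of $\tilde{I}_{m,\underline{w}_m}/I^m$: $\tilde{Y}_{\dot{w}}^m$ is an infinite disjoint union of translates $Y_{\dot{w}}^m\cdot y_i$ (not of finite type), and the $\chi$-isotypic space is taken for the full, infinite group. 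The paper first reduces via Lemma \ref{eq:def_of_xi_chi} to $\coh_c^0(Y_{\dot{w}}^m)[\chi|_{U_E}]$, and only then applies the trace formula with the finite group $I_{m,\underline{w}_m}/I^m$ of order $(q-1)q^{2m}$; some such reduction is needed before your ``absorption by $\langle u\rangle$-translates'' makes sense.

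Second, your description of how the normalization $1/\sharp A$ disappears is not correct as stated: it is not the case that ``the possible $i_2$'s are in bijection with $S_{g,i_1}$.'' The actual mechanism is that a point of $Y_{\dot{w}}^m$ is determined by $(a,C,A)$, and after the manipulations the fixed-point conditions become \eqref{eq:Gl_1_v1} and \eqref{eq:Gl_2_v1}, which involve only $a$, together with one relation expressing $i_2$ uniquely in terms of $(g,i_1,a,C,A)$ (this uses the self-consistency trick of Lemma \ref{rem:cancelling_terms}, because $i_2$ also enters through $H$, and likewise $\tau(i_1)\equiv g_2a+g_1 \bmod u^{\lengthofv}$ is fed back into the equation). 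Hence for each solution $a$ the variables $C\in(k[u]/u^{m+1})^{\times}$ and $A\in k[u]/u^m$ are completely free, contributing exactly $(q-1)q^m\cdot q^m = \sharp(I_{m,\underline{w}_m}/I^m)$, which is what cancels the prefactor; $i_2$ contributes a factor $1$, not the fiber you describe. With these two corrections your plan coincides with the paper's proof; the remaining content is indeed the congruence bookkeeping you flag (the repeated use of $u^{\lengthofv}N^{\pm1}\equiv u^{\lengthofv}$ and $H\equiv 1 \bmod u^{\lengthofv}$ under $m\leq 2\lengthofv$).
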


\begin{lm}\label{eq:def_of_xi_chi} Let $\chi$ be a character of $E^{\times}$. We have $\coh_c^0(\tilde{Y}_{\dot{w}}^m)[\chi] \cong \coh_c^0(Y_{\dot{w}}^m)[\chi|_{U_E}]$.
\end{lm}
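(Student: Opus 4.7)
My plan is to exploit the decomposition $\tilde{Y}_{\dot{w}}^m = \coprod_{i \in \bZ} Y_{\dot{w}}^m \cdot y_i$ from Definition \ref{def:disc_subschemes_Yvm2}. The key preliminary observation is that each $y_i = e_0(u^i,(-u)^i)$ actually belongs to $\tilde{I}_{m,\underline{w}_m}$: the condition for $\sigma$ is trivial since $y_i \in G(E)$, and for $\tau$ a direct computation using $\tau(u)=-u$ and the anti-diagonal form of $\dot{w}$ shows $\dot{w}\tau(y_i)\dot{w}^{-1}=y_i$, hence $y_i^{-1}\dot{w}\tau(y_i)=\dot{w}$. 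By Lemma~\ref{lm:right_group_action_on_gen_ADLV_ram_GL_2}, the $y_i$ then represent all cosets of $I_{m,\underline{w}_m}/I^m$ in $\tilde{I}_{m,\underline{w}_m}/I^m$, and $y_i$ normalizes $I_{m,\underline{w}_m}/I^m$ (since conjugation by the diagonal matrix $y_i$ just multiplies off-diagonal entries by $(-1)^{\pm i}$).

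Next I would describe the right action of $\tilde{I}_{m,\underline{w}_m}/I^m$ in these coordinates. For $j=j_0 y_l$ with $j_0 \in I_{m,\underline{w}_m}/I^m$, the identity $(Y_{\dot{w}}^m \cdot y_i)\cdot j = Y_{\dot{w}}^m\cdot (y_i j_0 y_i^{-1})\cdot y_{i+l} = Y_{\dot{w}}^m\cdot y_{i+l}$ shows that $y_l$ shifts the index by $l$, while $I_{m,\underline{w}_m}/I^m$ preserves each component $Y_{\dot{w}}^m\cdot y_i$ through a conjugated action. Since $Y_{\dot{w}}^m$ is zero-dimensional, this yields a decomposition
\[
\coh_c^0(\tilde{Y}_{\dot{w}}^m) = \bigoplus_{i \in \bZ} \coh_c^0(Y_{\dot{w}}^m \cdot y_i),
\]
where each summand is identified with $\coh_c^0(Y_{\dot{w}}^m)$ via pullback along $y_i$.

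I would then unravel the $\chi$-isotypic condition on a tuple $(g_i)_{i\in\bZ}$. Acting by $y_l$ alone forces $g_{i+l}=\chi(y_l)g_i=\chi(u^l)g_i$, so the whole tuple is determined by $g_0$. Acting by $j_0 \in I_{m,\underline{w}_m}/I^m$ on the $i$-th summand forces $g_i(x_0\cdot(y_i j_0 y_i^{-1}))=\chi(j_0)g_i(x_0)$; since the image of $y_i j_0 y_i^{-1}$ in $E^{\times}/U_E^{m+1}$ has the same upper-left entry as $j_0$, we have $\chi(y_i j_0 y_i^{-1})=\chi(j_0)$, so this reduces to the single condition $g_0(x_0\cdot j_0)=\chi(j_0)g_0(x_0)$, i.e., $g_0 \in \coh_c^0(Y_{\dot{w}}^m)[\chi|_{U_E}]$ (recalling that $\chi|_{I_{m,\underline{w}_m}/I^m}$ factors through $U_E/U_E^{m+1}$).

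Thus the restriction map $(g_i)_i \mapsto g_0$ furnishes the desired isomorphism, with explicit inverse $g_0 \mapsto (\chi(u^i)g_0)_{i\in\bZ}$. There is no real obstacle; the only care needed is in bookkeeping the twisted action on each summand and verifying the compatibility of the three conditions, which rests on the observation that conjugation by $y_i$ leaves the diagonal part (and hence the value of $\chi$) unchanged.
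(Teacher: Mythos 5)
Your structural analysis is correct and is essentially the content of the argument the paper has in mind (it simply cites \cite{Iv2} Lemma 4.5): the $y_i$ do lie in $\tilde{I}_{m,\underline{w}_m}$, they represent the cosets of $I_{m,\underline{w}_m}/I^m$, they normalize $I_{m,\underline{w}_m}/I^m$, and conjugation by $y_i$ leaves the diagonal part unchanged, so the inflated character $\chi$ is invariant under this conjugation; moreover the components $Y_{\dot{w}}^m\cdot y_i$ are permuted simply transitively by the image of $y_1$ in $\tilde{I}_{m,\underline{w}_m}/I_{m,\underline{w}_m} \cong \bZ$. These verifications are exactly the input needed.

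The gap is in the final linear-algebra step. Since $\tilde{Y}_{\dot{w}}^m$ is an \emph{infinite} discrete union, $\coh_c^0(\tilde{Y}_{\dot{w}}^m)=\bigoplus_{i\in\bZ}\coh_c^0(Y_{\dot{w}}^m\cdot y_i)$ consists of finitely supported tuples. A tuple satisfying $g_{i+l}=\chi(u^l)g_i$ for all $i,l$ with $g_0\neq 0$ has all components nonzero, hence does not lie in this direct sum: your proposed inverse $g_0\mapsto(\chi(u^i)g_0)_i$ lands in the product, not in $\coh_c^0$, and under the literal ``$\chi$-isotypic subspace'' reading the left-hand side of the lemma would be zero (the shift by $y_1$ has no nonzero compactly supported eigenvectors). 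The statement is true, and your computation proves it, once $[\chi]$ is taken to be the $\chi$-isotypic \emph{quotient} (this is how the paper uses it later, e.g.\ in the proof of Proposition \ref{prop:traces_of_val_1_elements}, where $y_1$ acts on $V_\chi$ by the scalar $\chi(u)$); equivalently, one identifies $\coh_c^0(\tilde{Y}_{\dot{w}}^m)\cong \cIndd_{I_{m,\underline{w}_m}/I^m}^{\tilde{I}_{m,\underline{w}_m}/I^m}\coh_c^0(Y_{\dot{w}}^m)$ using your transitivity statement, and applies Frobenius reciprocity together with your observation that the conjugated action of $I_{m,\underline{w}_m}/I^m$ on each component has the same $\chi$-value, plus semisimplicity of the finite-group action on the $Y_{\dot{w}}^m$-side. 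So the idea is right, but as written the last step conflates $\coh_c^0$ with the full function space and subspace with quotient, and needs to be rephrased in one of these two ways.
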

\begin{proof}
The proof is the same as in \cite{Iv2} Lemma 4.5. 
\end{proof}

\begin{lm}\label{rem:cancelling_terms}
Let $\lengthofv \leq s \leq 2\lengthofv$ be positive integers. Let $f \in k[u]/(u^s)$ and let $h \colon k[u]/(u^{\lengthofv}) \rar k[u]/(u^{s-\lengthofv})$ be some map. Then for $x \in k[u]/(u^s)$ we have

\[ x = f + u^{\lengthofv} h(x \hspace{-0.2cm} \mod u^{\lengthofv})  \quad \LRar \quad  x = f + u^{\lengthofv} h(f \hspace{-0.2cm} \mod u^{\lengthofv}) \]

\noindent (both equalities take place in $k[u]/(u^s)$).
\end{lm}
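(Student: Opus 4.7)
The plan is to observe that both sides of the claimed equivalence force the same congruence $x \equiv f \pmod{u^{\lengthofv}}$, after which the equation $h(x \bmod u^{\lengthofv}) = h(f \bmod u^{\lengthofv})$ becomes automatic and the two equations coincide.

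More precisely, I would first note that the expression $u^{\lengthofv} h(\cdot)$ makes sense in $k[u]/(u^s)$: since $h$ takes values in $k[u]/(u^{s-\lengthofv})$ and $s - \lengthofv \leq \lengthofv \leq s$, we can lift any value of $h$ to $k[u]/(u^s)$ in any way we like, multiply by $u^{\lengthofv}$, and the result is well-defined in $k[u]/(u^s)$ (different lifts differ by multiples of $u^{s-\lengthofv}$, which after multiplication by $u^{\lengthofv}$ become multiples of $u^s$, hence zero).

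For the implication ($\Rightarrow$): Assume $x = f + u^{\lengthofv} h(x \bmod u^{\lengthofv})$. Reducing both sides modulo $u^{\lengthofv}$ and using that $u^{\lengthofv} h(x \bmod u^{\lengthofv}) \equiv 0 \pmod{u^{\lengthofv}}$, we obtain $x \equiv f \pmod{u^{\lengthofv}}$. Hence $x \bmod u^{\lengthofv} = f \bmod u^{\lengthofv}$, so $h(x \bmod u^{\lengthofv}) = h(f \bmod u^{\lengthofv})$, and substituting this back yields $x = f + u^{\lengthofv} h(f \bmod u^{\lengthofv})$. The implication ($\Leftarrow$) is symmetric: from $x = f + u^{\lengthofv} h(f \bmod u^{\lengthofv})$ we again conclude $x \equiv f \pmod{u^{\lengthofv}}$, hence $h(f \bmod u^{\lengthofv}) = h(x \bmod u^{\lengthofv})$, producing the first equation.

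There is no real obstacle here; this is a bookkeeping lemma whose content is simply that truncating the argument of $h$ at level $\lengthofv$ makes the fixed-point equation $x = f + u^{\lengthofv} h(x \bmod u^{\lengthofv})$ trivially solvable, with solution governed entirely by $f \bmod u^{\lengthofv}$.
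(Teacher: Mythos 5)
Your proof is correct, and since the paper simply dismisses this lemma as trivial, your argument (reduce modulo $u^{\lengthofv}$ to get $x \equiv f$, hence the two values of $h$ agree, then substitute) is exactly the intended one, spelled out; the remark about well-definedness of $u^{\lengthofv}h(\cdot)$ in $k[u]/(u^s)$ is a nice additional check.
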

\begin{proof}
This is trivial. \qedhere
\end{proof}

\begin{proof}[Proof of Proposition \ref{lm:our_form_of_Boy_trace_formula}] The action of $g$ on $\tilde{Y}_{\dot{w}}^m$ fixes $Y_{\dot{w}}^m$. By Lemma \ref{eq:def_of_xi_chi} we have 

\[ \tr(g ; \coh_c^0(\tilde{Y}_{\dot{w}}^m)[\chi]) = \tr(g ; \coh_c^0(Y_{\dot{w}}^m)[\chi|_{U_E}]).\] 

\noindent We have $\sharp I_{m,\underline{w}_m}/I^m = (q-1)q^{2m}$. Applying Lemma \ref{lm:Boy_trace_formula} to the left action of $U_{\fJ}$ and the right action of $I_{m,\underline{w}_m}/I^m$ on $Y_{\dot{w}}^m$ and $\Frob_q$ (this is possible, as only the zeroth cohomology is non-vanishing, and as the Frobenius acts as a scalar in $\coh_c^0$), we deduce

\[ \tr(g ; \coh_c^0(\tilde{Y}_{\dot{w}}^m)[\chi]) = \frac{1}{(q-1)q^{2m}} \sum_{i \in I_{m,\underline{w}_m}/I^m} \sharp S_{g,i} \chi(i), \] 

\noindent where $S_{g,i}$ is the set of points $y \in Y_{\dot{w}}^m$ with $g.y = y.i$ (note that any point in $Y_{\dot{w}}^m$ has coordinates in $k$, hence Frobenius acts trivial). Further, note that a point of $Y_{\dot{w}}^m$ is uniquely determined by its coordinates $a,C,A$ (cf. Definition \ref{def:disc_subschemes_Yvm2}). Write $i = \matzz{i_1}{}{}{\tau(i_1)} \matzz{1}{i_2}{}{1}$ with $i_1 \in U_E/U_E^{m+1}$, $i_2 \in k[u]/u^m$. As the determinant is multiplicative, we see that $S_{g,i} = \emptyset$, unless $\det(g) \equiv \det(i) = i_1 \tau(i_1) \mod u^{m+1}$. Assume this holds. By Propositions \ref{prop:left_I_action_mega_formula} and \ref{prop:right_action_formula}, we see that $\sharp S_{g,i}$ is equal to the number of solutions of the equations
\begin{eqnarray} 
\label{eq:hilfs_eq_for_Trace_det_sthnr_1} g.a|_{\lengthofv} &\equiv& a \mod u^{\lengthofv + 1} \\
\label{eq:hilfs_eq_for_Trace_det_sthnr_2}\frac{\det(g)}{g_2 a + g_1} C N^{-1} &\equiv& C i_1 H^{-1} \mod u^{m+1} \\
\label{eq:hilfs_eq_for_Trace_det_sthnr_3} AN + h(g,a) \frac{(g_2 a + g_1)^2 D N^2}{\det(g) C} &\equiv& i_1^{-1} \tau(i_1)H^2 A + i_2 H \mod u^m
\end{eqnarray}

\noindent in the variables $a = \sum_{i=1}^{\lengthofv} a_iu^i + \sum_{i=\lengthofv+1}^m 0 u^i \in L^{\leq \lengthofv}_{[1, \lengthofv + m]} \bG_a(k)$ (with $a_1 \neq 0$), $C \in (k[u]/u^{m+1})^{\times}$ and $A \in k[u]/u^m$, where 
\begin{eqnarray*}
B &=& u^{\lengthofv} C\tau(C)^{-1}  \nonumber  \\
D &=& R^{-1} \tau(C)(1 + u^{\lengthofv} C\tau(C)^{-1}A  - u^{\lengthofv} C^{-1}\tau(C) \tau(A) ) \nonumber 
\end{eqnarray*}

\noindent (as we are in $Y_{\dot{w}}^m$; here $R = u^{-1}(\tau(a) - a)$) and $h(g,a)$ and

\begin{eqnarray*} 
N &=& 1 + u^{\lengthofv + 1} \frac{g_2}{g_2 a + g_1} C D^{-1} A \equiv 1 + u^{\lengthofv + 1} \frac{g_2}{g_2 a + g_1} R C \tau(C)^{-1} A \mod u^{m+1} \\
H &=& 1 + i_1 \tau(i_1)^{-1} i_2 B = 1 + u^{\lengthofv} i_1 \tau(i_1)^{-1} i_2 C\tau(C)^{-1} 
\end{eqnarray*}

\noindent are as in Propositions \ref{prop:left_I_action_mega_formula} and \ref{prop:right_action_formula}. As the character $\chi$ of $I_{m,\underline{w}_m}/I^m$ is inflated from a character of $U_E/U_E^{m+1}$ (again denoted by $\chi$), we see that 

\[ \sum_{i \mapsto i_1} \sharp S_{g,i} \chi(i) = \sharp S_{g,i_1}^{\prime} \chi(i_1), \]

\noindent where $i$ varies through all elements of $I_{m,\underline{w}_m}/I^m$ lying over $i_1$ and $\sharp S_{g,i_1}^{\prime}$ is the number of solutions of equations \eqref{eq:hilfs_eq_for_Trace_det_sthnr_1}, \eqref{eq:hilfs_eq_for_Trace_det_sthnr_2}, \eqref{eq:hilfs_eq_for_Trace_det_sthnr_3} in the variables $a,C,A,i_2$. It is enough to show that $\sharp S_{g,i_1}^{\prime} = (q-1)q^{2m} \sharp S_{g,i_1}$. If $\lengthofv \geq m + 1$, then $N,H \equiv 1 \mod u^{m+1}$ and the proof is immediate. Assume $\lengthofv \leq m \leq 2\lengthofv$. We cancel $C$ in \eqref{eq:hilfs_eq_for_Trace_det_sthnr_2} and insert the condition on the determinant to bring it to the form

\begin{equation}\label{eq:Gl_2_einfachere_Form_allg} 
\frac{\tau(i_1)}{g_2 a + g_1} H \equiv N \mod u^{m+1}.
\end{equation}

\noindent By replacing $N$ by $\frac{\tau(i_1)}{g_2 a + g_1} H$ in \eqref{eq:hilfs_eq_for_Trace_det_sthnr_3} and canceling the invertible term $H$ we see that the equations \eqref{eq:hilfs_eq_for_Trace_det_sthnr_1}, \eqref{eq:hilfs_eq_for_Trace_det_sthnr_2}, \eqref{eq:hilfs_eq_for_Trace_det_sthnr_3} are equivalent to the three equations \eqref{eq:hilfs_eq_for_Trace_det_sthnr_1}, \eqref{eq:Gl_2_einfachere_Form_allg} and 

\begin{equation}\label{eq:Gl_3_intermediate_form}
i_2 \equiv \frac{h(g,a)\tau(i_1)H D}{i_1 C} + \frac{\tau(i_1)A}{g_2 a + g_1}  - \frac{\tau(i_1) HA}{i_1} \mod u^m. 
\end{equation}

\noindent Using $H \equiv 1 \mod u^{\lengthofv}$ and $D \equiv R^{-1}\tau(C) \mod u^{\lengthofv}$ equation \eqref{eq:Gl_3_intermediate_form} implies:

\begin{equation}\label{eq:Gl_3_mod_ell}
i_2 \equiv \frac{h(g,a) \tau(i_1) \tau(C)}{i_1 R C}   + \frac{\tau(i_1)}{g_2 a + g_1} A - \frac{\tau(i_1) A}{i_1} \mod u^{\lengthofv} 
\end{equation}

\noindent (the right hand side does not depend on $i_2$). We can replace $i_2$ occurring in the term $H$ in \eqref{eq:Gl_2_einfachere_Form_allg} by the right hand side of \eqref{eq:Gl_3_mod_ell} and hence our three original equations \eqref{eq:hilfs_eq_for_Trace_det_sthnr_1}, \eqref{eq:hilfs_eq_for_Trace_det_sthnr_2}, \eqref{eq:hilfs_eq_for_Trace_det_sthnr_3} are equivalent to \eqref{eq:hilfs_eq_for_Trace_det_sthnr_1}, 

\begin{eqnarray}\label{eq:Gl_2_neue_Form}
\tau(i_1) (1 + u^{\lengthofv}(\frac{h(g,a)}{R} + \frac{i_1}{g_2 a + g_1} C\tau(C)^{-1}A - C\tau(C)^{-1}A) ) \equiv \\ 
\nonumber \equiv (g_2 a + g_1) + u^{\lengthofv + 1} g_2 R C \tau(C)^{-1} A \mod u^{m+1}
\end{eqnarray}

\noindent and \eqref{eq:Gl_3_intermediate_form}. By Lemma \ref{rem:cancelling_terms} applied to $x = i_2$, equation \eqref{eq:Gl_3_intermediate_form} is just an expression of $i_2$ in terms of $g,i_1,a,C,A$, hence it can be ignored and we see that $\sharp S_{g,i_1}^{\prime}$ is the number of solutions of \eqref{eq:hilfs_eq_for_Trace_det_sthnr_1} and \eqref{eq:Gl_2_neue_Form} in the variables $a,C,A$. 

Now, \eqref{eq:Gl_2_neue_Form} implies $\tau(i_1) \equiv g_2 a + g_1 \mod u^{\lengthofv}$. Applying Lemma \ref{rem:cancelling_terms} to $x = \tau(i_1)$, we see that \eqref{eq:Gl_2_neue_Form} is equivalent to 

\begin{eqnarray}\nonumber 
\tau(i_1) &+& u^{\lengthofv} ( (g_2 a + g_1)h(g,a)R^{-1} + (g_2 \tau(a) + g_1) C\tau(C)^{-1}A - (g_2 a + g_1) C\tau(C)^{-1}A) \equiv \\ 
\label{eq:Gl_2_neue_Form_2} &\equiv& (g_2 a + g_1) + u^{\lengthofv + 1} g_2 R C \tau(C)^{-1} A \mod u^{m+1}.
\end{eqnarray}

\noindent Inserting on the right hand side $R = u^{-1}(\tau(a) - a)$, we immediately see that \eqref{eq:Gl_2_neue_Form_2} is equivalent to \eqref{eq:Gl_2_v1}. Moreover, \eqref{eq:hilfs_eq_for_Trace_det_sthnr_1} is immediately seen to be equivalent to \eqref{eq:Gl_1_v1}. As in \eqref{eq:Gl_1_v1}, \eqref{eq:Gl_2_v1} neither $C$, nor $A$ occur, and as $C$ lives in $(k[u]/u^{m+1})^{\times}$ and $A$ lives in $k[u]/u^m$, we deduce that $\sharp S_{g,i_1}^{\prime} = (q-1)q^{2m} \sharp S_{g,i_1}$. \qedhere
\end{proof}

We now examine solutions of the equation \eqref{eq:Gl_1_v1} in $a \in L^{\leq \lengthofv}_{[1, \lengthofv + m]} \bG_a(k)$ (with $a_1 \neq 0$). Recall that via the embedding $\iota$ (see Section \ref{sec:subgroups_of_GF}) we have the subgroups $U_F U_{\fJ}^{\lengthofv} \subseteq U_E U_{\fJ}^{\lengthofv} \subseteq U_{\fJ}$.

\begin{lm}\label{lm:on_equation_for_a}
Let $g \in U_{\fJ}$. Precisely one of the following cases occur:
\begin{itemize}
\item[(i)] $g \in U_F U_{\fJ}^{\lengthofv}$. Then \eqref{eq:Gl_1_v1} has precisely $(q-1)q^{\lengthofv - 1}$ solutions.
\item[(ii)] $g \in U_{\fJ} \sm U_F U_{\fJ}^{\lengthofv}$ is conjugate in $U_{\fJ}$ to an element of $U_E U_{\fJ}^{\lengthofv}$. In this case \eqref{eq:Gl_1_v1} has precisely $2q^{v_u(g_3) - 1}$ solutions.
\item[(iii)] $g \in U_{\fJ} \sm U_F U_{\fJ}^{\lengthofv}$ is not conjugate in $U_{\fJ}$ to an element of $U_E U_{\fJ}^{\lengthofv}$. Then \eqref{eq:Gl_1_v1} has no solutions. 
\end{itemize}
\end{lm}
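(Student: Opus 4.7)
I will analyze, for each $g \in U_{\fJ}$, the quadratic polynomial
\[
P_g(X) := g_2 X^2 + (g_1 - g_4) X - g_3 \in \caO_F[X]
\]
and count its roots $a \in uk[u]/u^{\lengthofv+1}k[u]$ with $a_1 \neq 0$, organizing the argument around the discriminant $\Delta(g) := (g_1-g_4)^2 + 4 g_2 g_3 \in \caO_F$, which is a $U_{\fJ}$-conjugation invariant. For Case (i), the explicit description of $U_F \cdot U_{\fJ}^{\lengthofv}$ yields the uniform valuation bounds $v_u(g_1-g_4) \geq \lengthofv$, $v_u(g_2) \geq \lengthofv - 1$, $v_u(g_3) \geq \lengthofv + 1$ (valid in both parities of $\lengthofv$); combined with $v_u(a) \geq 1$, each summand of $P_g(a)$ is divisible by $u^{\lengthofv+1}$, so the equation is automatic and the $(q-1) q^{\lengthofv-1}$ admissible tuples $(a_1, \dots, a_\lengthofv) \in k^\times \times k^{\lengthofv-1}$ are all solutions.

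For Cases (ii) and (iii), I would first match the $U_{\fJ}$-conjugacy dichotomy with a discriminant dichotomy: $g$ is $U_{\fJ}$-conjugate to an element of $U_E U_{\fJ}^{\lengthofv}$ iff $\Delta(g)$ admits a square root in $E$ to sufficient $u$-precision, equivalently, iff the leading $t$-coefficient of $\Delta(g)$ lies in $(k^\times)^2$. This is verified by computing $\Delta(\iota(x+yu) \cdot h') = 4ty^2 + O(u^{\lengthofv+1})$ for $h' \in U_{\fJ}^{\lengthofv}$ and noting that the leading square-class of $\Delta$ is a conjugation invariant. In Case (ii), I complete the square to rewrite $P_g(a) \equiv 0$ as
\[
\bigl(2 g_2 a + (g_1 - g_4)\bigr)^2 \equiv \Delta(g) \pmod{4 g_2 u^{\lengthofv+1}},
\]
extract the two sign choices of $\sqrt{\Delta(g)} \in E$, and apply Hensel's lemma. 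Since $v_u(P_g'(a_\pm)) = v_u(\sqrt{\Delta(g)}) = v_u(g_3) - 1$ (first verified for $g \in U_E$, where $\sqrt{\Delta} = 2uy$ and $g_3 = ty$, then extended by perturbation), each approximate solution $a_\pm \equiv \pm u \pmod{u^2}$ lifts to a unique $a$ modulo $u^{\lengthofv + 2 - v_u(g_3)}$, leaving the top $v_u(g_3) - 1$ coefficients of $a$ free and yielding $2 q^{v_u(g_3) - 1}$ solutions in total. For Case (iii), the same completion-of-square argument shows that any solution would produce $\sqrt{\Delta(g)} \in E$ modulo $u^{\lengthofv+1}$, contradicting the non-square hypothesis on the leading $t$-coefficient; hence no solutions exist.

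The main obstacle I anticipate is the precise bookkeeping in Case (ii): verifying that the identity $v_u(g_3) = v_u(\sqrt{\Delta(g)}) + 1$ persists under both the $U_{\fJ}^{\lengthofv}$ perturbation of the $U_E$ model and under $U_{\fJ}$-conjugation, and then carrying out the Hensel lift with tight enough valuation control so that exactly $q^{v_u(g_3) - 1}$ free coefficients remain per branch. Case (iii) is conceptually cleaner but still demands care: one must rule out potential obstructions from $U_{\fJ}^{\lengthofv}$ perturbations at high precision beyond the primary leading-coefficient square-class obstruction, in order to confirm genuine non-existence rather than mere non-existence at first approximation.
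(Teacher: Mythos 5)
Your Case (i) and the completing-the-square/Hensel count in Case (ii) are sound in outline and essentially equivalent to the paper's argument (the paper divides out $u^{v_u(g_3)}$ and counts coefficients inductively, which is the same bookkeeping as your factorization through the two square roots of $\Delta(g)$). The genuine gap is the ``discriminant dichotomy'' on which your Cases (ii) and (iii) rest: it is \emph{false} that $g \in U_{\fJ} \sm U_F U_{\fJ}^{\lengthofv}$ is $U_{\fJ}$-conjugate into $U_E U_{\fJ}^{\lengthofv}$ if and only if the leading $t$-coefficient of $\Delta(g)$ is a square in $k^{\times}$. Take $g = \matzz{1}{1}{t^2}{1}$ and any $\lengthofv \geq 2$: then $g \in U_{\fJ} \sm U_F U_{\fJ}^{\lengthofv}$ (since $v_u(g_2) = 0 < \lengthofv - 1$) and $\Delta(g) = 4t^2$ is even a perfect square in $F$, yet $g$ is not conjugate into $U_E U_{\fJ}^{\lengthofv}$: the correct criterion (Lemma \ref{lm:charac_of_conjugacy_to_nonsplit_ram_torus}(ii)) requires, in addition to the square-class condition, the valuation conditions $v_u(g_3) = v_u(g_2) + 2 < \lengthofv + 1$ and $v_u(g_3) \leq v_u(g_1 - g_4)$, and here $v_u(g_3) = 4 \neq 2 = v_u(g_2) + 2$. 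For such $g$ your proof of Case (iii) cannot conclude: there is no non-square hypothesis to contradict, and indeed the reason \eqref{eq:Gl_1_v1} has no solutions for this $g$ (the equation reads $a^2 \equiv t^2 \mod u^{\lengthofv+1}$, forcing $v_u(a) = 2$ and hence $a_1 = 0$) is a valuation obstruction, not a square-class obstruction.

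What is missing is exactly the paper's first step: from the existence of a solution $a$ with $v_u(a) = 1$ one extracts valuation constraints on $g$, using that $v_u(g_2 a^2)$ and $v_u(g_3)$ are even while $v_u((g_1-g_4)a)$ is odd, so that leading-order cancellation modulo $u^{\lengthofv+1}$ can only occur between $g_2a^2$ and $g_3$; this forces either the Case (i) bounds or $v_u(g_3) = v_u(g_2)+2 < \lengthofv+1$ and $v_u(g_3) \leq v_u(g_1-g_4)$, and then reduction modulo $u$ yields the square condition, so that Lemma \ref{lm:charac_of_conjugacy_to_nonsplit_ram_torus} places $g$ in case (ii). Without this parity-of-valuations analysis your Case (iii) is incomplete. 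Relatedly, the bookkeeping you flag in Case (ii) --- that $v_u(g_3) = v_u(\sqrt{\Delta(g)}) + 1$ holds for $g$ itself, not merely for its $U_E U_{\fJ}^{\lengthofv}$-model --- is precisely the ``only if'' half of that characterization lemma and does require proof: $\Delta$ is conjugation-invariant but $g_3$ is not, so ``extension by perturbation'' does not cover conjugation; this part is fixable by a direct computation (as in the paper's proof of Lemma \ref{lm:charac_of_conjugacy_to_nonsplit_ram_torus}), but the failure of the dichotomy in Case (iii) is not.
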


\begin{proof}
Assume \eqref{eq:Gl_1_v1} has a solution $a$. As $g \in U_{\fJ}$, the integers $v_u(g_2), v_u(g_3), v_u(g_1 - g_4)$ are even. As $a_1 \neq 0$, we have $v_u(a) = 1$. We deduce that $v_u((g_1 - g_4)a)$ is odd and $v_u(g_2 a^2)$, $v_u(g_3)$ are even. Thus by Lemma \ref{lm:charac_of_conjugacy_to_nonsplit_ram_torus} we are either in the case $g \in U_F U_{\fJ}^{\lengthofv}$ of the lemma, where each of these three integers is $\geq \lengthofv+1$ and each element of $L^{\leq \lengthofv}_{[1, \lengthofv + m]} \bG_a(k)$ solves equation \eqref{eq:Gl_1_v1}, or we are forced to have $v_u(g_3) = v_u(g_2) + 2 < \lengthofv + 1$ and $v_u(g_3) \leq v_u(g_1 - g_4) + 1$ (this last is, using parity, equivalent to $v_u(g_3) \leq v_u(g_1 - g_4)$). In the last case write $g_2 = g_2^{\prime} u^{v_u(g_2)}$, $g_3 = g_3^{\prime} u^{v_u(g_2) + 2}$, $a = a^{\prime}u$ and $g_1 - g_4 = g_{1,4}^{\prime} u^{v_u(g_2) + 2}$ with $g_2^{\prime}, g_3^{\prime},a^{\prime} \in k\llbracket t\rrbracket^{\times}$ and $g_{1,4}^{\prime} \in k\llbracket t\rrbracket$. After canceling $u^{v_u(g_3)} = u^{v_u(g_2) + 2}$, \eqref{eq:Gl_1_v1} is equivalent to

\begin{equation}\label{eq:eine_gl_hilf_mod_bla}
g_2^{\prime} a^{\prime,2} + g_{1,4}^{\prime}a^{\prime} u - g_3^{\prime} \equiv 0 \mod u^{\lengthofv + 1 - v_u(g_3)}, 
\end{equation}

\noindent where $\lengthofv + 1 - v_u(g_3) \geq 1$. Reducing modulo $u$, we deduce $a_1^2 \equiv \frac{g_3^{\prime}}{g_2^{\prime}} \mod u$, which shows that $\frac{g_3^{\prime}}{g_2^{\prime}} \mod u$ must be a square of an element of $k^{\times}$, or, equivalently (cf. Lemma \ref{lm:charac_of_squares}), that $\frac{g_3}{t g_2} \in k\llbracket t \rrbracket^{\times}$ is a square. Thus by Lemma \ref{lm:charac_of_conjugacy_to_nonsplit_ram_torus} we deduce that we must be in case (ii) of the lemma and that in case (iii) there are no solutions. In case (ii) with notations as above, we have to determine how many solutions in $a^{\prime} = a_1 + a_2u + \dots + a_{\lengthofv}u^{\lengthofv - 1}$ equation \eqref{eq:eine_gl_hilf_mod_bla} has. Using induction, one now easily deduces that there are exactly two possibilities for $a_1$, exactly $1$ possibility for each $a_2, \dots, a_{\lengthofv + 1 - v_u(g_3)}$ and exactly $q$ possibilities for each $a_{\lengthofv + 2- v_u(g_3)}, \dots, a_{\lengthofv}$.
\end{proof}

\begin{lm}\label{lm:charac_of_conjugacy_to_nonsplit_ram_torus}
Let $g \in U_{\fJ}$ and $\lengthofv \geq 1$. Then
\begin{itemize}
\item[(i)] $g \in U_F U_{\fJ}^{\lengthofv} \LRar v_u(g_2) \geq \lengthofv - 1$, $v_u(g_3) \geq \lengthofv + 1$, $v_u(g_1 - g_4) \geq \lengthofv$. 
\item[(ii)] $g \in U_{\fJ} \sm U_F U_{\fJ}^{\lengthofv}$ and $g$ is conjugate to an element of $U_E U_{\fJ}^{\lengthofv}$ if and only if $v_u(g_3) = v_u(g_2) + 2 < \lengthofv + 1$, $v_u(g_3) \leq v_u(g_1 - g_4)$ and $\frac{g_3}{tg_2} \in k\llbracket t \rrbracket^{\times}$ is a square of an element in $k\llbracket u \rrbracket^{\times}$
\end{itemize} 
\end{lm}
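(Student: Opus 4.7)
For part (i), I would argue directly from the explicit description of $U_{\fJ}^{\lengthofv}=1+\varpi^{\lengthofv}\fJ$ given at the beginning of Section~\ref{sec:repth1}. An element of $U_F U_{\fJ}^{\lengthofv}$ has the form $zI+zm$ with $z\in U_F$ and $m\in \varpi^{\lengthofv}\fJ$, so its entries satisfy the stated valuation bounds by inspection, remembering that entries of $F$ have even $u$-valuation. Conversely, since $g\in U_{\fJ}$ forces $g_1\in\caO_F^{\times}=U_F$, setting $z:=g_1$ reduces the claim to checking $g-g_1I\in\varpi^{\lengthofv}\fJ$ entry by entry, which is immediate from the hypotheses.

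For the sufficiency direction of (ii), I would produce three successive conjugations in $U_{\fJ}$ that bring $g$ exactly into $\iota(U_E)$. \emph{Step A}: conjugate by $\matzz{c}{}{}{1}$, where $c\in U_F$ is a square root of $g_3/(tg_2)\in k\llbracket t\rrbracket^{\times}$; such $c$ exists by Hensel's lemma and the square hypothesis. This makes $g_3=tg_2$ exactly, and $g_1,g_4$ (hence $v_u(g_1-g_4)\geq v_u(g_3)$) are unchanged. \emph{Step B}: conjugate by $\matzz{1}{}{\gamma}{1}$ with $\gamma:=(g_1-g_4)/(2g_2)$; the hypothesis $v_u(g_1-g_4)\geq v_u(g_3)=v_u(g_2)+2$ together with $\charac k\neq 2$ yields $v_u(\gamma)\geq 2$, so $\gamma\in\fp_F$ and the matrix lies in $U_{\fJ}$. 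A short computation gives new $g_1-g_4=0$ exactly and new $g_3=tg_2+\gamma^2 g_2$. \emph{Step C}: repeat Step A with $c_2^2=1+\gamma^2/t\in 1+\fp_F$; since $1+\fp_F$ consists of squares by Hensel, $c_2\in 1+\fp_F\subset U_F$ exists and the conjugation restores $g_3=tg_2$ without disturbing the diagonal. The resulting matrix has the form $\lambda I+\mu\varpi=\iota(\lambda+u\mu)$ with $\lambda\in U_F$ and $\mu\in\caO_F$, hence lies in $\iota(U_E)\subset U_E U_{\fJ}^{\lengthofv}$. Finally, $g\not\in U_F U_{\fJ}^{\lengthofv}$ follows from the contrapositive direction of (i) and the hypothesis $v_u(g_3)<\lengthofv+1$.

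For the necessity direction of (ii), write $g=k\tilde g k^{-1}$ with $k\in U_{\fJ}$ and $\tilde g=he$, where $h=\iota(a+ub)=\matzz{a}{b}{bt}{a}$ with $a\in U_F$, $b\in\caO_F$, and $e\in U_{\fJ}^{\lengthofv}$. Applying (i) to $h\not\in U_F U_{\fJ}^{\lengthofv}$ (forced by $g\not\in U_F U_{\fJ}^{\lengthofv}$) gives $v_u(b)\leq \lengthofv-2$. A direct matrix computation yields
\[
g_2=\frac{\tilde g_2 k_1^2-\tilde g_3 k_2^2+(\tilde g_4-\tilde g_1)k_1 k_2}{\det k}, \qquad g_3=\frac{\tilde g_3 k_4^2-\tilde g_2 k_3^2+(\tilde g_1-\tilde g_4)k_3 k_4}{\det k},
\]
together with a similar expression for $g_1-g_4$. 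Since $\tilde g_2\equiv b$, $\tilde g_3\equiv bt$, $\tilde g_1-\tilde g_4\equiv 0$ modulo errors of $u$-valuation at least $\lengthofv$, while $k_1,k_4\in\caO_F^{\times}$, $v_u(k_2)\geq 0$, $v_u(k_3)\geq 2$, the leading $u$-terms of $g_2$ and $g_3$ come from $\tilde g_2 k_1^2$ and $\tilde g_3 k_4^2$ respectively. Extracting them gives $v_u(g_2)=v_u(b)$, $v_u(g_3)=v_u(b)+2$, and $v_u(g_1-g_4)\geq v_u(g_3)$; comparing the leading $u$-coefficients, $(g_3/(tg_2))(0)=(k_4(0)/k_1(0))^2\in k^{\times,2}$, which by Lemma~\ref{lm:charac_of_squares} is equivalent to $g_3/(tg_2)$ being a square of an element of $k\llbracket u\rrbracket^{\times}$.

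The main obstacle is ensuring that each conjugating element in the sufficiency argument really lies in $U_{\fJ}$. This reduces to two Hensel-type square-root extractions in $U_F$ (for $c$ and $c_2$) and one valuation bound for $\gamma$, each guaranteed by precisely one hypothesis of (ii): the square condition on $g_3/(tg_2)$ for $c$, the inequality $v_u(g_1-g_4)\geq v_u(g_3)$ for $\gamma$, and the trivial inclusion $1+\fp_F\subset U_F^{\times,2}$ for $c_2$. For the necessity direction, the delicate bookkeeping is verifying that the $\varpi^{\lengthofv}\fJ$-sized errors from $e$ and the off-diagonal entries of $k$ do not spoil the leading $u$-behaviour of $g_2$ and $g_3$; here the inequality $v_u(b)\leq \lengthofv-2$ is crucial.
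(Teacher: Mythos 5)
Your proof is correct, but for part (ii) it takes a genuinely different route from the paper's. The paper passes to the finite quotient $\fJ/\varpi^{\lengthofv}\fJ$, subtracts the scalar $\tfrac12(g_1+g_4)$ to reduce to a trace-zero element, and then exhibits the conjugacy by conjugating the standard element $\matzz{}{b}{tb}{}$ with the explicit coset representatives $r_{y,\lambda}$ of $U_EU_{\fJ}^{\lengthofv}$ in $U_{\fJ}$ (Lemma \ref{lm:rep_system_for_UJ_UEUJell}) and solving the resulting congruence system in $(y,\lambda,b_0)$ by Hensel's lemma; the converse is left as ``an immediate computation''. You instead conjugate $g$ itself, integrally and in three explicit steps (a diagonal twist normalizing $g_3=tg_2$, a lower-unipotent twist killing $g_1-g_4$, a second diagonal twist restoring $g_3=tg_2$), landing exactly in $\iota(U_E)$ rather than only in $U_EU_{\fJ}^{\lengthofv}$ modulo level — a slightly stronger conclusion — and your necessity direction is a transparent valuation computation on the conjugation formula, which the paper does not spell out; part (i) is essentially the paper's ``easy computation''. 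Both arguments rest on the same inputs: Hensel square roots in residue characteristic $\neq 2$ and the evenness of $u$-valuations of entries in $F$ (note that since $g_3/(tg_2)$ is a unit of $k\llbracket t\rrbracket$, being a square in $k\llbracket u\rrbracket^{\times}$ is indeed equivalent to being a square in $U_F$, so your $c$ exists). Two small points to tighten: the assertion that $h\notin U_FU_{\fJ}^{\lengthofv}$ is ``forced'' uses that $U_FU_{\fJ}^{\lengthofv}$ is normal in $U_{\fJ}$, which follows from $\varpi^{\lengthofv}\fJ$ being a two-sided ideal of $\fJ$ (a fact the paper records) and should be stated; and the error in $\tilde g_2$ coming from $e\in U_{\fJ}^{\lengthofv}$ has $u$-valuation only $\geq 2\lfloor\lengthofv/2\rfloor\geq\lengthofv-1$, not $\geq\lengthofv$ when $\lengthofv$ is odd — still strictly larger than $v_u(b)\leq\lengthofv-2$, so your leading-term extraction is unaffected.
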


\begin{proof}
(i): is an easy computation (use that $v_u(g_j)$ is always even). (ii): In the $\caO_F$-algebra $\fJ$ the subset $\varpi^{\lengthofv} \fJ$ form a two-sided ideal and $U_{\fJ}/U_{\fJ}^{\lengthofv} = (\fJ/\varpi^{\lengthofv}\fJ)^{\times}$. Assume $g \in U_{\fJ} \sm U_F U_{\fJ}^{\lengthofv}$ and $v_u(g_3) = v_u(g_2) + 2 < \lengthofv + 1$, $v_u(g_3) \leq v_u(g_1 - g_4)$ and $\frac{g_3}{tg_2} \in k\llbracket t \rrbracket^{\times}$ is a square of an element in $k\llbracket u \rrbracket^{\times}$. We replace $U_{\fJ}$ (resp. $\fJ$) by $U_{\fJ}/U_{\fJ}^{\lengthofv}$ (resp. $\fJ/\varpi^{\lengthofv}\fJ$) and $g$ by its image there. We show that $g$ is conjugate to an element of $U_E/U_E^{\lengthofv} = U_E/U_E \cap U_{\fJ}^{\lengthofv}$. Replace $g$ by the difference of $g$ and the scalar matrix with entries $\frac{1}{2}(g_1 + g_4)$. Thus we can assume that $g$ has trace zero and we must show that there is some $b \in \caO_F$ such that $g$ is conjugate in $\fJ/\varpi^{\lengthofv}\fJ$ to the image of $\matzz{}{b}{tb}{}$. Consider $r_{y,\lambda}$ from Lemma \ref{lm:rep_system_for_UJ_UEUJell}. Note that 

\[ r_{y,\lambda} \matzz{}{b}{tb}{} r_{y,\lambda}^{-1} = \matzz{b\lambda t}{by^{-1}(1-\lambda^2 t)}{byt}{-b\lambda t} \]

\noindent By our assumptions we can write $g_2 = t^{\alpha}g_2^{\prime}$, $g_3 = t^{\alpha + 1}g_3^{\prime}$, $g_1 = - g_4 = t^{\alpha+1}g_1^{\prime}$ with $\alpha + 1 \leq \lfloor \frac{\lengthofv}{2} \rfloor$ and $g_2^{\prime},g_3^{\prime} \in k \llbracket t \rrbracket^{\times}$. Thus we can conclude, if we find appropriate $y \in U_F/U_F^{\lfloor \frac{\lengthofv+1}{2} \rfloor }$, $\lambda \in \caO_F/\caO_F^{\lfloor \frac{\lengthofv}{2} \rfloor}$ and $b = b_0 t^{\alpha}\in \caO_F$ with $b_0 \in U_F$ such that
\begin{eqnarray}
\nonumber b_0\lambda &\equiv& g_1^{\prime} \mod t^{\lfloor \frac{\lengthofv + 1}{2} \rfloor - (\alpha + 1)} \\
\label{eq:irgendwelche_hilfsgleichungen} b_0y &\equiv& g_3^{\prime}  \mod t^{\lfloor \frac{\lengthofv}{2}\rfloor - \alpha} \\ 
\nonumber b_0y^{-1}(1- \lambda^2 t) &\equiv& g_2^{\prime} \mod t^{\lfloor \frac{\lengthofv}{2} \rfloor - \alpha}
\end{eqnarray}

\noindent Using the first and the second equations to eliminate $b_0$ and $\lambda$, the only remaining equation is 
\[ y^2 \equiv g_3^{\prime}g_2^{\prime,-1} (1 - g_1^{\prime,2}g_3^{\prime,-2}y^2 t ) \mod t^{\lfloor \frac{\lengthofv}{2} \rfloor - \alpha} \]
This equation has a solution in $y$ by Hensel's lemma and our assumption on $\frac{g_3}{tg_2}$. The other direction in (ii) is an immediate computation.\qedhere
\end{proof}


\subsection{Traces of unipotent elements}\label{sec:computations_of_unipotent_traces} \mbox{}

In Sections \ref{sec:computations_of_unipotent_traces}-\ref{sec:trace_on_induced_side} we assume $m = 2\lengthofv - 1$.

\begin{proof}[Proof of Lemma \ref{lm:UJm1_acts_trivial}]

We use notations of Proposition \ref{lm:our_form_of_Boy_trace_formula}. Let $g \in U_{\fJ}^{m+1}$. Thus $v_u(g_1 - 1),v_u(g_2),v_u(g_4 - 1) \geq 2\lengthofv = m+1$ and $v_u(g_3) \geq m+3$. This, Proposition \ref{lm:our_form_of_Boy_trace_formula} and Lemma \ref{sublm:hga_comp_for_UJm1}(i) show that $\sharp S_{g,i_1} = 0$ for $i_1 \in U_E/U_E^{m+1} \sm \{1\}$. Lemma \ref{lm:on_equation_for_a} implies $\sharp S_{g,1} = (q-1)q^{\lengthofv - 1}$. Proposition \ref{lm:our_form_of_Boy_trace_formula} shows $\tr(g;\Xi_{\chi}) = (q-1)q^{\lengthofv - 1}$. \qedhere

%
%
\end{proof}

\begin{proof}[Proof of Lemma \ref{eq:claim_for_unip_traces}]
We use notations from Proposition \ref{lm:our_form_of_Boy_trace_formula}. The case $g = 1$ of Lemma \ref{eq:claim_for_unip_traces} follows from Lemma \ref{lm:UJm1_acts_trivial}. Write $\delta := \lfloor \frac{\lengthofv + 1}{2} \rfloor - \lfloor \frac{\lengthofv}{2} \rfloor$. For $0 \leq  \alpha \leq \lfloor \frac{\lengthofv + 1}{2} \rfloor - 1$ consider the subgroup

\[A_{\alpha} := \{1 + u^{(\lengthofv - \delta) + 2\alpha + 1} y \colon y \text{ is $\tau$-invariant} \} \]

\noindent of $U_E^{(\lengthofv - \delta) + 2\alpha + 1}/U_E^{m+1}$, and let $A_{\lfloor \frac{\lengthofv + 1}{2} \rfloor } := \{ 1 \} \subseteq U_E/U_E^{m+1}$.

\begin{lm}\label{lm:sizes_of_sets_S_unip_traces}
Let $g \in N_{\lengthofv} \sm \{1\}$. If $g \not\in N_{\lengthofv}^{\lfloor \frac{\lengthofv}{2} \rfloor + 1}$, then $S_{g,i_1} = \emptyset$ for all $i_1 \in U_E/U_E^{m+1}$. Otherwise, let $g \in N_{\lengthofv}^{\lfloor \frac{\lengthofv}{2} \rfloor + 1 + \alpha} \sm N_{\lengthofv}^{\lfloor \frac{\lengthofv}{2} \rfloor + 2 + \alpha}$ for some $0 \leq  \alpha \leq \lfloor \frac{\lengthofv + 1}{2} \rfloor - 1$ and $i_1 \in U_E/U_E^{m+1}$. Then 

\[ \sharp S_{g,i_1} = \begin{cases} c(\alpha) & \text{if $i_1 \in A_{\alpha} \sm A_{\alpha + 1}$} \\ 0 & \text{otherwise}, \end{cases} \]

\noindent where $c(\alpha)$ depends only on $\alpha$, not on $i_1$. Moreover, $c(\lfloor \frac{\lengthofv + 1}{2} \rfloor - 1) = q^{\lengthofv - 1}$.
\end{lm}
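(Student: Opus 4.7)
The plan is to apply the trace formula of Proposition \ref{lm:our_form_of_Boy_trace_formula} to $g = \matzz{1}{0}{g_3}{1}$ with $g_3 \in \fp_F$, reducing the count $\sharp S_{g,i_1}$ to counting solutions $a$ of equations \eqref{eq:Gl_1_v1} and \eqref{eq:Gl_2_v1}.

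First I would treat \eqref{eq:Gl_1_v1}, which for such unipotent $g$ collapses to $g_3 \equiv 0 \mod u^{\lengthofv + 1}$. Since $g_3 \in \fp_F$ forces $v_u(g_3)$ to be even, this is equivalent to $v_t(g_3) \geq \lfloor \lengthofv/2 \rfloor + 1$, i.e. $g \in N_\lengthofv^{\lfloor \lengthofv/2 \rfloor + 1}$, giving the first assertion of the lemma. When the condition holds, every admissible $a$ satisfies \eqref{eq:Gl_1_v1}, so the count is driven entirely by \eqref{eq:Gl_2_v1}.

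Next, a direct computation with Proposition \ref{prop:left_I_action_mega_formula} yields $h(g,a) = u^{-(\lengthofv + 1)} g_3$, independent of $a$. Setting $H := u^{\lengthofv} h(g,a) R^{-1} = u^{-1} g_3 R^{-1}$, one has $\tau(H) = -H$, since $g_3$ and $R$ are $\tau$-invariant while $\tau(u^{-1}) = -u^{-1}$. Equation \eqref{eq:Gl_2_v1} then becomes $\tau(i_1)(1 + H) \equiv 1 \mod u^{m+1}$, so $i_1 \equiv (1 - H)^{-1} \equiv 1 + H \mod u^{m+1}$ (using $v_u(H) \geq \lengthofv$, whence $H^2 \equiv 0 \mod u^{m+1}$); the same estimate forces the determinant condition $i_1\tau(i_1) \equiv (1 - H^2)^{-1} \equiv 1 \mod u^{m+1}$ to hold automatically.

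For $g_3 \in \fp_F^{\lfloor \lengthofv/2 \rfloor + 1 + \alpha} \setminus \fp_F^{\lfloor \lengthofv/2 \rfloor + 2 + \alpha}$, a short valuation check gives $v_u(H) = (\lengthofv - \delta) + 2\alpha + 1$, placing $1 + H(a)$ in $A_\alpha \setminus A_{\alpha + 1}$, since its leading coefficient is $-g_{3, v_u(g_3)}/(2a_1) \in k^\times$. The heart of the argument is the bijectivity of the map $(a_1, a_3, \ldots, a_{2J_{\max} - 1}) \mapsto 1 + H(a)$ from $k^\times \times k^{J_{\max} - 1}$ onto $A_\alpha \setminus A_{\alpha + 1}$, where $J_{\max} := \lfloor (\lengthofv + \delta)/2 \rfloor - \alpha$ is the number of coefficients of $R$ surviving modulo $u^{\lengthofv - v}$ (with $v = 2\alpha + 1 - \delta$). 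Inversion gives $R \equiv g_3 u^{-1}(i_1 - 1)^{-1} \mod u^{\lengthofv - v}$, and the $\tau$-invariance of this prescribed value of $R$ reduces exactly to $\tau(i_1 - 1) = -(i_1 - 1)$, the defining property of $A_\alpha$. Combined with the freedom of the remaining $\lengthofv - J_{\max} = \lfloor \lengthofv/2 \rfloor + \alpha$ coefficients of $a$ (all even-indexed or high odd-indexed), this yields $\sharp S_{g, i_1} = c(\alpha) = q^{\lfloor \lengthofv/2 \rfloor + \alpha}$, independent of $i_1 \in A_\alpha \setminus A_{\alpha + 1}$; specialising to $\alpha = \lfloor (\lengthofv + 1)/2 \rfloor - 1$ gives $q^{\lengthofv - 1}$ as asserted.

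The main obstacle is the parity bookkeeping: whether $\lengthofv$ is even or odd enters (through $\delta$) both in the valuation of $g_3$ and in the filtration index defining $A_\bullet$, and in each case one must verify that the number of free odd-indexed coefficients of $a$ matches the $k$-rank of $A_\alpha \setminus A_{\alpha + 1}$, so that the counting map is bijective rather than merely surjective.
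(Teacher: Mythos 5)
Your proposal is correct and follows essentially the same route as the paper: apply Proposition \ref{lm:our_form_of_Boy_trace_formula}, observe that for a unipotent lift equation \eqref{eq:Gl_1_v1} reads $g_3 \equiv 0 \bmod u^{\lengthofv+1}$ (empty or vacuous according to $v_t(g_3)$), compute $h(g,a)=u^{-(\lengthofv+1)}g_3$, and reduce \eqref{eq:Gl_2_v1} to $i_1 \equiv 1 + u^{(\lengthofv-\delta)+2\alpha+1}xR^{-1}$ with $R=-2(a_1+a_3u^2+\cdots)$, counting solutions via the odd-indexed coefficients of $a$. You in fact prove slightly more than the paper states, namely the explicit value $c(\alpha)=q^{\lfloor \lengthofv/2\rfloor+\alpha}$ via the bijectivity of the counting map, which correctly specializes to $q^{\lengthofv-1}$ in the top case.
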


\begin{proof}
If $g \in N_{\lengthofv} \sm N_{\lengthofv}^{\lfloor \frac{\lengthofv}{2} \rfloor + 1}$, then $g$ is not conjugate to an element of $U_EU_{\fJ}^{\lengthofv}$ by Lemma \ref{lm:charac_of_conjugacy_to_nonsplit_ram_torus}, so $S_{g,i_1} = \emptyset$ for all $i_1 \in U_E/U_E^{m+1}$ by Lemma \ref{lm:on_equation_for_a}, and the first statement of the lemma follows from Proposition \ref{lm:our_form_of_Boy_trace_formula} (alternatively, look at equation \eqref{eq:Gl_1_v1} for $g$). Let $g \in N_{\lengthofv}^{\lfloor \frac{\lengthofv}{2} \rfloor + 1 + \alpha} \sm N_{\lengthofv}^{\lfloor \frac{\lengthofv}{2} \rfloor + 2 + \alpha}$ for some $0 \leq  \alpha \leq \lfloor \frac{\lengthofv + 1}{2} \rfloor - 1$ and $i_1 \in U_E/U_E^{m+1}$. Write $g = \matzz{1}{}{t^{\lfloor \frac{\lengthofv}{2}\rfloor + 1 + \alpha }x}{1}$ with $v_t(x) = 0$. Then equation \eqref{eq:Gl_1_v1} is trivially satisfied for each $a$ and equation \eqref{eq:Gl_2_v1} takes the form

\begin{equation} \label{eq:Gl_1_unip_version} 
i_1 \equiv 1 + u^{(\lengthofv - \delta) + 2\alpha + 1} x R^{-1} \mod u^{m+1}
\end{equation}

\noindent (one easily computes $h(g,a) = u^{2\alpha + 1 - \delta}x$). Write $a = \sum_{i=1}^{\lengthofv} a_i u^i$. Then $R = u^{-1}(\tau(a) - a) = -2(a_1 + a_3 u^2 + \dots)$ and $x$ are $\tau$-invariant and we have $v_t(R) = v_t(x) = 0$. Hence $S_{g,i_1} = \emptyset$ unless $i_1 \in A_{\alpha} \sm A_{\alpha + 1}$. On the other hand, from the explicit form of $R$, it is clear that for any $i_1 \in A_{\alpha} \sm A_{\alpha + 1}$ the set $S_{g,i_1}$ of solutions $a$ of \eqref{eq:Gl_1_unip_version} has the same cardinality. The second statement of the lemma follows. To see the last statement, put $\alpha = \lfloor \frac{\lengthofv + 1}{2} \rfloor - 1$. Then $(\lengthofv - \delta) + 2\alpha + 1 = 2\lengthofv - 1 = m$ and for a fixed $i_1 \in A_{\lfloor \frac{\lengthofv + 1}{2} \rfloor - 1} \sm A_{\lfloor \frac{\lengthofv + 1}{2} \rfloor} = U_E^m/U_E^{m+1} \sm \{1\}$ equation \eqref{eq:Gl_1_unip_version} amounts to a condition on $R \mod u$, or, which is the same, on $a_1$. It determines $a_1$ uniquely and $a_2, a_3, \dots, a_{\lengthofv}$ can be chosen arbitrarily. Thus \eqref{eq:Gl_1_unip_version} has exactly $q^{\lengthofv - 1}$ solutions.
\end{proof}

Now we can finish the proof of the Lemma \ref{eq:claim_for_unip_traces}. Let $g \in N_{\lengthofv} \sm N_{\lengthofv}^{\lengthofv}$. If $g \not\in N_{\lengthofv}^{\lfloor \frac{\lengthofv}{2} \rfloor + 1}$, then Proposition \ref{lm:our_form_of_Boy_trace_formula} and the first statement in Lemma \ref{lm:sizes_of_sets_S_unip_traces} immediately show $\tr(g;\Xi_{\chi}) = 0$. Otherwise, there is some $\alpha$ with $0 \leq \alpha < \lfloor \frac{\lengthofv + 1}{2} \rfloor - 1$, such that $g \in N_{\lengthofv}^{\lfloor \frac{\lengthofv}{2} \rfloor + 1 + \alpha} \sm N_{\lengthofv}^{\lfloor \frac{\lengthofv}{2} \rfloor + 2 + \alpha}$, and we deduce from Proposition \ref{lm:our_form_of_Boy_trace_formula} and Lemma \ref{lm:sizes_of_sets_S_unip_traces}

\[ \tr(g;\Xi_{\chi}) = \sum_{i_1 \in A_{\alpha} \sm A_{\alpha + 1}} c(\alpha) \chi(i_1) = c(\alpha) \sum_{i_1 \in A_{\alpha}} \chi(i_1) - c(\alpha) \sum_{i_1 \in A_{\alpha + 1}}  \chi(i_1) = 0, \]

\noindent as $A_{\alpha}$, $A_{\alpha + 1}$ both are subgroups containing $U_E^m/U_E^{m+1}$ and $\chi$ is a non-trivial character on $U_E^m/U_E^{m+1}$. 
Now assume $g \in N_{\lengthofv}^{\lengthofv} \sm \{ 1 \}$. This corresponds to $\alpha = \lfloor \frac{\lengthofv + 1}{2} \rfloor - 1$ and $A_{\lfloor \frac{\lengthofv + 1}{2} \rfloor - 1} = U_E^m/U_E^{m+1}$. By Proposition \ref{lm:our_form_of_Boy_trace_formula} and Lemma \ref{lm:sizes_of_sets_S_unip_traces} we compute

\[ \tr(g;\Xi_{\chi}) = \sum_{i_1 \in U_E^m/U_E^{m+1} \sm \{1\}} q^{\lengthofv - 1} \chi(i_1) = - q^{\lengthofv - 1}, \]

\noindent as $\chi$ is non-trivial on $U_E^m/U_E^{m+1}$. This finishes the proof of Lemma \ref{eq:claim_for_unip_traces}.
\end{proof}


\subsection{Traces of some non-split elements} \mbox{}

\begin{prop}\label{prop:non_split_traces}
Let $0 \leq \alpha \leq \lengthofv - 1$. Let $g = \iota(1 + u^{2\alpha + 1}h)$ for some $h \in U_F$. Then 
\begin{equation} \label{eq:traces_of_non_split_elements_in_prop}
\tr(g;\Xi_{\chi}) = q^{\alpha}(\chi(g) + \chi^{\tau}(g)) + 2q^{\alpha} \cdot \sum_{ \substack{i_1 = 1 + u^{2\alpha + 1}hs \\ s \in Q_{\alpha} }} \chi(i_1), 
\end{equation}
\noindent with $Q_{\alpha}$ as in Section \ref{sec:quadratic_distance}.
\end{prop}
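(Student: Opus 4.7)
The plan is to apply the trace formula Proposition~\ref{lm:our_form_of_Boy_trace_formula} and match its right-hand side to the asserted expression. For $g = \iota(1 + u^{2\alpha+1}h)$ one computes $g_1 = g_4 = 1$, $g_2 = u^{2\alpha}h$, $g_3 = u^{2\alpha+2}h$ and $\det(g) = 1 - t^{2\alpha+1}h^2$. Writing $a = ua'$, equation~\eqref{eq:Gl_1_v1} reduces to $a^{\prime,2} \equiv 1 \mod u^{\lengthofv - 1 - 2\alpha}$, which is vacuous in Case 1 ($\alpha \geq \lfloor \lengthofv/2 \rfloor$) and forces $a' \equiv \pm 1 \mod u^{\lengthofv - 1 - 2\alpha}$ in Case 2 ($\alpha < \lfloor \lengthofv/2 \rfloor$).

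Plugging the explicit formulas for $h(g,a)$ from Lemma~\ref{sublm:hga_comp_for_UJm1}(ii)/(iii) into equation~\eqref{eq:Gl_2_v1}, and using $R = -2p$ where $a' = p + q$ is the decomposition into even-power part $p = a_1 + a_3 u^2 + \dots$ and odd-power part $q = a_2 u + a_4 u^3 + \dots$, a direct simplification gives
\[ \tau(i_1) \equiv 1 + u^{2\alpha+1}h\,\frac{p^2 + 1 - q^2}{2p} \mod u^{m+1}. \]
Combining with the determinant condition $i_1\tau(i_1) \equiv \det(g) \mod u^{m+1}$ and the $\tau$-structure forces $i_1 = 1 + u^{2\alpha+1}hS$ for a unique $S \in R_{\alpha}^{\langle\tau\rangle, \prime}$ (the refinement in Case 2 arising from the $a' \equiv \pm 1$ constraint); equivalently, the system \eqref{eq:Gl_1_v1}--\eqref{eq:Gl_2_v1} becomes the polynomial identity
\begin{equation}\label{eq:keyeq_proofplan}
q^2 \equiv (p + S)^2 + (1 - S^2) \mod u^{m-2\alpha}
\end{equation}
together with $a_1 \neq 0$ (plus the Case 2 side constraint on $a'$). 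By Proposition~\ref{lm:description_of_special_characters}(iii), $\chi(i_1) = \chi_S(g)$, so the trace formula becomes $\tr(g;\Xi_\chi) = \sum_{S \in R_{\alpha}^{\langle\tau\rangle,\prime}} N(S)\,\chi_S(g)$, where $N(S)$ counts pairs $(p,q)$ obeying \eqref{eq:keyeq_proofplan} and the side constraints.

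It then remains to establish $N(\pm 1) = q^{\alpha}$, $N(s) = 2q^{\alpha}$ for $s \in Q_\alpha$, and $N(s) = 0$ for $s \in R_\alpha^{\langle\tau\rangle,\prime} \setminus (\{\pm 1\} \cup Q_\alpha)$. For $S = \pm 1$, the identity \eqref{eq:keyeq_proofplan} becomes $q^2 \equiv (p \pm 1)^2$; looking at the constant term forces $a_1 = \mp 1$, after which the factorization $(q - (p \pm 1))(q + (p \pm 1)) \equiv 0$ together with the $\tau$-symmetry of the factors reduces the condition to $(p \pm 1) + q \equiv 0 \mod u^{\lengthofv - \alpha}$; this determines $a_2, \dots, a_{\lengthofv - \alpha}$ and leaves the $\alpha$ coefficients $a_{\lengthofv - \alpha + 1}, \dots, a_{\lengthofv}$ free, giving $q^{\alpha}$ solutions. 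For $S \notin \{\pm 1\}$, rewriting \eqref{eq:keyeq_proofplan} as $(q - (p + S))(q + (p + S)) \equiv 1 - S^2 \mod u^{m-2\alpha}$ and noting the two factors are $\tau$-conjugate, the existence of a solution is equivalent to $1 - S^2 = 1 - S\tau(S)$ lying in the image of $\N_{\tau,\alpha}$, which by Lemma~\ref{lm:image_of_norm_on_Ralpha} is precisely $S \in Q_\alpha$; when $S \in Q_\alpha$, a Hensel-type lift (coefficient by coefficient in $t = u^2$) yields exactly $2q^{\alpha}$ solutions, the factor $2$ reflecting the sign ambiguity in the $\tau$-factorization.

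The main obstacle I anticipate is the final counting step, particularly the compatibility between the two cases: in Case 2 the constraint $a' \equiv \pm 1 \mod u^{\lengthofv - 1 - 2\alpha}$ simultaneously restricts the $(p,q)$-space and the target space $R_\alpha^{\langle\tau\rangle,\prime}$, and the sign matching between the branches $a' \equiv \pm 1$ and the refinement $S \equiv \pm 1$ must be tracked carefully to avoid double-counting or missing solutions. A reassuring sanity check is that $2q^\alpha(1 + |Q_\alpha|)$ must equal the total number of $a$ satisfying \eqref{eq:Gl_1_v1}, namely $(q-1)q^{\lengthofv - 1}$ in Case 1 and $2q^{2\alpha+1}$ in Case 2; both identities can be verified inductively from Lemma~\ref{lm:study_of_Qalpha12_sizes_and_maps}, which provides an independent cross-check on the individual multiplicities. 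Once $N(S)$ is determined, substituting into the trace formula and recognizing $\chi_1 = \chi$, $\chi_{-1} = \chi^\tau$ yields the desired identity.
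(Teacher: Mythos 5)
Your plan is essentially the paper's own proof: the proposition is deduced there from Lemma \ref{lm:traces_of_non_split_elements}, which computes $\sharp S_{g,i_1}$ via the trace formula of Proposition \ref{lm:our_form_of_Boy_trace_formula} by exactly this analysis of \eqref{eq:Gl_1_v1}--\eqref{eq:Gl_2_v1} — forcing $i_1=1+u^{2\alpha+1}hs$ with $s\in R_{\alpha}^{\langle\tau\rangle,\prime}$, interpreting solvability through the norm $\N_{\tau,\alpha}$, and counting $q^{\alpha}$ solutions for $s=\pm1$, $2q^{\alpha}$ for $s\in Q_{\alpha}$ and none otherwise (your even/odd splitting $a'=p+q$ is just a repackaging of its case analysis, and the deferred Hensel-type count and Case-2 sign bookkeeping are exactly what the paper carries out coefficient by coefficient). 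One detail to fix when writing it up: the $\tau$-norm of your factor is $(p+S)^2-q^2\equiv S^2-1$, so solvability is equivalent to $S^2-1\in\im(\N_{\tau,\alpha})$ — matching the definition of $Q_{\alpha}$ — and not to $1-S^2$ being a norm; the two are not interchangeable since $\im(\N_{\tau,\alpha})$ need not be stable under multiplication by $-1$.
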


\begin{proof}
This follows immediately from Proposition \ref{lm:our_form_of_Boy_trace_formula} and Lemma \ref{lm:traces_of_non_split_elements}.
\end{proof}

\begin{lm}\label{lm:traces_of_non_split_elements}
Let $\alpha$, $g$, $h$ be as in Proposition \ref{prop:non_split_traces}. Then $S_{g,i_1} = \emptyset$, unless $i_1 = 1 + u^{2\alpha + 1}hs$ for some $s \in R_{\alpha}$. Assume this holds. Then
\[ S_{g,i_1} = \begin{cases} 2q^{\alpha} & \text{ if } s \in Q_{\alpha} \\ q^{\alpha} & \text{ if } s = \pm 1 \\ 0 & \text{otherwise.} \end{cases} \]
\end{lm}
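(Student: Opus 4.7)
\begin{pro}[Proposal]
The plan is to unwind equations \eqref{eq:Gl_1_v1} and \eqref{eq:Gl_2_v1} for the specific element $g = 1 + h\varpi^{2\alpha+1}$ and count solutions explicitly. First I compute $g$ as a matrix: since $\varpi^2 = tI$ we have $\varpi^{2\alpha+1} = t^{\alpha}\varpi$, so
\[ g = \matzz{1}{t^{\alpha}h}{t^{\alpha+1}h}{1}, \qquad g_1 = g_4 = 1,\ g_2 = u^{2\alpha}h,\ g_3 = u^{2\alpha+2}h, \]
and $\det(g) = 1 - u^{2(2\alpha+1)}h^2$ is $\tau$-invariant.

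Since $g_1 - g_4 = 0$, equation \eqref{eq:Gl_1_v1} collapses to $u^{2\alpha}h(a^2 - u^2) \equiv 0 \mod u^{\lengthofv+1}$, i.e., writing $a = ua'$ with $a' \in k[u]/u^{\lengthofv}$ a unit, to $a'^2 \equiv 1 \mod u^{\max(0,\lengthofv-1-2\alpha)}$. I split into two cases according to the value of $\alpha$:

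\emph{Case A: $\alpha \geq \lfloor \lengthofv/2\rfloor$.} Here equation \eqref{eq:Gl_1_v1} imposes no constraint, and $a'$ ranges freely over units in $k[u]/u^{\lengthofv}$. By Lemma \ref{sublm:hga_comp_for_UJm1}(ii), $h(g,a) = u^{2\alpha+1-\lengthofv}h(1-a'^2)(1-u^{2\alpha+1}ha')$. Substituting into \eqref{eq:Gl_2_v1}, and using $R = -2a'(1 + \cO(u^2))$, I obtain after a short manipulation an equation of the shape $\tau(i_1) \equiv 1 + u^{2\alpha+1}hs' \mod u^{m+1}$ with $s'$ an explicit expression in $a'$ lying in $R_{\alpha}$. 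The crux is to invert this map $a' \mapsto s'$: for each admissible $s$, the set of $a'$ producing $i_1 = 1 + u^{2\alpha+1}hs$ has a prescribed cardinality.

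\emph{Case B: $\alpha < \lfloor \lengthofv/2\rfloor$.} Now the solutions of \eqref{eq:Gl_1_v1} split into two branches $a' = \pm 1 + u^{\lengthofv - 2\alpha - 1}b$ with $b$ free in $L^{\leq 2\alpha}_{[0,m+2\alpha+1]}\bG_a(k)$. Lemma \ref{sublm:hga_comp_for_UJm1}(iii) gives $h(g,a)$ on each branch; substitution into \eqref{eq:Gl_2_v1} yields an equation for $\tau(i_1)$ in terms of $b$ that, modulo higher powers of $u$, selects $i_1 \equiv 1 + u^{2\alpha+1}h \cdot(\pm 1) \mod u^{4\alpha+3}$, matching the defining congruence of $\chi_s$ from Proposition \ref{lm:description_of_special_characters}. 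This already forces $i_1 = 1 + u^{2\alpha+1}hs$ with $s \equiv \pm 1 \mod u$ (respectively $s \equiv \mp 1$, after using $\tau(i_1)$).

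In both cases the counting is then uniform. For each branch ($a' \equiv +1$ or $a' \equiv -1$), the parameters in $b$ (or the free tail of $a'$) that remain after imposing \eqref{eq:Gl_2_v1} form an affine space over $k$ of dimension $\alpha$, giving $q^{\alpha}$ solutions per branch. The pure contribution $s = \pm 1$ comes from exactly one branch (the branch on which the leading relation is automatic), hence $q^\alpha$. For $s \neq \pm 1$, the obstruction for both branches to realize $s$ simultaneously is the solvability of a quadratic equation of the form $s\tau(s) - 1 \in \N_{\tau,\alpha}R_\alpha$, which is exactly the condition $s \in Q_\alpha$ from Section \ref{sec:quadratic_distance}; when it holds, both branches contribute $q^\alpha$, yielding $2q^\alpha$, and otherwise neither does, yielding $0$.

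The main obstacle will be the bookkeeping in step (Case B), in particular verifying that the quadratic condition extracted from \eqref{eq:Gl_2_v1} coincides \emph{on the nose} with membership in $Q_\alpha$. This identification relies on expanding $R^{-1}$ and $(1-u^{2\alpha+1}ha')^{-1}$ to sufficient order and matching the resulting norm-form with $s\tau(s)$; since the previous proofs (Lemmas \ref{lm:UJm1_acts_trivial}, \ref{eq:claim_for_unip_traces}) already showed that such trace computations reduce to equations of this style modulo higher-order terms, I expect the argument to go through once the correct representatives of $a'$ are fixed.
\end{pro}
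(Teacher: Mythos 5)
Your setup coincides with the paper's own proof: the explicit matrix $g=\matzz{1}{t^{\alpha}h}{t^{\alpha+1}h}{1}$, the reduction of \eqref{eq:Gl_1_v1} to $a^{\prime,2}\equiv 1 \mod u^{\lengthofv-1-2\alpha}$ (vacuous when $\alpha\geq\lfloor\lengthofv/2\rfloor$), the case split at $\alpha\gtrless\lfloor\lengthofv/2\rfloor$, and the substitution of Lemma \ref{sublm:hga_comp_for_UJm1}(ii)/(iii) into \eqref{eq:Gl_2_v1} are exactly the steps taken in Section \ref{sec:applications_of_the_trace_formula}. The problem is that the counting itself — which is the entire content of the lemma — is both deferred ("has a prescribed cardinality") and, where you do describe the mechanism, described incorrectly.

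Concretely, the factor $2$ for $s\in Q_{\alpha}$ does \emph{not} arise from the two branches $a^{\prime}\equiv +1$ and $a^{\prime}\equiv -1$ each contributing $q^{\alpha}$. In the regime $\alpha<\lfloor\lengthofv/2\rfloor$, where those branches exist, reducing \eqref{eq:Gl_2_v1} modulo $u^{m-2\alpha}$ on the branch $a^{\prime}=\pm 1+u^{\lengthofv-2\alpha-1}b$ forces $i_1\equiv 1\mp u^{2\alpha+1}h \mod u^{m-2\alpha}$, i.e.\ $s\equiv\mp 1 \mod u^{m-4\alpha-1}$; since every admissible $s$ lies in $R_{\alpha}^{\langle\tau\rangle,\prime}$ and $\charac k\neq 2$, a fixed $s$ is compatible with exactly \emph{one} branch, and the other branch contributes nothing. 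On the compatible branch the leading coefficient of $b$ satisfies a quadratic of the form $(-1)^{j}b_0^2\equiv \pm 2 s_0 \mod u$, whose solvability is precisely the condition $s\in Q_{\alpha}$ (Lemma \ref{lm:study_of_Qalpha12_sizes_and_maps}); it is the \emph{two square roots} of this quadratic that produce $2q^{\alpha}$, while $s=\pm 1$ degenerates to $b\tau(b)\equiv 0$ and gives $q^{\alpha}$. In the regime $\alpha\geq\lfloor\lengthofv/2\rfloor$ there are no branches at all: \eqref{eq:Gl_2_v1} becomes $s\equiv -(1+a^{\prime}\tau(a^{\prime}))/(a^{\prime}+\tau(a^{\prime}))$, necessity of $s\in Q_{\alpha}\cup\{\pm1\}$ comes from exhibiting $s^2-1$ as a $\N_{\tau,\alpha}$-norm, and the count requires a further subdivision according to $v_u(s\mp 1)$ (the paper's Cases 1--3), each time with the $2$ coming from the two roots of a quadratic in the leading coefficient of $a^{\prime}$ (or of $b$). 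As written, your plan would miscount — e.g.\ it predicts a $q^{\alpha}$-contribution from the branch $a^{\prime}\equiv+1$ to an $s\equiv+1$ class, which is empty — and it hides the place where $Q_{\alpha}$ actually enters; the fact that your totals agree with the statement is coincidental rather than a consequence of the argument. You also never use the determinant constraint $\det(g)\equiv i_1\tau(i_1)\mod u^{m+1}$ from Proposition \ref{lm:our_form_of_Boy_trace_formula}, which is what forces $s$ to be $\tau$-invariant in the first place.
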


\begin{proof}
Assume first $\alpha \geq \lfloor \frac{\lengthofv}{2} \rfloor$, or equivalently $2\alpha + 1 \geq \lengthofv$. In this case $\sharp S_{g,i_1}$ is equal to the number of solutions of \eqref{eq:Gl_2_v1} in the variable $a$. Using Lemma \ref{sublm:hga_comp_for_UJm1}(ii), we see that $S_{g,i_1} = \emptyset$, unless $i_1 = 1 + u^{2\alpha + 1}hs$ for some $s \in R_{\alpha}$. Assume this holds. As $h$ is $\tau$-invariant, it follows that the condition $\det(g) \equiv i_1 \tau(i_1) \mod u^{m+1}$ (necessary for the non-emptiness of $S_{g,i_1}$) is equivalent to $s \in R_{\alpha}^{\langle \tau \rangle}$. Thus we can assume that $i_1 = 1 + u^{2\alpha + 1}hs$ with $s \in R_{\alpha}^{\langle \tau \rangle}$. Then, using Lemma \ref{sublm:hga_comp_for_UJm1}(ii), \eqref{eq:Gl_2_v1} is seen to be equivalent to

\[ s \equiv (1 - a^{\prime,2})(1 - u^{2\alpha + 1}ha^{\prime})R^{-1} - a^{\prime} \mod u^{m - 2\alpha}, \]

\noindent where we write $a = ua^{\prime}$. By assumption $2\alpha + 1 \geq m - 2\alpha$, and moreover, $R = u^{-1}(\tau(a) - a) = -(\tau(a^{\prime}) + a^{\prime})$. Hence \eqref{eq:Gl_2_v1} is equivalent to

\begin{equation}\label{eq:Gl2incase_alphagreater_l2} 
s \equiv - \frac{1 + a^{\prime}\tau(a^{\prime})}{a^{\prime} + \tau(a^{\prime})} \mod u^{m - 2\alpha}.
\end{equation}
\noindent Assume this equation has a solution in $a^{\prime}$. Then we deduce 

\[ s\tau(s) - 1 = s^2 - 1 = \left(\frac{1 + a^{\prime}\tau(a^{\prime})}{a^{\prime} + \tau(a^{\prime})}\right)^2 - 1 = \N_{\tau, \alpha} \left( \frac{1 - a^{\prime,2}}{a^{\prime} + \tau(a^{\prime})} \right) \]

\noindent in $R_{\alpha}$. This shows that if $S_{g,i_1} \neq \emptyset$, then $s \in Q_{\alpha} \cup \{\pm 1\}$. Conversely, assume that $s \in Q_{\alpha} \cup \{\pm 1\}$. Write $a^{\prime} = a_1 + a_2u + \dots + a_{\lengthofv}u^{\lengthofv - 1}$. We differ between three cases. 

\textbf{Case 1.} $s \not\equiv \pm 1 \mod u$. Let $s_0 := s \mod u$. By Lemma \ref{lm:study_of_Qalpha12_sizes_and_maps}, $s_0^2 - 1$ is a square in $k^{\times}$. We can rearrange the equation \eqref{eq:Gl2incase_alphagreater_l2} and bring it to the form
\begin{eqnarray*} 
(1 + a_1^2) &+& (2a_1 a_3 - a_2^2)u^2 + \dots + (2\sum_{j = 0}^{i-1} (-1)^j a_{j+1} a_{2i - j + 1} + (-1)^i a_{i+1}^2) u^{2i} + \dots \\
  \dots &+& (2a_1 a_{2\lfloor \frac{\lengthofv+1}{2} \rfloor - 1} + \dots ) u^{2\lfloor \frac{\lengthofv+1}{2} \rfloor - 2} \equiv -2s(a_1 + a_3 u^2 + \dots + a_{2\lfloor \frac{\lengthofv + 1}{2} \rfloor - 1}u^{2\lfloor \frac{\lengthofv + 1}{2} \rfloor - 2} ) \mod u^{m - 2\alpha}. 
\end{eqnarray*}

\noindent Taking this equation modulo $u$, we obtain the equation  $a_1^2 + 2 s_0 a_1 + 1 = 0$ in $k$. It has precisely two different solutions in $a_1$ as $s_0^2 - 1$ is a square in $k^{\times}$. Note that both solutions satisfy $a_1 \neq - s_0$ due to $s_0 \neq \pm 1$. Taking the above equation iteratively modulo $u^3, \dots, u^{m-2\alpha}$ and using $a_1 \neq - s_0$, we see that there are exactly $q$ possibilities to choose any of the pairs $(a_2,a_3), \dots, (a_{m-2\alpha - 1}, a_{m-2\alpha})$ and we obtain $q$ possibilities for each of the remaining variables $a_{m-2\alpha}, \dots, a_{\lengthofv}$ (note that $m- 2\alpha \leq 2\lfloor \frac{\lengthofv + 1}{2} \rfloor - 1$). Altogether we obtain $2q^{\lengthofv - \alpha - 1} q^{\lengthofv - (m-2\alpha)} = 2q^{\alpha}$ solutions.

\textbf{Case 2.} $v_u(s + 1) = 2j$ or $v_u(s - 1) = 2j$ with $0 < 2j < m - 2\alpha$ (note that the $v_u(s \pm 1)$ has to be even, as $s$ is $\tau$-invariant). We assume $v_u(s - 1) = 2j$ (the other case is similar). Then we can write $s = 1 + u^{2j}s^{\prime}$ for some $\tau$-invariant unit $s^{\prime}$. Then \eqref{eq:Gl2incase_alphagreater_l2} is equivalent to 

\[ (1 + a^{\prime})(1 + \tau(a^{\prime})) \equiv -u^{2j} s^{\prime} (a^{\prime} + \tau(a^{\prime})) \mod u^{m - 2\alpha}, \]

\noindent and we deduce that a solution $a^{\prime}$ must satisfy $v_u(1 + a^{\prime}) = j$ (as $s^{\prime}, a^{\prime} + \tau(a^{\prime})$ are necessarily units and $v_u(1 + a^{\prime}) = v_u(1 + \tau(a^{\prime}))$). Set $a^{\prime} = - 1 + u^j b$ with some $b = \sum_{i = 0}^{\lengthofv - j - 1} b_i u^i \in (k[u]/u^{\lengthofv - j})^{\times}$. The number of solutions of \eqref{eq:Gl2incase_alphagreater_l2} in $a^{\prime}$ is equal to the number of solutions of 

\begin{equation} \label{eq:version_of_Gl_2_in_some_interm_case}
(-1)^j b\tau(b) \equiv s^{\prime}(2 - u^j(b + (-1)^j \tau(b) )) \mod u^{m -2\alpha - 2j}
\end{equation}

\noindent in the variable $b \in (k[u]/u^{\lengthofv - j})^{\times}$. Taking this equation modulo $u$ we get the equation $(-1)^j b_0^2 \equiv 2s^{\prime} \mod u$. As $s = 1 + u^{2j}s^{\prime} \in Q_{\alpha}$, Lemma \ref{lm:study_of_Qalpha12_sizes_and_maps} shows that $(-1)^j 2s^{\prime} \mod u$ is a square in $k^{\times}$, and thus this equation has exactly two solutions in $b_0$. Similarly as in case 1 above, taking \eqref{eq:version_of_Gl_2_in_some_interm_case} iteratively modulo $u^3, u^5, \dots, u^{m - 2\alpha - 2j}$, we get per step exactly one condition which determines $b_2, b_4, \dots, b_{m-2\alpha - 2j - 1}$ uniquely (note: the set of these conditions also can be empty). For each $b_i$ with $i \not\in \{0,2,4, \dots, m-2\alpha - 2j - 1\}$ there are $q$ possible choices. Thus the number of solutions of \eqref{eq:version_of_Gl_2_in_some_interm_case} in $b$ is equal to $2 q^{(\lengthofv - j - 1) - (\lengthofv - \alpha - j - 1)} = 2q^{\alpha}$.

\textbf{Case 3.} $s = \pm 1$. Assume $s = 1$ (the other case is similar). Then \eqref{eq:Gl2incase_alphagreater_l2} is equivalent to

\[(1 + a^{\prime})(1 + \tau(a^{\prime})) \equiv 0 \mod u^{m - 2\alpha}, \]

\noindent which in turn is equivalent to $v_u(1 + a^{\prime}) \geq \frac{m - 2\alpha + 1}{2} = \lengthofv - \alpha$. We easily deduce that the number of solutions of this equation in $a^{\prime}$ is equal to $q^{\alpha}$. This finishes the case $\alpha \geq \lfloor \frac{\lengthofv}{2} \rfloor$.

Assume now $0 \leq \alpha < \lfloor \frac{\lengthofv}{2} \rfloor$. Then $2\alpha + 1 < \lengthofv$. The quantity $\sharp S_{g,i_1}$ is equal to the number of solutions of \eqref{eq:Gl_1_v1} and \eqref{eq:Gl_2_v1} in $a$. We again write $a = u a^{\prime}$. Equation \eqref{eq:Gl_1_v1} is immediately seen to be equivalent to $a^{\prime} \equiv \pm 1 \mod u^{\lengthofv - 2\alpha - 1}$ and we write $a^{\prime} = \pm 1 + u^{\lengthofv - 2\alpha - 1}b$ for $b \in k[u]/u^{2\alpha + 1}$. We deduce 

\begin{equation} \label{eq:some_expression_for_R_in_low_level_case}
R \equiv - (a^{\prime} + \tau(a^{\prime})) \equiv \mp 2 - u^{\lengthofv - 2\alpha - 1} ( b + (-1)^{\lengthofv + 1}\tau(b) ) \mod u^{m+1}.  
\end{equation}

\noindent Let us denote the 'automorphic factor' $g_2 a + g_1$ by
\begin{equation} \label{eq:def_of_automorphic_factor_in_low_level_case}
f := g_2 a + g_1 = 1 \pm u^{2\alpha + 1} h + u^{\lengthofv}hb. 
\end{equation}

\noindent By Lemma \ref{sublm:hga_comp_for_UJm1}(iii), the quantity $ \sharp S_{g,i_1}$ is equal to the number of solutions in the variable $b \in k[u]/u^{2\alpha + 1}$ of the equation

\[
\tau(i_1)(1 + u^{\lengthofv} R^{-1} \frac{h(\mp 2b - u^{\lengthofv - 2\alpha - 1} b^2)}{f}) \equiv f \mod u^{m+1},
\]

\noindent or equivalently,

\begin{equation}\label{eq:form_of_Gl_2_in_low_level_case}
\tau(i_1) \equiv f - u^{\lengthofv} R^{-1} h(\mp 2b - u^{\lengthofv - 2\alpha - 1} b^2)   \mod u^{m+1}.
\end{equation}

\noindent Taking this equation modulo $u^{m - 2\alpha} = u^{2\lengthofv - 2\alpha - 1}$ and using \eqref{eq:some_expression_for_R_in_low_level_case} and \eqref{eq:def_of_automorphic_factor_in_low_level_case}, we deduce that $S_{g,i_1} = \emptyset$, unless $i_1 \equiv 1 \mp u^{2\alpha + 1}h \mod u^{m-2\alpha}$, or with other words, $i_1 = 1 + u^{2\alpha + 1} h s$ with $s \in R_{\alpha}$ satisfying $s \equiv \mp 1 \mod u^{m - 4\alpha - 1}$. Assume that this holds. An easy computation shows now that $\det(g) \equiv i_1 \tau(i_1) \mod u^{m+1}$ is equivalent to $s \in R_{\alpha}^{\langle \tau \rangle, \prime}$, so we also can assume this (otherwise, $S_{g,i_1} = \emptyset$). Let us write $s = \mp 1 + u^{m - 4\alpha - 1} \cdot (u^{2j}s_0)$, with $s_0 \in (k[u]/u^{2\alpha - 2j + 1})^{\times}$ $\tau$-invariant with $0 \leq j \leq \alpha + 1$ ($j = \alpha + 1$ corresponds to $s = \pm 1$). Straightforward rearrangements of terms show that  \eqref{eq:form_of_Gl_2_in_low_level_case} is equivalent to

\begin{equation}\label{eq:form_Gl_2_mit_s0_low_level_case}
(\mp 2 - (b + (-1)^{\lengthofv + 1}\tau(b))u^{\lengthofv - 2\alpha - 1} ) u^{2j} s_0 \equiv (-1)^{\lengthofv + 1} b\tau(b) \mod u^{2\alpha + 1}. 
\end{equation}

\noindent If $j = \alpha + 1$, then $s = \pm 1$ and \eqref{eq:form_Gl_2_mit_s0_low_level_case} is equivalent to $b\tau(b) \equiv 0 \mod u^{2\alpha + 1}$. This is equivalent to $b \equiv 0 \mod u^{\alpha + 1}$, and hence \eqref{eq:form_Gl_2_mit_s0_low_level_case} has precisely $q^{\alpha}$ solutions in $b$. Assume $j \leq \alpha$. A potential solution $b$ of \eqref{eq:form_Gl_2_mit_s0_low_level_case} must satisfy $b \equiv 0 \mod u^j$, hence we can write $b = u^j b^{\prime}$ for a $b^{\prime} \in k[u]/u^{2\alpha + 1 - j}$ and rewrite \eqref{eq:form_Gl_2_mit_s0_low_level_case} as 

\begin{equation}\label{eq:form_Gl_2_mit_s0_low_level_case_rewritten}
(\mp 2 - (b^{\prime} + (-1)^{\lengthofv + j + 1}\tau(b^{\prime}))u^{\lengthofv - 2\alpha - 1 + j} ) s_0 \equiv (-1)^{\lengthofv + j + 1} b^{\prime}\tau(b^{\prime}) \mod u^{2\alpha + 1 - 2j}. 
\end{equation}

\noindent Assume first $S_{g,i_1} \neq \emptyset$, i.e., \eqref{eq:form_Gl_2_mit_s0_low_level_case_rewritten} has at least one solution. Taking \eqref{eq:form_Gl_2_mit_s0_low_level_case_rewritten} modulo $u$, we deduce that $\pm (-1)^{\lengthofv + j} 2s_0 \mod u$ is a square in $k^{\times}$, which is by Lemma \ref{lm:study_of_Qalpha12_sizes_and_maps} equivalent to $s \in Q_{\alpha}$. Thus $S_{g,i_1} \neq \emptyset$ implies $s \in Q_{\alpha}$. Conversely, if $s \in Q_{\alpha}$, we can deduce that $\sharp S_{g,i_1} = 2q^{\alpha}$ in the same way as in the case $\alpha \geq \lfloor \frac{\lengthofv}{2} \rfloor$. \qedhere
\end{proof}

We are convinced that there must be a more elegant proof of Lemma \ref{lm:traces_of_non_split_elements}, but we still can not find it. 


\subsection{Traces of elements in $E^{\times}$ with $u$-valuation $1$}\mbox{} \label{sec:traces_of_elements_in_varpi_UJ}

\begin{proof}[Proof of Proposition \ref{prop:traces_of_val_1_elements}] Put $y_1 := e_0(u,-u)$. Consider the automorphism $\tilde{\beta}_g \colon \tilde{Y}_{\dot{w}}^m \rar \tilde{Y}_{\dot{w}}^m$ given by $\tilde{\beta}_g(\dot{x}I) = g \dot{x}y_1^{-1}I^m$. Then $\tilde{\beta}_g$ induces an automorphism of $\coh_c^0(\tilde{Y}_{\dot{w}}^m)$ and hence also an automorphism $\tilde{\beta}_g^{\ast} \colon V_{\chi} \rar V_{\chi}$ of its $\chi$-isotypic quotient. As $y_1$ acts in $V_{\chi}$ as the scalar multiplication\footnote{A subtlety: we suppressed our choice of an identification of $E^{\times}$ with the diagonal quotient of $\tilde{I}_{m,w}$, for which we silently have chosen that $u$ corresponds to $y_1$. This choice determines on the one hand that $y_1$ acts in $V_{\chi}$ by $\chi(u)$, and on the other hand, that we have to evaluate the trace formula using the identifications $\varpi \leftrightarrow u \leftrightarrow y_1 = e_0(u,-u)$.} with $\chi(u)$, we have $\tr(g; \Xi_{\chi}) = \chi(u) \tr(\tilde{\beta}_g^{\ast}; V_{\chi})$. We determine $\tr(\tilde{\beta}_g^{\ast}; V_{\chi})$. As $v_u(g) = 1$, Lemma \ref{lm:action_of_beta_varpi} shows that $gY_{\dot{w}}^m = Y_{\dot{w}}^m y_1$. With other words, $\tilde{\beta}_g$ restricts to an automorphism $\beta_g$ of $Y_{\dot{w}}^m$. Further, $\beta_g$ induces an automorphism $\beta_g^{\ast}$ of $\coh_c^0(Y_{\dot{w}}^m)[\chi|_{U_E}]$. Moreover, the isomorphism from Lemma \ref{eq:def_of_xi_chi} induces a commutative diagram

\centerline{
\begin{xy}\label{diag:character_isos_diag}
\xymatrix{
\coh_c^0(Y_{\dot{w}}^m)[\chi|_{U_E}] \ar[r]^{\sim} \ar[d]^{\beta_g^{\ast}} & \coh_c^0(\tilde{Y}_{\dot{w}}^m)[\chi] \ar[d]^{\tilde{\beta}_g^{\ast}} \\
\coh_c^0(Y_{\dot{w}}^m)[\chi|_{U_E}] \ar[r]^{\sim} & \coh_c^0(\tilde{Y}_{\dot{w}}^m)[\chi]
}
\end{xy}
}

\noindent from which we deduce $\tr(\tilde{\beta}_g^{\ast}; V_{\chi}) = \tr(\beta_g^{\ast}; \coh_c^0(Y_{\dot{w}}^m)[\chi|_{U_E}])$. Lemma \ref{prop:our_form_of_Boy_trace_formula_for_varpi} finishes the proof. \qedhere
\end{proof}

\begin{lm}\label{prop:our_form_of_Boy_trace_formula_for_varpi}
Let $g \in E^{\times}$ with $v_u(g) = 1$. Let $\beta_g$ be the automorphism of $Y_{\dot{w}}^m$ defined by $\beta_g(\dot{x}I^m) = g\dot{x}y_1^{-1}I^m$. Write $g = g_0 u$. Then

\[ \tr(\beta_g^{\ast}; \coh_c^0(Y_{\dot{w}}^m)[\chi|_{U_E}]) = \chi(g_0) + \chi^{\tau}(-g_0). \]
\end{lm}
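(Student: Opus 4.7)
The plan is to apply Lemma \ref{lm:Boy_trace_formula} (Boyarchenko's trace formula) to the automorphism $\beta_g \colon Y_{\dot w}^m \to Y_{\dot w}^m$ together with the commuting right action of the finite group $I_{m,\underline w_m}/I^m$ and the character $\chi|_{U_E}$ inflated via the diagonal surjection from Lemma \ref{lm:right_group_action_on_gen_ADLV_ram_GL_2}. Since $Y_{\dot w}^m$ is a discrete union of $k$-rational points, $\Frob_q$ acts trivially on $\coh_c^0$ and the trace reduces to
\[
\tr(\beta_g^*;\coh_c^0(Y_{\dot w}^m)[\chi|_{U_E}]) \;=\; \frac{1}{|I_{m,\underline w_m}/I^m|}\sum_{i}\chi(i_1)\cdot\sharp S_{\beta_g,i},
\]
with $S_{\beta_g,i} = \{\dot xI^m \in Y_{\dot w}^m(\bar k) \colon g\dot xy_1^{-1}I^m = \dot xiI^m\}$ and $i_1$ the diagonal $U_E/U_E^{m+1}$-component of $i$.

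Using the commutativity of $\iota(E^\times)$, I factor $g = \iota(g_0)\varpi$ and write $\beta_g = \iota(g_0)\circ\beta_\varpi$; the two factors are then made fully explicit on the coordinate chart $\psi^m_{\dot v}$ of $C_v^m$ by, respectively, Proposition \ref{prop:left_I_action_mega_formula} (applied to $\iota(g_0) \in U_{\fJ}$) and Proposition \ref{prop:action_of_alpha_expl}. At the Iwahori level, a direct calculation using Lemma \ref{lm:action_of_beta_varpi} shows that $\beta_g$ permutes $D_w^\tau$ by an elementary M\"obius-type transformation whose only fixed points are $a = +u$ and $a = -u$, independently of $g_0$; these are precisely the two values of $a$ for which Proposition \ref{prop:action_of_alpha_expl} provides an explicit formula. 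Consequently, $S_{\beta_g,i}$ is supported in the two fibers of the projection $Y_{\dot w}^m \to D_w^\tau$ lying over $\pm u$, each of which is an $I_{m,\underline w_m}/I^m$-torsor by Theorem \ref{thm:structure_thm_GL_2_ram}.

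Within each such fiber, since the Iwahori-level point is $\beta_g$-fixed, every point $\dot x$ satisfies $g\dot xy_1^{-1}I^m = \dot x\cdot i_{\dot x}I^m$ for a \emph{unique} $i_{\dot x} \in I_{m,\underline w_m}/I^m$; summing over $\dot x$ in the fiber thus contributes $\sharp(\text{fiber}) \cdot \chi(i_{\dot x,1}) = |I_{m,\underline w_m}/I^m|\cdot\chi(i_{\dot x,1})$, provided the diagonal component $i_{\dot x,1} \in U_E/U_E^{m+1}$ is constant along the fiber. To identify it, I compare the diagonal parts of the explicit action formulas: the ``automorphic factor'' $g_2 a + g_1$ for $g = \iota(g_0)\varpi$ evaluates to $ug_0$ at $a = +u$ and to $-u\tau(g_0)$ at $a = -u$, and combining this with the factor $y_1^{-1}$ (which shifts the $u$-valuation by $-1$ and introduces a sign via $\tau(u) = -u$) one reads off that $i_{\dot x,1} \equiv g_0 \bmod U_E^{m+1}$ on the $a = +u$ fiber and $i_{\dot x,1} \equiv -\tau(g_0) \bmod U_E^{m+1}$ on the $a = -u$ fiber, in both cases independent of $\dot x$.

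Plugging these values into the trace formula, the two branches contribute $|I_{m,\underline w_m}/I^m|\cdot\chi(g_0)$ and $|I_{m,\underline w_m}/I^m|\cdot\chi(-\tau(g_0))$ respectively, which after dividing by $|I_{m,\underline w_m}/I^m|$ yields $\chi(g_0) + \chi^\tau(-g_0)$ as desired. The main technical obstacle will be the constancy of $i_{\dot x,1}$ on each fiber and the precise identification of the two values $g_0$ and $-\tau(g_0)$; this requires unravelling how the right translation by $y_1^{-1}$ (which changes parity via $\tau(u) = -u$) interacts with the left action of $\iota(g_0)$ and the explicit form of $\beta_\varpi$ from Proposition \ref{prop:action_of_alpha_expl}. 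It is structurally parallel to, but somewhat more delicate than, the analogous reduction in the proof of Proposition \ref{lm:our_form_of_Boy_trace_formula}, since $g$ now lies outside $U_{\fJ}$ and the ``quadratic in $a$'' fixed-point equation at Iwahori level shifts from $g_2 a^2 + (g_1 - g_4)a - g_3 \equiv 0$ to one whose two roots $\pm u$ give rise to the two Galois-twisted contributions.
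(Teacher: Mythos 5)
Your skeleton is the same as the paper's: apply Boyarchenko's formula (Lemma \ref{lm:Boy_trace_formula}) to $\beta_g$ and the right $I_{m,\underline{w}_m}/I^m$-action, observe via Lemma \ref{lm:action_of_beta_varpi} and Proposition \ref{prop:left_I_action_mega_formula} that the induced map on the $a$-coordinate fixes only $a=\pm u$, and then identify, on each of the two fibers over $\pm u$, the diagonal component $i_1$ of the unique element $i_{\dot x}$ with $\beta_g(\dot x)=\dot x\cdot i_{\dot x}$. Your torsor/constancy observation is in fact a clean repackaging of what the paper establishes by a direct count (it shows $\sharp S_{\beta_g,i_1}=(q-1)q^{2m}$ for one $i_1$ per fiber). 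However, the decisive step is exactly the one you defer as ``the main technical obstacle'': the determination of $i_1$ \emph{modulo} $u^{m+1}$. In the paper this occupies essentially the whole proof: writing out \eqref{eq:varpi_eq_1} and \eqref{eq:varpi_eq_2} via Propositions \ref{prop:left_I_action_mega_formula}, \ref{prop:action_of_alpha_expl}, \ref{prop:right_action_formula}, extracting $i_2$ modulo $u^{\lengthofv}$, computing $H$ as in \eqref{eq:H_for_varpi}, and verifying the cancellation $MN=H$, which is what upgrades the congruence $i_1\equiv\mp(1\mp u^{2\alpha+1}h)\bmod u^{\lengthofv}$ to an exact identity mod $u^{m+1}$. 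Your heuristic of ``reading off'' $i_1$ from the automorphic factor $g_2a+g_1$ of $\iota(g_0)\varpi$ ignores precisely these correction factors $M,N,H$, and indeed it assigns $g_0$ and $-\tau(g_0)$ to the opposite fibers compared with the actual computation (the paper gets $i_1=-\tau(g_0)$ over $a=+u$ and $i_1=g_0$ over $a=-u$); this is invisible in the symmetric final sum but shows the identification is not yet proved. So as it stands there is a genuine gap at the heart of the argument.

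Note, though, that your constancy claim (which does hold: $\beta_g(\dot x\cdot j)=\beta_g(\dot x)\cdot y_1jy_1^{-1}$ and conjugation by $y_1$ preserves the diagonal component, so $i_{\dot x,1}$ is constant on each fiber) would let you close the gap much more cheaply than by redoing the paper's general computation: evaluate at the single point of each fiber with $A=0$ and $C=1$, where $M=N=1$, the second equation forces $i_2\equiv 0$ and hence $H=1$, and the $C$-coordinate equation then gives $i_1$ on the nose. Carrying out that one-point evaluation (after the harmless normalization $g_0=1+u^{2\alpha+1}h$ used in the paper, or directly for general $g_0$) is what is missing from your proposal.
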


\begin{proof}
Multiplying with some central element in $U_F \subseteq U_E$ (those act as scalars in $V_{\chi}$) we can assume that $g_0 = 1 + u^{2\alpha + 1}h$ for some $h 
\in U_F$ (and $0 \leq \alpha < \lengthofv$). We proceed analogously as in the proof of Proposition \ref{lm:our_form_of_Boy_trace_formula}. Let $i \in I_{m,\underline{w}_m}/I^m$. A point $\dot{x}I \in Y_{\dot{w}}^m$ can lie in the set $S_{\beta_g,i} = \{ \dot{x}I \in Y_{\dot{w}}^m \colon \beta_g(\dot{x} I^m) = \dot{x} i I^m \}$ from Lemma \ref{lm:Boy_trace_formula} only if $\beta_g$ fixes its $a$-coordinate. By Lemma \ref{lm:action_of_beta_varpi}, $\beta_u$ acts on the coordinate $a$ by $a = ua^{\prime} \mapsto u a^{\prime,-1}$. From this and Proposition \ref{prop:left_I_action_mega_formula} one easily deduces that $\beta_g^{\ast}(a) = a$ is equivalent to $a = \pm u$ (for any $g_0$) and that $h(g,\pm u) = 0$. Apply Propositions \ref{prop:left_I_action_mega_formula}, \ref{prop:action_of_alpha_expl}, \ref{prop:right_action_formula} to determine the actions of $\beta_g$ and $i$ on $Y_{\dot{w}}^m$. Exactly as in the proof of Proposition \ref{lm:our_form_of_Boy_trace_formula} we see that

\[ \tr(\beta_g^{\ast}; \coh_c^0(Y_{\dot{w}}^m)[\chi|_{U_E}]) = \frac{1}{(q-1)q^{2m}} \sum_{i_1 \in U_E/U_E^{m+1}} \sharp S_{\beta_g,i_1} \chi(i_1), \]

\noindent where $S_{\beta_g,i_1}$ is the set of all solutions of the equations 

\begin{eqnarray}
\label{eq:varpi_eq_1} (1 \mp u^{2\alpha + 1}h)(\mp C) M^{-1}N^{-1} &\equiv& C i_1 H^{-1} \mod u^{m+1} \\
\label{eq:varpi_eq_2} -AMN &\equiv& i_1^{-1}\tau(i_1)H^2 A + i_2 H \mod u^m
\end{eqnarray}

\noindent in the variables $C \in (k[u]/u^{m+1})^{\times}$, $A, i_2 \in k[u]/u^m$ (the sign $\pm$ corresponds to the two possibilities $a = \pm u$), where

\begin{eqnarray*}
M &=& 1 - 2u^{\lengthofv}C\tau(C)^{-1}A \quad \text{as in Proposition \ref{prop:action_of_alpha_expl}, and} \\
N &=& 1 + u^{\lengthofv + 1} \frac{g_2}{g_2 (\pm 1) + g_1} (\mp CM^{-1})(\mp D M)^{-1} (-AM) = 1 \pm 2u^{\lengthofv} \frac{u^{2\alpha + 1}h}{1 \pm u^{2\alpha + 1}h} C \tau(C)^{-1}A \\
H &=& 1 + i_1 \tau(i_1)^{-1}i_2 B = 1 + u^{\lengthofv} i_1 \tau(i_1)^{-1}i_2 C \tau(C)^{-1}.
\end{eqnarray*}

\noindent Canceling $C$ in \eqref{eq:varpi_eq_1} we see that it is equivalent to

\begin{equation} \label{eq:varpi_eq_1_v2}
MN i_1 \equiv  \mp(1 \mp u^{2\alpha + 1}h) H \mod u^{m+1}.
\end{equation}

\noindent Taking \eqref{eq:varpi_eq_1_v2} modulo $u^{\lengthofv}$, we see that $S_{\beta_g,i} = \emptyset$, unless $i_1 \equiv \mp (1 \mp u^{2\alpha + 1}h) \mod u^{\lengthofv}$. Assume the last holds. Taking equation \eqref{eq:varpi_eq_2} modulo $u^{\lengthofv}$ and inserting $MN$ from \eqref{eq:varpi_eq_1_v2} and $i_1 \equiv \mp (1 \mp u^{2\alpha + 1}h) \mod u^{\lengthofv}$ we deduce

\[ i_2 \equiv -A \frac{2}{1 \mp u^{2\alpha + 1}h} \mod u^{\lengthofv}. \]

\noindent This allows to compute 

\begin{equation}\label{eq:H_for_varpi}
H = 1 - 2u^{\lengthofv}\frac{1}{1 \pm u^{2\alpha + 1}h} C\tau(C)^{-1}A.
\end{equation}

\noindent As in the proof of Proposition \ref{lm:our_form_of_Boy_trace_formula} we eliminate $i_2$ and ignore equation \eqref{eq:varpi_eq_2}. Thus $\sharp S_{\beta_g,i_1}$ is equal to the number of solutions of \eqref{eq:varpi_eq_1_v2} in $C$ and $A$. Finally, we compute $MN = H$ (this uses $i_1 \equiv \mp (1 \mp u^{2\alpha + 1}h) \mod u^{\lengthofv}$ and \eqref{eq:H_for_varpi}) and canceling these terms in \eqref{eq:varpi_eq_1_v2} shows that $S_{\beta_g,i_1} = \emptyset$, unless $i_1 = \mp (1 \mp u^{2\alpha + 1}h)$, in which case $\sharp S_{\beta_g,i_1} = (q-1)q^{2m}$, finishing the proof.
\end{proof}


\subsection{Traces on the induced side}\label{sec:trace_on_induced_side}

\begin{proof}[Proof of Lemma \ref{lm:restrictions_to_Etimes_are_isom}]
In $\Xi_{\chi}$ and $\Theta_{\chi}$ the central characters are $\chi|_{F^{\times}}$ and $U_E^{m+1}$ acts trivial. Thus it is enough to show

\begin{eqnarray} \label{eq:claimed_equality_of_traces}
\tr(g; \Xi_{\chi}) &=& \tr(g; \Theta_{\chi}) \quad \text{$\forall$ $g = \iota(1 + u^{2\alpha + 1}h)$ with $h \in U_F$ and $0 \leq \alpha \leq \lengthofv - 1$,} \\
\tr(g; \Xi_{\chi}) &=& \tr(g; \Theta_{\chi}) \quad \text{$\forall$ $g \in \varpi U_E$}. \label{eq:claimed_eq_for_trace_of_varpi}
\end{eqnarray}


\begin{lm}\label{lm:rep_system_for_UJ_UEUJell}
Let $\lengthofv \geq 1$. For $y \in U_F/U_F^{\lfloor \frac{\lengthofv+1}{2} \rfloor }$, $\lambda \in \caO_F/\caO_F^{\lfloor \frac{\lengthofv}{2} \rfloor}$ the matrices

\begin{equation}
r_{y,\lambda} := \matzz{1}{}{}{y} \matzz{1}{\lambda}{}{1} \in U_{\fJ}/U_{\fJ}^{\lengthofv}
\end{equation}

\noindent for a system of representatives for the left (and right) cosets of $U_E U_{\fJ}^{\lengthofv}$ in $U_{\fJ}$ and hence also of $J_{\beta} = E^{\times}U_{\fJ}^{\lengthofv}$ in $E^{\times}U_{\fJ}$.
\end{lm}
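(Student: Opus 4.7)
The plan is to verify that both sets (the proposed system of representatives and the index set $U_{\fJ}/U_E U_{\fJ}^{\lengthofv}$) have the same cardinality, and then to check directly that no two $r_{y,\lambda}$ are congruent modulo $U_E U_{\fJ}^{\lengthofv}$. Since this is a finiteness argument, this will suffice to establish the lemma.

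First I would compute the index $[U_{\fJ} : U_E U_{\fJ}^{\lengthofv}]$. Using that the Jacobson radical of $\fJ$ is $\varpi \fJ$ and $\fJ/\varpi\fJ \cong k \oplus k$, the filtration quotients give $|U_{\fJ}/U_{\fJ}^{\lengthofv}| = (q-1)^2 q^{2\lengthofv - 2}$. On the other hand, $U_E U_{\fJ}^{\lengthofv}/U_{\fJ}^{\lengthofv} \cong U_E/(U_E \cap U_{\fJ}^{\lengthofv}) = U_E/U_E^{\lengthofv}$ has order $(q-1)q^{\lengthofv - 1}$. Dividing yields $[U_{\fJ}:U_E U_{\fJ}^{\lengthofv}] = (q-1) q^{\lengthofv - 1}$. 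The number of pairs $(y,\lambda)$ is $(q-1) q^{\lfloor (\lengthofv+1)/2 \rfloor - 1} \cdot q^{\lfloor \lengthofv/2 \rfloor} = (q-1) q^{\lengthofv-1}$, which matches.

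Next I would show the $r_{y,\lambda}$ lie in pairwise distinct left cosets. Suppose $r_{y_1,\lambda_1}$ and $r_{y_2,\lambda_2}$ lie in the same $U_E U_{\fJ}^{\lengthofv}$-coset. A direct product gives
\[ r_{y_1,\lambda_1}^{-1} r_{y_2,\lambda_2} = \matzz{1}{\lambda_2 - \lambda_1 y_1^{-1} y_2}{0}{y_1^{-1} y_2}, \]
which by assumption equals $\iota(e) \cdot x$ for some $e = \alpha + u\beta \in U_E$ (so $\iota(e) = \matzz{\alpha}{\beta}{t\beta}{\alpha}$) and $x \in U_{\fJ}^{\lengthofv}$. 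Computing $\iota(e)^{-1}$ times the left side and demanding that the result lies in $U_{\fJ}^{\lengthofv}$ leads to four entrywise conditions. The $(2,1)$-entry forces $\beta \in \fp_F^{\lfloor \lengthofv/2 \rfloor}$, then the $(1,1)$-entry forces $\alpha \equiv 1 \pmod{\fp_F^{\lfloor (\lengthofv+1)/2 \rfloor}}$ (using $t\beta^2 \in \fp_F^{2\lfloor \lengthofv/2 \rfloor + 1} \subseteq \fp_F^{\lfloor (\lengthofv+1)/2 \rfloor}$), the $(2,2)$-entry forces $y_1^{-1} y_2 \equiv 1 \pmod{\fp_F^{\lfloor (\lengthofv+1)/2 \rfloor}}$, and the $(1,2)$-entry, combined with $\beta \in \fp_F^{\lfloor \lengthofv/2 \rfloor}$, forces $\lambda_2 - \lambda_1 \in \fp_F^{\lfloor \lengthofv/2 \rfloor}$. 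These are exactly the statements that $y_1, y_2$ agree in $U_F/U_F^{\lfloor (\lengthofv+1)/2 \rfloor}$ and $\lambda_1, \lambda_2$ agree in $\caO_F/\fp_F^{\lfloor \lengthofv/2 \rfloor}$.

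The same argument, applied to $r_{y_2,\lambda_2} r_{y_1,\lambda_1}^{-1} = \matzz{1}{(\lambda_2 - \lambda_1) y_1^{-1}}{0}{y_2 y_1^{-1}}$, handles right cosets. I do not expect significant difficulty; the only delicate point, which is the main obstacle, is the bookkeeping of the asymmetric exponents $\lfloor\lengthofv/2\rfloor$ versus $\lfloor(\lengthofv+1)/2\rfloor$ in the filtration of $U_{\fJ}^{\lengthofv}$ and the verification that the cross-term $t\beta^2$ lies in the correct ideal in both parities of $\lengthofv$, which is where the specific choice of the indexing in the statement becomes essential.
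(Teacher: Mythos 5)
Your proposal is correct, and the counting-plus-injectivity route is a legitimate way to carry out what the paper dismisses as "an immediate computation": your index count $[U_{\fJ}:U_EU_{\fJ}^{\lengthofv}]=(q-1)q^{\lengthofv-1}$ matches the number of pairs $(y,\lambda)$, and your entrywise analysis of $\iota(e)^{-1}r_{y_1,\lambda_1}^{-1}r_{y_2,\lambda_2}$ correctly forces $\beta\in\fp_F^{\lfloor\lengthofv/2\rfloor}$, $\alpha\equiv 1$, $y_1^{-1}y_2\equiv 1$ and $\lambda_2\equiv\lambda_1$ in the stated quotients (one small point: in the $(1,2)$-entry step you need not only $\beta\in\fp_F^{\lfloor\lengthofv/2\rfloor}$ but also $y_1^{-1}y_2\equiv 1\bmod\fp_F^{\lfloor\lengthofv/2\rfloor}$, already supplied by your $(2,2)$-entry conclusion, to pass from $\lambda_2-\lambda_1y_1^{-1}y_2$ to $\lambda_2-\lambda_1$). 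Where you differ from the paper is that its one-line proof records exactly the piece you omit: the "hence also" claim for $J_{\beta}=E^{\times}U_{\fJ}^{\lengthofv}$ inside $E^{\times}U_{\fJ}$, which the paper settles via the bijection $U_{\fJ}/U_EU_{\fJ}^{\lengthofv}\cong E^{\times}U_{\fJ}/E^{\times}U_{\fJ}^{\lengthofv}$. You should add this sentence: it follows because $E^{\times}U_{\fJ}=U_{\fJ}\cdot E^{\times}U_{\fJ}^{\lengthofv}$ and $U_{\fJ}\cap E^{\times}U_{\fJ}^{\lengthofv}=U_EU_{\fJ}^{\lengthofv}$ (using $E^{\times}\cap U_{\fJ}=U_E$, since an element of $E^{\times}$ lying in $U_{\fJ}$ has unit norm). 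With that addition your argument is complete; the trade-off is that your version is longer but fully explicit, while the paper's relies on the reader performing the same reduction of a general element of $U_{\fJ}$ to the normal form $r_{y,\lambda}$ that your counting argument replaces.
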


\begin{proof}
We have $U_{\fJ} / U_E U_{\fJ}^{\lengthofv} \cong E^{\times}U_{\fJ}/E^{\times}U_{\fJ}^{\lengthofv}$. The rest is an immediate computation. 
\end{proof}

\noindent We use notations from Section \ref{sec:BH_constr_of_pi_chi} and compute the traces $\tr(g; \Theta_{\chi})$. Let $g = \iota(1 + u^{2\alpha + 1}h)$ be as in \eqref{eq:claimed_equality_of_traces}. Applying the Mackey formula to $\Theta_{\chi} = \Indd_{J_{\beta}}^{E^{\times}U_{\fJ}} \Lambda$ we see

\begin{equation}
\tr(g; \Theta_{\chi}) = \sum_{y,\lambda} \Lambda(r_{y,\lambda} g r_{y,\lambda}^{-1}), 
\end{equation}

\noindent where the sum is taken over all representatives $r_{y,\lambda}$ of $E^{\times} U_{\fJ}/J_{\beta}$ (from Lemma \ref{lm:rep_system_for_UJ_UEUJell}), such that $r_{y,\lambda} g r_{y,\lambda}^{-1} \in J_{\beta} = E^{\times} U_{\fJ}^{\lengthofv}$. We compute:

\[r_{y,\lambda} g r_{y,\lambda}^{-1} = \matzz{1 + \lambda h t^{\alpha + 1}}{y^{-1}h(1 - \lambda^2 t) t^{\alpha}}{yht^{\alpha + 1}}{1 - \lambda h t^{\alpha + 1}}. \]

\noindent Write $\beta = (b + uc)u^{-m}$ with some $b,c \in \caO_F$. 
Assume first $\alpha \geq \lfloor \frac{\lengthofv}{2} \rfloor$. Then $r_{y,\lambda} g r_{y,\lambda}^{-1} \in U_{\fJ}^{\lengthofv} \subseteq U_E U_{\fJ}^{\lengthofv}$ for all $r_{y,\lambda}$ and we compute:

\[ \tr(g; \Theta_{\chi}) = \sum_{y,\lambda} \psi_{\fM,\beta}(r_{y,\lambda} g r_{y,\lambda}^{-1}) = \sum_{y,\lambda} \psi(t^{\alpha + 1 - \lengthofv}bh(y + y^{-1}(1 - \lambda^2 t))). \]

\noindent Taking some lifts of $y,\lambda$ to $E$ and setting $n := \frac{1}{2} h(y + y^{-1}(1 - \lambda^2 t)) u^{2\alpha + 1} \in E$, we see that $\beta n + \tau(\beta n) = t^{\alpha + 1 - \lengthofv}bh(y + y^{-1}(1 - \lambda^2 t))$, i.e., using \eqref{eq:BH_choice_identity_between_chi_and_psi}, we deduce

\[ \tr(g; \Theta_{\chi}) = \sum_{y,\lambda} \psi_E(\beta n) = \sum_{y,\lambda} \chi(1 + \frac{1}{2} h(y + y^{-1}(1 - \lambda^2 t)) u^{2\alpha + 1}). \]

\noindent This does not depend on the choice of the lifts $y,\lambda$ to $E$, as $\chi$ is of level $m$. Interpreting $1 + \frac{1}{2} h(y + y^{-1}(1 - \lambda^2 t)) u^{2\alpha + 1}$ as an element of $U_E/U_E^{m+1}$, we have to show that the summand $\chi(i_1)$ for $i_1 \in U_E/U_E^{m+1}$ occurs in this sum if and only if and with the same multiplicity as it occurs in the sum \eqref{eq:traces_of_non_split_elements_in_prop}. Therefore, writing $i_1 = 1 + u^{2\alpha + 1}hs$, it is enough to show that for a fixed $s \in R_{\alpha}^{\langle \tau \rangle}$ the equation 

\begin{equation}\label{eq:for_trace_comparison} 
\frac{1}{2} (y + y^{-1}(1 - \lambda^2 t)) \equiv s \mod t^{\lengthofv - \alpha} 
\end{equation}

\noindent in the variables $y \in U_F/U_F^{\lfloor \frac{\lengthofv+1}{2} \rfloor }$, $\lambda \in \caO_F/\caO_F^{\lfloor \frac{\lengthofv}{2} \rfloor}$ is equivalent to the equation \eqref{eq:Gl2incase_alphagreater_l2} in the variable $a^{\prime} = a_1 + a_2 u + \dots + a_{\lengthofv}u^{\lengthofv - 1} \in (k[u]/u^{\lengthofv})^{\times}$. Indeed, write $a^{\prime} = -b^{\prime} + c^{\prime}u$ with $b^{\prime} = -\frac{1}{2}( \tau(a^{\prime}) + a^{\prime} ) = \sum_{j = 0}^{\lfloor \frac{\lengthofv+1}{2} \rfloor - 1} b_j t^j$ and $c^{\prime}u = \frac{1}{2}(\tau(a^{\prime}) - a^{\prime}) = u \sum_{j = 0}^{\lfloor \frac{\lengthofv}{2} \rfloor - 1}c_j t^j$ with $b^{\prime},c^{\prime}$ $\tau$-invariant. Then \eqref{eq:Gl2incase_alphagreater_l2} can be rewritten as

\[ s \equiv \frac{1 + b^{\prime, 2} - c^{\prime,2} t}{2b^{\prime}} \mod t^{\lengthofv - \alpha}, \]

\noindent which is evidently equivalent to \eqref{eq:for_trace_comparison} (replace $b^{\prime}$ by $y$ and $c^{\prime}$ by $\lambda$). The case $\alpha < \lfloor \frac{\lengthofv}{2} \rfloor$ can be done similarly. This shows \eqref{eq:claimed_equality_of_traces}.

To show \eqref{eq:claimed_eq_for_trace_of_varpi}, we let $g = \iota(u(1 + hu))$ for some $h \in \caO_F$ (restriction to this case is possible after multiplication with a central element). We compute 

\begin{equation} \label{eq:commut_of_varpi_mit_rlambday}
\varpi^{-1} r_{y,\lambda} g r_{y,\lambda}^{-1} = \matzz{y}{h-\lambda}{(h + \lambda) t}{y^{-1}(1- \lambda^2 t)}.
\end{equation}

\noindent Notice that $r_{y,\lambda}g r_{y,\lambda}^{-1} \in J_{\beta} = E^{\times} U_{\fJ}^{\lengthofv}$ if and only if $\varpi^{-1} r_{y,\lambda}g r_{y,\lambda}^{-1} \in E^{\times}U_{\fJ}^{\lengthofv} \cap U_{\fJ} = U_E U_{\fJ}^{\lengthofv}$. By \eqref{eq:commut_of_varpi_mit_rlambday}, this is the case if and only if $\lambda = 0$, $y = \pm 1$. Thus $\tr(g; \Theta_{\chi}) = \chi(g) + \chi^{\tau}(g)$. Together with Proposition \ref{prop:traces_of_val_1_elements} it shows \eqref{eq:claimed_eq_for_trace_of_varpi} and thus the lemma. \qedhere 
\end{proof}


\subsection{Computation of traces in the small level case}\label{sec:traces_small_level_case} \mbox{}

In this section we assume $\lengthofv \geq m+1$.

\begin{proof}[Proof of Lemma \ref{lm:traces_on_ADLV_side_small_level_case}] Let $g \in U_{\fJ}$. We apply Proposition \ref{lm:our_form_of_Boy_trace_formula}. Observe first that equation \eqref{eq:Gl_2_v1} reduces to
\begin{equation}\label{eq:Gl_2_v2} 
\tau(i_1) \equiv g_2 a + g_1 \mod u^{m+1}.
\end{equation}

\noindent (i): Then we are exactly in the case (i) of Lemma \ref{lm:on_equation_for_a}. As $v_u(a) = 1$ and $v_u(g_2) \geq 2\lfloor\frac{\lengthofv}{2}\rfloor$, we see that $g_2 a + g_1 \equiv g_1 \mod u^{\lengthofv}$ and hence also $g_2 a + g_1 \equiv g_1 \mod u^{m+1}$. Let $i_1 \in (k[u]/u^{m+1})^{\times}$. Then \eqref{eq:Gl_2_v2} simply says that either $i_1$ is $g_1 \mod u^{m+1}$ or $S_{g,i_1} = \emptyset$. By Proposition \ref{lm:our_form_of_Boy_trace_formula} we deduce

\[ \tr(g; \coh_c^0(\tilde{Y}_{\dot{w}})[\chi]) = (q-1)q^{\lengthofv - 1} \chi(g_1) = (q-1)q^{\lengthofv - 1}\tilde{\chi}(g), \]

\noindent showing the first statement of (i). The last statement of (i) follows immediately from the first, as $\tilde{\chi}$ is trivial on $U_{\fJ}^{\lengthofv}$. 

\noindent (ii): Conjugating $g$ into $U_E U_{\fJ}^{\lengthofv}$ and multiplying with an element of $U_F U_{\fJ}^{\lengthofv}$ (these elements act by part (i) as scalars), we can without loss of generality assume that $g = \iota(1 + u^{2\alpha + 1}h)$ with some $h \in U_F$ and with $2\alpha + 2 = v_u(g_3) \leq \lengthofv$.  Let $i_1 \in U_E/U_E^{m+1}$. We determine $\sharp S_{g,i_1}$. First of all \eqref{eq:Gl_1_v1} is equivalent to


\[ (t^{\alpha}h) a^2 - t^{\alpha + 1} h \equiv 0 \mod u^{\lengthofv + 1}. \]

\noindent Write $a = a^{\prime} u$ with $a^{\prime}$ invertible. This equation is equivalent to

\begin{equation}\label{eq:Gl_1_v2_easy_case} 
a^{\prime} \equiv \pm 1 \mod u^{\lengthofv + 1 - (2\alpha + 2)}.
\end{equation}

\noindent Equation \eqref{eq:Gl_2_v2} takes the form 

\[ \tau(i_1) = 1 + u^{2\alpha + 1}h a^{\prime} \mod u^{m+1}. \]

\noindent Thus \eqref{eq:Gl_1_v2_easy_case} and $\lengthofv \geq m+1$ shows that either $S_{g,i_1} = \emptyset$, or $i_1 = 1 \pm u^{2\alpha + 1}h$. Moreover, for each of this two choices of $i_1$, there are exactly $q^{v_u(g_3) - 1} = q^{2\alpha + 1}$ possible $a$'s satisfying equations \eqref{eq:Gl_1_v2_easy_case} and \eqref{eq:Gl_2_v2} (cf. Lemma \ref{lm:on_equation_for_a}(ii)). We obtain

\[ \tr(g; \coh_c^0(\tilde{Y}_{\dot{w}})[\chi]) = q^{v_u(g_3) - 1} \cdot (\tilde{\chi}(g) + \tilde{\chi}^{\tau}(g)). \]

\noindent (iii): By Lemma \ref{lm:on_equation_for_a}(iii) it is clear that $S_{g,i_1} = \emptyset$ for all $i_1$ in this case. 

Let now $g = g_0 \varpi \in \varpi U_E$. As in the proof of Proposition \ref{prop:traces_of_val_1_elements} we have the automorphism $\tilde{\beta}_g$ of $\tilde{Y}_{\dot{w}}^m$ defined by $\tilde{\beta}_g(\dot{x}I^m) = g\dot{x}y_1^{-1}I^m$, where $y_1 = e_0(u,-u)$ and its restriction $\beta_g$ to $Y_{\dot{w}}^m$. Again, we have

\[ \tr(g; \coh_c^0(\tilde{Y}_{\dot{w}}^m)[\chi]) = \chi(u) \tr(\tilde{\beta}_g^{\ast}; \coh_c^0(\tilde{Y}_{\dot{w}}^m)[\chi]) = \chi(u) \tr(\beta_g^{\ast}; \coh_c^0(Y_{\dot{w}}^m)[\chi]). \]

\noindent The right action of $I_{m,\underline{w}_m}/I^m$ does not affect the $a$-coordinate of a point $\dot{x}I \in Y_{\dot{w}}^m$, thus we see from the Lemma \ref{lm:Boy_trace_formula} that $\tr(\beta_g^{\ast}; \coh_c^0(Y_{\dot{w}}^m)[\chi]) = 0$, unless $\beta_g^{\ast}(a) = a$. A simple computation shows that this can only be the case if $g$ is conjugate to an element in $E^{\times}U_{\fJ}^{\lengthofv}$. This shows (ii)${}^{\prime}$. If $g$ is conjugate to an element of $E^{\times}U_{\fJ}$, then we can assume $g \in E^{\times}U_{\fJ}$ and (i)${}^{\prime}$ can be shown as in the proof of Proposition \ref{prop:traces_of_val_1_elements}.
\end{proof}


\end{document}